\numberwithin{equation}{section}
\newcommand{\Z}{\mathbb Z}
\newcommand{\R}{\mathbb R}
\newcommand{\C}{\mathbb C}
\newcommand{\T}{\mathbb T}
\newcommand{\GL}{\mathrm{GL}}
\DeclareMathOperator{\Ext}{Ext}
\DeclareMathOperator{\Hom}{Hom}
\newtheorem{introthm}{Theorem}
\newtheorem{lem}[equation]{Lemma}
\newtheorem{cor}[equation]{Corollary}
\newtheorem{prop}[equation]{Proposition}
\newtheorem{thm}[equation]{Theorem}
\theoremstyle{definition}
\newtheorem{exm}[equation]{Example}
\newtheorem{defn}[equation]{Definition}
\newtheorem{conj}[equation]{Conjecture}
\newtheorem{ques}[equation]{Question}
\theoremstyle{remark}
\newtheorem{rem}[equation]{Remark}
\crefname{thm}{Theorem}{Theorems}
\crefname{introthm}{Theorem}{Theorems}
\crefname{lem}{Lemma}{Lemmas}
\crefname{cor}{Corollary}{Corollaries}
\crefname{prop}{Proposition}{Propositions}
\crefname{ex}{Exercise}{Exercises}
\crefname{exm}{Example}{Examples}
\crefname{defn}{Definition}{Definitions}
\crefname{claim}{Claim}{Claims}
\crefname{rem}{Remark}{Remarks}
\crefname{fct}{Fact}{Facts}
\crefname{note}{Note}{Notes}
\crefname{ques}{Question}{Questions}
\newcommand{\term}{\emph} 
\DeclarePairedDelimiter\abs{\lvert}{\rvert}
\DeclarePairedDelimiter\set{\{}{\}}
\DeclarePairedDelimiter\paren{(}{)}
\DeclarePairedDelimiter\ang{\langle}{\rangle}
	\let\oldparen\paren
	\def\paren{\@ifstar{\oldparen}{\oldparen*}}
\def\instring#1#2{TT\fi\begingroup
  \edef\x{\endgroup\noexpand\in@{#1}{#2}}\x\ifin@}
\def\isuppercase#1{%
  \instring{#1}{ABCDEFGHIJKLMNOPQRSTUVWXYZ}%
}%
\newcommand{\C@lIfUpper}[1]{
 \if\isuppercase{#1}\mathscr{#1}%
 \else #1%
 \fi
}
\newcommand{\cat}[1]{\mathit{\@tfor\next:=#1\do{\C@lIfUpper{\next}}}}
\newcommand{\Ch}{\cat{Ch}}
\newcommand{\Sym}{\mathrm{Sym}}
\newcommand{\fg}{\mathfrak g}
\newcommand{\SO}{\mathrm{SO}}
\renewcommand{\d}{\mathrm d}
\newcommand{\bl}{\text{--}}
\newcommand{\cA}{\mathcal A}
\newcommand{\cQ}{\mathcal Q}
\newcommand{\tmf}{\mathit{tmf}}
\newcommand{\ko}{\mathit{ko}}
\newcommand{\MTString}{\mathit{MTString}}
\newcommand{\spinc}{spin\textsuperscript{$c$}\xspace}
\newcommand{\U}{\mathrm U}
\newcommand{\E}{\mathrm E}
\newcommand{\G}{\mathbb G}
\newcommand{\shortexact}[4]{
\begin{tikzcd}[ampersand replacement=\&]
        0 \& {#1}
        \&  {#2}
        \& {#3}
        \& {0 #4}
        \arrow[from=1-1, to=1-2]
        \arrow[from=1-2, to=1-3]
        \arrow[from=1-3, to=1-4]
        \arrow[from=1-4, to=1-5]
\end{tikzcd}
}
\newcommand{\Aut}{\mathrm{Aut}}
\newcommand{\Spin}{\mathrm{Spin}}
\newcommand{\Pin}{\mathrm{Pin}}
\newcommand{\CW}{\mathrm{CW}}
\newcommand{\rE}{\mathrm E}
\newcommand{\Sq}{\mathrm{Sq}}
\newcommand{\RP}{\mathbb{RP}}
\newcommand{\CP}{\mathbb{CP}}
\newcommand{\HP}{\mathbb{HP}}
\newcommand{\String}{\mathrm{String}}
\newcommand{\pinm}{pin\textsuperscript{$-$}\xspace}
\newcommand{\MO}{\mathit{MO}}
\newcommand{\MTSO}{\mathit{MTSO}}
\newcommand{\MTSpin}{\mathit{MTSpin}}
\newcommand{\inj}{\hookrightarrow}
\renewcommand{\O}{\mathrm O}
\newcommand{\Sph}{\mathbb S}
\newcommand{\ku}{\mathit{ku}}
\newcommand{\SU}{\mathrm{SU}}
\newcommand{\KO}{\mathit{KO}}
\newcommand{\pt}{\mathrm{pt}}
\newcommand{\Mod}{\cat{Mod}}
\newcommand{\Ghet}{\G^{\mathrm{het}}}
\newcommand{\GCHL}{\G^{\mathrm{CHL}}}
\newcommand{\xihet}{{\xi^{\mathrm{het}}}}
\newcommand{\xihetn}{{\xi_n^{\mathrm{het}}}}
\newcommand{\xiCHL}{{\xi^{\mathrm{CHL}}}}
\newcommand{\xiCHLn}{\xi_n^{\mathrm{CHL}}}
\newcommand{\MTxi}{\mathit{MT\xi}}
\newcommand{\MTxihet}{\mathit{MT\xihet}}
\newcommand{\MTxiCHL}{\mathit{MT\xiCHL}}
\newcommand{\cP}{\mathcal P}
\newcommand{\MTO}{\mathit{MTO}}
\newcommand{\DD}{\mathrm{DD}}
\newcommand{\Snb}{S_{\mathit{nb}}^1}
\DeclareRobustCommand\widecheck[1]{{\mathpalette\@widecheck{#1}}}
\def\@widecheck#1#2{%
    \setbox\z@\hbox{\m@th$#1#2$}%
    \setbox\tw@\hbox{\m@th$#1%
       \widehat{%
          \vrule\@width\z@\@height\ht\z@
          \vrule\@height\z@\@width\wd\z@}$}%
    \dp\tw@-\ht\z@
    \@tempdima\ht\z@ \advance\@tempdima2\ht\tw@ \divide\@tempdima\thr@@
    \setbox\tw@\hbox{%
       \raise\@tempdima\hbox{\scalebox{1}[-1]{\lower\@tempdima\box
\tw@}}}%
    {\ooalign{\box\tw@ \cr \box\z@}}}
\newcommand{\cS}{\mathit{\mathcal Str}}
\newcommand{\pinp}{pin\textsuperscript{$+$}\xspace}
\newcommand{\Bord}{\cat{Bord}}
\newcommand\rightthreearrow{%
        \mathrel{\vcenter{\mathsurround0pt
                \ialign{##\crcr
                        \noalign{\nointerlineskip}$\rightarrow$\crcr
                        \noalign{\nointerlineskip}$\rightarrow$\crcr
                        \noalign{\nointerlineskip}$\rightarrow$\crcr
                }%
        }}%
}
\title{Bordism for the 2-group symmetries of the heterotic and CHL strings}
\author{Arun Debray}
\date{\today}
\begin{document}
\maketitle

\begin{abstract}
In the presence of a nonzero B-field, the symmetries of the $\E_8\times\E_8$ heterotic string form a $2$-group,
or a categorified group, as do the symmetries of the CHL string. We express the bordism groups of the corresponding
tangential structures as twisted string bordism groups, then compute them through dimension $11$ modulo a few
unresolved ambiguities. Then, we use these bordism groups to study anomalies and defects for these two string
theories.
\end{abstract}

\tableofcontents

\setcounter{section}{-1}
\section{Introduction}
	String theory has long been a place where higher-categorical structures in mathematics meet their applications.
This is true for a few different reasons, but one crucial reason is that many fields in superstring and
supergravity theories have mathematical incarnations that are higher-categorical objects, and so even precisely
setting up mathematical questions coming out of string theory, let alone solving them, often requires engaging with
or developing the foundations of various kinds of geometric objects with higher structure. This paper is concerned
with the appearance of a higher structure called a \term{2-group} in two specific string theories, and how
including this structure affects computations of bordism groups for the tangential structures of these theories. These
bordism groups control anomalies and extended objects for these theories. The main results of this paper are
computations of bordism groups and their generating manifolds through dimension $11$, except for a few ambiguities
we did not addres, for the tangential structures underlying these two string theories.

For the higher structures we investigate in this paper, the story begins with the \term{Kalb-Ramond field}, or the
\term{B-field}. This is an analogue of the field strength of an electromagnetic field, represented as a closed
differential $2$-form with a quantization condition. Locality of quantum field theory means expressing the
field strength of the electromagnetic field as a section of a sheaf, specifically as a connection on a principal
$\T$-bundle, where $\T$ is the circle group. For the B-field, everything is one degree higher: it comes to us as a
closed differential $3$-form with a quantization condition, which we would like to express as a geometric object
that sheafifies. This cannot be a connection on a principal $G$-bundle for a finite-dimensional Lie group $G$;
instead, one models the B-field as a connection on a \term{$\T$-gerbe}, which is a categorification of a principal
$\T$-bundle. A $\T$-gerbe on a manifold $M$ is, roughly speaking, a bundle of groupoids on $M$ which is locally
equivalent to $\pt/\T$. There are several ways to make this precise; we discuss one, Murray's \term{bundle
gerbes}~\cite{Mur96}, in \cref{bundle_gerbe}.

In this article, we consider higher structures in two string theories: the $\rE_8\times\rE_8$ heterotic string, and
the Chaudhuri-Hockney-Lykken (CHL) string. The former is a ten-dimensional superstring theory whose low-energy
limit is ten-dimensional $\mathcal N = 1$ supergravity, and the latter is a nine-dimensional theory obtained from
the $\rE_8\times\rE_8$ heterotic string theory by compactifying on a circle. Both of these theories have
B-fields, but Green and Schwarz~\cite{GS84} showed that in order to cancel an anomaly, the B-field and the
gauge field must satisfy a relation known as a \term{Bianchi identity}. Fiorenza-Schreiber-Stasheff~\cite{FSS12}
and Sati-Schreiber-Stasheff~\cite{SSS12} describe how the Bianchi identity mixes the data of the B-field and
the gauge field into data that can be interpreted as a connection on a principal bundle for a $2$-group $\G$,
specifically a \term{string $2$-group} $\cS(G, \mu)$ associated to the data of a compact Lie group $G$ and a class
$\mu\in H^4(BG;\Z)$; typically, $G$ is the gauge group and $\mu$ is determined by the anomaly polynomial.

$2$-groups have been used in the theoretical and mathematical physics literature for some time now. This
program began in earnest with work of Baez, Crans, Lauda, Stevenson, and Schreiber~\cite{Bae02, BC04, BL04, BSCS07,
BS07}; more recently, $2$-groups, their symmetries, and their anomalies have made a resurgence in quantum field
theory following work of Córdova-Dumitriescu-Intrilligator~\cite{CDI19} and Benini-Córdova-Hsin~\cite{BCS19}
identifying many examples of $2$-group symmetries in commonly studied QFTs. See also Sharpe~\cite{Sha15} and the
references therein.

In the first part of this article, we introduce the Bianchi identity and $2$-groups, then review work of
Fiorenza-Schreiber-Stasheff~\cite{FSS12} and Sati-Schreiber-Stasheff~\cite{SSS12} mentioned above. These authors
work in the setting of stacks on the site $\cat{Man}$ of smooth manifolds; the data of the B-field $(Q, \Theta_Q)$
and the principal $G$-bundle with connection $(P, \Theta_P)$ on a manifold $M$ refine to maps from $M$ to
classifying stacks of these data.  The data of an identification of two differential characteristic classes
associated to $\Theta_P$ and $\Theta_Q$ gives rise to
\begin{enumerate}
	\item a principal $\cS(G, \mu)$-bundle lifting $P$ for a specified choice of $\mu$ (\cref{2shape}), and
	\item local data of solutions to the Bianchi identity (\cref{2gp_satisfies_bianchi},~\cite[\S 6.3]{FSS12}).
\end{enumerate}

Inspired by this, we introduce the tangential structures $\xihet$ and $\xiCHL$, which are special cases of a
general construction of Sati-Schreiber-Stasheff~\cite[Definition 2.8]{SSS12}: a $\xihetn$-structure on a spin
manifold $M$ is data of a principal $\Ghet_n$-bundle, where $\Ghet_n \coloneqq \cS(\Spin_n\times
(\rE_8\times\rE_8\rtimes\Z/2), c_1 + c_2 - \lambda)$~\eqref{gnhet_defn}, whose associated $\Spin_n$-bundle via the
quotient $\Ghet_n\to\Spin_n$ is the principal $\Spin_n$-bundle of spin frames (\cref{top_het}). This is compatible
as $n$ varies, allowing us to stabilize and define a $\xihet$-structure as usual. The definition of $\xiCHL$ in
\cref{top_CHL}, which coincides with $B\String^{2a}$ in~\cite[(2.18), \S 2.2.3]{SSS12}, is analogous. Related
tangential structures appear in~\cite{Sat11, FSS15, FSS15a, FSS21}.

Given a tangential structure, we can compute bordism groups, and indeed the point of this paper is to compute
$\xihet$ and $\xiCHL$ bordism groups in low dimensions. These bordism groups can then be used to learn more about
the $\rE_8\times\rE_8$ heterotic and CHL strings. We have two primary applications in mind.
\begin{enumerate}
	\item The \term{cobordism conjecture} of McNamara-Vafa~\cite{MV19} is an application to the question of what
	kinds of spacetime backgrounds are summed over in quantum gravity. Such backgrounds are often taken to be
	manifolds or something closely related equipped with data of a tangential structure $\xi$. The cobordism
	conjecture says that if $\xi$ is the most general
	tangential structure which can appear in this way in any particular $d$-dimensional theory of quantum gravity, then
	$\Omega_k^\xi = 0$ for $3\le k\le d-1$. We will see that $\Omega_k^{\xihet}$ and $\Omega_k^{\xiCHL}$ are often
	nonzero in that range. This is consistent with the cobordism conjecture: it suggests that $\xihet$ and $\xiCHL$
	are not the most general tangential structures
	that can be summed over. Typically these
	bordism groups are killed by allowing singular manifolds corresponding to considering the theory with branes or
	other defects, so one can use bordism computations to predict new defects in string theories.
	\item A broad class of $n$-dimensional quantum field theories come with data of an \term{anomaly}, which in
	many cases can roughly be described an
	$(n+1)$-dimensional invertible field theory $\alpha$.
	In some cases one wants to trivialize $\alpha$,
	meaning exhibiting an isomorphism from $\alpha$ to the trivial field theory. By work of
	Freed-Hopkins-Teleman~\cite{FHT10} and Freed-Hopkins~\cite{FH21}, invertible field theories can be classified
	using bordism group computations. For both the $\rE_8\times\rE_8$ heterotic string and the CHL string, the
	bordism groups indicating a potential anomaly are nonzero, and it would be interesting to check whether the
	corresponding anomalies are nontrivial.
\end{enumerate}
See \S\ref{s_phys}, as well as \cref{ques1,ques2,ques3} below, for more on these applications and what we can learn
from our bordism computations.

%
Our main theorems are the following two computations of the $\xihet$ and $\xiCHL$ bordism groups in low dimensions.
\begin{introthm}
\label{main_1}
For $k\le 10$, the $\xihet$-bordism groups are:
\begin{alignat*}{2}
        \Omega_0^{\xihet}  &\cong \Z & \qquad\qquad \Omega_6^\xihet &\cong \Z/2\\
        \Omega_1^{\xihet}  &\cong \Z/2\oplus\Z/2 &\Omega_7^\xihet &\cong \Z/16\\
        \Omega_2^{\xihet}  &\cong \Z/2\oplus\Z/2 &\Omega_8^\xihet &\cong \Z^3\oplus (\Z/2)^{\oplus i}\\
        \Omega_3^{\xihet}  &\cong \Z/8&\Omega_9^\xihet &\cong (\Z/2)^{\oplus j}\\
        \Omega_4^{\xihet}  &\cong \Z\oplus\Z/2& \Omega_{10}^\xihet &\cong (\Z/2)^{\oplus k}.\\
        \Omega_5^{\xihet}  &\cong 0
\end{alignat*}
Here, either $i = 1$, $j = 4$, and $k = 4$, or $i = 2$, $j = 6$, and $k = 5$.

$\Omega_{11}^\xihet$ is an abelian group of order $64$ isomorphic to one of $\Z/8\oplus\Z/8$, $\Z/16\oplus\Z/4$,
$\Z/32\oplus\Z/2$, or $\Z/64$.
\end{introthm}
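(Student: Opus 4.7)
The plan is to identify $\MTxihet$ with a twisted String bordism spectrum and then compute its low-dimensional homotopy groups via the Adams spectral sequence at the prime $2$, exploiting the classical $\cA(2)$-change-of-rings available for $\MTString$ in this range.

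First, the string-$2$-group description developed earlier in the paper (\cref{2shape}) shows that a $\xihetn$-structure on a spin manifold $M$ is equivalent data to a map $M\to BH$, where $H \coloneqq \rE_8\times\rE_8\rtimes\Z/2$, together with a trivialization of the class $\lambda - (c_1+c_2)$. Stabilizing in $n$, this identifies $\MTxihet$ with a Thom spectrum of the form $\MTString\wedge (BH)^\tau$, where the twist $\tau$ is classified by $c_1+c_2\in H^4(BH;\Z)$. This is the sense in which $\xihet$-bordism becomes twisted String bordism of $BH$.

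Second, the cyclic $2$-power summands and free summands appearing in the claim suggest that $\Omega_*^{\xihet}$ is purely $2$-primary torsion plus free pieces in this range, so I would localize at $p=2$ (verifying the triviality of odd-primary contributions separately, using that $H^*(B\rE_8)$ is concentrated in well-understood generators at the relevant odd primes). Next, I would compute $H^*(BH;\mathbb{F}_2)$ through degree about $15$ via the Lyndon-Hochschild-Serre spectral sequence for the extension $\rE_8\times\rE_8 \to H\to \Z/2$, using that $H^*(B\rE_8;\mathbb{F}_2)$ begins as a polynomial ring on a single degree-$4$ generator in this range and that the swap involution interchanges the two tensor factors.

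Third, in the relevant range there is a classical equivalence $H^*(\MTString;\mathbb{F}_2)\cong \cA\otimes_{\cA(2)}\mathbb{F}_2$, and smashing with $(BH)^\tau$ together with a change-of-rings identifies the portion of the Adams $E_2$-page relevant for $*\le 11$ with $\Ext_{\cA(2)}^{s,t}(M,\mathbb{F}_2)$, where $M$ is the Thom-twisted $\cA(2)$-module obtained from $H^*(BH;\mathbb{F}_2)$. I would decompose $M$ as a sum of standard $\cA(2)$-modules, read off $E_2$, and then pin down differentials and hidden extensions using naturality along the forgetful map to $\MTString$, the module structure over the Adams $E_2$-page for $\tmf$, and explicit bordism generators (for instance, to distinguish $\Z/8$ from $\Z/4\oplus\Z/2$ in dimension $3$, or to see the $\Z/16$ in dimension $7$).

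The hard part is precisely what the statement flags as unresolved: fixing the integers $i,j,k$ in dimensions $8$-$10$ and the isomorphism type of $\Omega_{11}^\xihet$. These reduce to specific Adams differentials or hidden $2$-extensions on classes supported in the range where the $\cA(2)$-resolution of $M$ becomes combinatorially intricate, and where naturality from $\MTString$ and module structure over $\tmf$ alone do not suffice to pin down the answer.
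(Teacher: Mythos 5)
Your overall architecture --- reduce to twisted string bordism, change rings down to $\cA(2)$, run the Adams spectral sequence at $p=2$, and handle odd primes directly --- matches the paper's, but your first step contains a real gap that the paper devotes a lemma and an entire external theorem to repairing. You write $\MTxihet\simeq \MTString\wedge (BH)^\tau$ with the twist $\tau$ ``classified by $c_1+c_2$.'' If $(BH)^\tau$ is meant as the Thom spectrum of a virtual vector bundle, this is provably impossible: \cref{simple_shear_no} shows, using the fact that the Dynkin index of $\rE_8$ is $60$ together with the Atiyah--Segal completion theorem, that no spin vector bundle on $B((\rE_8\times\rE_8)\rtimes\Z/2)$ has $\lambda$ equal to $c_1+c_2$. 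Hence there is no smash-product splitting of $\MTxihet$ off of $\MTString$, and the classical Thom-isomorphism-plus-Cartan-formula route to an $\cA(2)$ change-of-rings is unavailable. The paper instead realizes $\MTxihet$ as an $\MTString$-module Thom spectrum in the sense of Ando--Blumberg--Gepner--Hopkins--Rezk, attached to a twist $B((\rE_8\times\rE_8)\rtimes\Z/2)\to B\GL_1(\MTString)$ constructed via \cref{general_twisting} and Beardsley's theorem; the identification of the Adams $E_2$-page with $\Ext_{\cA(2)}(T(X,\mu),\Z/2)$ is then the content of \cref{fake_shearing_DY}, which requires defining a nonstandard $\Sq^4$-action $S(x)=\mu x+\Sq^4(x)$ on $H^*(X;\Z/2)$, verifying the Adem relations for it, and proving the resulting $\cA$-module isomorphism by reduction to the universal case $X=K(\Z,4)$. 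None of this is automatic, and your proposal silently assumes its conclusion.

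Two further points. First, the unresolved integers $i,j,k$ are not the only places where geometric input is needed: to obtain $\Omega_7^\xihet\cong\Z/16$ and exactly three free summands in degree $8$ one must show $d_2(a)=h_2^2p_1$ (\cref{g_to_o_nonzero}), which the paper proves by establishing that $\Omega_4^{\xi^{r,\mathrm{het}}}\to\Omega_4^{\xihet}$ is surjective (via a fiber-sequence argument) and invoking Stong's vanishing $\Omega_7^\Spin(K(\Z,4))=0$; naturality to $\MTString$ and the $\tmf$-module structure alone do not produce this differential. Second, the paper streamlines the bookkeeping by replacing $B\rE_8$ with $K(\Z,4)$ (harmless through dimension $14$ by Bott--Samelson) and by splitting $\MTxihet'\simeq\MTSpin\vee\cQ$, so that only the complementary summand $\cQ$ requires a fresh Ext computation; it also computes the multiplicative structure of $H^*(B((\rE_8\times\rE_8)\rtimes\Z/2);\Z/2)$ via Nakaoka--Evens rather than a bare Lyndon--Hochschild--Serre argument, and pins down the Steenrod action with Quillen's detection theorem. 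Your plan could in principle proceed without these devices, but the missing shearing theorem is essential and must be supplied.
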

This is a combination of \cref{het_at_2,no_odd_het}. In \S\ref{xihet_gens}, we find manifold representatives for
all classes in $\Omega_k^{\xihet}$ for $k\le 10$ except potentially for two missing classes $X_8$ and $X_9$ of
dimensions $8$, resp.\ $9$ and their products with $\Snb$. These classes may or may not be zero depending on the
fate of an Adams differential. In \S\ref{s:X8}, we find a manifold representing $X_8$: if the unaddressed Adams
differential vanishes, $X_8$ should be added to the list of generators in \S\ref{xihet_gens}, and if the
differential does not vanish, then $X_8$ bounds as a $\xihet$-manifold.

Our calculation of $\xiCHL$-bordism builds on work of Hill~\cite[Theorem 1.1]{Hil09}, who computes
$\Omega_*^{\String}(B\rE_8)$ in dimensions $14$ and below.
\begin{introthm}
\label{main_2}
For $k\le 11$, there is an abstract isomorphism from $\Omega_*^{\xiCHL}$ to the free and $2$-torsion summands of
$\Omega_*^\String(B\rE_8)$. Therefore, by Hill's computation~\cite{Hil09}, there are isomorphisms
\begin{alignat*}{2}
	\Omega_0^{\xiCHL} &\cong \Z & \Omega_6^{\xiCHL} &\cong \Z/2\\
	\Omega_1^{\xiCHL} &\cong \Z/2 \qquad\qquad& \Omega_7^{\xiCHL} &\cong 0\\
	\Omega_2^{\xiCHL} &\cong \Z/2 & \Omega_8^{\xiCHL} &\cong \Z\oplus\Z\oplus\Z/2\\
	\Omega_3^{\xiCHL} &\cong \Z/8 & \Omega_9^{\xiCHL} &\cong \Z/2\oplus\Z/2 \oplus\Z/2\\
	\Omega_4^{\xiCHL} &\cong \Z & \Omega_{10}^{\xiCHL} &\cong \Z/2\oplus\Z/2\\
	\Omega_5^{\xiCHL} &\cong 0 & \Omega_{11}^{\xiCHL} &\cong \Z/8.
\end{alignat*}
\end{introthm}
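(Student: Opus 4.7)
The plan is to reduce $\Omega_*^{\xiCHL}$ to a twisted string bordism group and then invoke Hill's computation~\cite{Hil09} of $\Omega_*^\String(B\rE_8)$. Following the string 2-group framework developed in the first part of the paper, I would first identify the Madsen-Tillmann spectrum $\MTxiCHL$ as a Thom spectrum built from $\MTString$ and a classifying space related to $\rE_8$ and the CHL $\Z/2$. Unlike the heterotic Bianchi identity $c_1+c_2=\lambda$, the CHL identity involves only the single degree-$4$ class of $\rE_8$, so one expects $\MTxiCHL$ to be closely related, at the prime $2$, to $\MTString\wedge(B\rE_8)_+$, up to contributions from the $\Z/2$ that either affect only odd-torsion summands or lie outside the range of interest.

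I would then compare Adams spectral sequences at the prime $2$. The $E_2$-page for $\MTxiCHL$ is $\Ext_{\mathcal A}(H^*(\MTxiCHL;\mathbb F_2),\mathbb F_2)$; the task is to show this matches the Adams $E_2$-page for $\MTString\wedge(B\rE_8)_+$ in total degree $\le 11$, so that the abutments agree. The identification of $H^*(\MTxiCHL;\mathbb F_2)$ with $H^*(\MTString\wedge(B\rE_8)_+;\mathbb F_2)$ as $\mathcal A$-modules in low degrees would follow from a Thom isomorphism calculation together with a careful analysis of the classifying space, with any $B\Z/2$ contribution either absent in mod-$2$ cohomology through degree $11$ or accounting precisely for the summands the theorem excludes. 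Naturality of the Adams spectral sequence would then transfer Hill's differential analysis directly to our setting. For odd primes $p$, I would verify by a separate Adams argument that $\Omega_k^{\xiCHL}$ is $p$-torsion-free in degrees $\le 11$, matching the torsion-free part of Hill's answer in this range (since both $B\rE_8$ and $B\Z/2$ have sparse mod-$p$ cohomology at odd primes in low degrees, the odd-primary Adams spectral sequence collapses onto its rational part).

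The main obstacle is the cohomological identification at the prime $2$: showing that the CHL 2-group twist, despite formally involving $B\Z/2$, does not introduce additional $2$-primary bordism classes in the relevant range beyond those of $\Omega_*^\String(B\rE_8)$. This requires unwinding the CHL Bianchi identity and verifying that the $\Z/2$ action on the characteristic classes is compatible with the twist in a way that the Thom spectrum comparison actually produces the stated abstract isomorphism on the free and $2$-torsion summands. Once this $E_2$-page comparison is established, the theorem follows by reading off Hill's tables and identifying the odd-torsion classes of $\Omega_*^\String(B\rE_8)$ which do not appear in $\Omega_*^{\xiCHL}$ with precisely those excluded by the statement.
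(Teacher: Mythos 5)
There is a genuine gap, concentrated in the odd-primary part, plus a structural misunderstanding that runs through the whole proposal. First, the misunderstanding: the CHL tangential structure is built from $\cS(\Spin_n\times\rE_8,\,2c-\lambda)$ --- the gauge group is just $\rE_8$, with no $\Z/2$ factor and no $B\Z/2$ anywhere in $B\GCHL$. (The $\Z/2$ lives in the heterotic theory one compactifies; it is used up in choosing the M\"obius bundle over $\RP^1$.) So the ``main obstacle'' you identify --- controlling a $B\Z/2$ contribution --- does not exist. The actual obstacle at $p=2$ is that $2c-\lambda$ is not $\lambda$ of any vector bundle (\cref{simple_shear_no}), so $\MTxiCHL$ is an $\MTString$-module Thom spectrum for a non-vector-bundle twist, and one needs \cref{fake_shearing_DY}/\cref{fake_shearing} to compute its cohomology. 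The key mechanism, which your proposal does not identify, is then simply that the twist class $2c$ vanishes mod $2$, so the twisted $\cA(2)$-module $T(B\rE_8,-2c)$ equals the untwisted one $H^*(B\rE_8;\Z/2)$, giving the abstract agreement of $E_2$-pages with $\MTString\wedge(B\rE_8)_+$. One more caveat: because this is only an abstract isomorphism of $E_2$-pages, not one induced by a map of spectra, ``naturality'' does not transfer Hill's differentials; the paper instead uses that the quotient by $\T[1]$ gives a map to $\MTSpin\wedge(B\rE_8)_+$ on \emph{both} sides, and Hill's two nonzero $d_2$'s are detected there.

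The odd-primary argument as you state it is wrong. You claim the odd-primary Adams spectral sequence ``collapses onto its rational part'' by sparseness, matching the torsion-free part of Hill's answer --- but $\Omega_*^\String(B\rE_8)$ has genuine odd torsion in this range (already $\Omega_3^\String=\Z/24\supset\Z/3$ in the summand with trivial $\rE_8$-bundle, which is exactly why the theorem restricts to the free and $2$-torsion summands), and sparseness of $H^*(B\rE_8;\Z/3)$ gives no reason for that $\Z/3$ to disappear from $\Omega_3^{\xiCHL}$. The correct mechanism is arithmetic in the twist: mod $3$ one has $2c\equiv -c$, so after flipping the sign of the $K(\Z,4)$ class the twist $2c-\lambda$ becomes the \emph{universal} twist $c-\lambda$, whose Thom spectrum is $\MTSpin$ by \cref{universal_Thom}; hence $3$-locally $\MTxiCHL\simeq\MTSpin\simeq\MTSO$, and Milnor's theorem on $H^*(\MTSO;\Z/3)$ being free over $\cA/\beta$ kills all $3$-torsion. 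Without this step your argument cannot distinguish $\Omega_3^{\xiCHL}\cong\Z/8$ from $\Z/24$, so the proof does not go through as written.
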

This is a combination of \cref{2_loc_CHL,no_odd_CHL}. We also obtain some information about manifold
representatives of generators of these groups.

The computational tool we use to prove \cref{main_1,main_2} is standard: the Adams spectral sequence. This spectral
sequence has seen plenty of applications in the mathematical physics literature, and there is a standard procedure
reviewed by Beaudry-Campbell~\cite{BC18} for simplifying the $E_2$-page for a wide class of tangential structures,
namely those which can be described as oriented, \spinc, spin, or string bordism twisted by a virtual vector
bundle. For example, the twisted string bordism computations of~\cite{FK96, Fan99, FW10} make use of this
simplifying technique. Unfortunately, this procedure is unavailable to us: in \cref{simple_shear_no}, we prove that $\xihet$ and
$\xiCHL$ cannot be described as twists of this sort. However, we are still able to describe them as twists in a
more general sense due to Ando-Blumberg-Gepner-Hopkins-Rezk~\cite{ABGHR14a, ABGHR14b}: adapting an argument of
Hebestreit-Joachim~\cite{HJ20}, one learns that the Thom spectra for
$\xihet$ and $\xiCHL$ can be produced as the $\MTString$-module Thom spectra associated to certain maps to
$B\GL_1(\MTString)$. Using this structure, in upcoming joint work with Matthew Yu, we are able to prove a theorem
simplifying the calculation of the $E_2$-page:
\begin{introthm}[Debray-Yu~\cite{DY}]
In topological degrees $15$ and below, the $E_2$-pages of the Adams spectral sequences computing $2$-completed
twisted string bordism for a class of twists including those for $\xihet$ and $\xiCHL$ can be computed as $\Ext$
over the subalgebra $\cA(2)$ of the Steenrod algebra.
\end{introthm}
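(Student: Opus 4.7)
The plan is to adapt the Hopkins-Mahowald decomposition of $H^*(\MTString;\mathbb F_2)$ into extended $\cA(2)$-modules so that it applies to the twisted Thom spectra $X^f$ produced by the Hebestreit-Joachim-style argument recalled just above the theorem. Recall that through a range of degrees well past $15$, $H^*(\MTString;\mathbb F_2)$ decomposes as a direct sum $\bigoplus_i \Sigma^{n_i}\,\cA\otimes_{\cA(2)} N_i$ of suspensions of extended $\cA(2)$-modules $N_i$. Combined with the change-of-rings isomorphism $\Ext_\cA^{*,*}(\cA\otimes_{\cA(2)} N,\mathbb F_2)\iso \Ext_{\cA(2)}^{*,*}(N,\mathbb F_2)$, this already reduces the Adams $E_2$-page for $\MTString$ itself to an $\cA(2)$-computation; the aim is to extend this reduction to $H^*(X^f;\mathbb F_2)$ for a well-chosen class of twists.

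The main step is a Thom-isomorphism argument in the $\MTString$-module setting. For $f\colon X\to B\GL_1(\MTString)$, the Thom spectrum $X^f$ is an $\MTString$-module whose mod $2$ cohomology carries a natural filtration with associated graded $H^*(X;\mathbb F_2)\otimes H^*(\MTString;\mathbb F_2)$, where the $\cA$-action is twisted by the characteristic classes of $f$. Feeding the Hopkins-Mahowald splitting into the right-hand tensor factor, each associated-graded summand takes the form $\cA\otimes_{\cA(2)} N$ for an $\cA(2)$-module $N$ built from $H^*(X;\mathbb F_2)$ and the low-degree characteristic data of $f$. One must then check that this filtration splits as $\cA$-modules through topological degree $15$, and identify a cohomological hypothesis on $f$ that guarantees all splitting obstructions vanish in this range. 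The twists $\xihet$ and $\xiCHL$ arise from principal bundles for compact Lie groups and $2$-groups with low-degree characteristic data (the classes $c_1+c_2-\lambda$ and its CHL analogue), so they are expected to lie in such a class.

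Once the $\cA$-module splitting $H^*(X^f;\mathbb F_2)\iso\bigoplus_i \Sigma^{m_i}\,\cA\otimes_{\cA(2)} M_i$ is established through degree $15$, change of rings immediately identifies the Adams $E_2$-page in this range with $\bigoplus_i \Ext_{\cA(2)}^{*,*}(M_i,\mathbb F_2)$. The main obstacle is therefore the splitting-lift step: pinning down the correct structural condition on the class of twists and proving that the relevant extension classes in $\Ext_\cA^{1,*}$ between the associated-graded pieces vanish in total degree at most $15$. I expect this to be manageable because $\cA(2)$ controls exactly the Steenrod operations that appear on low-degree Stiefel-Whitney-type classes of string-theoretic twists, so once the condition on $f$ is formulated carefully, the obstructions should be seen to live above the relevant range on cohomological grounds.
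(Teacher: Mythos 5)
Your skeleton (Thom isomorphism in the $\MTString$-module setting, reduction to extended $\cA(2)$-modules, change of rings) matches the paper's, but the central step is either missing or wrong as stated. If you filter $H^*(X^f;\Z/2)$ so that the associated graded is $H^*(X;\Z/2)\otimes H^*(\MTString;\Z/2)$ with the untwisted diagonal action, then proving that ``the relevant extension classes in $\Ext^1_{\cA}$ between the associated-graded pieces vanish'' would identify $H^*(X^f;\Z/2)$ with the \emph{untwisted} module $\cA\otimes_{\cA(2)}H^*(X;\Z/2)$ through the range. That conclusion is false for the heterotic twist: the twist genuinely changes the module structure, since on the Thom class one has $\Sq^4U=\mu U$ rather than $0$ (compare \cref{heterotic_A2_module}, where $\Sq^4U=U(D_1+D_2)$). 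So the extensions do not vanish; they assemble into a specific twisted $\cA(2)$-module, and the whole content of the theorem is to identify it. The answer is $H^*(X;\Z/2)$ with its usual $\cA(1)$-action but with $\Sq^4$ replaced by $S(x)=\mu x+\Sq^4(x)$, and one must separately verify that this modified operation still satisfies the Adem relations so that the result is an $\cA(2)$-module at all --- part~\eqref{adem_part} of \cref{fake_shearing_DY}, which your proposal never touches.

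The mechanism the paper uses to pin down the twisted structure is also quite different from an obstruction-theoretic splitting argument. After the Thom diagonal exhibits $H^*(\mathit{MT\xi}^\mu;\Z/2)$ as free of rank one over $H^*(\mathcal B(X);\Z/2)$ on the Thom class $U$, the Cartan formula reduces everything to computing $\Sq(U)$; naturality in $(X,\mu)$ then reduces that to the universal case $X=K(\Z,4)$, $\mu=\tau$, where \cref{universal_Thom} identifies the Thom spectrum with $\MTSpin$, whose $\cA$-module cohomology is known by Anderson--Brown--Peterson, and a direct comparison of $\cA(2)$-modules using Serre's computation of $H^*(K(\Z,4);\Z/2)$ finishes the proof. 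Without some substitute for this universal-case computation, your unspecified ``cohomological hypothesis on $f$'' and the expectation that the obstructions ``live above the relevant range'' remain unsupported; the correct hypothesis in the paper is simply that the twist factors through $\widehat\lambda\colon K(\Z,4)\to B\GL_1(\MTString)$, which is what makes the reduction to $\MTSpin$ available.
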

What we prove is more precise and holds in more generality; see \cref{fake_shearing_DY,fake_shearing} for that
version of the result.\footnote{Since~\cite{DY} is not available yet, we provide a proof sketch of the case we need
in \cref{DY_proof_rem}.}

The $\cA(2)$-module Ext groups we have to compute are simpler than what one a
priori has to work with over the entire Steenrod algebra $\cA$. We do not need this simplification at odd
primes; there the full Adams spectral sequence is easier to work with, and the absence of a simplification does not
hinder us (though see also~\cite[\S 3.2]{DY}).

The reason we computed these bordism groups in this paper is with applications to physics, specifically to
anomalies and the cobordism conjecture, in mind. We discuss some implications of our calculations in
\S\ref{s_phys}; for example, one of the $\Z/2$ summands of $\Omega_1^\xihet$ corresponds to the non-supersymmetric
$7$-brane recently discovered by Kaidi-Ohmori-Tachikawa-Yonekura~\cite{KOTY23}. We end this section of the
introduction with some questions related to these physics predictions.
\begin{ques}
\label{ques1}
What does the Kaidi-Ohmori-Tachikawa-Yonekura $7$-brane correspond to in Ho\v{r}ava-Witten theory, and what does
this look like in bordism? Ho\v{r}ava-Witten~\cite{HW96b, HW96a, Wit96} proposed that the $\rE_8\times\rE_8$
heterotic string can be identified with a certain limit of M-theory compactified on an interval; thus this ought to
correspond to a notion of bordism of manifolds with boundary. Conner-Floyd~\cite[\S 16]{CF66} define a notion
bordism of compact manifolds with boundary --- is this the correct kind of bordism for applications to
McNamara-Vafa's conjecture?
\end{ques}
We discuss some additional extended objects predicted by our bordism computations to exist in the
$\rE_8\times\rE_8$ heterotic and CHL strings in \S\ref{s_cobordism_conjecture}.
\begin{ques}
\label{ques2}
Is the $\Z/2$ symmetry exchanging the two $\rE_8$-bundles in $\rE_8\times\rE_8$ heterotic string theory anomalous?
Because $\Omega_{11}^{\xihet}$ is nonzero, we were unable to rule out this anomaly.
\end{ques}
Witten~\cite[\S 4]{Wit86} and Tachikawa-Yonekura~\cite{TY21} show that the $\rE_8\times\rE_8$ heterotic string is
anomaly-free in certain cases, but they do not address the $\Z/2$ symmetry. 
\begin{ques}
\label{ques3}
Does the CHL string have an anomaly? This anomaly could be nontrivial, because $\Omega_{10}^{\xiCHL}\cong
\Z/2\oplus\Z/2$.
\end{ques}
There is another application of twisted string bordism to physics that we did not address in this paper: studying
elliptic genera, the Witten genus and related invariants, along the lines of, e.g., Bunke-Naumann~\cite{BN14},
McTague~\cite{McT14}, Han-Huang-Duan~\cite{HHD21}, and Berwick-Evans~\cite{Ber23}. It would be interesting to study
whether the calculations in this paper could be applied in similar contexts.
%
\subsection*{Outline}
We begin in \S\ref{heterotic_intro} by introducing the fields present in 10d $\mathcal N = 1$ supergravity, the
low-energy limit of heterotic string theory. We discuss how the Green-Schwarz anomaly cancellation condition
imposes an equation called the \term{Bianchi identity}~\eqref{first_bianchi} on the fields in this theory. We then
generalize this to a \term{twisted Bianchi identity}~\eqref{twisted_Bianchi} associated to data of a Lie group $G$
and a class $\mu\in H^4(BG;\Z)$. In \S\ref{higher_stuff}, we relate these Bianchi identities to the presence of a
$2$-group symmetry in this field theory. We begin by reviewing $2$-groups, their principal bundles, and their
connections, and in \cref{string_cover} define the string cover $\mathcal S(G, \mu)$ corresponding to a group $G$
and a class $\mu\in H^4(BG;\Z)$. Then we review work of Fiorenza-Schreiber-Stasheff~\cite{FSS12} and
Sati-Schreiber-Stasheff~\cite{SSS12} relating the Bianchi identity to twisted string structures.
Using this, we define the heterotic tangential structure in \cref{top_het},
which is the topological part of the structure necessary for defining $\mathcal N = 1$ supergravity. Then, in
\S\ref{CHL_intro}, we introduce the CHL string and define the CHL tangential structure using what we learned in
\S\ref{higher_stuff}.

In \S\ref{s_computations}, we compute the bordism groups $\Omega_*^{\xihet}$ and $\Omega_*^{\xiCHL}$ in low
degrees. For the latter we are able to completely compute them in dimensions $11$ and below, but for the former, we
have only partial information above dimension $7$, occluded by Adams differentials and an extension problem we could not
solve. We begin in \S\ref{shearing} by discussing how to simplify the Thom spectra $\MTxihet$ and $\MTxiCHL$;
we prove in \cref{simple_shear_no} that a standard approach does not work, and so we use a different idea:
construct $\MTxihet$ and $\MTxiCHL$ as $\MTString$-module Thom spectra using machinery developed by
Ando-Blumberg-Gepner-Hopkins-Rezk.  We review this machinery and discuss how it leads to \cref{fake_shearing}, a
special case of the main theorem of our work~\cite{DY} joint with Matthew Yu, simplifying the calculation
of the $E_2$-page of the Adams spectral sequence at $2$ for a wide class of twisted string bordism groups.
Next, in \S\ref{xihet_at_2}, we undertake this computation for $\xihet$. We do not have such a simplification
at odd primes, so in \S\ref{xihet_odd} we press ahead directly with the Adams spectral sequence for $\xihet$,
proving in \cref{no_odd_het} that $\Omega_*^{\xihet}$ lacks odd-primary torsion in degrees $11$ and below. Finally,
in \S\ref{s_CHL_bord} we run the analogous calculations for the CHL string, again using \cref{fake_shearing} at $p
= 2$ and arguing more directly at odd primes.

The final section, \S\ref{s_phys}, is about applications to string theory. We first discuss the cobordism
conjecture of McNamara-Vafa~\cite{MV19} in \S\ref{s_cobordism_conjecture}, and go over a few predictions that
follow from the bordism group computations in \S\ref{s_computations}. In \S\ref{anomalies_general}, we briefly
introduce anomalies of quantum field theories and their bordism-theoretic classification, and touch on questions
raised by our bordism computations.

\subsection*{Acknowledgements}
I especially want to thank Miguel Montero both for suggesting this project and for many helpful conversations about
the material in this paper, and Matthew Yu for many helpful discussions relating to~\cite{DY} and other ideas
related to this paper. I also want to thank Markus Dierigl and the anonymous referee for helpful comments on a draft. In addition, this paper benefited from conversations with
Ivano Basile, Matilda Delgado,
Jacques Distler, Dan Freed, Jonathan J. Heckman,
Justin Kaidi,
Jacob McNamara,
Yuji Tachikawa, and Roberto Telléz Domínguez; thank you to all!

Part of this project was completed while AD visited the Perimeter
Institute for Theoretical Physics; research at Perimeter is supported by the Government of
Canada through Industry Canada and by the Province of Ontario through the Ministry
of Research \& Innovation.

\section{Tangential structures for heterotic and CHL string theories}
	\label{s_symmetry}
The goal of this section is to define the tangential structures $\xihet$ and $\xiCHL$ that are necessary to
formulate the (low-energy limits of) the $\rE_8\times\rE_8$ heterotic string and the CHL string. By ``tangential
structure'' we mean the topological part of the structure needed on a manifold to define a given field theory; see
\cref{symm_defn} for the precise definition. The presence of a B-field in both theories means that these
tangential
structures arise as classifying spaces of higher groups. First, we introduce the heterotic string in
\S\ref{heterotic_intro}, and see what data and conditions are told to us by Green-Schwarz anomaly cancellation;
then in \S\ref{higher_stuff}, we reinterpret that data as combining the gauge field and the B-field into a
connection for a principal bundle for a higher group. Finally, in \S\ref{CHL_intro}, we use the general theory from
\S\ref{higher_stuff} to determine the tangential structure for the CHL string.

The material in this section is not new, though it was not always stated in this form before. The
fact that a Bianchi identity/Green-Schwarz mechanism is expressing a lift to a connection for a higher-group
principal bundle is well-known; see Fiorenza-Schreiber-Stasheff~\cite{FSS12} and
Sati-Schreiber-Stasheff~\cite{SSS12}. 

\subsection{The $\rE_8\times\rE_8$ heterotic string}
\label{heterotic_intro}
Heterotic string theories are ten-dimensional superstring theories whose low-energy limits are 10d $\mathcal N = 1$
supergravity theories. These supergravity theories can have Yang-Mills terms, and so are parametrized by the data
of the gauge group $G$, a compact Lie group. However, not all choices of $G$ yield valid supergravity theories;
there is the potential for an anomaly that must be trivialized, and this is quite a strong constraint, implying
that the connected component of the identity in $G$ must be either $\rE_8\times\rE_8$ or $G =
\mathrm{SemiSpin}_{32}$\footnote{The center of $\Spin_{4k}$ is isomorphic to
$\Z/2\times\Z/2$. Quotienting by one copy of $\Z/2$ yields $\SO_{4k}$; the quotients by the two other $\Z/2$
subgroups are isomorphic, and are called $\mathrm{SemiSpin}_{4k}$. See~\cite{McI99}.}~\cite{GS84, ATD10}. The
anomaly cancellation mechanism itself, due to Green-Schwarz~\cite{GS84}, combines the different fields in the
theory into a connection for a principal $\G$-bundle, where $\G$ is a higher group;\footnote{Green-Schwarz' work
only cancels the perturbative part of the anomaly; see \S\ref{anomalies_general} for more information.} we use this
subsection to discuss the fields and the Green-Schwarz condition, and the next subsection to discuss the role of
higher group. In this paper, we will focus solely on the $\rE_8\times\rE_8$ case; it would be interesting to study
the analogues of the computations and applications in this paper in the $\mathrm{SemiSpin}_{32}$ case.

The group $\Z/2$ acts on $\rE_8\times\rE_8$ by exchanging the two factors, and the setup of heterotic string
theory, including the low-energy supergravity limit and Green-Schwarz' anomaly cancellation, is invariant under
this symmetry, so we can expand the gauge group to $G\coloneqq (\rE_8\times\rE_8)\rtimes\Z/2$.\footnote{Though we
often use the standard name ``the $\rE_8\times\rE_8$ heterotic string'' to refer to this theory, we will always
consider the larger gauge group $(\rE_8\times\rE_8)\rtimes\Z/2$.} This appears to have
first been noticed by McInnes~\cite[\S I]{McI99}; see also~\cite[\S 2.2.1]{dBDHKMMS00}.

The fields of 10d $\mathcal N = 1$ supergravity on a manifold $M$ include:
\begin{itemize}
	\item a metric $g$,
	\item a spin structure on $M$,
	\item a principal $G$-bundle $P\to M$ with connection $\Theta_P$,
	\item a \term{B-field} or \term{Kalb-Ramond field}, a gerbe $Q\to M$ with connection $\Theta_Q$, and \item
	several additional fields (the dilaton, dilatino, gravitino, and gaugino) which will not be directly relevant to
	this paper.
\end{itemize}
Let us say more about the B-field, since its model as a gerbe with connection may be less familiar. A
\term{gerbe} is a categorification of the idea of a principal $\T$-bundle; here $\T$ is the circle group.
Thus, for example, a principal $\T$-bundle $P\to M$ is classified by its first Chern class $c_1(P)\in
H^2(M;\Z)$, and a gerbe $Q\to M$ is classified by its \term{Dixmier-Douady class} $\DD(Q)\in
H^3(M;\Z)$~\cite{DD63, Bry93}. A connection on a principal $\T$-bundle has holonomy around loops; a connection on a
gerbe has holonomy on closed surfaces. And so on.

Gerbes were first introduced by Giraud~\cite{Gir71}. There are several different and equivalent ways to precisely
define gerbes and their connections; heuristically you can think of a gerbe on $M$ as a sheaf of groupoids on
$M$ locally equivalent to the trivial sheaf with fiber $\pt/\T$. One way to make this precise is the following.

If $f\colon Y\to X$ is a map, we let $Y^{[n]}\coloneqq Y\times_X Y\times_X \dotsm \times_X Y$; $Y^{[n]}$ is the
space of $n$-simplices in the Čech nerve for $f$.
\begin{defn}[Murray~\cite{Mur96}]
\label{bundle_gerbe}
A \term{bundle gerbe} over a manifold $M$ is a surjective submersion $\pi\colon Y\to M$, a $\T$-bundle $P\to
Y^{[2]}$, and an isomorphism $\mu\colon \pi_{12}^* P\otimes \pi_{23}^*P\overset\cong\to \pi_{13}^*P$ of
$\T$-bundles over $Y^{[3]}$ satisfying the natural associativity condition (see below) over $Y^{[4]}$.
\end{defn}
Given two $\T$-bundles $P_1,P_2\to X$, their \term{tensor product} $P_1\otimes P_2$ is the unit circle bundle
inside the tensor product of the Hermitian line bundles $L_1,L_2\to X$ associated to $P_1$, resp.\ $P_2$. The maps
$\pi_{12},\pi_{23},\pi_{13}\colon Y^{[3]}\rightthreearrow Y^{[2]}$ are the three face maps in the Čech nerve
$Y^\bullet$ associated to $f$, given explicitly by contracting two of the three copies of $Y$ via $Y\times_X Y\to Y$.

The associativity condition in \cref{bundle_gerbe} is a little unwieldy to state explicitly, but can be found in
in~\cite[Definition 4.1(2)]{Mur10}.
\begin{defn}[\cite{Mur96}]
A \term{connection} $\Theta_Q$ on a bundle gerbe $Q = (Y, P, \mu)$ is data of a $2$-form $B\in\Omega^2(Y)$ and a
connection $\Theta_P$ on $P$ such that if $\Omega_P\in\Omega^2(P)$ denotes the curvature of $P$ and
$\pi_1,\pi_2\colon Y^{[2]}\to Y$ are the two projections, then
\begin{equation}
	\Omega_P = \pi_2^*B - \pi_1^*B.
\end{equation}
The \term{curvature} of $\Theta_Q$ is $\Omega_Q\coloneqq \d B$, which is a closed $3$-form.
\end{defn}
The key thing to know about this definition is that, just like a principal $\T$-bundle $P\to M$ with connection
locally has a connection $1$-form $A$ and globally has a curvature $2$-form $\Omega_P$ which locally satisfies
$\Omega_P = \d A$, a gerbe with connection $Q$ locally has a connection $2$-form $B$ and globally has a
curvature $3$-form $\Omega_Q$ which locally satisfies $\Omega_Q = \d B$. For more information, see, e.g.,
Brylinski~\cite[\S 5.3]{Bry93}.

\begin{defn}
\label{char_class_e8}
Because $\rE_8$ is a simple, connected, simply connected, compact Lie group, there is a canonical isomorphism
$H^4(B\rE_8;\Z)\overset\cong\to\Z$. Let $c$ denote the generator corresponding to $1\in\Z$. In
$B(\rE_8\times\rE_8) \simeq B\rE_8\times B\rE_8$, let $c_1$ and $c_2$ denote the copies of $c$ coming from the
first, resp.\ second copies of $B\rE_8$ via the Künneth map.

The class $c_1 + c_2$ is invariant under the $\Z/2$ swapping action, so descends via the Serre spectral sequence to
a class in $H^4(B((\rE_8\times\rE_8)\rtimes\Z/2);\Z)$, which we also call $c_1 +c_2$.
\end{defn}
\begin{defn}
$\Spin_n$ is also a compact, connected, simply connected simple Lie group when $n \ge 3$, and the generator of
$H^4(B\Spin_n;\Z)\overset\cong\to\Z$ corresponding to $1$ is denoted $\lambda$.
\end{defn}
The class $\lambda$ is preserved under the standard embeddings $\Spin_n\inj\Spin_{n+k}$, so we often work with its
stabilized avatar $\lambda\in H^4(B\Spin;\Z)$. We use this to define $\lambda$ for $\Spin_n$ when $n < 3$. Because
$2\lambda = p_1$, the class $\lambda$ is often denoted $\tfrac 12 p_1$. The mod $2$ reduction of $\lambda$ is the
Stiefel-Whitney class $w_4$.
\begin{lem}[Whitney sum formula]
\label{lambda_whitney}
Let $X$ be a topological space and $E_1,E_2\to X$ be two vector bundles with spin structure. Then
$\lambda(E_1\oplus E_2) = \lambda(E_1) + \lambda(E_2)$.
\end{lem}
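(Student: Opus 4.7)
The plan is to reduce the statement to a computation in $H^4(B\Spin \times B\Spin;\Z)$ by pulling back along the Whitney sum map. Let $\mu\colon B\Spin\times B\Spin\to B\Spin$ denote the $H$-space structure classifying $E\oplus F$; after stabilizing if necessary, each $E_i$ is classified by a map $f_i\colon X\to B\Spin$, and $E_1\oplus E_2$ is classified by $\mu\circ(f_1,f_2)$. By naturality, it then suffices to prove the universal identity
\[
	\mu^*\lambda = \lambda\otimes 1 + 1\otimes\lambda
\]
in $H^4(B\Spin\times B\Spin;\Z)$.

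First I would record the low-dimensional cohomology of $B\Spin$: since $\Spin$ is $2$-connected with $\pi_3\Spin\cong\Z$, $B\Spin$ is $3$-connected with $H^4(B\Spin;\Z)\cong\Z$ generated by $\lambda$. In particular $H^i(B\Spin;\Z)=0$ for $0<i<4$, so the Künneth theorem gives a split isomorphism
\[
	H^4(B\Spin\times B\Spin;\Z)\;\cong\; \paren{H^4(B\Spin;\Z)\otimes H^0} \oplus \paren{H^0\otimes H^4(B\Spin;\Z)},
\]
with no Tor contribution. Thus I can write $\mu^*\lambda = a\,(\lambda\otimes 1) + b\,(1\otimes\lambda)$ for some integers $a,b$.

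To pin down $a$ and $b$, I would restrict along the two inclusions $B\Spin\inj B\Spin\times B\Spin$ given by the basepoint in the other factor. The composition $B\Spin\to B\Spin\times B\Spin\xrightarrow{\mu}B\Spin$ classifies $E\oplus 0\cong E$, so it is homotopic to the identity; pulling $\lambda$ back through this composite gives $a\lambda=\lambda$, hence $a=1$, and symmetrically $b=1$. This yields the universal Whitney sum formula for $\lambda$, and naturality completes the proof.

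The entire argument is routine; the only point that really does any work is confirming the Künneth splitting, which relies on the $3$-connectedness of $B\Spin$ and hence on the fact that we are working with $\Spin$ rather than $\SO$ (where $2$-torsion in $H^*(B\SO;\Z)$ would prevent us from dividing $p_1$ by $2$ universally). No serious obstacle is anticipated.
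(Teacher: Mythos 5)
Your proof is correct, and it shares the paper's setup (reduce to the universal case and apply the K\"unneth theorem using the vanishing of $H^{1},H^{2},H^{3}$ of $B\Spin$) but finishes differently. The paper only extracts from K\"unneth that $H^4(B\Spin_{k_1}\times B\Spin_{k_2};\Z)$ is torsion-free, so that it suffices to verify the doubled identity $2\lambda(E_1\oplus E_2)=2\lambda(E_1)+2\lambda(E_2)$; since $2\lambda=p_1$, this becomes the Whitney sum formula for $p_1$, which the paper imports from Brown (and Thomas) for orientable bundles. You instead observe the stronger consequence of K\"unneth that there are no cross terms at all in degree $4$, so $\mu^*\lambda$ is forced to lie in the span of $\lambda\otimes 1$ and $1\otimes\lambda$, and the coefficients are pinned down by restricting to the two factors, where $\mu$ restricts to the identity. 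Your route is more self-contained --- it needs no external input about $p_1$ of orientable bundles --- while the paper's route is the one that generalizes immediately to situations where one only knows torsion-freeness rather than the absence of cross terms. Your closing remark about $B\SO$ is also apt: there $H^3(B\SO;\Z)\cong\Z/2$ produces both Tor and cross terms, which is exactly why $p_1$ fails to be primitive and why the spin hypothesis is doing real work. The one point worth making explicit is that you pass to the stable $B\Spin$, whereas the paper argues with $B\Spin_{k_1}\times B\Spin_{k_2}\to B\Spin_{k_1+k_2}$; this is harmless because $\lambda$ is defined via its stable avatar, but it deserves a sentence.
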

\begin{proof}
It suffices to prove the universal case, which amounts to the calculation of the pullback of $\lambda$ by the map
\begin{equation}
	\oplus\colon B\Spin_{k_1}\times B\Spin_{k_2}\longrightarrow B\Spin_{k_1+k_2}.
\end{equation}
For $n\ge 3$, $\Spin_n$ is a connected, simply connected, compact simple Lie group, so $H^\ell(B\Spin_n;\Z)$
vanishes for $\ell = 1,2,3$ and is isomorphic to $\Z$ for $\ell = 0,4$. For $n < 3$, $H^\ast(B\Spin_n;\Z)$ is still
trivial or free abelian in degrees $4$ and below. Therefore by the Künneth formula, for all $k_1,k_2$,
$H^4(B\Spin_{k_1}\times B\Spin_{k_2};\Z)$ is a free abelian group, meaning that if we can show $2\lambda(E_1\oplus
E_2) = 2\lambda(E_1) + 2\lambda(E_2)$, then we can deduce $\lambda(E_1\oplus E_2) = \lambda(E_1) + \lambda(E_2)$.

As $2\lambda = p_1$, we have reduced to the Whitney sum formula for $p_1$. The Whitney sum formula $p_1(E_1\oplus
E_2) = p_1(E_1) + p_1(E_2)$ does not actually hold for all vector bundles, but Brown~\cite[Theorem 1.6]{Bro82} (see
also Thomas~\cite{Tho62}) showed that the difference $p_1(E_1\oplus E_2) - p_1(E_1) - p_1(E_2)$ vanishes when $E_1$
and $E_2$ are orientable, so in our setting of spin vector bundles, we can conclude.
\end{proof}
\begin{rem}
There are other ways to prove \cref{lambda_whitney}: for example, it follows immediately from a result of
Jenquin~\cite[Corollary 4.9]{Jen05} in a simple generalized cohomology theory. Johnson-Freyd and Treumann~\cite[\S
1.4]{JFT20} sketch another proof of \cref{lambda_whitney}.
\end{rem}
Next, we introduce the Chern-Weil homomorphism. Let $G$ be a Lie
group with Lie algebra $\fg$, and let $f\in\Sym^k(\fg^\vee)$, i.e.\ $f$ is a degree-$k$ polynomial function on
$\fg$ which is invariant under the adjoint $G$-action on $\fg$. Given a manifold $M$, a principal $G$-bundle $P\to
M$, and a connection $\Theta$ on $P$, let $\Omega\in\Omega_P^2(\fg)$ denote the curvature $2$-form. Then one can
evaluate $f$ on $\Omega^{\wedge k}\in\Omega_P^{2k}(\fg^{\otimes k})$, producing a form $f(\Omega^{\wedge
k})\in\Omega_P^{2k}$; because $f$ is $\mathrm{Ad}$-invariant, $f(\Omega^{\wedge k})$ descends to a form
$w(\Theta)\in\Omega_M^{2k}$, which is always closed. This defines a ring homomorphism, called the \term{Chern-Weil
homomorphism}~\cite{Car49, Che52},
\begin{subequations}
\begin{equation}
	w\colon \Sym^\bullet(\fg^\vee)\longrightarrow H_{\mathrm{dR}}^*(M),
\end{equation}
which doubles the degree and is natural in $M$; moreover, the de Rham class of $w(\Theta)$ depends on $P$ but not
on the connection. Using de Rham's theorem and naturality, $w$ upgrades to a ring homomorphism
\begin{equation}
	w\colon\Sym^\bullet(\fg^\vee)\longrightarrow H^*(BG;\R),
\end{equation}
\end{subequations}
which Chern and Weil showed is an isomorphism when $G$ is compact~\cite{Che52, Wei49}. Thus, when $G$ is compact, a
class $x\in H^{2*}(BG;\Z)$ defines a polynomial $\CW_x\in\Sym^*(\fg^\vee)$, the $w$-preimage of the de Rham class
of $x$. We will also write $\CW_x(\Theta)$ to denote the form defined by evaluating the polynomial $\CW_x$ on the
curvature form of $\Theta$.

Returning to 10d $\mathcal N = 1$ supergravity, Green-Schwarz~\cite{GS84} noticed that in order to trivialize an
anomaly, one has to impose a relation between $P$ and $Q$ and their connections, so that $Q$ is not quite a gerbe,
but instead something twisted. Specifically, the curvature $\Omega_Q$ is no longer closed, but instead satisfies
the equation
\begin{equation}
\label{first_bianchi}
	\d\Omega_Q = \CW_{c_1+c_2}(\Theta_P) - \CW_\lambda(\Theta^{\mathrm{LC}}),
\end{equation}
where $\Theta^{\mathrm{LC}}$ is the Levi-Civita connection on the principal $\Spin_n$-bundle of frames of
$M$.\footnote{Before Green-Schwarz, it was already known that $\CW_{c_1+c_2}(\Theta_P)$ and $\d\Omega_Q$ had to mix
in order to preserve supersymmetry, thanks to work of Bergshoeff-de Roo-de Wit-van Nieuwenhuizen~\cite{BdRdWvN82}
and Chapline-Manton~\cite{CM83}.}
%
This is called a \term{Bianchi identity} in the physics literature, motivating the following definition.
\begin{defn}
Given data of a compact Lie group $G$ and a class $\mu\in H^4(BG;\Z)$, the \term{twisted Bianchi identity} is the
equation
\begin{equation}
\label{twisted_Bianchi}
	\d H = \CW_\mu(\Theta_P),
\end{equation}
where $H$ is a $3$-form and $\Theta_P$ is a connection on a principal $G$-bundle.
\end{defn}
As in the case of~\eqref{first_bianchi}, we think of this as mixing the data of two connections, one on a principal
$G$-bundle and one on a gerbe. In the next section, we interpret twisted Bianchi identities as coming from
connections on higher groups.

%
%
%
%
%
\subsection{From the Bianchi identity to higher groups}
\label{higher_stuff}
In this section, we show that the twisted Bianchi identity~\eqref{twisted_Bianchi} is a natural consequence of
combining a principal $G$-bundle and a gerbe, each with connections, into a principal $\G$-bundle, where $\G$ is a
certain Lie $2$-group built from $G$ and $\mu$, together with additional data that we think of as a connection on
$\G$. First we introduce $2$-groups and their principal bundles; then, following~\cite{FSS12, SSS12}, we recover
the twisted Bianchi identity. As a result, we can precisely define the tangential structure for the
$\rE_8\times\rE_8$ heterotic string, i.e.\ the topological part of the data which, when put on a manifold $M$,
allows one to study $\rE_8\times\rE_8$ heterotic string theory on that manifold.

\begin{defn}
A \term{$2$-group} $\G$ is a group object in the bicategory of small categories.
\end{defn}
\begin{defn}
A \term{Lie $2$-group} is a $2$-group $\G$ whose underlying category has been given the structure of a category
object in smooth manifolds.
\end{defn}
This means that the sets of objects and morphisms are smooth manifolds, and assignments such as the source of a map
or the composition of two maps are smooth. $2$-groups were first introduced by Hoàng Xuân Sính in her
thesis~\cite{Hoa75}, and Lie $2$-groups were introduced by Baez~\cite[\S 2]{Bae02}.

We call a $2$-group \term{strict} if it is strict as a monoidal category, i.e.\ its associators and unitors are all
identity maps. Mac Lane's coherence theorem~\cite[Chapter 7]{ML71} implies every $2$-group is equivalent to a
strict $2$-group, but the analogous statement is false for Lie $2$-groups; see \cref{strict_string}.
\begin{exm}
If $G$ is a group, it defines a monoidal groupoid with $G$ as its set of objects, tensor product $g\otimes
h\coloneqq gh$, and only the identity morphisms. This is a $2$-group, and inherits the structure of a Lie $2$-group
if $G$ is a Lie group.
\end{exm}
This procedure embeds the bicategory of groups, group homomorphisms, and identity $2$-morphisms into the bicategory
of $2$-groups, and we will therefore abuse notation and call this $2$-group $G$ again.
\begin{exm}
Let $A$ be an abelian group, and let $A[1]$ denote the monoidal groupoid with a single object $*$ and $\Hom_{A[1]}(*,
*) \coloneqq A$. This is a $2$-group, and if $A$ is Lie, $A[1]$ is a Lie $2$-group.
\end{exm}
It turns out every $2$-group $\G$ factors as an extension of these examples. Let $e$ be the identity object
of $\G$ and $\pi_0(\G)$ be the group of isomorphism classes of objects in $\G$. Then there is a short exact sequence
of $2$-groups
\begin{equation}
\label{2gp_extn}
	\shortexact{\Aut_{\G}(e)[1]}{\G}{\pi_0(\G)}.
\end{equation}
The Eckmann-Hilton theorem guarantees $\Aut_\G(e)$ is abelian. Extensions~\eqref{2gp_extn} are classified by the
data of:
\begin{enumerate}
	\item an action of $\pi_0(\G)$ on $\Aut_\G(e)$, and
	\item a cohomology class $k\in H^3(B\pi_0(\G); \Aut_\G(e))$, called the \term{$k$-invariant} of $\G$.
\end{enumerate}
When $\G$ has the discrete topology, this is unambiguous, but when $\G$ is a Lie $2$-group, one must be careful
what kind of cohomology is used here. The correct notion of cohomology is the \term{Segal-Mitchison
cohomology}~\cite{Seg70, Seg75} of $\pi_0(\G)$ valued in the abelian Lie group $\Aut_{\G}(e)$, as shown by
Schommer-Pries~\cite[Theorem 1]{SP11}.

Now we want to discuss principal $\G$-bundles. The idea is that if $G$ is a group, a principal $G$-bundle is a
submersion which is locally trivial, and whose fibers are $G$-torsors. For a Lie $2$-group $\G$, we need
the fibers to locally look like $\G$, meaning they must be categorified somehow.
\begin{defn}[{Bartels~\cite{Bar06}, Nikolaus-Waldorf~\cite[Definition 6.1.5]{NW13}}]
\label{2_bun_defn}
Let $\G$ be a Lie $2$-group. A \term{principal $\G$-bundle} over a smooth manifold $M$ is a Lie groupoid $\mathcal
P$ with a surjective submersion $\mathrm{obj}(P)\to M$ and a smooth right action $\rho$ of $\G$ on $\mathcal P$
such that the map
\begin{equation}
	(\mathrm{pr_1}, \rho)\colon \mathcal P\times\G\longrightarrow \mathcal P\times_M\mathcal P
\end{equation}
is a weak equivalence of Lie groupoids.
\end{defn}
See Nikolaus-Waldorf~\cite[\S 6]{NW13} for more details. The principal $\G$-bundles on a manifold $M$ form a
$2$-groupoid $\cat{Bun}_\G(X)$~\cite[Theorem 6.2.1]{NW13}.
\begin{defn}
Let $\G$ be a $2$-group, and let $C_\G$ be the bicategory with a single object $*$ and morphism category
$\Hom_{C_\G}(*, *)\coloneqq \G$. The \term{classifying space} of $\G$, denoted $B\G$, is the geometric realization
of the nerve of $C_\G$.\footnote{There are many different definitions of the nerve of a bicategory; the fact that
their geometric realizations are canonically homotopy equivalent is a theorem of
Carrasco-Cegarra-Garzón~\cite{CGG10}, allowing us to speak about $B\G$ without specifying which kind of
bicategorical nerve to use.}

When $\G$ is a Lie $2$-group, we make the same definition. This time $C_\G$ is a topological bicategory, so its nerve
is a simplicial space, and geometrically realizing, we obtain the space $B\G$.
\end{defn}
\begin{thm}[{Nikolaus-Waldorf~\cite[Theorems 4.6, 5.3.2, 7.1]{NW13}}]
\label{2_bun_class}
If $\G$ is a strict Lie $2$-group, then there is a natural equivalence $[X, B\G]\overset\simeq\to
\pi_0(\cat{Bun}_\G(X))$.
\end{thm}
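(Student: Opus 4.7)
The plan is to construct mutually inverse maps between $[X, B\G]$ and $\pi_0(\cat{Bun}_\G(X))$, natural in $X$, modeled on the classical classification $[X, BG] \cong \pi_0(\cat{Bun}_G(X))$ for ordinary Lie groups and adapted to the higher-categorical setting. Strictness is what lets us get concrete models on both sides: a strict Lie $2$-group is equivalent data to a smooth crossed module $H \to G$, which gives us an honest simplicial manifold $N(C_\G)$ whose realization is $B\G$, and also gives manageable local cocycle data for principal $\G$-bundles.

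First I would build a ``universal'' principal $\G$-bundle $E\G \to B\G$, constructed simplicially from the translation action of $\G$ on itself; pullback along $f \colon X \to B\G$ defines the map $[X, B\G] \to \pi_0(\cat{Bun}_\G(X))$, and homotopy-invariance of this pullback gives a well-defined assignment on homotopy classes. For the reverse direction, given $\mathcal P \in \cat{Bun}_\G(X)$ I would choose a good open cover $\{U_i\}$ of $X$ over which $\mathcal P$ trivializes as in \cref{2_bun_defn}. The resulting transition data consists of smooth maps $g_{ij} \colon U_{ij} \to G$ on double overlaps together with smooth $2$-morphisms $h_{ijk} \colon U_{ijk} \to H$ on triple overlaps witnessing the cocycle identity $g_{ij} g_{jk} \Rightarrow g_{ik}$, subject to a coherence relation on $U_{ijkl}$. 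This package is precisely a simplicial map from the Čech nerve $N(\{U_i\})$ to $N(C_\G)$. Since the cover is good, the realization of $N(\{U_i\})$ is canonically homotopy equivalent to $X$, so realizing yields a well-defined homotopy class $X \to B\G$.

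The hard part will be showing that these two constructions are inverse on $\pi_0$ and genuinely natural. Concretely, one must verify that two different trivializations of the same $\mathcal P$ (and two different choices of good cover) yield homotopic classifying maps, and that an equivalence of principal $\G$-bundles translates to a homotopy between the corresponding simplicial cocycles. This is where the strictness hypothesis does essential work: without it, the cocycle conditions would involve coherence data in addition to $H$, and one would have to replace $\G$ by a strict model (á la Baez--Lauda) before the simplicial description becomes a faithful invariant. The remaining verification is bookkeeping: that the pullback of $E\G$ along the classifying map recovers $\mathcal P$ up to equivalence of principal $\G$-bundles, and conversely that $f^*E\G$ admits a trivialization whose Čech data classifies back to $[f]$. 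Naturality in $X$ then reduces to functoriality of the Čech nerve construction, which is automatic.
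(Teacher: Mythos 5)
The paper does not prove \cref{2_bun_class}: it is quoted from Nikolaus--Waldorf, with the remark that their argument builds on Baez--Stevenson's identification of $[X, B\G]$ with nonabelian \v{C}ech cohomology. So there is no in-paper proof to compare against; what can be assessed is whether your sketch is a viable outline of the cited argument. Your second half --- extracting from a locally trivialized principal $\G$-bundle the transition data $g_{ij}\colon U_{ij}\to G$, $h_{ijk}\colon U_{ijk}\to H$ with the coherence condition on quadruple overlaps, i.e.\ a class in $\check H^1(X;\G)$, and then realizing this as a map out of the \v{C}ech nerve of a good cover --- is indeed the backbone of the actual proof, and you are right that strictness (the crossed-module presentation) is what makes this cocycle description available.

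The step that would fail as written is the forward direction via a universal bundle. $B\G$ is the geometric realization of a simplicial manifold and is not itself a smooth manifold, so there is no principal $\G$-bundle $E\G\to B\G$ in the sense of \cref{2_bun_defn}, and pulling back along a merely continuous map $f\colon X\to B\G$ would not produce the smooth surjective submersion and smooth $\G$-action that the definition requires. The cited proofs avoid this by routing \emph{both} directions through $\check H^1(X;\G)$: Baez--Stevenson prove $[X,B\G]\cong\check H^1(X;\G)$ (this is where one needs $\G$ well-pointed and the simplicial space $NC_\G$ proper, so that its realization has the correct homotopy type, and where the classical classification of bundles for the topological group $|\G|$ enters), and Nikolaus--Waldorf separately prove $\check H^1(X;\G)\cong\pi_0(\cat{Bun}_\G(X))$ by the local-trivialization analysis you describe. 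Replacing your universal-bundle paragraph with the first of these comparisons, and keeping your \v{C}ech paragraph as the second, gives the correct architecture; the remaining work is then exactly the well-definedness and invertibility bookkeeping you identify.
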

Nikolaus-Waldorf's proof builds on Baez-Stevenson's related but distinct characterization of $[X,
B\G]$~\cite[Theorem 1]{BS09} in terms of nonabelian Čech cohomology.

When $G$ is an ordinary group, if $G$ has the discrete topology, $BG$ has only one nonzero homotopy group, which is
$\pi_1(BG) = G$; likewise if $\G$ is a discrete $2$-group, $\pi_i(B\G)$ is nontrivial only for $i = 1,2$;
$\pi_1(B\G) = \pi_0(\G)$ and $\pi_2(B\G) = \Aut_\G(e)$. When $\G$ is a Lie
$2$-group, we have no control over its homotopy groups in general, just like $BG$ when $G$ is positive-dimensional.

If $\G$ has the discrete topology, the data classifying~\eqref{2gp_extn}, namely the action of $\pi_0(\G)$ on
$\Aut_\G(e)$ and the $k$-invariant, is equivalent to the Postnikov data of $B\G$, worked out by Mac
Lane-Whitehead~\cite{MW50}: this data classifies fibrations over $BG$ with fiber the Eilenberg-Mac Lane space
$K(\Aut_\G(e), 2)$. The total space of the fibration with this Postnikov data is homotopy equivalent to $B\G$.
\begin{exm}
\label{string_cover}
Let $G$ be a compact Lie group; then, the Segal-Mitchison cohomology group $H^3_{\mathrm{SM}}(G; \T)$ classifying
Lie $2$-group extensions of $G$ by $\T[1]$ is naturally isomorphic to $H^4(BG;\Z)$~\cite[Corollary 97]{SP11}.
Therefore given a class $\mu\in H^4(BG;\Z)$, we obtain a Lie $2$-group $\cS(G, \mu)$ fitting into a central
extension
\begin{equation}
\label{string_SES}
	\shortexact{\T[1]}{\cS(G, \mu)}{G},
\end{equation}
which is sometimes called the \term{string $2$-group} or \term{string cover} associated to $G$ and $\lambda$. Of
all the string covers, the most commonly studied one is $\String_n\coloneqq \cS(\Spin_n, \lambda)$, which is called 
\emph{the} string $2$-group.
\end{exm}
This class of $2$-groups was first studied by Baez-Lauda~\cite[\S 8.5]{BL04}.

The sequence~\eqref{string_SES} implies that upon taking classifying spaces,
\begin{equation}
\label{2gp_fiber}
	B\G\longrightarrow BG\overset{\mu}{\longrightarrow} K(\Z, 4)
\end{equation}
is a fibration.
\begin{rem}
\label{strict_string}
\Cref{2_bun_class} classified principal $\G$-bundles when $\G$ is a strict $2$-group, but it is a theorem of
Baez-Lauda~\cite[Corollary 60]{BL04} that there is no strict Lie $2$-group model for $\cS(G, \mu)$ when $G$ is
simply connected and $\mu\ne 0$. However, there is a fix: in the setting of \term{Fréchet Lie $2$-groups}, where we
allow the spaces of objects and morphisms of $\G$ to be Fréchet manifolds, there is a strict model for $\cS(G,
\mu)$~\cite{BSCS07, LW23}, so $B\cS(G, \mu)$ actually classifies principal $\cS(G, \mu)$-bundles. This suffices for
studying bordism groups.
\end{rem}
Following Sati-Schreiber-Stasheff~\cite{SSS12}, we now relate $\cS(G, \mu)$ to the twisted Bianchi identity for $G$
and $\mu$. To do so, we use the language of stacks and differential cohomology, following~\cite{HS05, FH13, Sch13,
BNV16, ADH21}. Make the category $\cat{Man}$ into a site by defining the covers to
be surjective submersions, and define a \term{stack} to be a functor of $\infty$-categories
$\cat{Man}^{\cat{op}}\to\cat{Top}$ which satisfies descent for hypercovers. This defines a presentable
$\infty$-category $\cat{St}$ of stacks~\cite[Proposition 6.5.2.14]{Lur09b}, and the Yoneda embedding $h\colon
\cat{Man}\to\cat{St}$ embeds $\cat{Man}$ as a full subcategory. We will often simply write $M$ for the stack
$h(M)$; we never compare these two notions directly, so this will not introduce confusion.

For any space $X$, the functor $\mathrm{Map}(\bl, X)\colon\cat{Man}\to\cat{Top}$ is a sheaf, and this procedure
defines a functor of $\infty$-categories $\Gamma^*\colon\cat{Top}\to\cat{St}$. The values of
the stacks produced by $\Gamma^*$ evaluated on manifolds $M$ are homotopy-invariant in $M$. $\Gamma^*$ has a left
adjoint $\Gamma_\sharp\colon\cat{St}\to\cat{Top}$ (see Dugger~\cite[Proposition 8.3]{Dug01},
Morel-Voevodsky~\cite[Proposition 3.3.3]{MV99}, and~\cite[Proposition 4.3.1]{ADH21}); $\Gamma_\sharp(\mathbf X)$
for a stack $\mathbf X$ can be thought of as the best approximation to $\mathbf X$ by a stack whose values on
manifolds are homotopy-invariant.

Let $\Delta_{\mathrm{alg}}^n\coloneqq\set{(t_0,\dotsc,t_n)\mid t_0 + \dotsb + t_n = 1}\subset\R^{n+1}$. These
``algebraic $n$-simplices'' assemble into a cosimplicial manifold $\Delta_{\mathrm{alg}}^\bullet$,
and~\cite[Corollary 5.1.4]{ADH21} there is a natural homotopy equivalence $\Gamma_\sharp(\mathbf X)\simeq
\abs{\mathbf X(\Delta_{\mathrm{alg}}^\bullet)}$, where as usual $\abs{\bl}$ denotes geometric realization.

Thus, for a manifold $M$, there is a natural homotopy equivalence $\Gamma_\sharp(M)\overset\simeq\to
M$, so a map $M\to\mathbf X$ naturally induces a
map $M\to \Gamma_\sharp(\mathbf X)$.
\begin{lem}
\label{shape_pullback}
Suppose $\mathbf X\to \mathbf Y\gets\mathbf Z$ is a diagram in $\cat{St}$, and that $\mathbf
Y(\Delta_{\mathrm{alg}}^n)$ and $\mathbf Z(\Delta_{\mathrm{alg}}^n)$ are connected for all $n$. Then
\begin{equation}
	\Gamma_\sharp(\mathbf X\times_{\mathbf Y}\mathbf Z)\simeq \Gamma_\sharp(\mathbf X)\times_{\Gamma_\sharp(\mathbf
	Y)} \Gamma_\sharp(\mathbf Z).
\end{equation}
\end{lem}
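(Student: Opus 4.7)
My plan is to translate the lemma into a statement about geometric realization of simplicial spaces, and then invoke a classical theorem on when realization commutes with pullbacks.

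Write $X_\bullet\coloneqq\mathbf{X}(\Delta^\bullet_{\mathrm{alg}})$, and analogously $Y_\bullet$, $Z_\bullet$. Limits in the $\infty$-category $\cat{St}$ of stacks are computed objectwise, so $(\mathbf{X}\times_{\mathbf{Y}}\mathbf{Z})(\Delta^n_{\mathrm{alg}})\simeq X_n\times_{Y_n}Z_n$ for every $n$. Combined with the natural equivalence $\Gamma_\sharp(\mathbf{W})\simeq\abs{\mathbf{W}(\Delta^\bullet_{\mathrm{alg}})}$ recalled in the text, the lemma reduces to the claim that
\begin{equation}
\abs{X_\bullet\times_{Y_\bullet}Z_\bullet}\simeq \abs{X_\bullet}\times_{\abs{Y_\bullet}}\abs{Z_\bullet}
\end{equation}
as spaces, under the hypothesis that $Y_n$ and $Z_n$ are connected for all $n$.

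The right-hand side is a homotopy pullback of spaces, so what one really needs is that (a) the levelwise pullback of simplicial spaces computes the homotopy pullback in simplicial spaces, and (b) geometric realization preserves this homotopy pullback. The standard tool addressing both points is the Bousfield-Friedlander theorem (and its $\infty$-categorical avatars): if $Y_\bullet$ is proper (e.g.\ Reedy cofibrant) and satisfies the $\pi_\ast$-Kan condition, then $\abs{\bl}$ sends pullbacks over $Y_\bullet$ to pullbacks in spaces, equivalently $\abs{Z_\bullet}\to\abs{Y_\bullet}$ becomes a quasi-fibration. Levelwise connectedness of $Y_\bullet$ trivially yields the $\pi_0$-Kan condition; after a Reedy cofibrant replacement, chosen so as not to disturb levelwise connectivity, the higher $\pi_\ast$-Kan conditions should bootstrap from the connectedness of $Z_n$ via the long exact sequences associated to the pullback squares.

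The main obstacle I anticipate is the careful verification of the $\pi_\ast$-Kan condition in this setting. A more symmetric alternative that may sidestep these technicalities is to recognize $X_\bullet\times_{Y_\bullet}Z_\bullet$ as a two-sided bar construction $B(X_\bullet,Y_\bullet,Z_\bullet)$, whose geometric realization is known to model the homotopy pullback of realizations under connectedness hypotheses; the symmetric role played by $\mathbf{Y}$ and $\mathbf{Z}$ in the statement is suggestive of this approach.
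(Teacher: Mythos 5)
Your proposal is correct and is essentially the paper's argument: compute the pullback of stacks levelwise on $\Delta^\bullet_{\mathrm{alg}}$, use $\Gamma_\sharp(\mathbf W)\simeq\abs{\mathbf W(\Delta^\bullet_{\mathrm{alg}})}$, and then invoke the Bousfield--Friedlander theorem to commute geometric realization with the homotopy pullback. The paper handles the $\pi_*$-Kan verification you were worried about by citing the precise consequence of Bousfield--Friedlander stating that levelwise connectedness of $Y_\bullet$ and $Z_\bullet$ suffices (Warner, \emph{Topics in topology and homotopy theory}, p.~14-9), so no bar-construction detour is needed.
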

\begin{proof}
Pullbacks of sheaves can be computed pointwise, then sheafifying, so given a pullback
$\mathbf X\to\mathbf Y\gets\mathbf Z$ in $\cat{St}$, for each $n\ge 0$ the pullback of
\begin{equation}\label{LHP}
\mathbf X(\Delta_{\mathrm{alg}}^n) \longrightarrow \mathbf Y(\Delta_{\mathrm{alg}}^n) \longleftarrow
\mathbf Z(\Delta_{\mathrm{alg}}^n)
\end{equation}
is $(\mathbf X\times_{\mathbf Y}\mathbf Z)(\Delta_{\mathrm{alg}}^n)$. The Bousfield-Friedlander theorem~\cite{BF78,
Bou01} implies that, given the hypotheses on $\mathbf Y$ and $\mathbf Z$ in the theorem statement, the homotopy
pullback of the geometric realizations of $\mathbf X$, $\mathbf Y$, and $\mathbf Z$ is the geometric realization of
the levelwise homotopy pullback~\eqref{LHP} (see~\cite[p.\ 14-9]{War20} for this specific consequence of the
Bousfield-Friedlander theorem).
\end{proof}
\begin{exm}
For $G$ a Lie group, there is a stack $B_\nabla G$ whose value on a manifold $M$ is the geometric realization of
the nerve of the groupoid of principal $G$-bundles on $M$ with connection~\cite{FH13}. This object is denoted
$\mathbf B G_{\mathrm{conn}}$ in~\cite{FSS12, SSS12, Sch13}, $\mathbb BG^\nabla$ in~\cite[\S 5]{BNV16}, and
$\mathrm{Bun}_G^\nabla$ in~\cite{ADH21}.

There is a natural homotopy equivalence $\Gamma_\sharp(B_\nabla G)\overset\simeq\to BG$~\cite[Corollary
13.3.29]{ADH21}, which can be interpreted as forgetting from a principal bundle with connection to a principal
bundle.
\end{exm}
\begin{exm}
For $k\ge 0$, there is a stack $B_\nabla^k \T$ whose value on a manifold $M$ is the geometric realization of the
nerve of the $\infty$-groupoid of cocycles for the differential cohomology group $\check H^{k+1}(M;\Z)$. This
object is studied in~\cite{FSS12, SSS12, Sch13}, where it is denoted $\mathbf B^k U(1)_{\mathrm{conn}}$.
\end{exm}
\begin{lem}
There is a homotopy equivalence $\Gamma_\sharp(B_\nabla^k\T)\simeq K(\Z, k+1)$.
\end{lem}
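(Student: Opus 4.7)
The plan is to identify $B_\nabla^k\T$ as the total space of a fibre sequence of stacks whose fibre and base have tractable shapes, then invoke \cref{shape_pullback}. The key input is that the sheaves of smooth $\R$-valued functions $C^\infty(-, \R)$, and more generally the sheaves of differential forms $\Omega^j$ and closed forms $\Omega_{\mathrm{cl}}^j$, have trivial shape: the straight-line contraction $s \mapsto ts$ is smooth on each $\Delta_{\mathrm{alg}}^n$, so $\Gamma_\sharp$ sends each of these sheaves to a contractible stack. Combining this with the exponential short exact sequence of sheaves of abelian Lie groups $\Z \to C^\infty(-,\R) \to C^\infty(-, \T)$, and using that $\Gamma_\sharp$ is a left adjoint (hence preserves cofibres), we obtain $\Gamma_\sharp C^\infty(-,\T) \simeq B\Z = K(\Z,1)$; iterating deloopings $k$ more times yields $\Gamma_\sharp B^k C^\infty(-,\T) \simeq K(\Z, k+1)$, i.e., the shape of the stack of flat $k$-gerbes is $K(\Z, k+1)$.

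Next, I would fit $B_\nabla^k\T$ into the curvature fibre sequence
\[
B^k C^\infty(-, \T) \longrightarrow B_\nabla^k \T \xrightarrow{\,\mathrm{curv}\,} \Omega_{\mathrm{cl}}^{k+1},
\]
whose fibre is the stack of flat $k$-gerbes and whose base, the sheaf of closed $(k+1)$-forms, is shape-trivial by the discussion above. Writing the fibre as the pullback $B^k C^\infty(-, \T) \simeq B_\nabla^k\T \times_{\Omega^{k+1}_{\mathrm{cl}}} \{\ast\}$ and applying \cref{shape_pullback}, one gets $K(\Z, k+1) \simeq \Gamma_\sharp B_\nabla^k\T \times_\ast \ast \simeq \Gamma_\sharp B_\nabla^k\T$, as desired.

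The main obstacle is verifying the connectivity hypotheses of \cref{shape_pullback}: as a sheaf of sets, $\Omega^{k+1}_{\mathrm{cl}}(\Delta_{\mathrm{alg}}^n)$ is a discrete $\R$-vector space, so one must regard it as a stack recording smooth deformations among forms in order for the straight-line homotopies above to produce connected values. A cleaner alternative bypassing this issue is to directly compute $B_\nabla^k\T(\Delta_{\mathrm{alg}}^n)$ via the Deligne-complex model: the Poincar\'e lemma on the contractible manifold $\Delta_{\mathrm{alg}}^n$ implies that the inclusion $\Z \hookrightarrow \bigl(C^\infty(-,\R)\to\Omega^1\to\cdots\to\Omega^{k+1}\bigr)$ of the constant sheaf into the Deligne complex is a quasi-isomorphism of cochain complexes for each $n$, so after Dold--Kan realization $B_\nabla^k\T(\Delta_{\mathrm{alg}}^n) \simeq K(\Z, k+1)$ compatibly in $n$, and geometric realization gives the result.
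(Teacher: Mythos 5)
Your strategy --- realize $B^k_\nabla\T$ in a fibre sequence whose other two terms have known shapes and apply \cref{shape_pullback} --- is close in spirit to the paper's proof, but two of your inputs are false, and they are exactly where the content of the lemma sits. First, while $\Gamma_\sharp(\Omega^j)\simeq *$ is correct, the shape of the sheaf of \emph{closed} forms is $\Gamma_\sharp(\Omega^{k+1}_{\mathrm{cl}})\simeq K(\R,k+1)$, not a point: the straight-line contraction of $\Delta^n_{\mathrm{alg}}$ only shows that every closed form on a contractible manifold is concordant to $0$, i.e.\ that the realization $\lvert\Omega^{k+1}_{\mathrm{cl}}(\Delta^\bullet_{\mathrm{alg}})\rvert$ is connected; the Moore complex of this simplicial vector space has homology $\R$ in degree $k+1$. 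This simplicial de~Rham theorem is the key input of the paper's proof (via \cite[Lemma 7.15]{BNV16}). Second, the fibre of $\mathrm{curv}$ is the stack of \emph{flat} $k$-gerbes, which is homotopy-invariant with shape the fibre of $K(\Z,k+1)\to K(\R,k+1)$, namely $K(\T,k)$; it is not $B^kC^\infty(-,\T)$, the stack of $k$-gerbes with no connection at all, whose shape is indeed $K(\Z,k+1)$. With both points corrected, your argument yields only a fibre sequence $K(\T,k)\to\Gamma_\sharp(B^k_\nabla\T)\to K(\R,k+1)$, which is consistent with the answer but does not determine the total space. The paper avoids this by using the other corner of the differential-cohomology square, $B^k_\nabla\T\simeq K(\Z,k+1)\times_{K(\R,k+1)}\Omega^{k+1}_{\mathrm{cl}}$, so that the de~Rham equivalence $\Gamma_\sharp(\Omega^{k+1}_{\mathrm{cl}})\simeq K(\R,k+1)$ cancels against the base.

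The ``cleaner alternative'' fails for the same underlying reason. It is not true that $B^k_\nabla\T(\Delta^n_{\mathrm{alg}})\simeq K(\Z,k+1)$ levelwise: already for $k=1$, $\pi_0\bigl(B_\nabla\T(\R^n)\bigr)\cong\Omega^1(\R^n)/\mathrm{d}\Omega^0(\R^n)$ is an uncountable vector space of gauge-equivalence classes of connections on the trivial bundle. Correspondingly, the inclusion of $\Z$ into the truncated Deligne complex is not a quasi-isomorphism on a contractible manifold: the top hypercohomology of the Deligne complex on $\R^n$ is the group of closed $(k+1)$-forms with integral periods, not $H^{k+1}(\R^n;\Z)=0$. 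The Poincar\'e lemma identifies $\R$ (not $\Z$) with the \emph{full} de~Rham complex; the surplus differential-form data dies only after realizing over $\Delta^\bullet_{\mathrm{alg}}$, which is once more exactly the de~Rham input you would need to supply.
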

\begin{proof}
Schreiber~\cite[Observation 1.2.134]{Sch13} produces the following pullback square in $\cat{St}$:
\begin{equation}
\label{diffcoh_diagram}
\begin{tikzcd}
	{B_\nabla^k\T} & {\Omega_{\mathit{c\ell}}^{k+1}} \\
	{K(\Z, k+1)} & {K(\R, k+1),}
	\arrow[from=2-1, to=2-2]
	\arrow[from=1-1, to=2-1]
	\arrow[from=1-1, to=1-2]
	\arrow[from=1-2, to=2-2]
	\arrow["\lrcorner"{anchor=center, pos=0.125}, draw=none, from=1-1, to=2-2]
\end{tikzcd}
\end{equation}
where $\Omega_{\mathit{c\ell}}^{k+1}$ is the stack of closed $(k+1)$-forms. For that stack and $K(\R, n+1)$, the
values on each $\Delta_{\mathrm{alg}}^n$ are connected spaces, so \cref{shape_pullback} identifies
$\Gamma_\sharp(B_\nabla^k\T)\simeq K(\Z, k+1)\times_{K(\R, k+1)} \Gamma_\sharp(\Omega_{\mathit{c\ell}}^{k+1})$. To
finish, observe that, essentially by the de Rham theorem, the map
$\Omega_{\mathit{c\ell}}^{k+1}\to K(\R, k+1)$ passes to a homotopy equivalence after applying $\Gamma_\sharp$. This
follows from~\cite[Lemma 7.15]{BNV16} together with the Dold-Kan theorem.
\end{proof}
These stacks are the universal setting for the Chern-Weil map.
\begin{thm}[{Cheeger-Simons~\cite[Theorem 2.2]{CS85}, Bunke-Nikolaus-Völkl~\cite[\S 5.2]{BNV16}}]
\label{CW_lifts}
Let $G$ be a compact Lie group and $c\in H^k(BG;\Z)$, where $k$ is even. Then there is a map $\check c\colon
B_\nabla G\to B^{k-1}_\nabla \T$ natural in $(G, c)$ such that for any manifold $M$ and map $f\colon M\to B_\nabla
G$, interpreted as a principal $G$-bundle $P\to M$ with connection $\Theta$,
\begin{enumerate}
	\item if $\mathit{char}\colon \check H^*(\bl;\Z)\to H^*(\bl;\Z)$ denotes the characteristic class map, then
	$\mathit{char}(\check c\circ f) = c(P)$, and
	\item if $\mathit{curv}\colon \check H^*(\bl;\Z)\to \Omega^*_{\mathit{c\ell}}$ denotes the curvature map, then
	$\mathit{curv}(\check c\circ f) = \CW_c(\Theta)$.
\end{enumerate}
\end{thm}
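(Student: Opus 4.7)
The plan is to use the pullback square~\eqref{diffcoh_diagram} from the preceding lemma, which characterizes the stack $B^{k-1}_\nabla\T$ as the homotopy fiber product of $K(\Z,k)$ and $\Omega_{\mathit{c\ell}}^k$ over $K(\R,k)$. Thus, constructing a map $\check c\colon B_\nabla G \to B^{k-1}_\nabla \T$ of stacks amounts to specifying a triple: (i) a map $\check c_{\mathrm{top}}\colon B_\nabla G \to K(\Z,k)$, (ii) a map $\check c_{\mathrm{form}}\colon B_\nabla G \to \Omega_{\mathit{c\ell}}^k$, and (iii) a homotopy in $\cat{St}$ between the two resulting composites into $K(\R,k)$. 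Properties (1) and (2) in the theorem are automatic once this is done, since the left and top legs of~\eqref{diffcoh_diagram} are exactly the characteristic class and curvature projections.

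The first two pieces are essentially formal. For (i), the unit of the $(\Gamma_\sharp, \Gamma^*)$ adjunction gives a natural map $B_\nabla G \to \Gamma^*\Gamma_\sharp(B_\nabla G)\simeq \Gamma^* BG$, and composing with (the image under $\Gamma^*$ of) the homotopy class $c\in H^k(BG;\Z) = [BG, K(\Z,k)]$ yields $\check c_{\mathrm{top}}$; naturality in $(G,c)$ is automatic. For (ii), the Chern-Weil recipe sends a manifold-point $(P,\Theta)\in B_\nabla G(M)$ to $\CW_c(\Theta)\in \Omega_{\mathit{c\ell}}^k(M)$, and naturality of the curvature form under pullback of $(P,\Theta)$ together with naturality of the polynomial $\CW_c$ in $(G,c)$ assemble this into a morphism of stacks.

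The main obstacle is producing the homotopy (iii) coherently, since one needs not just equality of cohomology classes but a natural cochain-level primitive witnessing that the image of $c(P)$ in $H^k(M;\R)$ agrees with the de Rham class of $\CW_c(\Theta)$. I would handle this by passing to a universal setting: there is a simplicial classifying manifold $B_\nabla^{\mathrm{univ}} G$ carrying a tautological principal $G$-bundle with connection (either via a Narasimhan-Ramanan-style universal connection on a finite-dimensional approximation, or via the simplicial/sheaf-theoretic construction of Bunke-Nikolaus-Völkl~\cite{BNV16}), on which the de Rham form $\CW_c(\Theta_{\mathrm{univ}})$ represents the real image of $c$ by Chern-Weil's theorem. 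A fixed choice of chain-level primitive trivializing the difference on this universal model produces the required homotopy, and one pulls back along the classifying map $B_\nabla G\to B_\nabla^{\mathrm{univ}} G$. Naturality in $(G,c)$ then reduces to functoriality of the classifying construction and the polynomial assignment $c\mapsto \CW_c$.

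Equivalently, and perhaps more cleanly, one can follow Bunke-Nikolaus-Völkl~\cite[\S 5.2]{BNV16}: they construct refinements of integral classes into differential cohomology of $B_\nabla G$ directly within their sheaf-cohomology framework, so that the three data above — and the homotopy in particular — are packaged as a single morphism $\check c$, with properties (1) and (2) visible on the nose from the two projections out of~\eqref{diffcoh_diagram}.
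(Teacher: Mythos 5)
The paper does not actually prove this statement: it is quoted from Cheeger--Simons and Bunke--Nikolaus--V\"olkl, and the only content the paper adds after the theorem is the identification of $\mathit{char}$ and $\mathit{curv}$ with the left and top legs of the square~\eqref{diffcoh_diagram}. Your sketch is a correct reconstruction of the argument in those references, and it uses exactly the decomposition the paper has already set up: realize $B^{k-1}_\nabla\T$ as the pullback in~\eqref{diffcoh_diagram}, feed in the topological class $c$ via the $(\Gamma_\sharp,\Gamma^*)$ unit, feed in the Chern--Weil form, and supply a coherence homotopy over $K(\R,k)$. Properties (1) and (2) are then immediate from the two projections, as you say.

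The one point your sketch underplays is where the hypotheses ``$G$ compact'' and ``$k$ even'' actually enter, and it is precisely at your step (iii). For a compact Lie group, $H^{\mathrm{odd}}(BG;\R)=0$, so $H^{k-1}(BG;\R)=0$ when $k$ is even; this is what makes the lift of $(c,\CW_c)$ through the pullback square \emph{unique} up to contractible choice, rather than a torsor over $H^{k-1}(BG;\R)/\mathrm{im}\,H^{k-1}(BG;\Z)$. Without that vanishing, your ``fixed choice of chain-level primitive on the universal model'' would be a genuine choice, and naturality in $(G,c)$ would not follow from functoriality alone --- you would have to check that the chosen primitives are compatible under pullback along homomorphisms $G\to G'$. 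With the vanishing, uniqueness does that work for you, which is how Cheeger--Simons obtain naturality. So your argument is correct, but you should make explicit that the evenness of $k$ and compactness of $G$ are not decorative: they are what upgrade ``a homotopy exists'' to ``the refinement is canonical and natural.''
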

Cheeger-Simons lifted the Chern-Weil map to differential cohomology; Bunke-Nikolaus-Völkl recast it in terms of
$B_\nabla G$. The map $\mathit{char}$ in \cref{CW_lifts} is the map down the left of the
square~\eqref{diffcoh_diagram}; $\mathit{curv}$ is the map across the top of~\eqref{diffcoh_diagram}.
\begin{defn}[{Fiorenza-Schreiber-Stasheff~\cite[\S 6.2]{FSS12}}]
Given a compact Lie group $G$ and a class $\mu\in H^4(BG;\Z)$, let $\mathbf{BStr}(G, \mu)$ denote the fiber of the map
$\check\mu\colon B_\nabla G\to B_\nabla^3 \T$.
\end{defn}
We will see momentarily that maps to $\mathbf{BStr}(G, \mu)$ lead to solutions to the twisted Bianchi identity for $G$
and $\mu$.
\begin{prop}
\label{2shape}
There is a natural homotopy equivalence $\Gamma_\sharp(\mathbf{BStr}(G, \mu))\simeq B\cS(G, \mu)$.
\end{prop}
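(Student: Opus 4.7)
The plan is to express $\mathbf{BStr}(G,\mu)$ as a homotopy pullback in $\cat{St}$, apply $\Gamma_\sharp$ using \cref{shape_pullback}, and identify the resulting fiber sequence with the one defining $B\cS(G,\mu)$ via \eqref{2gp_fiber}.

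By definition, $\mathbf{BStr}(G,\mu)$ is the pullback of $B_\nabla G \xrightarrow{\check\mu} B_\nabla^3\T \longleftarrow \pt$ in $\cat{St}$. To apply \cref{shape_pullback}, I need to verify that $B_\nabla^3\T(\Delta_{\mathrm{alg}}^n)$ is connected for every $n$ (the analogous statement for $\pt$ is immediate). The square \eqref{diffcoh_diagram} realizes this $\infty$-groupoid as the homotopy pullback of $K(\Z,4)(\Delta_{\mathrm{alg}}^n)$, $K(\R,4)(\Delta_{\mathrm{alg}}^n)$, and $\Omega^4_{\mathit{c\ell}}(\Delta_{\mathrm{alg}}^n)$; the first two are connected because $\Delta_{\mathrm{alg}}^n$ is contractible, and the third is a topological vector space, so the homotopy pullback is connected as required.

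With that verified, applying $\Gamma_\sharp$ to the defining pullback and invoking $\Gamma_\sharp(B_\nabla G)\simeq BG$ together with the preceding lemma $\Gamma_\sharp(B_\nabla^3\T)\simeq K(\Z,4)$ yields
\begin{equation*}
	\Gamma_\sharp(\mathbf{BStr}(G,\mu)) \simeq \mathrm{fib}\bigl(BG \xrightarrow{\Gamma_\sharp(\check\mu)} K(\Z,4)\bigr).
\end{equation*}
It remains to identify $\Gamma_\sharp(\check\mu)$ with a classifying map for $\mu$. This follows from \cref{CW_lifts}: the characteristic class map $\mathit{char}\colon B_\nabla^3\T \to K(\Z,4)$ appearing in \eqref{diffcoh_diagram} becomes the identification $\Gamma_\sharp(B_\nabla^3\T)\simeq K(\Z,4)$ after applying $\Gamma_\sharp$, and its composition with $\check\mu$ represents $\mu$ on underlying cohomology by part~(1) of \cref{CW_lifts}. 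Finally, \eqref{2gp_fiber} identifies the fiber of $\mu\colon BG \to K(\Z,4)$ with $B\cS(G,\mu)$, completing the equivalence.

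The main obstacle I anticipate is the double bookkeeping task of verifying the connectedness hypothesis for \cref{shape_pullback} and cleanly tying $\Gamma_\sharp(\check\mu)$ to the characteristic class classifying $\mu$; once these are in hand, the chain of equivalences is formal.
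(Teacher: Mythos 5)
Your proof is correct and follows the same route as the paper's: apply \cref{shape_pullback} to the defining pullback $B_\nabla G\to B_\nabla^3\T\gets *$, use the identifications $\Gamma_\sharp(B_\nabla G)\simeq BG$ and $\Gamma_\sharp(B_\nabla^3\T)\simeq K(\Z,4)$, and recognize the resulting fiber via \eqref{2gp_fiber}. The only difference is that you spell out the connectedness hypothesis and the identification of $\Gamma_\sharp(\check\mu)$ with $\mu$, which the paper asserts without elaboration.
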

For this reason we think of $\mathbf{BStr}(G, \mu)$ as the classifying stack of principal $\cS(G, \mu)$-bundles
with connection, though this is only a heuristic.\footnote{There are at least \emph{five} notions of a connection
on principal $\G$-bundles for $\G$ a $2$-group: three are discussed by Waldorf~\cite[\S 5]{Wal18}, a fourth by
Rist-Saemann-Wolf~\cite{RSW22}, and a fifth, defined only for $\G = \String_n$, by Waldorf~\cite{Wal13}.}
\begin{proof}
Apply \cref{shape_pullback} to the diagram
\begin{equation}
	B_\nabla G\overset{\check\mu}{\longrightarrow} B^3_\nabla \T \longleftarrow *,
\end{equation}
as the values of both $*$ and $B^3_\nabla\T$ are connected on $\Delta_{\mathrm{alg}}^n$ for each $n$. This implies
that $\Gamma_\sharp(\mathbf{BStr}(G, \mu))$ is the fiber of $\mu\colon BG\to K(\Z, 4)$, which we identified with $B\G$
in~\eqref{2gp_fiber}.
\end{proof}
\begin{prop}[{Fiorenza-Schreiber-Stasheff~\cite[\S 6.3]{FSS12}}]
\label{2gp_satisfies_bianchi}
Let $G$ be a compact Lie group, $U\subset\R^n$ be an open set, and $P\to U$ be a principal $G$-bundle with connection
$\Theta$. A lift of the corresponding map $f_{P,\Theta}\colon U\to B_\nabla G$ to a map $\widetilde
f_{P,\Theta}\colon U\to \mathbf{BStr}(G, \mu)$ induces a form $H\in\Omega^3(U)$ such that $H$ and $\Theta$ satisfy the
twisted Bianchi identity~\eqref{twisted_Bianchi}.
\end{prop}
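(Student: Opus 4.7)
The plan is to translate the hypothesis about a lift into differential-cohomological data and then apply \Cref{CW_lifts}. By construction, $\mathbf{BStr}(G,\mu)$ is the homotopy fiber of $\check\mu\colon B_\nabla G\to B_\nabla^3\T$, so a lift $\widetilde f_{P,\Theta}$ of $f_{P,\Theta}$ is equivalent to the data of $f_{P,\Theta}$ together with a null-homotopy $\gamma$ of the composite $\check\mu\circ f_{P,\Theta}\colon U\to B_\nabla^3\T$. The task is therefore to extract the $3$-form $H$ from $\gamma$.

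Next, I would unpack what such a null-homotopy provides by choosing a concrete model for $B_\nabla^3\T$, for example the sheaf associated to the Deligne complex $\Z\to\Omega^0\to\Omega^1\to\Omega^2\to\Omega^3$. In this model, the cocycle represented by $\check\mu\circ f_{P,\Theta}$ has an underlying closed $4$-form, namely its curvature, and a $1$-morphism from this cocycle to the zero cocycle consists in particular of a global $3$-form $H\in\Omega^3(U)$ whose exterior derivative equals that curvature. Equivalently, one can argue directly from the pullback square~\eqref{diffcoh_diagram} with $k=3$: projecting the null-homotopy $\gamma$ to the stack $\Omega^4_{\mathit{c\ell}}$ gives a path in the space of closed $4$-forms on $U$ from $\mathrm{curv}(\check\mu\circ f_{P,\Theta})$ to $0$, which is precisely a primitive $H$ with $\d H=\mathrm{curv}(\check\mu\circ f_{P,\Theta})$.

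Finally, part~(2) of \Cref{CW_lifts} identifies $\mathrm{curv}(\check\mu\circ f_{P,\Theta})=\CW_\mu(\Theta)$, so combining with the preceding step yields $\d H=\CW_\mu(\Theta)$, which is exactly the twisted Bianchi identity~\eqref{twisted_Bianchi}.

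The main obstacle is the bookkeeping in the second step: one needs to verify that a null-homotopy in the stack $B_\nabla^3\T$ really does produce a global $3$-form on $U$ (rather than, say, only Čech data on a good cover) and that its de Rham differential matches the curvature rather than differing by exact contributions; this can be handled by fixing a model of $B_\nabla^3\T$ as above, or equivalently by exploiting the naturality statement already contained in \Cref{CW_lifts} together with the pullback description of $\mathbf{BStr}(G,\mu)$. After this, the remaining content is formal.
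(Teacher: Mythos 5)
Your proposal takes the same route as the paper, which itself only sketches this argument and defers to Fiorenza--Schreiber--Stasheff~\cite[\S 6.3]{FSS12}: read the lift as a trivialization (null-homotopy) of the differential characteristic class $\check\mu(P,\Theta)$, push it through the curvature map, and use \cref{CW_lifts}(2) to identify $\mathrm{curv}(\check\mu\circ f_{P,\Theta})$ with $\CW_\mu(\Theta)$. The only caution concerns your ``equivalent'' reformulation via~\eqref{diffcoh_diagram}: as the paper uses it, $\Omega^4_{\mathit{c\ell}}$ is a $0$-truncated sheaf, so a path in $\Omega^4_{\mathit{c\ell}}(U)$ is not a primitive, and the $3$-form $H$ must instead be extracted from the null-homotopy inside $B^3_\nabla\T$ itself (the Chern--Simons-type data in a suitable Deligne/Lie-integration model) --- exactly the bookkeeping you flag at the end and that the paper also leaves to~\cite{FSS12}.
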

The idea here is that we have specified a trivialization of the differential characteristic class $\check \mu(P,
\theta)$. Applying the curvature map $\mathit{curv}\colon B_\nabla^3\T\to\Omega^4_{\mathit{c\ell}}$, we have also
specified a trivialization of $\CW_\mu(\Theta)$, which locally is the data $H$ showing that $\CW_\mu(\Theta)$ is exact.

A map to $\mathbf{BStr}(G, \mu)$ is more data than what we get from \cref{2gp_satisfies_bianchi}, as we have
trivialized not just the Chern-Weil form, but also the differential characteristic class. This can be interpreted
as saying the data $H$ specifying the trivialization is quantized to form a twisted version of a gerbe with
connection.

To summarize, given a map $M\to \mathbf{BStr}(G, \mu)$, the stack which we think of as modeling $\cS(G,\mu)$-bundles
with connection, we obtain:
\begin{enumerate}
	\item a principal $\cS(G, \mu)$-bundle $\cP\to M$ by \cref{2shape}, and
	\item a ``twisted gerbe with connection,'' i.e.\ local data of a gerbe $Q\to M$ such that $\Omega_Q$ and the
	$G$-connection $\Theta$ induced by the map $\mathbf{BStr}(G, \mu)\to B_\nabla G$ satisfy the twisted Bianchi
	identity~\eqref{twisted_Bianchi} by \cref{2gp_satisfies_bianchi}.
\end{enumerate}
Motivated by this, we define
of the tangential structure for the $\rE_8\times\rE_8$ heterotic string. This first appears in~\cite[\S
3.2]{SSS12}, with~\cite{Sat11, FSS15} considering some related examples.
\begin{defn}
\label{diff_het}
Let $G\coloneqq (\rE_8\times\rE_8)\rtimes\Z/2$. A \term{differential $\xihetn$-structure} on a manifold $M$ is the
following data:
\begin{enumerate}
	\item a Riemannian metric and spin structure on $M$,
	\item a principal $G$-bundle $P\to M$ with connection $\Theta$, and
	\item a lift of
	\begin{equation}
		((B_{\mathrm{Spin}}(M), \Theta^{\mathrm{LC}}), (P, \Theta))\colon M\longrightarrow
		B_\nabla(\Spin_n\times G)
	\end{equation}
	to a map $M\to \mathbf{BStr}(\Spin_n\times G, c_1 + c_2 - \lambda)$.
\end{enumerate}
Here $B_{\Spin}(M)\to M$ is the principal $\Spin_n$-bundle of frames of $M$, and $\Theta^{\mathrm{LC}}$ denotes its
Levi-Civita connection.
\end{defn}
For bordism groups we want the topological version of this.
\begin{defn}
\label{symm_defn}
A \term{tangential structure} is a space $B$ and a map $\xi\colon B\to B\O$. Given a tangential structure $\xi$, a
\term{$\xi$-structure} on a virtual vector bundle $E\to X$ is a lift of the classifying map $f_E\colon X\to B\O$ to
a map $\widetilde f_E\colon X\to B$ such that $\xi\circ \widetilde f_E = f_E$. A \term{$\xi$-structure} on a
manifold $M$ is a $\xi$-structure on its tangent bundle.

We make the analogous definition with maps $\xi_n\colon B_n\to B\O_n$; in this case, we only refer to
$\xi_n$-structures on $n$-manifolds.
\end{defn}
Lashof~\cite{Las63} defined bordism groups $\Omega_*^\xi$ of manifolds with $\xi$-structure, and Boardman~\cite[\S
V.1]{Boa65} defined a Thom spectrum $\mathit{MT\xi}$ whose homotopy groups are naturally isomorphic to
$\Omega_*^\xi$ via the Pontrjagin-Thom construction.\footnote{In homotopy theory, it is common to study the Thom
spectra $\mathit{M\xi}$ representing $\xi$-structures on the stable \emph{normal} bundle $\nu_M$ of a manifold $M$,
and indeed many of the results we cite about $\MTSO$, $\MTString$, etc.\ are stated for $\mathit{MSO}$,
$\mathit{MString}$, etc., or about Thom spectra $\mathit{M\xi}$ in general. This is not a problem: for any
tangential
structure $\xi$, there is a tangential structure $\xi^\perp$ such that a $\xi$-structure on $TM$ is equivalent data to a
$\xi^\perp$-structure on $\nu_M$ and vice versa, so that $\mathit{MT\xi}\simeq\mathit{M\xi^\perp}$, so the general
theory is the same. And for $\xi = \O$, $\SO$, $\Spin$, $\Spin^c$, and $\String$, $\xi\simeq\xi^\perp$ and in those
cases we can ignore the difference between $\mathit{M\xi}$ and $\mathit{MT\xi}$.} We think of the category of
tangential structures as the category of spaces over $B\O$, and bordism groups and Thom spectra are functorial in this
category. That is, taking bordism groups and Thom spectra is functorial as long as one commutes with the map down
to $B\O$.

The following definition is a special case of a definition due to Sati-Schreiber-Stasheff~\cite[Definition
2.8]{SSS12}. See~\cite{Sat11, FSS15, FSS21} for other related examples.
\begin{defn}
\label{top_het}
Let $G_n \coloneqq \Spin_n \times (\rE_8\times\rE_8)\rtimes\Z/2$ and
\begin{equation}
\label{gnhet_defn}
	\Ghet_n \coloneqq \cS(G_n, c_1 + c_2 -
\lambda).
\end{equation}
The \term{$\rE_8\times\rE_8$ heterotic tangential structure} is the tangential structure
\begin{equation}
	\xihetn\colon B\Ghet_n\longrightarrow B\Spin_n\longrightarrow B\O_n,
\end{equation}
where the first map comes from the quotient of $\Ghet$ by $\T[1]$, followed by projection onto the $\Spin_n$ factor
in $G_n$. We also define $\Ghet$ and $\xihet$ analogously by stabilizing in $n$.
\end{defn}
In other words: a differential $\xihetn$-structure is a lift of a map to $B_\nabla (\Spin\times G)$ to
$\mathbf{BStr}(G, \mu)$; by \cref{2shape}, a topological $\xihetn$-structure is the image of this data under
$\Gamma_\sharp$. In particular, a $\xihetn$-structure on an $n$-manifold $M$ includes data of a principal $\Ghet_n$-bundle $\mathcal P\to
M$.

Taking the quotient of $\Ghet$ by $\T[1]$ induces a map of tangential structures
\begin{equation}
\label{forget_string}
	\phi\colon B\Ghet\longrightarrow B\Spin\times B(\rE_8^2\rtimes\Z/2).
\end{equation}
Thus, much like a \spinc manifold $M$ has an associated $\T$-bundle $P$ with $c_1(P)\bmod 2 = w_2(M)$, a
$\xihet$-manifold has associated $(\rE_8^2\rtimes\Z/2)$-bundle $P$. From this perspective, a $\xihet$-structure on
a manifold $M$ is the following data:
\begin{itemize}
	\item a spin structure on $M$,
	\item a double cover $\pi\colon \widetilde M\to M$,
	\item two principal $\rE_8$-bundles $P,Q\to \widetilde M$ which are exchanged by the nonidentity deck
	transformation of $\pi$, and
	\item a trivialization of the class $\lambda(M) - (c(P) + c(Q))\in H^4(M;\Z)$.
\end{itemize}
By a \term{trivialization} of a cohomology class $\alpha\in H^n(M; A)$ we mean a null-homotopy of the classifying
map $f_\alpha\colon M\to K(A, n)$. Thus orientations are identified with trivializations of $w_1$, etc. To make the
trivialization of $\lambda(M) - (c(P) + c(Q))$ precise, we have to descend the class $c(P) + c(Q)$, a priori an
element of $H^4(\widetilde M;\Z)$, to $H^4(M;\Z)$. We can do this because, as noted in \cref{char_class_e8}, the
class $c_1 + c_2$ descends through the Serre spectral sequence to the base.
\begin{rem}
\label{xihet_char_class}
We can combine some the data of a $\xihet$ structure on $M$ into a twisted characteristic class. Let $\Z^\sigma$ be
the $\Z[\Z/2]$-module isomorphic to $\Z^2$ as an abelian group, and in which the nontrivial element of $\Z/2$ swaps
the two factors. Then, let $\Z^\sigma_\pi$ denote the local system on $M$ which is the associated bundle
$\widetilde M\times_{\Z/2} \Z^\sigma$. A pair of classes $x,y\in H^k(\widetilde M;\Z)$ exchanged by the deck
transformation thus define a class in $H^k(M; \Z^\sigma_\pi)$, so, the classes $c(P)$ and $c(Q)$ in
$H^4(\widetilde M;\Z)$ together define a class $\widetilde c(P, Q)\in H^4(M;\Z^\sigma_\pi)$, which is a characteristic
class of an $((\rE_8\times\rE_8)\rtimes\Z/2)$-bundle.

If $\Z$ denotes the $\Z[\Z/2]$-module isomorphic to $\Z$ as an abelian group and with trivial $\Z/2$-action, then
taking the quotient of $\Z^\sigma$ by the submodule generated by $(1, -1)$ defines a map of $\Z[\Z/2]$-modules $q\colon
\Z^\sigma\to\Z$, hence also a map between the corresponding twisted cohomology groups, and this map sends
$\widetilde c(P, Q)\mapsto c(P) + c(Q)$. Therefore one could recast a $\xihet$-structure on a spin manifold $M$ as
the data of a principal $((\rE_8\times\rE_8)\rtimes\Z/2)$-bundle $(P,Q,\pi)$ together with a trivialization of
$\lambda(M) - q(\widetilde c(P, Q))$.

Bott-Samelson~\cite[Theorems IV, V(e)]{BS58} showed that the map $B\rE_8\to K(\Z, 4)$ defined by the characteristic
class $c$ is $15$-connected. This implies that up to isomorphism, a principal
$((\rE_8\times\rE_8)\rtimes\Z/2)$-bundle on a manifold of dimension $15$ or lower is equivalent data to its
characteristic class $\widetilde c$.
\end{rem}
\begin{rem}
\label{no_Z2}
One might want to simplify by restricting to the special case where $\pi\colon\widetilde M\to M$ is trivial (as done
in, e.g.,~\cite{Wit86}), in which case the data of a $\xihet$-structure simplifies to the data of a spin structure
on $M$, two principal $\rE_8$-bundles $P,Q\to M$, and a trivialization of $\lambda(M) - c(P) - c(Q)$.  This
corresponds to the tangential structure $\xi^{r,\mathrm{het}}\colon B\cS(\Spin\times \rE_8\times\rE_8, c_1 + c_2 -
\lambda)\to B\Spin\to B\O$.
\end{rem}

%
%
%


\subsection{The CHL string}
\label{CHL_intro}
Eleven-dimensional $\mathcal N = 1$ supergravity admits a time-reversal symmetry, allowing it to be defined on
\pinp $11$-manifolds.\footnote{In addition to the \pinp structure, one needs the additional data of a lift of
$w_4(TM)$ to $w_1(TM)$-twisted integral cohomology. See~\cite{Wit97, Wit16, FH21b}.} Therefore we can compactify it
on a Möbius strip with certain boundary data to obtain a nine-dimensional supergravity theory; the goal of this
subsection is to determine the tangential structure of this theory. Eleven-dimensional $\mathcal N = 1$
supergravity is expected to be the low-energy limit of a theory called M-theory,\footnote{M-theory is expected to
require additional data on top of the tangential structure described above for $11$-dimensional $\mathcal N = 1$
supergravity. See~\cite[Table 1]{FSS20} and the references listed there.} and compactifying M-theory on the Möbius
strip is expected to produce a string theory called the \term{Chaudhuri-Hockney-Lykken (CHL) string}~\cite{CHL95}
whose low-energy limit is the $9$-dimensional supergravity theory described above; we study the tangential
structure of this supergravity theory in this subsection with the aim of also learning about the CHL string.

However, we do not want our perspective on the CHL string to be overly one-sided. There is another way to produce
the CHL string by compactifying: consider the circle with its nontrivial principal $\Z/2$-bundle $P\to S^1$. Via
the map $\Z/2\inj \Spin\times ((\rE_8\times\rE_8)\rtimes\Z/2)$, this bundle defines a
$\Spin\times((\rE_8\times\rE_8)\rtimes\Z/2)$-structure on $S^1$ for which $\lambda$ and $c_1 + c_2$ are both
trivial, so this structure lifts to define a $\xihet$-structure on $S^1$. We will call the circle with this
$\xihet$-structure $\RP^1$, as $S^1\cong\RP^1$ as manifolds and the $\xihet$-structure comes from the double cover
$S^1\to\RP^1$. The CHL string is precisely what one obtains by compactifying the $\rE_8^2$ heterotic string on
$\RP^1$.

We want to determine the tangential structure $\xiCHL$ such that the product of $\RP^1$ with a manifold with
$\xiCHL$-structure has an induced $\xihet$-structure. In general, keeping track of how the tangential structure
changes under compactification can be subtle; for a careful analysis, see Schommer-Pries~\cite[\S 9]{SP18}. But for
the CHL string, we can get away with a more ad hoc approach: following Chaudhuri-Polchinski~\cite{CP95} (see
also~\cite[\S 2.2.1]{dBDHKMMS00}) we restrict to the case where the principal $\Z/2$-bundle on $\RP^1\times M$
obtained by the quotient map~\eqref{forget_string} is the pullback of the Möbius bundle $S^1\to\RP^1$ along the
projection $\mathrm{pr}_1\colon \RP^1\times M\to\RP^1$.
\begin{prop}
Let $M$ be a spin manifold and $P\to M$ be a principal $\rE_8$-bundle. The data of a trivialization $\mathfrak s$
of $\lambda(M) - 2c(P)$ induces a $\xihet$-structure on $\RP^1\times M$ whose associated principal $\Z/2$-bundle is
the Möbius bundle $S^1\times M\to\RP^1\times M$. Moreover, if $\dim(M)\le 14$, this assignment is a natural
bijection from the set of isomorphism classes of data $(P, \mathfrak s)$ to the set of $\xihet$-structures on
$\RP^1\times M$ whose associated $\Z/2$-bundle is $S^1\times M\to \RP^1\times M$.
\end{prop}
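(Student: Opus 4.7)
The plan is to construct the forward map explicitly and then verify bijectivity using the classification of $\rE_8$-bundles in low dimensions (\cref{xihet_char_class}) together with the Künneth decomposition for $S^1\times M$.

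First, given $(P,\mathfrak{s})$ on $M$, I would construct the claimed $\xihet$-structure on $\RP^1\times M$ by pullback. Let $\widetilde N := S^1\times M$ with deck transformation $\tau$ acting by the antipodal map on $S^1$, and let $\pi_M\colon \widetilde N\to M$ be projection. Take the associated double cover to be $\widetilde N\to \RP^1\times M$ (pullback of the Möbius bundle), the two $\rE_8$-bundles on $\widetilde N$ to be two copies of $\pi_M^*P$ (trivially exchanged by $\tau$), and the spin structure on $\RP^1\times M$ to be the product of that on $M$ with a once-and-for-all fixed spin structure on $\RP^1$. To produce the required trivialization, I apply \cref{lambda_whitney} together with the triviality of $T\RP^1$ to conclude that $\lambda(\RP^1\times M)=\pi_M^*\lambda(M)$; then $c(P')+c(Q')=2\pi_M^*c(P)$, so the class to trivialize is $\pi_M^*(\lambda(M)-2c(P))$, and $\pi_M^*\mathfrak s$ supplies the trivialization. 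This step uses nothing beyond naturality, the Whitney sum formula, and compatibility of characteristic classes with pullback.

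For the bijection under the hypothesis $\dim M\le 14$, I would unpack a $\xihet$-structure on $\RP^1\times M$ (with the specified $\Z/2$-bundle) as the data of a single $\rE_8$-bundle $P_{\widetilde N}$ on $\widetilde N$ together with a trivialization of $\lambda-(c(P_{\widetilde N})+\tau^* c(P_{\widetilde N}))$, because giving a $(\rE_8\times\rE_8)\rtimes\Z/2$-bundle with prescribed $\Z/2$-quotient is the same as giving a $\Z/2$-equivariant $\rE_8^2$-bundle on $\widetilde N$ for the swap action, which in turn is equivalent to just $P_{\widetilde N}$ (with partner $Q:=\tau^*P_{\widetilde N}$). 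By \cref{xihet_char_class}, for $\dim\widetilde N\le 15$ the class $c(P_{\widetilde N})\in H^4(\widetilde N;\Z)$ classifies such bundles up to isomorphism, and the Künneth splitting $H^4(S^1\times M;\Z)\cong H^4(M;\Z)\oplus H^3(M;\Z)$ decomposes this into the $\tau^*$-invariant piece $a\in H^4(M;\Z)$ (the class of the candidate $P$ on $M$) and an antiinvariant piece $b\in H^3(M;\Z)$. Similarly, a Künneth argument identifies the descent of a trivialization through $\pi_M^*$.

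The main obstacle, and the step which genuinely requires the dimension hypothesis, is showing that the antiinvariant piece $b$ and the non-pulled-back part of a trivialization in $H^3(\RP^1\times M;\Z)$ are absorbed by the equivalence relation of isomorphism of $\xihet$-structures. The key observation is that isomorphism classes of $\xihet$-structures do not just identify isomorphic underlying bundles, but also act by gauge transformations of $P_{\widetilde N}$ on the string-theoretic trivialization via the Wess-Zumino transgression of $c$; because $\dim M\le 14$ controls the relevant Postnikov stages, the induced action of gauge transformations on trivializations has the correct image to identify all extra data with the pullback data from $M$. I would verify this via obstruction theory on the Postnikov tower of $B\Ghet$, together with naturality of the forward construction in $M$, to conclude that the assignment $(P,\mathfrak s)\mapsto (\pi_M^*P,\pi_M^*\mathfrak s)$ descends to a natural bijection on isomorphism classes.
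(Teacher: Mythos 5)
Your forward construction matches the paper's (the paper obtains the trivialization of $\lambda(\RP^1\times M)-\lambda(M)$ from the bounding string structure on $\RP^1$ and the two-out-of-three property rather than from \cref{lambda_whitney}, but this is cosmetic). The gap is in the bijectivity step. You correctly observe that, via \cref{xihet_char_class} and the Künneth formula, the bundle datum is controlled by a class in $H^4(S^1\times M;\Z)\cong H^4(M;\Z)\oplus H^3(M;\Z)$, so there is an $H^3(M;\Z)$'s worth of bundles on $S^1\times M$ that do not pull back from $M$. But your proposed mechanism for eliminating them --- that isomorphisms of $\xihet$-structures act by gauge transformations on the trivialization and ``absorb'' this extra data, to be checked by unspecified obstruction theory on the Postnikov tower --- is not an argument and, I believe, cannot be made into one: a bundle $P_{\widetilde N}\to S^1\times M$ with $c(P_{\widetilde N})=t\otimes b$ and $b\ne 0$ (where $t$ generates $H^1(S^1;\Z)$) is genuinely not isomorphic to a pullback from $M$, and gauge transformations act only on the trivialization datum, not on the isomorphism class of the underlying bundle. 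A side error: calling $b$ the ``antiinvariant piece'' is off, since the deck transformation restricts to the antipodal map of $S^1$, which is orientation-preserving, so $\tau^*$ acts trivially on all of $H^4(S^1\times M;\Z)$; the splitting you are using is the Künneth splitting, not an eigenspace decomposition.

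The correct reason the $H^3(M;\Z)$ classes do not occur is that they are incompatible with trivializability of $\lambda-(c_1+c_2)$. Writing $c(P_{\widetilde N})=1\otimes a+t\otimes b$, the partner bundle $\tau^*P_{\widetilde N}$ has the same characteristic class, and the descended class $c_1+c_2\in H^4(\RP^1\times M;\Z)\cong H^4(M;\Z)\oplus H^3(M;\Z)$ is $1\otimes 2a+s\otimes b$, with $s$ a generator of $H^1(\RP^1;\Z)$. Since $\lambda(\RP^1\times M)=1\otimes\lambda(M)$ has no component in the $H^1(\RP^1)\otimes H^3(M)$ summand, a trivialization of $\lambda-(c_1+c_2)$ exists only if $b=0$, i.e.\ only if the bundle pulls back from $M$ (and then only if $2a=\lambda(M)$). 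This is the content the paper extracts, in compressed form, from the twisted characteristic class of \cref{xihet_char_class} and the twisted Künneth computation of $H^4(\RP^1\times M;\Z^\sigma_\pi)$, and some version of it is needed to complete your argument. (A fully careful treatment would also track the torsor of trivializations, where $H^3(\RP^1\times M;\Z)\cong H^3(M;\Z)\oplus H^2(M;\Z)$ raises an analogous issue; neither your sketch nor the paper's proof spells that out.)
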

\begin{proof}
Let $\pi\colon S^1\times M\to M$ be the projection onto the second factor. Given $P\to M$ and $\mathfrak s$, the
pair of $\rE_8$-bundles $(\pi^*P, \pi^*P)\to S^1\times M$ are exchanged by the deck transformation for $S^1\times
M\to\RP^1\times M$, and $(c_1+c_2)$ evaluated on the pair $(\pi^*P, \pi^*P)$ is $2c(P)\in H^4(\RP^1\times M;\Z/2)$.
Choosing the string structure on $\RP^1$ induced from the bounding framing, we obtain a canonical trivialization of
$\lambda(\RP^1\times M) - \lambda(M)\in H^4(\RP^1\times M;\Z)$ from the two-out-of-three property of string
structures. Putting all of this together, we see that we have data of two $\rE_8$-bundles on $S^1\times M$
exchanged by the deck transformation, and a trivialization of $\lambda - (c_1 + c_2)$ on $\RP^1\times M$, thus
defining a $\xihet$-structure as claimed.

To see that this produces all $\xihet$-structures associated with $S^1\times M\to\RP^1\times M$, recall from
\cref{xihet_char_class} that the
$((\rE_8\times\rE_8)\rtimes\Z/2)$-bundle associated to a $\xihet$-structure is classified by a characteristic class
in twisted cohomology. The assumption that the associated $\Z/2$-bundle is $S^1\times M\to\RP^1\times M$ implies
this class belongs to $H^4(\RP^1\times M; \underline{\Z\oplus\Z})$, where a generator of $\pi_1(\RP^1)$ acts on
$\underline{\Z\oplus\Z}$ by swapping the two factors, and $\pi_1(M)$ acts trivially. The twisted Künneth
formula~\cite[Theorem 1.7]{Gre06} gives us an isomorphism
\begin{equation}
	H^4(\RP^1\times M; \underline{\Z\oplus\Z}) \overset\cong\longrightarrow H^4(M; \Z),
\end{equation}
meaning that the pair of $\rE_8$-bundles on the orientation double cover $S^1\times M$ pull back from bundles on
$M$, which must be isomorphic in order to be exchanged by the $\Z/2$-action.
\end{proof}
The Bianchi identity corresponding to this data can therefore be simplified to use a single bundle $P\to M$ and the
class $c(P) + c(P)$: we obtain
\begin{equation}
	\d H = \CW_{2c}(\Theta_P) - \CW_\lambda(\Theta^{\mathrm{LC}}),
\end{equation}
i.e.\ the twisted Bianchi identity for $G = \Spin\times\rE_8$ and $\mu = 2c-\lambda$. Then, following
\cref{top_het,diff_het}, we make the following definitions.
\begin{defn}
\label{diff_CHL}
A \term{differential $\xiCHLn$-structure} on a manifold $M$ is the
following data:
\begin{enumerate}
	\item a Riemannian metric and spin structure on $M$,
	\item a principal $\rE_8$-bundle $P\to M$ with connection $\Theta$, and
	\item a lift of
	\begin{equation}
		((B_{\mathrm{Spin}}(M), \Theta^{\mathrm{LC}}), (P, \Theta))\colon M\longrightarrow
		B_\nabla(\Spin_n\times \rE_8)
	\end{equation}
	to a map $M\to \mathbf{BStr}(\Spin_n\times \rE_8, 2c - \lambda)$.
\end{enumerate}
\end{defn}
What we call $B\GCHL_n$
coincides with Sati-Schreiber-Stasheff's $B\String^{2a}$~\cite[(2.18), \S 2.3.3]{SSS12} and also appears in work of
Fiorenza-Sati-Schreiber~\cite[Remark 4.1.1]{FSS15}, though those papers do not discuss its relationship with
the CHL string.
\begin{defn}[{Sati-Schreiber-Stasheff~\cite[(2.18), \S 2.3.3]{SSS12}}]
\label{top_CHL}
Let
\begin{equation}
	\GCHL_n \coloneqq \cS(\Spin_n\times\rE_8, 2c - \lambda).
\end{equation}
The \term{CHL tangential structure} is the
tangential structure
\begin{equation}
	\xiCHLn\colon B\GCHL_n\longrightarrow B\Spin_n\longrightarrow B\O_n,
\end{equation}
where the first map comes from the quotient of $\GCHL$ by $\T[1]$, followed by projection onto the $\Spin_n$ factor.
Stabilizing in $n$, we also obtain $\GCHL$ and a tangential structure $\xiCHL$.
\end{defn}
A $\xiCHL$-structure on an $n$-manifold $M$ in particular comes with data of a principal $\GCHL_n$-bundle $\mathcal
P\to M$,
and can be formulated as the data of a principal $\rE_8$-bundle $P\to M$ and a trivialization of
$\lambda(M) - 2c(P)\in H^4(M;\Z)$.
\begin{rem}
\label{spinw4}
Since a $\xiCHL$ structure includes data identifying $\lambda$ as twice another class, it induces a trivialization
of the mod $2$ reduction of $\lambda$, which is $w_4$. That is, a $\xiCHL$ structure induces a
$\Spin\ang{w_4}$ structure, where $B\Spin\ang{w_4}$ is the homotopy fiber of $w_4\colon B\Spin\to K(\Z, 4)$. This
structure has been studied in, e.g.~\cite{Wit97, KS04, FH21b} for applications to M-theory.
\end{rem}
\begin{rem}[Variation of the tangential structure along the moduli space]
There is a moduli space of CHL string theories, not just one, and the gauge group depends on
where in the moduli space one is; this moduli space was first studied by Chaudhuri-Polchinski~\cite{CP95}. At a
generic point, the gauge group is broken to $\T^8$, and at various special points the gauge group enhances to
$\rE_8$ or other nonabelian groups: see~\cite[Table 3]{FFGNP21}. We work only at the $\rE_8$ point of the moduli
space in this paper; it would be interesting to apply the techniques in this paper to other points in the CHL
moduli space.

There has been quite a bit of recent research studying the moduli spaces of compactifications of the
$\rE_8\times\rE_8$ heterotic string and the CHL string, and investigating which gauge groups can
occur~\cite{FGN18, CDLZ20, FFGNP20, CDLZ21, FFGNP21, FP21, MV21, CDLZ22, CGH22, CMM22, FP22, Fre22, MF22}.
\end{rem}
%

\section{Bordism computations}
	\label{s_computations}
Now it is time to compute. We will use the Adams spectral sequence to compute $\Omega_*^\xihet$ and
$\Omega_*^\xiCHL$; this is a standard tool in computational homotopy theory and more recently appears frequently in
the mathematical physics literature, and we point the interested reader to Beaudry-Campbell's introductory
article~\cite{BC18}.

Applications of the Adams spectral sequence to mathematical physics questions tend to follow the same formula.
Suppose that we want to compute $\Omega_*^\xi$ for some tangential structure $\xi$.
\begin{enumerate}
	\item First, express $\xi$ as a ``twisted $\xi'$-structure,'' where $\xi'$ is one of $\SO$, $\Spin$, $\Spin^c$,
	or $\String$: prove that a $\xi$-structure on a vector bundle $E\to M$ is equivalent data to an auxiliary
	vector bundle $V\to M$ and a $\xi'$-structure on $E\oplus V$.
	
	This implies that $\mathit{MT\xi}\simeq \mathit{MT\xi'}\wedge X$ for some Thom spectrum $X$ that is
	usually not too complicated.
	\item Next, invoke a change-of-rings theorem to greatly simplify the calculation of the $E_2$-page for
	$\xi'$-bordism of spaces or spectra. Then run the Adams spectral sequence, taking advantage of the extra
	structure afforded by the change-of-rings theorem.
\end{enumerate}
This recipe goes back to work of Anderson-Brown-Peterson~\cite{ABP69} and Giambalvo~\cite{Gia73a, Gia73b, Gia76}
computing twisted spin bordism. It is most commonly used in the case $\xi' = \Spin$, where it has been frequently
used to compute bordism groups for tangential structures representing field theories with fermions; $\xi' = \String$ is
less common but still appears in physically motivated examples, including the tangential structure of the Sugimoto
string~\cite{Sug99} and $\xi = \String^c$~\cite{CHZ11, Sat11}.

Unfortunately, $\xihet$ and $\xiCHL$ do not belong to this class of examples: we will see in \cref{simple_shear_no}
that there is no way to write these tangential structures as twisted string structures in the sense
above.\footnote{The presence of the B-field, and how the Bianchi identity mixes it with the principal
$\Spin_n$-bundle of frames, rules out $\xi' = \SO$, $\Spin$, or $\Spin^c$.}\textsuperscript{,}\footnote{This
problem also happens to the tangential structures studied in~\cite{FH21b, DY22}.} So we have to do something different.

At odd primes, we plow ahead with the unsimplified Adams spectral sequence, though since we only care about
dimensions $11$ and below the computations are very tractable. At $p = 2$, though, we can modify the above strategy
to simplify the computation: in \S\ref{shearing}, we generalize the notion of ``twisted string bordism'' for which
the change-of-rings trick works to include string covers (in the sense of \cref{string_cover}) of groups of the
form $\Spin\times G$. This applies to both $\xihet$ and $\xiCHL$, and so we are off to the races.
\begin{rem}
We are far from the first to compute bordism groups for a tangential structure $\xi\colon B\to B\O$ where $B$ is
the classifying space of a $2$-group. For example, $\Omega_*^\String$ has been calculated in a range of degrees
by~\cite{Gia71, HR95, MG95, Hov08}; other examples include~\cite{Hil09, KT17, WW19b, WW19a, WWZ19, Tho20, LT21,
Yu21, DL23}.
\end{rem}
\subsection{Twists of string bordism}
\label{shearing}
\begin{quote}
\textit{
	``Started out with a twist, how did it end up like this?\\
	It was only a twist, it was only a twist\dots{}''
}
\end{quote}

Once the tangential structure for a bordism question is known, the next step is typically to prove a ``shearing'' theorem
simplifying the tangential structure. For example, the usual route to computing \pinm bordism~\cite[\S 7]{Pet68} first
establishes an isomorphism between \pinm bordism and the spin bordism of the Thom spectrum $\Sigma^{-1}\MO_1$, and
then computes the latter groups using something like the Adams or Atiyah-Hirzebruch spectral sequence.

There are a few different approaches to shearing theorems, such as those in~\cite{FH21,DDHM22a}, but generally they
work with Thom spectra of vector bundles; for example, the above simplification of \pinm bordism begins with the
observation that a \pinm structure on a bundle $E\to M$ is equivalent data to a real line bundle $L\to M$ and a
spin structure on $E\oplus L$, which follows from a characteristic class computation, and then passes the data of
``$L$ and a spin structure on $E\oplus L$'' through the Pontrjagin-Thom theorem.

This approach does not work for the heterotic and CHL tangential structures.
\begin{lem}
\label{simple_shear_no}
There is no spin vector bundle $V$ on $B((\rE_8\times\rE_8)\rtimes\Z/2)$ such that $\lambda(V) = c_1 + c_2$, and
there is no spin vector bundle $W$ on $B\rE_8$ such that $\lambda(V) = 2c$.
\end{lem}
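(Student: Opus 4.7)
The plan is to reduce both assertions to a single divisibility statement for the first Pontryagin class on $B\rE_8$, and then to resolve that statement using representation theory of $\rE_8$.

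First, I would observe that the first assertion follows from the analogous claim on $B\rE_8$: pulling back along the inclusion $\iota\colon B\rE_8\hookrightarrow B((\rE_8\times\rE_8)\rtimes\Z/2)$ of a single $\rE_8$-factor, we have $\iota^\ast(c_1+c_2)=c$, so a hypothetical spin bundle $V$ with $\lambda(V)=c_1+c_2$ would restrict to a spin bundle $\iota^\ast V$ on $B\rE_8$ with $\lambda(\iota^\ast V)=c$. Both parts of the lemma therefore follow once one shows that the image of
\begin{equation}
\lambda\colon \widetilde{KO}^0(B\rE_8)\longrightarrow H^4(B\rE_8;\Z)\cong\Z\cdot c
\end{equation}
contains neither $c$ nor $2c$.

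To identify this image I would invoke the Atiyah-Segal completion theorem, which identifies $\widetilde{KO}^0(B\rE_8)$ with the $I$-adic completion of the reduced real representation ring $\widetilde{RO}(\rE_8)$. Since $\lambda$ has discrete target $\Z$, it is automatically continuous and therefore factors through a finite-rank quotient of the completion; in particular its image agrees with its image on $\widetilde{RO}(\rE_8)$ itself, where $\lambda$ coincides (up to sign and the factor of $2$ in $2\lambda=p_1$) with the Dynkin index homomorphism $R(\rE_8)\to\Z$.

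The main obstacle is pinning down this image for $\rE_8$. The adjoint representation has Dynkin index equal to the dual Coxeter number $h^\vee=30$, which by the Weyl dimension formula
\begin{equation}
d_\Lambda=\frac{\dim V_\Lambda\cdot(\Lambda,\Lambda+2\delta)}{2\dim\mathfrak{e}_8}
\end{equation}
together with the fact that $248$ is the smallest dimension of a nontrivial irreducible $\rE_8$-representation is also the minimum nonzero Dynkin index attained. An inspection of the eight fundamental representations (equivalently, an arithmetic argument exploiting the even-unimodularity of the $\rE_8$-lattice and the factorization $\dim\mathfrak{e}_8=248=2^3\cdot 31$) upgrades this lower bound to the divisibility statement that every Dynkin index lies in $30\Z$. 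Hence the image of $\lambda$ is contained in $30\Z\cdot c$, which excludes both $c$ and $2c$, completing the proof.
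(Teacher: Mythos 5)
Your proof is correct and follows essentially the same route as the paper's: reduce to the claim that the image of $\lambda$ on $\KO^0(B\rE_8)$ avoids $c$ and $2c$, identify that image with the image on $\mathit{RO}(\rE_8)$ via Atiyah--Segal, and invoke the Dynkin index of $\rE_8$ (which the paper takes from Laszlo--Sorger as $60$ for complex representations, matching your $30$ for real ones once the factor from $2\lambda=p_1$ is accounted for). The only organizational difference is that you restrict to a single $\rE_8$ factor at the outset, which lets you bypass the paper's $V^{\oplus 4}$ device for extending $\lambda$ beyond spin bundles, since every bundle on the $3$-connected space $B\rE_8$ is canonically spin.
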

This means there is no way to express a $\xihet$-structure as ``a $G$-bundle and a string structure on $E$ plus
some associated bundle,'' and likewise for $\xiCHL$.
\begin{proof}
Let $G$ be a compact, simple, simply connected Lie group and $\rho\colon G\to\SU_n$ be a representation.
$H^4(BG;\Z)$ and $H^4(B\SU_n;\Z)$ are both canonically isomorphic to $\Z$, so the pullback map $\rho^*$ on $H^4$ is
a map $\Z\to\Z$, necessarily multiplication by some integer $\delta(\rho)$. Because
$\SU_n$ is compact, connected, and simply connected, the standard inclusion $\SU_n\to\GL_{2n}(\R)$ lifts to a map
$\SU_n\to\Spin_{2n}$. Choices of this lift are a torsor over $H^1(B\SU_n;\Z/2) = 0$, meaning that the
characteristic class $\lambda$ is uniquely defined for $\SU_n$-representations. Moreover, $\lambda$ of the defining
representation is a generator of $H^4(B\SU_n;\Z)$; because $H^4(B\SU_n;\Z)$ is torsion-free, it suffices to show
$2\lambda = p_1$ is twice a generator, which is standard. The \term{Dynkin index} of $G$ is the minimum value of
$\abs{\delta(\rho)}$ over all such representations $\rho$. Laszlo-Sorger~\cite[Proposition 2.6]{LS97} show that the
Dynkin index of $\rE_8$ is $60$, meaning that for any vector bundle $V\to B\rE_8$ with $\SU$-structure induced from
a representation, $\lambda(V)$ is at least $60$ times a generator.

We would like to generalize to real representations.
\begin{lem}
The complexification map $\Spin_n\to\O_n\to\U_n$ has image contained in $\SU_n$.
\end{lem}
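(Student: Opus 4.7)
The plan is to factor the map $\Spin_n \to \O_n \to \U_n$ through $\SO_n$ using connectedness, then check the resulting map $\SO_n \to \U_n$ lands in $\SU_n$ by a determinant computation. First I would observe that for $n \ge 2$, $\Spin_n$ is path-connected, so the continuous homomorphism $\Spin_n \to \O_n$ (the double cover composed with the inclusion $\SO_n \hookrightarrow \O_n$) has image contained in the identity component, namely $\SO_n$. The remaining small cases $n \le 1$ can be handled by inspection since $\Spin_n$ is then a finite group mapping into the abelian group $\U_n$.

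Once this reduction is in place, it suffices to show that the complexification map $\SO_n \to \U_n$ factors through $\SU_n$. This is a standard piece of linear algebra: for any real matrix $A \in \GL_n(\R)$, viewing $A$ as a complex matrix via the inclusion $\R \hookrightarrow \C$ gives $\det_{\C}(A) = \det_{\R}(A)$ regarded as an element of $\C$, because the determinant is given by the same polynomial in the matrix entries over either field. For $A \in \SO_n$ this determinant is $1$, so its image in $\U_n$ lies in $\SU_n$.

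There is no substantive obstacle here: the argument relies only on connectedness of $\Spin_n$ and the compatibility of the determinant with the scalar extension $\R \subset \C$. The only care needed is the handling of the degenerate low-dimensional cases, which is routine.
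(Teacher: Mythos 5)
Your argument is correct, and it takes a genuinely different (and more elementary) route than the paper's. The paper's proof works at the level of classifying spaces: it invokes the criterion that a representation into $\U_n$ factors through $\SU_n$ if and only if the first Chern class of the associated bundle vanishes, and then uses the identity that $c_1$ pulled back along the complexification $B\O_n\to B\U_n$ equals $\beta w_1$, which dies on $B\Spin_n$ because $w_1$ does. Your proof instead stays at the matrix level: the map $\Spin_n\to\O_n$ lands in $\SO_n$ (by connectedness for $n\ge 2$, or indeed by definition, since it is the double cover of $\SO_n$ followed by the inclusion), and the determinant of a real matrix is the same polynomial in its entries whether computed over $\R$ or over $\C$, so an element of $\SO_n$ maps to a unitary matrix of determinant $1$. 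Both arguments are complete. The characteristic-class phrasing is the one the surrounding proof of the paper actually reuses --- the vanishing of $c_1=\beta w_1$ on $B\Spin_n$ is exactly what is needed later when passing from representations to arbitrary vector bundles via Atiyah--Segal --- whereas your determinant computation settles the lemma as literally stated with essentially no machinery. The only point to be explicit about is that a real orthogonal matrix, viewed as a complex matrix, is unitary (since $A^*=A^T$ for real $A$), which is implicit in your use of the given complexification map.
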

\begin{proof}
A lift of a representation $\rho\colon G\to\U_n$ has image contained in $\SU_n$ if and only if $c_1$ of the complex
vector bundle associated to $\rho$ vanishes. When one pulls back across the complexification map $B\O_n\to B\U_n$,
$c_1$ is sent to the image of $w_1$ under the Bockstein map $\beta\colon H^1(B\O_n;\Z/2)\to H^2(B\O_n;\Z)$; when we
pull back further to $B\Spin_n$, $w_1\mapsto 0$, so $c_1 = \beta w_1\mapsto 0$ too.
\end{proof}
Thus the Dynkin index fact we mentioned above applies to complexifications of representations landing in $\Spin_n$.

If $V$ is a real representation of a group $G$, $V\otimes\C\cong V\oplus V$ as real
representations, so using the Whitney sum formula for $\lambda$ (\cref{lambda_whitney}), $\lambda(V\otimes\C)
=2\lambda(V)$. Therefore if $V$ is any real spin representation of $\rE_8$, $\lambda(V\otimes\C)$ is at least $60$
times a generator, so $\lambda(V)$ is at least $30$ times a generator. Thus the class defining $\GCHL$, which is
twice a generator, is not $\lambda$ of any spin representation of $\rE_8$; likewise for $\Ghet$, as one could
restrict to either factor of $\rE_8$ inside $\rE_8^2\rtimes\Z/2$ and obtain a representation with $\lambda$ equal
to the generator.

Finally, the Atiyah-Segal completion theorem extends this from representations to all vector bundles. Because
$\lambda$ is additive (\cref{lambda_whitney}), it factors through the Grothendieck group $\mathit{KSpin}(BG)$ of
spin vector bundles on $BG$, and similarly, evaluated on spin representations, $\lambda$ factors through the
corresponding Grothendieck group $\mathit{RSpin}(G)$. Atiyah-Segal~\cite[\S 7, \S 8]{AS69} show that taking the
associated bundle of an arbitrary representation exhibits the Grothendieck ring $\KO^0(BG)$ of all vector bundles
on $BG$ as the completion of the representation ring $\mathit{RO}(G)$ at its augmentation ideal. Thus given a
$\Z$-valued characteristic class $c$ of arbitrary vector bundles of $G$ which satisfies the Whitney sum formula,
passing from representations of $G$ to vector bundles on $BG$ does not decrease the minimal value of $\abs c$.

In order to use the Atiyah-Segal theorem, we need to get from spin representations and vector bundles to arbitrary
ones. We will do so, at the cost of lowering the minimum value of $\lambda$ a little bit. For any vector bundle
$V$, $V^{\oplus 4}$ admits a canonical spin structure: the Whitney sum formula for Stiefel-Whitney classes shows a
spin structure exists; then choose a spin structure universally over $B\O$. Therefore we can define $\lambda$ of an
arbitrary representation of $\rE_8$ or vector bundle on $B\rE_8$ by $\lambda(V)\coloneqq \tfrac 14
\lambda(V^{\oplus 4})$, valued in $\tfrac 14\Z$. Therefore passing from $\mathit{RO}(\rE_8)\to\KO^0(B\rE_8)$ to
$\mathit{RSpin}(\rE_8)\to\mathit{KSpin}(B\rE_8)$ divides the minimal value of $\lambda$ by at most $4$, and now we
can invoke Atiyah-Segal, so it is still not possible to get $2c$ and $\xiCHL$; and likewise for
$\rE_8^2\rtimes\Z/2$ in place of $\rE_8$ to show that the characteristic class for $\xihet$ cannot be achieved.
\end{proof}
So we take a different approach: we cannot get Thom spectra corresponding to vector bundles, but we can still
obtain $\MTString$-module Thom spectra. We accomplish this using the theory of
Ando-Blumberg-Gepner-Hopkins-Rezk~\cite{ABGHR14a, ABGHR14b} (ABGHR), which we briefly summarize.

The idea behind the ABGHR perspective on Thom spectra is to generalize the notion of local coefficients to
generalized cohomology theories. Given a based, connected space $X$ and a homomorphism $\rho\colon
\pi_1(X)\to\GL_1(\Z)\cong\set{\pm 1}$, one obtains a local coefficient system $\Z_\rho$ on $X$: this is a bundle on
$X$ with fiber $\Z$, and whose monodromy around a loop $\gamma\in\pi_1(X)$ is precisely $\rho(\gamma)$. Given
$\Z_\rho$, we can take twisted cohomology groups: if $\widetilde X\to X$ denotes the universal cover, then the
cochain complex $C^\ast(\widetilde X;\Z)$ has a $\pi_1(X)$-action induced from the $\pi_1(X)$-action on $\widetilde
X$. If $C^\ast(X; \Z_\rho)$ denotes the subcomplex of $C^\ast(\widetilde X;\Z)$ of cochains which transform under
this $\pi_1(X)$-action by $\rho$, then $H^\ast(X;\Z_\rho)\coloneqq H^\ast(C^\ast(X;\Z_\rho))$.

Another way to say this is that if $\pt/G$ denotes the category with one object $\ast$ and $\Hom(\ast, \ast)
= G$, $\rho$ defines a $\pt/\pi_1(X)$-shaped diagram of chain complexes of abelian groups:
\begin{equation}
\label{local_coh}
	\pt/\pi_1(X) \overset\rho\longrightarrow \pt/\set{\pm 1}\longrightarrow \Ch_\Z,
\end{equation}
sending $\pt$ to $C^\ast(\widetilde X;\Z)$, and sending $g\in\pi_1(X)$ to the action by $\rho(g)$. The subcomplex
of cochains that transform by $\rho$ is precisely the limit of this diagram. For functoriality reasons, we envision
this complex as cochains on some object $\mathcal X$ which is a \emph{co}limit of a diagram akin
to~\eqref{local_coh}.

To summarize, twisted cohomology, i.e.\ cohomology of the Thom spectrum, is expressed as a colimit of a diagram of
chain complexes of $\Z$-modules induced from a map $X\to B\Aut(\Z)$. Ando-Blumberg-Gepner-Hopkins-Rezk lift this to
spectra. Specifically, given a ring spectrum $R$,
Ando-Blumberg-Gepner-Hopkins-Rezk naturally associate
a topological group\footnote{$\GL_1(R)$ is not exactly a topological group, but the
homotopy-coherent version thereof: a grouplike $A_\infty$-space.} $\GL_1(R)$, thought of as the group of units or
group of automorphisms of $R$. The classifying space
$B\GL_1(R)$ carries the universal local system of $R$-lines; a local system of $R$-lines over $X$ is equivalent
data to a map $X\to B\GL_1(R)$.
\begin{defn}[{Ando-Blumberg-Gepner-Hopkins-Rezk~\cite[Definition 2.20]{ABGHR14a}}]
\label{ABGHR_Thom}
The \term{Thom spectrum} $Mf$ associated to a map $f\colon X\to B\GL_1(R)$ is the colimit of the diagram $X\to
B\GL_1(R)\to\Mod_R$, where we think of $X$ as its fundamental $\infty$-groupoid.
\end{defn}
When $R = \Sph$, this is due to Lewis~\cite[Chapter IX]{LMSM86}. In \cref{ABGHR_Thom}, we have to consider the
fundamental $\infty$-groupoid, rather than just $\pi_1$, because $R$ can have higher automorphisms, because spectra
are derived objects.

The Thom spectrum of a map to $B\GL_1(R)$ is an $R$-module.
\begin{exm}[Twisted ordinary cohomology]
\label{twisted_ord_coh}
It turns out $B\GL_1(H\Z)\simeq K(\Z/2, 1)$, so the ABGHR viewpoint recovers $\Aut(\Z)$ and the usual notion of
cohomology twisted by a local system. To prove this homotopy equivalence, use the homotopy pullback
square of $E_\infty$-spaces~\cite[Definition 2.1]{ABGHR14b}
\begin{equation}
\begin{tikzcd}
	{\GL_1(H\Z)} & {\Omega^\infty H\Z} \\
	{(\pi_0 (H\Z))^\times} & {\pi_0(H\Z).}
	\arrow[from=1-1, to=1-2]
	\arrow["\varphi", from=1-1, to=2-1]
	\arrow["\psi", from=1-2, to=2-2]
	\arrow[from=2-1, to=2-2]
\end{tikzcd}
\end{equation}
$\Omega^\infty H\Z\simeq\Z$ as $E_\infty$-spaces, and $\psi$ is a homotopy equivalence of $E_\infty$-spaces.
Therefore $\varphi$ is also a homotopy equivalence of $E_\infty$-spaces, and we conclude.
\end{exm}
\begin{exm}[Thom spectra from vector bundles]
Boardman's original definition of Thom spectra~\cite[\S V.1]{Boa65} associates them to virtual vector bundles $V\to
X$. Let us connect this to the ABGHR definition. Virtual vector bundles are classified by maps $f_V\colon X\to
B\O$, and one avatar of the $J$-homomorphism~\cite{Whi42} is a map $J\colon \O\to\GL_1(\Sph)$~\cite[Example
3.15]{ABG10}, which deloops to a map of spaces $BJ\colon B\O\to B\GL_1(\Sph)$. A map with this signature is a
natural assignment from virtual vector bundles $V\to X$ to local systems of invertible $\Sph$-modules, and $BJ$
assigns to $V$ the local system with fiber $\Sph^{V_x}$ at each $x\in X$. Putting these maps together, we have an
$X$-shaped diagram
\begin{equation}
\label{OG_Thom}
	X\overset{f_V}\longrightarrow B\O \overset{BJ}{\longrightarrow} B\GL_1(\Sph)\longrightarrow \cat{Sp},
\end{equation}
and the colimit of this diagram, which is a Thom spectrum in the ABGHR sense, coincides with the Thom spectrum
$X^V$ in the usual sense. This is a combination of theorems of Lewis~\cite[Chapter IX]{LMSM86} and
Ando-Blumberg-Gepner-Hopkins-Rezk~\cite[Corollary 3.24]{ABGHR14a}.
\end{exm}
This approach to Thom spectra plays well with multiplicative
structures. If $R$ is an $E_\infty$-ring spectrum, then the grouplike $A_\infty$-structure on $\GL_1(R)$
refines to a grouplike $E_\infty$-structure, making $\GL_1(R)$ and therefore $B\GL_1(R)$ into infinite loop spaces.
For $1\le k\le\infty$, if $X$ is a $k$-fold loop space and $f\colon X\to B\GL_1(R)$ is a $k$-fold loop map, then
the Thom spectrum $Mf$ inherits the structure of an $E_k$-ring spectrum. This is a theorem of Lewis~\cite[Theorem
IX.7.1]{LMSM86} for $R = \Sph$ and Ando-Blumberg-Gepner~\cite[Theorem 1.7]{ABG18} for more general $R$.

$B\O$ has an infinite loop space structure coming from the addition-like operation on $B\O$ of direct sum of vector
bundles. The $J$-homomorphism $BJ\colon B\O\to B\GL_1(\Sph)$ is an infinite loop map, so we get an $E_\infty$-ring
structure on $\mathit{MT}\xi$ if $\xi$ is a tangential structure satisfying a \term{2-out-of-3 property}, i.e.\ whenever
any two of $E$, $F$, and $E\oplus F$ have a $\xi$-structure, the third has an induced $\xi$-structure. The idea is
that the 2-out-of-3 property implies that $\xi\colon B\to B\O$ is an infinite loop map, so passing to
$B\GL_1(\Sph)$ and taking the Thom spectrum, we obtain an $E_\infty$-ring spectrum. This applies to $\MTO$,
$\MTSO$, $\MTSpin^c$, $\MTSpin$, and $\MTString$; however, some commonly considered tangential structures appearing in
physics do not have this property, including $B\mathrm{Pin}^\pm$.

%
\begin{prop}
\label{general_twisting}
Let $B$ and $X$ be infinite loop spaces and $\xi\colon B\to B\O$ and $f\colon B\to X$ be infinite loop maps, so
that the fiber $\eta\colon F\to B$ of $f$ is also a map of infinite loop spaces. This data naturally defines twists
of the Thom spectrum $M(\xi\circ\eta)$ over $X$, i.e.\ a map $X\to B\GL_1(M(\xi\circ\eta))$.
\end{prop}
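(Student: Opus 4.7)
The plan is to apply the ABGHR universal property of Thom spectra together with the fact that in the world of infinite loop spaces (equivalently, connective spectra), fiber sequences coincide with cofiber sequences, so the map $f\colon B\to X$ is the cofiber of $\eta\colon F\to B$.

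First, since $\xi\circ\eta\colon F\to B\O$ is a composition of infinite loop maps, so is $F\xrightarrow{\xi\circ\eta} B\O\xrightarrow{BJ} B\GL_1(\Sph)$; by the Lewis/Ando--Blumberg--Gepner result quoted just before the statement, the Thom spectrum $R\coloneqq M(\xi\circ\eta)$ is an $E_\infty$-ring spectrum, and consequently $B\GL_1(R)$ is itself an infinite loop space. The unit $\Sph\to R$ induces an infinite loop map $u\colon B\GL_1(\Sph)\to B\GL_1(R)$.

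Second, I will invoke the universal property of the Thom spectrum in the ABGHR framework: $R$ corepresents, on $E_\infty$-rings, pairs consisting of a ring together with a null-homotopy (of infinite loop maps) of the composite $F\to B\GL_1(\Sph)\to B\GL_1(R')$. Applied to $R' = R$ with the identity, this produces a canonical null-homotopy, as infinite loop maps, of
\begin{equation}
F \xrightarrow{\xi\circ\eta} B\O \xrightarrow{BJ} B\GL_1(\Sph) \xrightarrow{u} B\GL_1(R).
\end{equation}
Because this composite equals $F\xrightarrow{\eta} B\xrightarrow{u\circ BJ\circ\xi} B\GL_1(R)$, we obtain a canonical null-homotopy, of infinite loop maps, of the composite $u\circ BJ\circ\xi\circ\eta$.

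Third, the sequence $F\xrightarrow{\eta} B\xrightarrow{f} X$ is a fiber sequence of infinite loop spaces, hence (passing to the associated connective spectra) also a cofiber sequence. Thus $X$ represents the cofiber of $\eta$ in infinite loop spaces, and the canonical null-homotopy produced in the previous step yields a factorization
\begin{equation}
B \xrightarrow{f} X \xrightarrow{\tau} B\GL_1(R)
\end{equation}
of $u\circ BJ\circ\xi$ through $X$, well-defined up to contractible choice. The map $\tau$ is the desired twist.

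The only step that requires real care is the second one: one must verify that the universal property of $M(\xi\circ\eta)$ is formulated in the $E_\infty$ (equivalently, infinite loop) setting so that the resulting null-homotopy is compatible with the infinite loop space structures needed in the cofiber argument. This is the content of the enhancement from \cite{ABG18} of the original Lewis universal property for $\Sph$-algebra Thom spectra, and once it is in hand the rest is formal manipulation of fiber/cofiber sequences of infinite loop spaces.
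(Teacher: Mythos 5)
Your argument is correct and reaches the same conclusion, but it is organized differently from the proof in the paper. The paper extends the fiber sequence one step to the left: writing $\zeta\colon \Omega X\to F$ for the fiber of $\eta$, the composite $\xi\circ\eta\circ\zeta$ is nullhomotopic, so its Thom spectrum is $\Sigma^\infty_+\Omega X$, and functoriality of Thom spectra gives an $E_\infty$-ring map $\Sigma^\infty_+\Omega X\to M(\xi\circ\eta)$; the adjunction $\Sigma^\infty_+\dashv \GL_1$ of~\cite{ABGHR14b} then converts this into an infinite loop map $\Omega X\to\GL_1(M(\xi\circ\eta))$, which is delooped to produce the twist. You instead stay at the level of $F\to B\to X$: you invoke the universal $E_\infty$-orientation property of $M(\xi\circ\eta)$ (applied to the identity) to get a canonical nullhomotopy of $F\to B\GL_1(M(\xi\circ\eta))$, and then factor $B\to B\GL_1(M(\xi\circ\eta))$ through $X$ by treating the fiber sequence as a cofiber sequence of connective spectra. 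Since the universal orientation property is itself proved via the $\Sigma^\infty_+\dashv\GL_1$ adjunction, the two arguments are two packagings of the same input; yours has the mild advantage of making explicit that the resulting twist restricts along $f$ to the given map $u\circ BJ\circ\xi$ on $B$ (which is what feeds into \cref{universal_Thom}), while the paper's version avoids any appeal to cofiber sequences of non-stable objects.

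One caveat to state honestly: the slogan that fiber sequences of infinite loop spaces are cofiber sequences of connective spectra is only correct when $\pi_0(B)\to\pi_0(X)$ is surjective; otherwise $\mathrm{cofib}(\eta)$ differs from $X$ by an Eilenberg--Mac Lane summand in degree $0$. The paper's proof has the mirror-image gap (delooping $\Omega X\to\GL_1(M(\xi\circ\eta))$ only recovers the basepoint component of $X$), and in every application in this paper $X$ is a connected Eilenberg--Mac Lane space with the map surjective on homotopy, so nothing is lost --- but if you present the argument in this generality you should record the hypothesis.
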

\begin{proof}
The fiber of $\eta\colon F\to B$ is another infinite loop map $\zeta\colon \Omega X\to F$, so the induced map of
Thom spectra (where the maps down to $B\O$ are $\xi\circ\eta\circ\zeta$ and $\xi\circ\eta$ respectively) is a map
of $E_\infty$-ring spectra. Because $\xi\circ\eta\circ\zeta$ is nullhomotopic, its Thom spectrum is a suspension
spectrum, so we have a map of $E_\infty$-ring spectra $\Sigma_+^\infty \Omega X\to M(\xi\circ\eta)$.

Ando-Blumberg-Gepner-Hopkins-Rezk~\cite[(1.4), (1.7)]{ABGHR14b} prove that $\Sigma^\infty_+$ and $\GL_1$ are an
adjoint pair on the categories of infinite loop spaces and $E_\infty$-ring spectra. Applying this adjunction, we
have a map of infinite loop spaces $\Omega X\to \GL_1(M(\xi\circ\eta))$; deloop to obtain the map in the theorem
statement.
\end{proof}
\begin{thm}[{Beardsley~\cite[Theorem 1]{Bea17}}]
\label{universal_Thom}
With notation as in \cref{general_twisting}, the Thom spectrum of the ``universal twist'' $X\to
B\GL_1(M(\xi\circ\eta))$ is canonically equivalent to $M\xi$.
\end{thm}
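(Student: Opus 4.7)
The plan is to express both $M\xi$ and the Thom spectrum of the universal twist as colimits in the $\infty$-category of spectra, and then identify them via the transitivity (``Fubini'') property of colimits along a fibration. The main obstacle is matching the classifying map that arises from this colimit decomposition with the twist produced via the $\Sigma^\infty_+\dashv \GL_1$ adjunction in the proof of \cref{general_twisting}.

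First, recall from \cref{ABGHR_Thom} that the Thom spectrum of any map $g\colon Y\to B\GL_1(R)$ is the colimit of the composite $Y\to B\GL_1(R)\to\Mod_R$, with $Y$ regarded as its fundamental $\infty$-groupoid. In particular, $M\xi$ is the colimit of $B\to B\GL_1(\Sph)\to\cat{Sp}$. Now use the fibration $f\colon B\to X$: by straightening/unstraightening, $f$ is classified by an $X$-indexed diagram of spaces whose values are coherently equivalent to $F$, and $\xi$ makes this into a compatible $X$-family of maps $F\to B\GL_1(\Sph)$. The Fubini theorem for colimits in presentable $\infty$-categories then gives
\begin{equation}
M\xi \;\simeq\; \underset{x\in X}{\mathrm{colim}}\; M(\xi\circ\eta)_x,
\end{equation}
where $M(\xi\circ\eta)_x$ denotes the Thom spectrum of $\xi$ restricted to the fiber over $x$. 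Each such spectrum is, coherently in $x$, equivalent as an $R_0$-module to $R_0 \coloneqq M(\xi\circ\eta)$ itself. Hence $M\xi$ is the colimit of a local system of invertible $R_0$-modules on $X$, which by \cref{ABGHR_Thom} is exactly the Thom spectrum $Mu'$ of some infinite loop map $u'\colon X\to B\GL_1(R_0)$ classifying this local system.

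It remains to identify $u'$ with the universal twist $u$ constructed in \cref{general_twisting}. Both maps are infinite loop maps, so by the $\Sigma^\infty_+\dashv\GL_1$ adjunction it suffices to compare the corresponding $E_\infty$-ring maps $\Sigma^\infty_+\Omega X \to R_0$. The map associated to $u$ is, by construction, the map of Thom spectra induced by the fiber inclusion $\zeta\colon \Omega X\to F$ together with the identification $M(\xi\circ\eta\circ\zeta)\simeq \Sigma^\infty_+\Omega X$. The map associated to $u'$ captures the monodromy of the $R_0$-module local system above, which by naturality is computed from the monodromy of the fibration $f$, i.e.\ from the loop-space action classifying $F\to B\to X$; this action is represented at the level of spaces by $\zeta$. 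The two $E_\infty$-ring maps $\Sigma^\infty_+\Omega X\to R_0$ therefore agree, so $u\simeq u'$ and $M\xi \simeq Mu$, as desired. The principal subtlety to check in detail is the coherence statement that the $R_0$-module structures on the fiber Thom spectra can be trivialized in an $X$-family way compatible with all the infinite loop space structures, which is where the hypothesis that $\xi$ and $f$ are infinite loop maps is crucially used.
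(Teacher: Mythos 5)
The paper does not actually prove this statement: it is quoted verbatim from Beardsley~\cite[Theorem 1]{Bea17}, so there is no internal proof to compare against. That said, your outline is essentially a sketch of Beardsley's own argument. Your ``Fubini'' step is the statement that the colimit of $B\to B\GL_1(\Sph)\to\cat{Sp}$ can be computed as the colimit over $X$ of the left Kan extension along $f\colon B\to X$, whose pointwise values are the fiberwise Thom spectra; this is exactly the decomposition Beardsley uses, and the identification of the resulting local system's classifying map with the twist of \cref{general_twisting} via the $\Sigma^\infty_+\dashv\GL_1$ adjunction is also how compatibility is checked there.

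The one thing to be clear-eyed about is that the step you defer at the end --- showing that the fiberwise Thom spectra assemble into a coherent local system of \emph{invertible $M(\xi\circ\eta)$-modules}, compatibly with the multiplicative structures, rather than merely a local system of spectra --- is not a routine verification but the entire technical content of Beardsley's paper (this is what the ``operadic Kan extension'' machinery is for: one must perform the Kan extension in a category of algebras/modules, not just in $\cat{Sp}$, to see the $R_0$-module structure on each fiber and its coherence in $x$). Relatedly, your claim that the monodromy of the module local system ``is represented at the level of spaces by $\zeta$'' is asserted by naturality but is precisely the point that needs the coherent multiplicative setup to make sense. So the skeleton is correct and matches the cited proof, but as written the argument pushes the genuinely hard part into the final sentence.
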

\begin{cor}\hfill
\label{bordism_twists}
\begin{enumerate}
	\item There is a map $\widehat w_1\colon K(\Z/2, 1)\to B\GL_1(\MTSO)$ which, after taking the quotient
	$\MTSO\to H\Z$, passes to the usual homotopy equivalence $K(\Z/2, 1)\to B\GL_1(H\Z)$ from
	\cref{twisted_ord_coh}.
	\item There is a map $\widehat w_2\colon K(\Z/2, 2)\to B\GL_1(\MTSpin)$ which, after composing with the
	Atiyah-Bott-Shapiro map $\MTSpin\to\ko$~\cite{ABS, Joa04}, is the usual map $K(\Z/2, 2)\inj
	B\GL_1(\ko)$~\cite{DK70, HJ20}.
	\item\label{k_twist} There is a map $\widehat{\beta w_2}\colon K(\Z, 3)\to B\GL_1(\MTSpin^c)$ which, after
	composing with the Atiyah-Bott-Shapiro map $\MTSpin^c\to\ku$~\cite{ABS, Joa04, AHR10}, is the usual twist of
	$K$-theory by degree-$3$ classes $K(\Z, 3)\to B\GL_1(\ku)$~\cite{DK70, Ros89, AS04, ABG10}.
	\item There is a map $\widehat\lambda\colon K(\Z, 4)\to B\GL_1(\MTString)$ which, when composed with the
	Ando-Hopkins-Rezk orientation $\MTString\to\tmf$~\cite{AHR10}, is the Ando-Blumberg-Gepner map $K(\Z, 4)\to
	B\GL_1(\tmf)$~\cite[Proposition 8.2]{ABG10}.
\end{enumerate}
\end{cor}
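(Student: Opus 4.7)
The plan is to apply Proposition~\ref{general_twisting} four times, once for each fiber sequence of infinite loop spaces that governs the relevant tangential structure, and then to verify the claimed compatibilities with the ABS and AHR orientations by chasing the $\Sigma^\infty_+ \dashv \GL_1$ adjunction used in the proof of Proposition~\ref{general_twisting}.

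For the construction of each of $\widehat w_1$, $\widehat w_2$, $\widehat{\beta w_2}$, and $\widehat\lambda$, I would apply Proposition~\ref{general_twisting} to the following data. For (1), take $\xi = \mathrm{id}_{B\O}$ and $f = w_1\colon B\O \to K(\Z/2, 1)$, whose fiber is $B\SO \to B\O$, so that $M(\xi\circ\eta) = \MTSO$. For (2), take $\xi\colon B\SO \to B\O$ and $f = w_2\colon B\SO \to K(\Z/2, 2)$, whose fiber is $B\Spin \to B\SO$, so that $M(\xi\circ\eta) = \MTSpin$. For (3), take $\xi\colon B\SO \to B\O$ and $f = \beta w_2\colon B\SO \to K(\Z, 3)$, whose fiber is $B\Spin^c \to B\SO$. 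For (4), take $\xi\colon B\Spin \to B\O$ and $f = \lambda\colon B\Spin \to K(\Z, 4)$, whose fiber is $B\String \to B\Spin$. In each case $f$ is a stable cohomology operation, hence an infinite loop map, and the fibers are infinite loop maps automatically, so Proposition~\ref{general_twisting} applies and produces the four maps.

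For the compatibility with the orientations, recall from the proof of Proposition~\ref{general_twisting} that the twist $X \to B\GL_1(M(\xi\circ\eta))$ is the image under the $\Sigma^\infty_+ \dashv \GL_1$ adjunction of a canonical map of $E_\infty$-ring spectra $\Sigma^\infty_+ \Omega X \to M(\xi\circ\eta)$, built from the nullhomotopy of $\xi\circ\eta\circ\zeta$ where $\zeta\colon \Omega X \to F$ is the next map in the fiber sequence. If $\Phi\colon M(\xi\circ\eta) \to E$ is a map of $E_\infty$-ring spectra, naturality of the adjunction implies that the composition $X \to B\GL_1(M(\xi\circ\eta)) \to B\GL_1(E)$ is adjoint to $\Sigma^\infty_+ \Omega X \to M(\xi\circ\eta) \xrightarrow{\Phi} E$. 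So for each of the ABS and AHR orientations, the compatibility reduces to identifying this composite of $E_\infty$-ring maps with the adjoint of the known twist on the target.

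The main obstacle is this last identification, which requires comparing our construction with the specific constructions in~\cite{DK70, AS04, ABG10}. For parts~(1) and~(2), the target infinite loop space $B\GL_1(H\Z)$, respectively the $2$-truncation of $B\GL_1(\ko)$, has only one nontrivial homotopy group in the relevant range, so it suffices to check that our $\widehat{(\cdot)}$ composed with the orientation is nonzero, which follows from Theorem~\ref{universal_Thom} (the Thom spectrum of the twist is $M\xi$, not a trivial smash product). For~\ref{k_twist}, the Ando-Blumberg construction of the degree-$3$ twist of $\ku$ in~\cite[\S 3]{ABG10} is itself obtained by pulling back along the ABS map, giving a direct match. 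For~(4), the Ando-Blumberg-Gepner twist of $\tmf$ in~\cite[Proposition 8.2]{ABG10} is constructed as the adjoint of a map $\Sigma^\infty_+ K(\Z, 3) \to \tmf$ arising from the same fiber-sequence trivialization applied to $\lambda\colon B\String \to B\Spin \to K(\Z, 4)$ composed with the AHR orientation, so the two constructions agree by design.
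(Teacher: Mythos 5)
Your proposal follows the paper's proof essentially verbatim: apply \cref{general_twisting} to the four fiber sequences determined by $w_1$, $w_2$, $\beta w_2$, and $\lambda$ (all infinite loop maps since these classes are additive), and then deduce compatibility with the known twists from the fact that Ando-Blumberg-Gepner construct the $\ku$- and $\tmf$-twists by the same adjunction procedure, so that naturality/functoriality gives the identification. Your slightly more explicit connectivity argument for the $H\Z$ and $\ko$ cases is a reasonable fleshing-out of what the paper dismisses as ``analogous,'' but it is not a different route.
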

Part~\eqref{k_twist} is a theorem of Hebestreit-Joachim~\cite[Appendix C]{HJ20}. The other parts are surely
known, though we were unable to find them in the literature.
\begin{proof}
Apply \cref{general_twisting} to the four maps
\begin{enumerate}
	\item $w_1\colon B\O\to K(\Z/2, 1)$, whose fiber is $B\SO$;
	\item $w_2\colon B\SO\to K(\Z/2, 2)$, whose fiber is $B\Spin$;
	\item $\beta\circ w_2\colon B\SO\to K(\Z, 3)$, whose fiber is $B\Spin^c$, where $\beta\colon H^k(\bl;\Z/2)\to
	H^{k+1}(\bl;\Z)$ is the Bockstein; and
	\item $\lambda\colon B\Spin\to K(\Z, 4)$, whose fiber is $B\String$.
\end{enumerate}
All four of these are infinite loop maps, because these characteristic classes are additive in direct sums. For
compatibility with preexisting twists, we use the fact that in the \spinc and string cases,
Ando-Blumberg-Gepner~\cite[\S 7, \S 8]{ABG10} construct the desired twists $K(\Z, 3)\to B\GL_1(\ku)$ and $K(\Z,
4)\to B\GL_1(\tmf)$ in the same way as we construct the twists of $\MTSpin^c$ and $\MTString$, so compatibility
follows from functoriality. The cases of $\ko$ and $H\Z$ are analogous.
\end{proof}
The homotopy groups of the Thom spectra of the twists \cref{bordism_twists} have bordism interpretations. Looking
at $\widehat w_2$ for example, a spin structure on an oriented manifold is a trivialization of $w_2(TM)$, but given
a space $X$ and a degree-$2$ cohomology class $B$, thought of as a map $f_B\colon X\to K(\Z/2, 2)$, the homotopy
groups of $\mathit{MT}(\widehat w_2\circ f_B)$ are the bordism groups of oriented manifolds $M$ together with a map
$g\colon M\to X$ and a trivialization of $w_2(TM) + g^*B$, as was shown by Hebestreit-Joachim~\cite[Corollary 3.3.8]{HJ20}. The other three cases are analogous; in particular, we
have described the Thom spectra for $\xihet$ and $\xiCHL$ as $\MTString$-module Thom spectra.

These kinds of twisted bordism have been studied before: \spinc structures twisted by a degree-$3$ cohomology class
were first studied by Douglas~\cite[\S 5]{Dou06}, and they appear implicitly in work of Freed-Witten~\cite{FW99} on
anomaly cancellation. Twisted spin and string structures of the sort appearing in \cref{bordism_twists} were first
considered by B.L.\ Wang~\cite[Definitions 8.2, 8.4]{Wan08}. See~\cite{DFM11a, DFM11b, Sat11a, Sat11b, Sat12,
Sat15, SW15, LSW20, SY21} for more examples of twisted generalized cohomology theories from a similar point of view
and some applications in physics.

The first case, involving twists of $\MTSO$ by degree-$1$ $\Z/2$-cohomology classes, is the notion of a twisted
orientation from the beginning of this section: given a real line bundle $L\to X$, we ask for data of a map
$g\colon M\to X$ and an orientation on $TM\oplus g^*(L)$. In the ABGHR perspective this says that the map $\widehat
w_1$ factors through $B\O_1$ as
\begin{equation}
\label{MTSO_twist_not_new}
	K(\Z/2, 1)\overset\simeq\to B\O_1\inj B\O\to B\GL_1(\Sph)\to B\GL_1(\MTSO).
\end{equation}
But the others do not factor this way.
\begin{rem}
There is a complex version of~\eqref{MTSO_twist_not_new}. Let $\mathcal W$ denote \term{Wall's bordism
spectrum}~\cite{Wal60}, whose homotopy groups are the bordism groups of manifolds with an integral lift of $w_1$.
Explicitly, if $\xi\colon F\to B\O$ is the fiber of $\beta w_1\colon B\O\to K(\Z, 2)$, then $\mathcal W\coloneqq
\mathit{MT\xi}$.  \Cref{general_twisting} then produces a map $\widehat{\beta w_1}\colon K(\Z, 2)\to
B\GL_1(\mathcal W)$, but degree-$2$ cohomology classes are equivalent to complex line bundles, and $\widehat{\beta
w_1}$ factors as
\begin{equation}
\label{Wall_factor}
	K(\Z, 2) \overset\simeq\to B\T\to B\O_2\to B\O\to B\GL_1(\Sph)\to B\GL_1(\mathcal W).
\end{equation}
\end{rem}
\begin{rem}
\label{universal_twist}
One consequence of the fact that $\widehat w_1$ (resp.\ $\widehat{\beta w}_1$) factors as
in~\eqref{MTSO_twist_not_new} (resp.\ \eqref{Wall_factor}), i.e.\ as a twist associated to a real (resp.\ complex)
line bundle $L\to X$ is that the associated $\MTSO$-module (resp.\ $\mathcal W$-module) Thom spectrum splits as
$\MTSO\wedge X^{L-1}$ (resp.\ $\mathcal W\wedge X^{L-2}$). Working universally over $B\O_1$ and $B\T$,
\cref{universal_Thom} gives us homotopy equivalences $\MTSO\wedge (B\O_1)^{L-1}\simeq\MTO$ and $\mathcal W\wedge
(B\T)^{L-2}\simeq\MTO$; the former is a theorem of Atiyah~\cite[Proposition 4.1]{Ati61}.
\end{rem}

We will apply \cref{bordism_twists} to the degree-$4$ characteristic classes that the Bianchi identity told us for
the heterotic and CHL tangential structures. Given a space $X$ with a class $\mu\in
H^4(X;\Z)$, let $\mathcal B(X)$ denote the homotopy fiber of $\lambda + \mu\colon B\Spin\times X\to K(\Z, 4)$, and
let $\xi^\mu$ denote the tangential structure
\begin{equation}
	\xi^\mu\colon \mathcal B(X)\longrightarrow B\Spin\times X\longrightarrow B\O.
\end{equation}
$\mathit{MT\xi}^\mu$ is equivalent to the $\MTString$-module Thom spectrum associated to the twist
$\widehat\lambda\circ\mu\colon X\to B\GL_1(\MTString)$. If $X = BG$ for a Lie group $G$, $\mathcal B(X)$ is the
classifying space of the string $2$-group $\mathcal S(\Spin\times G, \lambda + \mu)$.
Let $\cA$ denote the $2$-primary Steenrod algebra and for $n\ge 0$, let $\cA(n)$ denote the subalgebra of $\cA$
generated by $\Sq^1,\dotsc,\Sq^{2^n}$.
In work to appear joint with
Matthew Yu~\cite{DY}, we compute the $\cA$-module structure on $H^*(\mathit{MT\xi}^\mu;\Z/2)$.
\begin{defn}
Let $R$ denote the $\Z/2$-algebra $\cA(1)[S]$, i.e.\ the algebra with generators $\Sq^1$, $\Sq^2$, and $S$, and
with Adem relations for $\Sq^1$ and $\Sq^2$. Given $X$ and $\mu$ as above, define the $\cA(1)$-module $T(X,
\mu)\coloneqq H^*(X;\Z/2)$, and give $T(X, \mu)$ an $R$-module structure by defining
\begin{equation}
\label{not_Cartan}
	S(x) \coloneqq \mu x + \Sq^4(x).
\end{equation}
\end{defn}
We want to think of $S$ as $\Sq^4$ and $T(X, \mu)$ as an $\cA(2)$-module, but a priori it is not clear that this
$S$-action satisfies the Adem relations. 
\begin{thm}[\cite{DY}]\hfill
\label{fake_shearing_DY}
\begin{enumerate}
	\item\label{adem_part} The $R$-module structure on $T(X, \mu)$ satisfies the Adem relations for $\Sq^1$,
	$\Sq^2$, and $\Sq^4 = S$, hence induces an $\cA(2)$-module structure on $T(X, \mu)$.
	\item\label{A_mod_part} There is an map of $\cA$-modules
	\begin{equation}
	\label{the_goal_isom}
		H^*(\mathit{MT\xi}^\mu; \Z/2) \longrightarrow \cA\otimes_{\cA(2)} T(X, \mu),
	\end{equation}
	natural in the data $(X, \mu)$, which is an isomorphism in degrees $15$ and below.
\end{enumerate}
\end{thm}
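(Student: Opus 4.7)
The plan is to exploit the mod-$2$ Thom isomorphism for $\mathit{MT\xi}^\mu$. Since a $\xi^\mu$-structure contains a spin structure, the Thom spectrum is $H\Z/2$-oriented, with a Thom class $U$ giving a $\Z/2$-vector space isomorphism $H^*(\mathit{MT\xi}^\mu;\Z/2)\cong H^*(\mathcal B(X);\Z/2)$. Under this isomorphism the $\cA$-action transports, via the Cartan formula and the defining relation $\Sq^j(U) = w_j(V)\cdot U$ for the virtual bundle $V$ classified by $\xi^\mu$, to
\begin{equation}
\Sq^i(U\cdot y) \;=\; \sum_{j=0}^{i} w_j(V)\cdot U\cdot \Sq^{i-j}(y).
\end{equation}
The spin structure kills $w_1$, $w_2$, and $w_3 = \Sq^1 w_2$, while the fiber condition $\lambda+\mu = 0$ on $\mathcal B(X)$ forces $w_4 = \lambda\bmod 2 = \mu$, with $\mu$ pulled back along $p\colon\mathcal B(X)\to X$. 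Substituting $y = p^*x$ for $x\in H^*(X;\Z/2)$ gives $\Sq^j(U\cdot p^*x) = U\cdot p^*(\Sq^j x)$ for $j\le 3$ and $\Sq^4(U\cdot p^*x) = U\cdot p^*(Sx)$---exactly the $R$-action defining $T(X,\mu)$.

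From here the first claim follows: the map $x\mapsto U\cdot p^*x$ exhibits $T(X,\mu)$ as a $\Z/2$-linear subspace of the genuine $\cA$-module $H^*(\mathit{MT\xi}^\mu;\Z/2)$ on which the formal generators $\Sq^1,\Sq^2,S$ act as the honest $\Sq^1,\Sq^2,\Sq^4$, so all Adem relations among the generators of $\cA(2)$ are inherited for free. This same embedding is a map of $\cA(2)$-modules and, by $\cA$-linear extension, induces a natural $\cA$-module map
\begin{equation}
\cA\otimes_{\cA(2)} T(X,\mu) \longrightarrow H^*(\mathit{MT\xi}^\mu;\Z/2);
\end{equation}
I would take~\eqref{the_goal_isom} to be the inverse of this map in the range where it is bijective.

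To verify bijectivity in topological degrees $\le 15$, I would combine the Serre spectral sequence of the fibration $K(\Z,3)\to\mathcal B(X)\to B\Spin\times X$---in which $\iota_3$ transgresses to $w_4+\mu$---with the classical computation that $H^*(\MTString;\Z/2)\cong \cA/\!/\cA(2)$ through degree $15$, recovered here as the special case $X = \mathrm{pt}$, $\mu = 0$. The ``$B\Spin$ times $K(\Z,3)$'' factor of $\mathcal B(X)$ contributes, after Thom iso and the twisted $\cA$-action, the classes assembling $\cA/\!/\cA(2)$ in this range, while the $X$-factor contributes $T(X,\mu)$; the relative tensor product $\cA\otimes_{\cA(2)}T(X,\mu)$ records precisely this bookkeeping. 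The main obstacle---and the real content beyond the untwisted case---is to show that turning on $\mu$ does not disturb the $\cA(2)$-freeness on the $B\Spin$-plus-$K(\Z,3)$ side in the stated range, i.e.\ to track how the twist modifies the Serre differentials and to check that no unwanted $\cA(2)$-relations appear among the images of $U\cdot p^*x$. The cutoff at $15$ is intrinsic, since $H^*(\MTString;\Z/2)$ acquires its first additional $\cA/\!/\cA(2)$-summand in degree $16$; pushing the range further would require enlarging the model $T(X,\mu)$ to include this extra generator.
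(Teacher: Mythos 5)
Your setup---the Thom class $U$, the identities $\Sq^j(U\cdot p^*x) = U\cdot p^*(\Sq^j x)$ for $j\le 3$ and $\Sq^4(U\cdot p^*x) = U\cdot p^*(\mu x + \Sq^4 x)$---matches the first half of the paper's argument in \cref{DY_proof_rem}, which likewise uses the Thom diagonal to exhibit $H^*(\mathit{MT\xi}^\mu;\Z/2)$ as a free rank-one module over $H^*(\mathcal B(X);\Z/2)$ and then invokes the Cartan formula. But both halves of your proof stop exactly where the real work begins. For part~(1), inheriting the Adem relations from the honest $\cA$-module $H^*(\mathit{MT\xi}^\mu;\Z/2)$ requires the map $x\mapsto U\cdot p^*x$ to be injective \emph{in all degrees}, since the $\cA(2)$-module structure on $T(X,\mu)$ is asserted everywhere, not just through degree $15$. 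That injectivity amounts to showing $1\otimes H^*(X;\Z/2)$ misses the ideal of $H^*(B\Spin\times X;\Z/2)$ generated by $w_4+\mu$ and its Steenrod images in the Serre spectral sequence for $K(\Z,3)\to\mathcal B(X)\to B\Spin\times X$; this is plausible via a leading-term argument in the $B\Spin$-grading, but you neither state nor prove it. The paper avoids the issue entirely by checking the Adem relations for the formal $S$-action directly.

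For part~(2), you correctly identify that the entire content is the bijectivity of $\cA\otimes_{\cA(2)}T(X,\mu)\to H^*(\mathit{MT\xi}^\mu;\Z/2)$ in degrees $\le 15$, and then you explicitly leave that step open (``the main obstacle\dots is to show that turning on $\mu$ does not disturb the $\cA(2)$-freeness''). That step \emph{is} the theorem, and the Serre-spectral-sequence bookkeeping you gesture at is not carried out. The mechanism the paper uses is worth internalizing because it is what makes the problem tractable: since $\Sq(U)$ and the comparison map are natural in $(X,\mu)$, one reduces to the universal case $X = K(\Z,4)$, $\mu=\tau$, where \cref{universal_Thom} identifies $\mathit{MT\xi}^\tau\simeq\MTSpin$; then Anderson--Brown--Peterson's description of $H^*(\MTSpin;\Z/2)$ as $\cA\otimes_{\cA(1)}M$ through degree $15$, Giambalvo's $H^*(\MTString;\Z/2)\cong\cA\otimes_{\cA(2)}\Z/2$ in the same range, and the change-of-rings isomorphism reduce everything to an explicit comparison of $\cA(2)$-modules $T(K(\Z,4),\tau)\to\cA(2)\otimes_{\cA(1)}M$, verifiable from Serre's computation of $H^*(K(\Z,4);\Z/2)$. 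Without this reduction (or a completed substitute for it), your argument does not close.
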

As~\cite{DY} is not yet available, we describe a proof of this theorem in \cref{DY_proof_rem}.
\begin{cor}
\label{fake_shearing}
For $t-s\le 15$, the $E_2$-page of the Adams spectral sequence computing $2$-completed $\xi^\mu$-bordism is
\begin{equation}
	E_2^{t,s} = \Ext_{\cA(2)}^{s,t}(T(X, \mu), \Z/2).
\end{equation}
\end{cor}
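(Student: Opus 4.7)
The plan is to combine \cref{fake_shearing_DY} with the classical Milnor change-of-rings theorem for the Steenrod algebra and a connectivity argument for Ext. First I would set up the $2$-primary Adams spectral sequence for the connective spectrum $\MTxi^\mu$, whose $E_2$-page is $\Ext_\cA^{s,t}(H^*(\MTxi^\mu;\Z/2),\Z/2)$ and which converges to the $2$-completed $\xi^\mu$-bordism groups. The goal is to rewrite this $E_2$-page as $\Ext_{\cA(2)}^{s,t}(T(X,\mu),\Z/2)$ in the stated range of topological degrees.

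For the change-of-rings piece, I would use the fact that $\cA(2)$ is a sub-Hopf algebra of $\cA$ and that $\cA$ is free as a right $\cA(2)$-module (a standard consequence of the Milnor-Moore theorem applied to the Steenrod algebra). Freeness yields the natural isomorphism
\begin{equation}
\Ext_\cA^{s,t}\paren*{\cA\otimes_{\cA(2)} T(X,\mu),\,\Z/2} \cong \Ext_{\cA(2)}^{s,t}(T(X,\mu),\Z/2),
\end{equation}
so it suffices to show that the natural map $H^*(\MTxi^\mu;\Z/2)\to \cA\otimes_{\cA(2)} T(X,\mu)$ supplied by \cref{fake_shearing_DY} induces an isomorphism on $\Ext_\cA^{s,t}(\bl,\Z/2)$ for $t-s\le 15$.

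By \cref{fake_shearing_DY}, this map of $\cA$-modules is an isomorphism in internal degrees $\le 15$, so its kernel $K$ and cokernel $C$ are $\cA$-modules concentrated in internal degrees $\ge 16$. A standard minimality argument applies: because the Steenrod algebra has no nonscalar operations of degree $0$, in any minimal free resolution $F_\bullet\to M$ the generators of $F_s$ lie in degrees at least $s$ more than the minimum degree of $M$. Consequently, for any $\cA$-module $M$ concentrated in degrees $\ge N$, $\Ext_\cA^{s,t}(M,\Z/2)$ vanishes whenever $t-s<N$. Applying this to $K$ and $C$ with $N=16$ and chasing the two long exact Ext sequences attached to
\begin{equation}
0 \to K \to H^*(\MTxi^\mu;\Z/2) \to I \to 0, \qquad 0 \to I \to \cA\otimes_{\cA(2)} T(X,\mu) \to C \to 0,
\end{equation}
where $I$ is the image of the map of \cref{fake_shearing_DY}, identifies the two Ext groups in the range $t-s\le 15$, completing the argument.

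There is no real obstacle beyond \cref{fake_shearing_DY} itself, which is the content of the joint work with Yu. The only piece of bookkeeping is the connectivity/vanishing-line argument above, which translates the internal-degree bound on the cohomology identification into the bidegree bound $t-s\le 15$ on the resulting Ext groups; the change-of-rings step is entirely formal once $\cA$ is known to be $\cA(2)$-free.
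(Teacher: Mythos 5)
Your proposal is correct and follows essentially the same route as the paper: apply the change-of-rings isomorphism for the Hopf subalgebra $\cA(2)\subset\cA$ to the map of \cref{fake_shearing_DY}. The only difference is that you spell out the connectivity bookkeeping (kernel and cokernel concentrated in degrees $\ge 16$, hence their Ext vanishes for $t-s\le 15$ by the minimal-resolution vanishing line), which the paper leaves implicit.
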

As $\cA(2)$ is much smaller than $\cA$, this is much easier to work with.
\begin{proof}
This follows from the change-of-rings formula: if $\mathcal B$ is a graded Hopf algebra, $\mathcal C$ is a graded
Hopf subalgebra of $\mathcal B$, and $M$ and $N$ are graded $\mathcal B$-modules, then there is a natural
isomorphism
\begin{equation}
\label{change-of-rings}
	\Ext_{\mathcal B}^{s,t}(\mathcal B\otimes_{\mathcal C}M, N)\overset\cong\longrightarrow \Ext_{\mathcal
	C}^{s,t}(M, N).
\end{equation}
This you can think of as the derived version of a maybe more familiar isomorphism
\begin{equation}
	\Hom_{\mathcal B}(\mathcal B\otimes_{\mathcal C}M, N)\overset\cong\longrightarrow \Hom_{\mathcal C}(M, N).
\end{equation}
In our example, $\mathcal B$ is the Steenrod algebra, which is a Hopf algebra, and $\mathcal C$ is $\mathcal A(2)$,
which is indeed a Hopf subalgebra of $\mathcal A$, so we can invoke~\eqref{change-of-rings} and conclude.
\end{proof}
We will use this simplification in the cases $\xi^\mu = \xihet,\xiCHL$ to run the Adams spectral sequences
computing $\Omega_*^{\xihet}$ and $\Omega_*^{\xiCHL}$ at $p = 2$.
\begin{rem}[Proof sketch of \cref{fake_shearing_DY}]
\label{DY_proof_rem}
To prove~\eqref{adem_part}, check the Adem relations for $\cA(2)$ directly. The first step in proving
part~\eqref{A_mod_part} is to establish a Thom isomorphism for mod $2$ cohomology. We make use of the \term{Thom
diagonal}, a map of $\MTString$-modules
\begin{equation}
\label{Thom_diagonal}
	\mathit{MT\xi}^\mu\overset{\Delta^t}{\longrightarrow} \mathit{MT\xi}^\mu \wedge \MTString\wedge \Sigma_+^\infty X
\end{equation}
defined as follows: the diagonal map $\Delta\colon X\to X\times X$ is a map of spaces over $B\GL_1(\MTString)$,
if we give $X$ the map $\widehat\lambda\circ\mu$ to $B\GL_1(\MTString)$  and we give $X\times X$ the map
$(\widehat\lambda\circ\mu, \ast)$. Applying the $\MTString$-module Thom spectrum functor to $\Delta$ produces
\eqref{Thom_diagonal}.
Smash~\eqref{Thom_diagonal} with $H\Z/2$. The result is the Thom diagonal for a twist of $H\Z/2$, but all such
twists are trivializable (i.e.\ all $H\Z/2$-bundles admit an orientation). Therefore by~\cite[Proposition
3.26]{ABGHR14b} the following composition is an equivalence:
\begin{equation}
	\mathit{MT\xi}^\mu\wedge H\Z/2\overset{\Delta^t}{\longrightarrow}
		\mathit{MT\xi}^\mu\wedge \Sigma_+^\infty X \wedge H\Z/2\longrightarrow
		\MTString\wedge \Sigma_+^\infty X\wedge H\Z/2,
\end{equation}
which is the $\Z/2$-homology Thom isomorphism. The analogous fact is true for mod $2$ cohomology.
%

The Thom diagonal makes $H^*(\mathit{MT\xi}^\mu;\Z/2)$ into a free, rank-$1$ module over $H^*(\mathcal B(X);\Z/2)$,
generated by the Thom class $U$. As the Thom diagonal is a map of spectra, we may use the Cartan formula to compute
the Steenrod squares of an arbitrary element of $H^*(\mathit{MT\xi}^\mu;\Z/2)$ in terms of Steenrod squares in
$\mathcal B(X)$ and $\Sq(U)$. As both $\Sq(U)$ and our desired isomorphism in~\eqref{the_goal_isom} are natural in
$X$ and $\mu$, it suffices to understand the universal case, where $X = K(\Z, 4)$ and $\mu$ is the tautological
class $\tau\in H^4(K(\Z, 4);\Z)$. In this case, \cref{universal_Thom} implies $\mathit{MT\xi}^\mu\simeq \MTSpin$.
By work of Anderson-Brown-Peterson~\cite{ABP67}, if $J$ is the $\cA(1)$-module $\cA(1)/\Sq^3$ and $M$ is the
$\cA(1)$-module $\Z/2\oplus \Sigma^8\Z/2\oplus \Sigma^{10}J$, then there is a map of $\cA$-modules
\begin{equation}
	H^*(\MTSpin;\Z/2)\longrightarrow \cA\otimes_{\cA(1)} M
\end{equation}
which is an isomorphism in degrees $15$ and below. And Giambalvo~\cite[Corollary 2.3]{Gia71} shows that there is a
map $H^*(\MTString;\Z/2)\to \cA\otimes_{\cA(2)}\Z/2$ which is also an isomorphism in degrees $15$ and below.
Therefore by the change-of-rings theorem~\eqref{change-of-rings} it suffices to exhibit a map of $\cA(2)$-modules
\begin{equation}
	T(K(\Z, 4), \tau) \longrightarrow \cA(2)\otimes_{\cA(1)} M
\end{equation}
which is an isomorphism in degrees $15$ and below. This can be verified directly, using as input the
$\cA(2)$-module structure on $H^*(K(\Z, 4);\Z/2)$ calculated by Serre~\cite[\S 10]{Ser53}.
\end{rem}
\subsection{$\xihet$ bordism at $p = 2$}
	\label{xihet_at_2}
In this section we will first compute $H^\ast(BG;\Z/2)$ as an $\cA(2)$-module in low degrees, where $G\coloneqq
\rE_8^2\rtimes\Z/2$; then, using \cref{fake_shearing}, we run the Adams spectral sequence computing $2$-completed
$\xihet$ bordism in degrees $11$ and below.

First, though, we reformulate the problem slightly. Consider the tangential structure $\xihet'\colon
B^{\mathrm{het}'}\to B\O$ defined in the same manner as $\xihet$, but with $K(\Z, 4)$ replacing $B\rE_8$. In a
little more detail, $\Z/2$ acts on $K(\Z, 4)\times K(\Z, 4)$ by swapping the two factors; taking the Borel
construction
\begin{equation}
	B\coloneqq (K(\Z, 4)\times K(\Z, 4))\times_{\Z/2} E\Z/2
\end{equation}
produces a fiber bundle
\begin{equation}
	K(\Z, 4)\times K(\Z, 4)\longrightarrow B\longrightarrow B\Z/2.
\end{equation}
For $i = 1,2$, let $c_i\in H^4(K(\Z, 4)\times K(\Z, 4);\Z)$ be the tautological class for the $i^{\mathrm{th}}$
$K(\Z, 4)$ factor. The class $c_1 + c_2$ is invariant under the $\Z/2$-action, so we can follow it through the
Serre spectral sequence to learn that it defines a nonzero class $c_1 + c_2\in H^4(B;\Z/2)$. Define $f\colon
B^{\mathrm{het}'}\to B\Spin \times B$ to be the fiber of $\lambda - (c_1 + c_2)\colon B\Spin\times B\to K(\Z, 4)$;
then the tangential structure $\xihet'$ is the composition
\begin{equation}
\begin{tikzcd}
	{B^{\mathrm{het}'}} & {B\Spin\times B} & B\Spin & B\O.
	\arrow[from=1-3, to=1-4]
	\arrow["{\mathrm{pr}_1}", from=1-2, to=1-3]
	\arrow["f", from=1-1, to=1-2]
	\arrow["{\xihet'}"', curve={height=12pt}, from=1-1, to=1-4]
\end{tikzcd}
\end{equation}
That is, a $\xihet'$ structure on a manifold $M$ is a spin structure, a principal $\Z/2$-bundle $P\to M$, two
classes $c_1,c_2\in H^4(P;\Z)$ which are exchanged under the deck transformation, and a trivialization of
$\lambda(M) - (c_1 + c_2)$ (where the latter class is descended to $M$). This is the same data as a
$\xihet$ structure, except that we do not ask for $c_1$ or $c_2$ to come from principal $\rE_8$-bundles; therefore
there is a map of tangential structures $\widetilde c\colon \xihet\to\xihet'$, i.e.\ a map of spaces $B\Ghet\to
B^{\mathrm{het}'}$ commuting with the maps down to $B\O$. Like for $\xihet$, a $\xihet'$-structure is a twisted
string structure in the sense of \cref{bordism_twists}, via the class $\lambda - (c_1 + c_2)\colon B\to K(\Z, 4)$.

Bott-Samelson~\cite[Theorems IV, V(e)]{BS58} showed that the characteristic class $c\in H^4(B\rE_8;\Z)$ we defined
in \cref{char_class_e8}, interpreted as a map $c\colon B\rE_8\to K(\Z, 4)$, is $15$-connected. This means that the
homomorphism $\widetilde c$ induces on bordism groups, $\widetilde c\colon \Omega_k^{\xihet}\to\Omega_k^{\xihet'}$,
is an isomorphism in degrees $14$ and below. For our string-theoretic purposes, we only care about $k\le 12$, so we
may as well compute $\xihet'$-bordism. In the rest of this subsection, we often blur the distinction between
$\xihet$ and $\xihet'$; we will point out where it matters which one we are looking at.
\begin{rem}
\label{restricted_het_is_easy}
Turning off the $\Z/2$ symmetry switching the two $\rE_8$ factors, i.e.\ passing to a
$\xi^{r,\mathrm{het}}$-structure as in \cref{no_Z2}, simplifies this story considerably: the bordism groups were
known decades ago. Specifically, replace $B\rE_8$ with $K(\Z, 4)$ in the definition of $\xi^{r,\mathrm{het}}$ to
define a tangential structure $\xi^{r,\mathrm{het}}{}'$, which on a manifold $M$ consists of a spin structure on
$M$, two classes $c_1,c_2\in H^4(M;\Z)$, and a trivialization of $\lambda(M) - c_1 - c_2$. As
Witten~\cite[\S 4]{Wit86} noticed, this data is equivalent to a spin structure and the single class $c_1$, which
may be freely chosen; then $c_2$ must be $\lambda(M) - c_1$.  Therefore the tangential structure
$\xi^{r,\mathrm{het}}{}'$-structure is simply $B\Spin\times K(\Z, 4)\to B\O$, and just as for $\xihet$, the map
$\mathit{MT\xi}^{r,\mathrm{het}}\to\mathit{MT\xi}^{r,\mathrm{het}}{}'\simeq \MTSpin\wedge K(\Z, 4)_+$ is an
isomorphism on homotopy groups in degrees $14$ and below. Stong~\cite{Sto86} computes $\Omega_*^\Spin(K(\Z,
4))$ in degrees $12$ and below.
\end{rem}
As we discussed in \S\ref{higher_stuff}, the data of a trivial principal $\Z/2$-bundle on a manifold $M$ and two
principal $\rE_8$-bundles $P,Q\to M$ define a principal $\rE_8^2\rtimes\Z/2$-bundle on $M$ with $c_1 + c_2$ equal
to $c(P) + c(Q)$; data trivializing $c(P) + c(Q) - \lambda(M)$ therefore defines a $\xihet$ structure. Analogously,
the trivial $\Z/2$-bundle and a pair $c_1,c_2\in H^4(M;\Z)$ with a trivialization of $c_1 + c_2 - \lambda$ define a
$\xihet'$ structure.
\begin{lem}
\label{spin_to_xihet}
A spin manifold $M$ has a canonical $\xihet'$ structure specified as above by the trivial principal $\Z/2$-bundle,
the cohomology classes $c_1 = \lambda$ and $c_2 = 0$, and the canonical trivialization of $\lambda-\lambda = 0\in
H^4(M;\Z)$.
\end{lem}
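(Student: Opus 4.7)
The plan is a direct verification by unpacking the definition of a $\xihet'$-structure from the description in \cref{top_het} and the twisted-characteristic-class perspective of \cref{xihet_char_class}, then specializing to the case of the trivial $\Z/2$-bundle. Concretely, a $\xihet'$-structure on $M$ is the data of (i) a spin structure on $M$, (ii) a double cover $\pi\colon\widetilde M\to M$, (iii) a pair of classes $a_1, a_2\in H^4(\widetilde M;\Z)$ exchanged by the nontrivial deck transformation, equivalently (via \cref{xihet_char_class}) a class $\widetilde c\in H^4(M;\Z^\sigma_\pi)$, and (iv) a trivialization of $\lambda(M)-q(\widetilde c)\in H^4(M;\Z)$, where $q\colon\Z^\sigma\to\Z$ is the sum map.

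Taking $\pi$ to be the trivial cover, so that $\widetilde M=M\sqcup M$ with $\sigma$ swapping the two copies, the local system $\Z^\sigma_\pi$ is canonically the trivial system $\Z\oplus\Z$ on $M$, and a class in $H^4(M;\Z^\sigma_\pi)$ is just a pair $(c_1,c_2)\in H^4(M;\Z)\oplus H^4(M;\Z)$; under this identification, the composite $q(\widetilde c)$ becomes $c_1+c_2$. Choosing $c_1\coloneqq\lambda(M)$ (which is canonically defined from the given spin structure) and $c_2\coloneqq 0$ therefore gives $q(\widetilde c)=\lambda(M)$, so the class to be trivialized is $\lambda(M)-\lambda(M)=0\in H^4(M;\Z)$.

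The final step is to observe that the zero class has a canonical trivialization: its classifying map $M\to K(\Z,4)$ is the constant map at the basepoint, and the constant null-homotopy provides a canonical lift to the homotopy fiber. Because every piece of the data was specified either tautologically or as a direct function of the spin structure, the assignment is canonical and natural in maps of spin manifolds.

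There is no real obstacle here; the content of the lemma is bookkeeping. The only point that warrants care is ensuring that the descent of $c(P)+c(Q)$ to $H^4(M;\Z)$ in the trivial-cover case really reduces to the sum $c_1+c_2$ as claimed, which is immediate from the fact that $\Z^\sigma_\pi$ trivializes when $\pi$ does and that $q$ is the addition map on fibers.
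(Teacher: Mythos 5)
Your verification is correct and matches the paper's treatment: the paper states this lemma without a separate proof, since it follows immediately from the preceding paragraph unpacking what a $\xihet'$-structure is in the trivial-cover case (a spin structure, a pair $c_1,c_2\in H^4(M;\Z)$, and a trivialization of $\lambda-(c_1+c_2)$). Your bookkeeping — trivial cover, $c_1=\lambda$, $c_2=0$, canonical null-homotopy of the zero class — is exactly the intended argument.
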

This defines a map of tangential structures and therefore a map of Thom spectra $s_1\colon \MTSpin\to\MTxihet'$. A
$\xihet'$-structure includes data of a spin structure; forgetting the rest of the $\xihet'$-structure defines a map
$s_2\colon \MTxihet'\to\MTSpin$. The composition of $s_1$ and $s_2$ is homotopy equivalent to the identity, because
the underlying spin structure of the $\xihet'$ manifold built in \cref{spin_to_xihet} is the same spin structure we
began with.
\begin{cor}
\label{spin_splits_off}
There is a spectrum $\cQ$ and a splitting
\begin{equation}
	(s_2, q)\colon \MTxihet'\overset\simeq\longrightarrow \MTSpin\vee\cQ.
\end{equation}
\end{cor}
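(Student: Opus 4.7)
The plan is to take $\cQ \coloneqq \mathrm{cofib}(s_1)$ in the stable $\infty$-category of spectra and to let $q\colon \MTxihet'\to\cQ$ be the canonical map in the cofiber sequence
\begin{equation}
	\MTSpin \xrightarrow{s_1} \MTxihet' \xrightarrow{q} \cQ.
\end{equation}
I then want to verify that $(s_2, q)\colon \MTxihet'\to \MTSpin\vee\cQ$ is a homotopy equivalence.

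The key point is standard: in any stable $\infty$-category, a map admitting a retraction is a split monomorphism, and the splitting is witnessed precisely by the map into the direct sum of the source and the cofiber. Since the $\infty$-category of spectra is stable and we are assuming $s_2\circ s_1\simeq \mathrm{id}_{\MTSpin}$ as maps of spectra, this general fact applies directly and delivers the claimed equivalence. If one prefers a more concrete verification, one can read off the splitting from the induced long exact sequence
\begin{equation}
	\cdots\longrightarrow \pi_n\MTSpin \xrightarrow{(s_1)_*} \pi_n\MTxihet' \xrightarrow{q_*} \pi_n\cQ \xrightarrow{\partial} \pi_{n-1}\MTSpin\longrightarrow\cdots,
\end{equation}
and observe that $(s_2)_*\circ (s_1)_*=\mathrm{id}$ makes $(s_1)_*$ split-injective, so every connecting map $\partial$ vanishes and the sequence breaks into split short exact sequences. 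Consequently $((s_2)_*, q_*)$ is an isomorphism on all homotopy groups, and $(s_2, q)$ is therefore a weak equivalence of spectra, i.e.\ a homotopy equivalence.

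There is no substantive obstacle; the only thing to keep honest is that $s_2\circ s_1\simeq \mathrm{id}_{\MTSpin}$ is a homotopy of maps of Thom spectra, not merely an identity on some derived invariant. This is true by construction: by \cref{spin_to_xihet}, $s_1$ sends a spin manifold $M$ to the $\xihet'$-manifold given by the same $M$ with the same spin structure and the data $(c_1, c_2)=(\lambda, 0)$, and $s_2$ forgets back to this spin structure, so the underlying map of tangential structures $B\Spin\to B^{\mathrm{het}'}\to B\Spin$ commutes up to canonical homotopy with the identity over $B\O$ and hence Pontrjagin-Thoms to a map homotopic to $\mathrm{id}_{\MTSpin}$.
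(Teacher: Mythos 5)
Your proposal is correct and is essentially the argument the paper intends: the preceding discussion establishes $s_2\circ s_1\simeq\mathrm{id}_{\MTSpin}$ at the level of tangential structures over $B\O$, and the corollary is then the standard fact that a retract in the stable category splits off, with $\cQ=\mathrm{cofib}(s_1)$. The paper leaves these routine details implicit, and your write-up just fills them in.
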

We will use this later to reduce the amount of spectral sequence computations we have to make.

Both \cref{spin_to_xihet,spin_splits_off} require us to use $\xihet'$ and not $\xihet$, though of course the
consequence on low-degree bordism groups is true for both.


When $K$ is a finite group, Nakaoka~\cite[Theorem 3.3]{Nak61} proved that there is a ring isomorphism from the mod
$2$ cohomology of $B(\Z/2\ltimes (K\times K))$ to the $E_2$-page of the Serre spectral sequence
\begin{equation}
\label{Nakaoka_serre}
	E_2^{p,q} = H^p(B\Z/2; \underline{H^q(BK\times BK;\Z/2)}) \Longrightarrow H^{p+q}(B(\Z/2\ltimes(K\times
	K));\Z/2).
\end{equation}
Here the underline denotes the local coefficient system arising from the $\Z/2$-action on $BK\times BK$ by
switching the two factors. Since this local coefficient system can be nontrivial, one has to be careful defining
the multiplicative structure on the $E_2$-page of~\eqref{Nakaoka_serre}, but here it can be made explicit. As a
$\Z/2[\Z/2]$-module, $\underline{H^\ast(BK\times BK;\Z/2)}$ is a direct sum of:
\begin{itemize}
	\item the subalgebra $\mathcal H_1$ of classes fixed by $\Z/2$, which are of the form $x\otimes x$ for $x\in
	H^\ast(BK;\Z/2)$; and
	\item the submodule $\mathcal H_2$ spanned by classes of the form $x\otimes y$ where $x$ and $y$ are linearly
	independent.
\end{itemize}
Since $\Z/2$ acts trivially on $\mathcal H_1$ and $\mathcal H_1$ is a ring, $H^\ast(B\Z/2; \mathcal H_1)$ has a
ring structure. And as a $\Z/2[\Z/2]$-module, $\mathcal H_2$ is of the form $M\oplus M$ where $\Z/2$ acts by
swapping the two factors, so $H^\ast(B\Z/2; \mathcal H_2)$ vanishes in positive degrees.\footnote{To see this,
first observe that mod $2$ group cohomology for $G$ is additive in the $\Z/2[G]$-module of coefficients, so it
suffices to prove that $H^*(B\Z/2; M\oplus M)$ vanishes in positive degrees when $M = \Z/2$. But $\Z/2\oplus\Z/2$
is isomorphic to $\Z/2[\Z/2]$ as $\Z/2[\Z/2]$-modules (i.e.\ as vector spaces with $\Z/2$-representations,
$\Z/2\oplus\Z/2$ is isomorphic to the vector space of functions on the group $\Z/2$), and group cohomology valued
in the group ring is trivial, e.g.\ because the group ring is its own free resolution.} In degree zero, we obtain
invariants, spanned by elements of the form $x\otimes y + y\otimes x$, with $x,y\in H^\ast(BK;\Z/2)$. $\mathcal
H_1\oplus (\mathcal H_2)^{\Z/2} = E_2^{0,\bullet}$ is a subalgebra of $H^\ast(BK\times BK;\Z/2)$.

So far we have specified ring structures on $H^\ast(B\Z/2;\mathcal H_1) \supsetneq E_2^{>0, \bullet}$ and
$\mathcal H_1\oplus (\mathcal H_2)^{\Z/2} = E_2^{0, \bullet}$, and these ring structures agree where they
overlap. Therefore to specify a ring structure on the entirety of the $E_2$-page, it suffices to write down the
product of an element in $(\mathcal H_2)^{\Z/2}$ and an element in positive $p$-degree. We say that all such
products vanish; this is the ring structure that appears in Nakaoka's theorem.

Of course, $\rE_8$ is not a finite group. Nakaoka's theorem is true in quite great generality~\cite{Eve65,
Kah84, Lea97}; the version we need is proven by Evens~\cite{Eve65}, who proves the same ring isomorphism when $K$
is a compact Lie group. Thus this applies to $\xihet$, and not necessarily to $\xihet'$, but since their cohomology
rings are isomorphic in degrees $14$ and below, it does not matter which one we use in this calculation.

Now we make this ring structure and $\cA(2)$-module structure explicit. Since $c\colon B\rE_8\to K(\Z, 4)$ is
$15$-connected, it induces an isomorphism in cohomology in degrees $14$ and below, so we can use the cohomology of
$K(\Z, 4)$ as a stand-in for the cohomology of $B\rE_8$.
Serre~\cite[\S 10]{Ser53} computed the mod $2$ cohomology of $K(\Z, 4)$. It is an infinitely generated polynomial
algebra; in degrees $12$ and below the generators are: the tautological class $D\in H^4(K(\Z, 4);\Z/2)$,
$F\coloneqq \Sq^2 D$, $G\coloneqq\Sq^3D$, $J\coloneqq\Sq^4F$, and $K\coloneqq\Sq^5F$. 

If $C$ is one of $D$, $F$, $G$, $J$, or $K$, we let $C_1$ denote the class coming from the
first copy of $B\rE_8$ and $C_2$ denote the class coming from the second copy. Thus we have the following additive
basis for the low-degree cohomology of $BG$:
\begin{enumerate}
	\item In $\mathcal H_1$, $D_1D_2x^k$ and $F_1F_2x^k$ for $k\ge 0$.
	\item In $(\mathcal H_2)^{\Z/2}$, $D_1 + D_2$, $F_1 + F_2$, $G_1 + G_2$, $D_1^2 + D_2^2$, $J_1 +J_2$, $D_1F_1 +
	D_2F_2$, $D_1F_2 + D_2F_1$, $D_1G_1 + D_2G_2$, $D_1G_2 + D_2G_1$, $K_1 + K_2$, $F_1^2 + F_2^2$, $D_1^3 +
	D_2^3$, and $D_1^2D_2 + D_1D_2^2$. 
\end{enumerate}
Next, we determine the $\cA(2)$-module structure using a theorem of Quillen.
\begin{thm}[{Quillen's detection theorem~\cite[Proposition 3.1]{Qui71}}]
Let $X$ be a space and let $\Z/k$ act on $X^k$ by cyclic permutations. Let $Y\coloneqq E\Z/k\times_{\Z/k} X^k$,
which is a fiber bundle over $B\Z/k$ with fiber $X^k$. Let $i_1\colon X^k\to Y$ be inclusion of the fiber at the
basepoint and $i_2\colon B\Z/k\times X\to Y$ be induced by the diagonal map; then
\begin{equation}
	(i_1^\ast, i_2^\ast)\colon H^\ast(Y; \Z/k) \longrightarrow H^\ast(X^k;\Z/k)\oplus H^\ast(B\Z/k\times X;\Z/k)
\end{equation}
is injective.
\end{thm}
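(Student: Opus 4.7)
The plan is to run the Serre (equivalently, Lyndon--Hochschild--Serre) spectral sequence of the fibration $X^k \to Y \to B\Z/k$ with mod $k$ coefficients, and exploit the decomposition of the fiber cohomology as a $\Z/k$-module. Throughout I take $k = p$ prime, which is the case needed in the applications below.

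First I would analyze the fiber. By Künneth, $H^*(X^p;\Z/p) \cong H^*(X;\Z/p)^{\otimes p}$, and the cyclic permutation action splits this as $D \oplus N$, where $D$ is the diagonal submodule spanned by tensors $x\otimes\cdots\otimes x$ (with trivial action) and $N$ is its complement. Because $p$ is prime, every non-diagonal orbit of basis tensors has trivial stabilizer, so $N$ is a free $\Z/p[\Z/p]$-module. Hence $H^s(B\Z/p; N) = 0$ for all $s > 0$, and the $E_2$-page reduces in positive filtration to $E_2^{s,q} = H^s(B\Z/p;\Z/p) \otimes D^q$, while $E_2^{0,*}$ equals the full fiber-invariants $(D \oplus N)^{\Z/p}$.

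Second, I would interpret the two restrictions in spectral-sequence terms: $i_1^*$ is the edge homomorphism $H^*(Y;\Z/p) \twoheadrightarrow E_\infty^{0,*} \hookrightarrow H^*(X^p;\Z/p)$, so $\ker(i_1^*)$ consists exactly of classes of strictly positive Serre filtration. Naturality applied to the inclusion $B\Z/p \times X \hookrightarrow Y$ (covering the identity on $B\Z/p$, with fiber map $\Delta\colon X \to X^p$) gives a morphism of spectral sequences into the trivially collapsing Serre SS of the product; on $E_2$ it is induced by $\Delta^*\colon H^*(X^p;\Z/p) \to H^*(X;\Z/p)$, which sends $x^{\otimes p}\in D$ to the cup-product power $x^p$.

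Third, suppose $\alpha$ lies in the kernel of $(i_1^*, i_2^*)$. Since $i_1^*\alpha = 0$, $\alpha$ has some positive filtration $s$ with leading term $\bar\alpha = \sum_i u_i \otimes x_i^{\otimes p}$ in $H^s(B\Z/p;\Z/p) \otimes D^q$ (so in particular $p \mid q$). The main obstacle is that pairing $i_2^*$ with the naive leading term only recovers $\sum_i u_i \otimes x_i^p$, which can vanish when some $x_i^p = 0$. To overcome this I would recognize the composite $B\Z/p \times X \xrightarrow{\mathrm{id}\times\Delta} B\Z/p \times X^p \to Y$ as the realization of Steenrod's total power construction: the image of $x^{\otimes p}$ under $i_2^*$ is not merely $u\otimes x^p$ but the full Steenrod power of $x$, namely a sum of the form $\sum_j u_j \otimes \Sq^j(x)$ (or its mod-$p$ analogue) whose top Steenrod term is nonzero whenever $x$ is. A descending induction on filtration, feeding this refined restriction back into the spectral sequence, then forces $\alpha = 0$ and yields injectivity of $(i_1^*, i_2^*)$.
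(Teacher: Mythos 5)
The paper does not prove this statement; it quotes it from Quillen, and your outline is essentially Quillen's own argument (spectral sequence of $X^p\to Y\to B\Z/p$, freeness of the non-diagonal part of the fiber cohomology, and detection of the diagonal part via Steenrod's total power restricted along $B\Z/p\times X\to Y$). Two caveats. First, your restriction to $k=p$ prime is genuinely used: for composite $k$ the non-diagonal orbits can have nontrivial stabilizers, $N$ is no longer free over $\Z/k[\Z/k]$, and the positive-filtration part of $E_2$ is not exhausted by the diagonal classes. This matches Quillen's actual hypothesis and covers the paper's application ($k=2$), but it does not prove the statement as transcribed for arbitrary $k$.

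Second, the closing ``descending induction on filtration'' hides the one step where the argument can go wrong. If $\alpha\in F^s\setminus F^{s+1}$ has leading term $\sum_i u_i\otimes x_i^{\otimes p}$, the natural lift is $\sum_i\pi^*(u_i)\smile P(x_i)$, where $P(x)\in H^*(Y)$ is Steenrod's power class restricting to $x^{\otimes p}$ on the fiber --- this existence statement is the essential input, since an arbitrary lift of the leading term has $i_2^*$-image determined only modulo corrections from $F^{s+1}$. Moreover, the ``top Steenrod term'' $u_i\,t^{\deg x_i}\otimes x_i$ of the leading piece lands in a bidegree of $H^*(B\Z/p)\otimes H^*(X)$ that \emph{can} receive contributions from the higher-filtration tail (terms $t^{\,\deg x'-j}\otimes\Sq^j x'$ with $\deg x'<\deg x_i$), so one cannot conclude directly from the leading piece. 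The clean fix is to first write all of $F^1H^n(Y)$ as sums $\sum_j\pi^*(w_j)P(x_j)$ (finite descent on filtration), and then observe that the component of $i_2^*\alpha$ in the \emph{minimal} $X$-degree $q=\min_j\deg x_j$ equals $\sum_{\deg x_j=q}w_j\,t^{q}\otimes x_j$, coming only from the $\Sq^0=\mathrm{id}$ (resp.\ $P^0=\mathrm{id}$) terms of the deepest-filtration pieces; since multiplication by $t$ is injective on $H^*(B\Z/p;\Z/p)$, vanishing forces these $w_j$ to vanish, and one iterates upward in $X$-degree. With that reorganization your proof is complete for $k$ prime.
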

For us, $k = 2$, $X = B\rE_8$, and $Y = BG$. Thus, to compute Steenrod squares for classes in $H^\ast(BG;\Z/2)$, we
can assume we are in $B\rE_8^2$ if the class is in $(\mathcal H_2)^{\Z/2}$; for $\mathcal H^1$, we also need to
know $\Sq(x)$, and $i_2^\ast$ tells us $\Sq(x) = x+x^2$. Thus we can compute the $\cA(2)$-module structure on
$H^*(BG;\Z/2)$, hence also on $T(-(c_1 + c_2))$; we focus on the latter. Like most calculations of this form, it is
a little tedious but straightforward, and can be done by hand in a reasonable length of time. After working through
the calculation, we have learned the following.
\begin{prop}
\label{e8_semi_coh}
Let $\mathcal M$ be the quotient of $T(-(c_1 +c_2))$ by all elements in degrees $14$ and
higher. Then $\mathcal M$ is the direct sum of the following submodules.
\begin{enumerate}
	\item $\textcolor{BrickRed}{M_1}$, the summand containing the Thom class $U$.
	\item $\textcolor{RedOrange}{M_2}\coloneqq \widetilde H^\ast(\RP^\infty;\Z/2)$ modulo those elements in degrees
	$13$ and above.
	\item $\textcolor{Goldenrod!67!black}{M_3}$, the summand containing $U(D_1^2 + D_2^2)$.
	\item $\textcolor{Green}{M_4}$, the summand containing $UD_1D_2$.
	\item $\textcolor{PineGreen}{M_5}$, the summand containing $UD_1D_2x$.
	\item $\textcolor{MidnightBlue}{M_6}$, the summand containing $U(D_1F_1 + D_2F_2)$.
	\item $\textcolor{Fuchsia}{M_7}$, the summand containing $U(D_1D_2^2 + D_1^2D_2)$.
\end{enumerate}
\end{prop}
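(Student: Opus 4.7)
The plan is to compute the $\mathcal{A}(2)$-module $\mathcal{M}$ completely through degree $13$ by (i) fixing an explicit additive basis, (ii) computing $\Sq^1$, $\Sq^2$, and $S$ on every basis element, and then (iii) tracing the $\mathcal{A}(2)$-orbit of each listed generator.

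First I would assemble the additive basis. The paper has already listed it via Evens' generalization of Nakaoka's theorem applied to the Borel fibration $BE_8^2 \to BG \to B\Z/2$: each basis element is either a symmetrized monomial $C_1^{a_1}\! C_2^{a_2} + C_2^{a_1}\! C_1^{a_2}$ or a diagonal $C_1 C_2$ in the Serre generators $D,F,G,J,K$ of $H^*(K(\Z,4);\Z/2)$ (valid in the range by Bott--Samelson's $15$-connectivity of $c\colon BE_8\to K(\Z,4)$), possibly multiplied by $x^k$ with $k\ge 1$. Writing each basis element degree by degree up to $13$ is purely combinatorial.

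Next I would tabulate $\Sq^1$, $\Sq^2$, and $S$. For the ordinary squares I use Quillen's detection theorem: pulling back along $i_1\colon BE_8^2 \to BG$ and $i_2\colon B\Z/2 \times BE_8\to BG$ gives an injection, and on each target the Steenrod action is determined by Serre's computation of $\Sq$ on $D,F,G,J,K$, together with $\Sq(x) = x + x^2$ and the Cartan formula. For the twisted operation I use the defining formula $S(y) = \Sq^4(y) + (D_1+D_2)y$, noting that part~\eqref{adem_part} of \cref{fake_shearing_DY} guarantees the Adem relations automatically, so no further consistency check is required. In particular one immediately verifies the essential inputs $S(U) = U(D_1+D_2)$, $S^2(U)=0$, $S(UD_1D_2) = U F_1 F_2$, and so on. Finally, starting from each of the seven listed classes, I close up under $\Sq^1$, $\Sq^2$, and $S$ (subject to Adem), list the basis elements produced, and check two things: that the union of the seven orbits exhausts the basis of $\mathcal{M}$, and that the orbits are pairwise disjoint. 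That yields the direct sum decomposition.

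The main obstacle is bookkeeping around the twist. Because $S$ differs from $\Sq^4$ by multiplication by $D_1+D_2$, it can move a class from $(\mathcal H_2)^{\Z/2}$ into $\mathcal H_1$, or from $k=0$ into $k>0$ slices, so the submodules $M_i$ mix sectors that look independent at first glance; one has to verify that, e.g., the orbit of $U(D_1^2+D_2^2)$ really closes up without re-entering the orbit of $U$, and similarly that $M_4$ and $M_6$ remain distinct even after applying $S$. The twist also forces the truncation at degree $14$ to be done carefully: a generator whose orbit would otherwise reach into degree $\geq 14$ must have its high-degree image discarded, but the module structure below that must be checked to still close. Once these cancellations are written out, no conceptual difficulty remains and the decomposition follows.
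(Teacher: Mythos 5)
Your proposal matches the paper's argument essentially exactly: the paper also takes the Nakaoka--Evens basis for $H^*(BG;\Z/2)$ in the range given by Bott--Samelson's connectivity result, computes $\Sq^1$ and $\Sq^2$ via Quillen's detection theorem together with $\Sq(x)=x+x^2$, applies the twisted operation $S(y)=\Sq^4(y)+(D_1+D_2)y$, and then records the resulting decomposition as the outcome of a hand calculation. Your sample verifications (e.g.\ $S(UD_1D_2)=UF_1F_2$) are consistent with the decomposition in the paper's figure, so the approach is sound and the same as the paper's.
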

We draw this decomposition in \cref{heterotic_A2_module}.

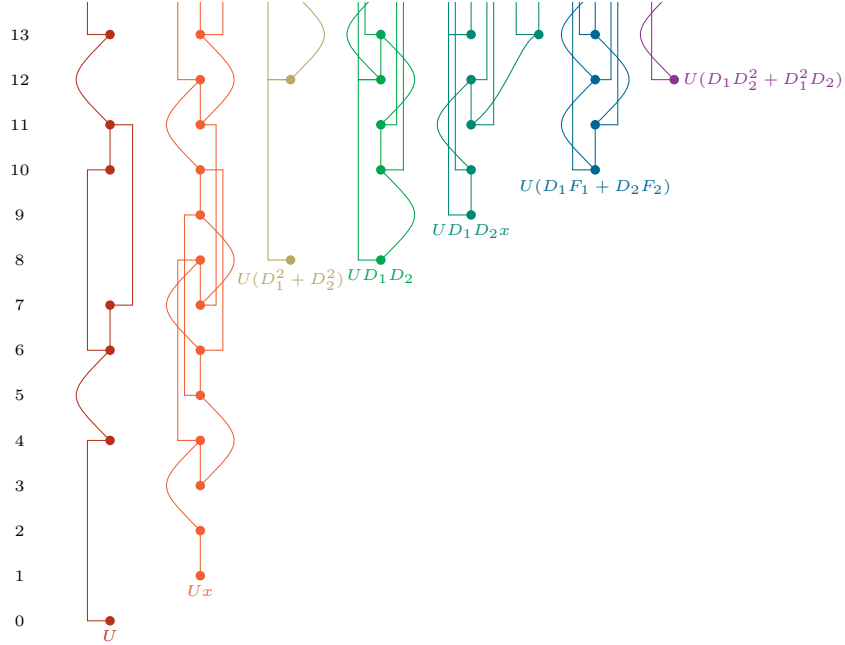
\begin{figure}[h!]
\begin{tikzpicture}[scale=0.6, every node/.style = {font=\tiny}]
	\foreach \y in {0, ..., 13} {
		\node at (-2, \y) {$\y$};
	}
	\begin{scope}
	\clip (-1, -1) rectangle (16.5, 13.7);

	\begin{scope}[BrickRed]
		\tikzptB{0}{0}{$U$}{};
		\foreach \y in {4, 6, 7, 10, 11, 13} {
			\tikzpt{0}{\y}{}{};
		}
		\sqfourL(0, 0);
		\sqtwoL(0, 4);
		\sqone(0, 6);
		\sqfourL(0, 6);
		\sqfourR(0, 7);

		\sqone(0, 10);
		\sqtwoL(0, 11);
		\sqfourL(0, 13);
	\end{scope}

	\begin{scope}[RedOrange]
		\tikzptB{2}{1}{$Ux$}{};
		\foreach \y in {2, ..., 13} {
			\tikzpt{2}{\y}{}{};
		}
		\foreach \y in {1, 3, ..., 13} {
			\sqone(2, \y);
		}
		\foreach \y in {2, 6, 10} {
			\sqtwoL(2, \y);
		}
		\foreach \y in {3, 7, 11} {
			\sqtwoR(2, \y);
		}
		\sqfourL(2, 4);
		\sqfourLtwo(2, 5);
		\sqfourR(2, 6);
		\sqfourRtwo(2, 7);

		\sqfourL(2, 12);
		\sqfourR(2, 13);
	\end{scope}

	\begin{scope}[Goldenrod!67!black]
		\tikzptB{4}{8}{$U(D_1^2 + D_2^2)$}{};
		\tikzpt{4}{12}{}{};
		\sqfourL(4, 8);
		\sqfourL(4, 12);
		\sqtwoR(4, 12);
	\end{scope}

	\begin{scope}[Green]
		\tikzptB{6}{8}{$UD_1D_2$}{};
		\foreach \y in {10, 11, 12, 13} {
			\tikzpt{6}{\y}{}{};
		}
		\sqtwoR(6, 8);
		\sqfourL(6, 8);
		\sqfourR(6, 10);
		\sqone(6, 10);
		\sqfourRtwo(6, 11);
		\sqtwoR(6, 11);
		\sqfourL(6, 12);
		\sqtwoL(6, 12);
		\sqone(6, 12);
		\sqfourLtwo(6, 13);
	\end{scope}
	\begin{scope}[PineGreen]
		\tikzptB{8}{9}{$UD_1D_2x$}{};
		\foreach \y in {10, 11, 12, 13} {
			\tikzpt{8}{\y}{}{};
		}
		\tikzpt{9.5}{13}{}{};
		\sqone(8, 9);
		\sqfourL(8, 9);
		\sqtwoL(8, 10);
		\sqfourLtwo(8, 10);
		\sqone(8, 11);
		\sqtwoCR(8, 11);
		\sqfourR(8, 11);
		\sqone(8, 13);
		\sqone(9.5, 13);
		\sqfourL(8, 13);
		\sqfourRtwo(8, 12);
		\sqfourL(9.5, 13);
	\end{scope}

	\begin{scope}[MidnightBlue]
		\tikzptB{10.75}{10}{$U(D_1F_1 + D_2F_2)$}{};
		\tikzpt{10.75}{11}{}{};
		\tikzpt{10.75}{12}{}{};
		\tikzpt{10.75}{13}{}{};
		
		\sqone(10.75, 10);
		\sqtwoL(10.75, 10);
		\sqtwoR(10.75, 11);
		\sqtwoL(10.75, 12);
		\sqfourRtwo(10.75, 12);
		\sqfourL(10.75, 10);
		\sqfourR(10.75, 11);
		\sqone(10.75, 13);
		\sqfourLtwo(10.75, 13);
	\end{scope}
	\begin{scope}[Fuchsia]
		\tikzptR{12.5}{12}{$U(D_1D_2^2 + D_1^2D_2)$}{};
		\sqtwoL(12.5, 12);
		\sqfourL(12.5, 12);
	\end{scope}

	\end{scope}

\end{tikzpicture}
\caption{The $\cA(2)$-module $T(-(c_1 +c_2))$ in low degrees. The pictured submodule contains all classes in
degrees $12$ and below.}
\label{heterotic_A2_module}
\end{figure}

Recall from \cref{spin_splits_off} that $\MTxihet'$ splits as $\MTSpin\vee \cQ$. Since
$\Omega_*^{\xihet}\cong\Omega_*^{\xihet'}$ in the range we need and $\Omega_*^\Spin$ is known thanks to work of
Anderson-Brown-Peterson~\cite{ABP67}, we focus on $\pi_*(\cQ)$. To do so, we will identify the submodule of the
$E_2$-page of the Adams spectral sequence for $\xihet'$ coming from spin bordism via $s_1\colon
\MTSpin\to\MTxihet'$; the $E_2$-page for $\cQ$ is then a complementary submodule.

The canonical $\xihet'$-structure on a spin manifold from \cref{spin_to_xihet} can be rephrased as follows: a spin
structure on a manifold $M$ is equivalent data to: a spin structure on $M$, a map $c\colon M\to K(\Z, 4)$, and a
trivialization of $c - \lambda(M)$. Thus spin structures are twisted string structures in the sense of
\cref{bordism_twists} (in fact the universal twist in the sense of \cref{universal_twist}), so the map
\begin{equation}
	(1, 0)\colon K(\Z, 4)\longrightarrow (K(\Z, 4)\times K(\Z, 4))\times_{\Z/2}E\Z/2 = B
\end{equation}
lifts to a map of $\MTString$-module Thom spectra $s_1\colon \MTSpin\to\MTxihet'$. Naturality of
\cref{fake_shearing_DY}
then tells us the image of $s_1^*$ on mod $2$ cohomology, allowing us to determine which of the summands in
\cref{e8_semi_coh} correspond to $\MTSpin$ and which correspond to $\cQ$. Specifically, the pullback map sends
$x\mapsto 0$, is nonzero on $D_1$, $F_1$, $G_1$, etc., and sends $D_2$, $F_2$, $G_2$, etc., to zero. This implies
that in the direct-sum decomposition $\MTxihet'\simeq\MTSpin\vee \cQ$, the summands $\textcolor{BrickRed}{M_1}$,
$\textcolor{Goldenrod!67!black}{M_3}$, and $\textcolor{MidnightBlue}{M_6}$ come from the cohomology of $\MTSpin$,
and the remaining summands come from the cohomology of $\cQ$.

In order to run the Adams spectral sequence for $\cQ$, we need to compute the Ext of $\textcolor{RedOrange}{M_2}$,
$\textcolor{Green}{M_4}$, $\textcolor{PineGreen}{M_5}$, and $\textcolor{Fuchsia}{M_7}$ over $\cA(2)$. After we
compute this, we will display the $E_2$-page in \cref{heterotic_SS}. For an $\cA(2)$-module $M$,
$\Ext_{\cA(2)}^{*,*}(M, \Z/2)$, which we will usually denote $\Ext_{\cA(2)}(M)$ or $\Ext(M)$, is a bigraded module
over the bigraded $\Z/2$-algebra $\Ext_{\cA(2)}(\Z/2)$; both the algebra and module structures arise from the
\term{Yoneda product}~\cite[\S 4]{Yon54} (see~\cite[\S 4.2]{BC18} for a review). This module structure is helpful
for determining differentials in the Adams spectral sequence: differentials are equivariant with respect to the
action. The module structure also constrains extensions on its $E_\infty$-page.

May (unpublished) and Shimada-Iwai~\cite[\S 8]{SI67} determined the algebra $\Ext_{\cA(2)}(\Z/2)$. We will only
need to track the actions of three elements: $h_0\in \Ext_{\cA(2)}^{1,1}(\Z/2)$, $h_1\in\Ext_{\cA(2)}^{1,2}(\Z/2)$,
and $h_2\in\Ext_{\cA(2)}(\Z/2)$. These elements are in the image of the map
$\Ext_{\cA}(\Z/2)\to\Ext_{\cA(2)}(\Z/2)$ induced by the quotient $\cA\to\cA(2)$, so we do not have to worry
about whether \cref{fake_shearing} is compatible with the $\Ext_{\cA(2)}(\Z/2)$-action on the $E_2$-page of the
Adams spectral sequence. (It is, though.) When we draw Ext charts as in \cref{heterotic_SS}, we denote
$h_0$-actions as vertical lines, $h_1$-actions as diagonal lines with slope $1$, and $h_2$-actions as diagonal
lines with slope $1/3$. When one of these lines is not present, the corresponding $h_i$ acts as $0$.

Often one computes Ext groups of $\cA(2)$-modules using computer programs developed by Bruner~\cite{Bru18} and
Chatham-Chua~\cite{CC21}, or tools such as the May spectral sequence~\cite{May66} or the Davis-Mahowald spectral
sequence~\cite{DM82, MS87} (see also~\cite[Chapter 2]{BR21}) to compute Ext groups of $\cA(2)$-modules, but for the
four modules we care about, we can get away using simpler calculations by hand and computations already in the
literature.
\begin{enumerate}
	\item Davis-Mahowald~\cite[Table 3.2]{DM78} compute $\Ext_{\cA(2)}(\textcolor{RedOrange}{M_2})$ in the
	degrees we need.
	\item In degrees $13$ and below, $\textcolor{Green}{M_4}$ is isomorphic to $\Sigma^8
	(\cA(2)\otimes_{\cA(0)} \Z/2)$; therefore the Ext groups of these two $\cA(2)$-modules, as algebras over
	$\Ext_{\cA(2)}(\Z/2)$, are isomorphic in topological degrees $12$ and below. Thus we can compute with the
	change-of-rings theorem~\eqref{change-of-rings}: as $\Ext_{\cA(2)}(\Z/2)$-algebras,
	\begin{equation}
		\Ext_{\cA(2)}(\cA(2)\otimes_{\cA(0)}\Z/2) \cong \Ext_{\cA(0)}(\Z/2) \cong \Z/2[h_0],
	\end{equation}
	with $h_0\in\Ext^{1,1}$. This identification of $\Ext_{\cA(0)}(\Z/2)$ follows from Koszul duality~\cite[Example
	4.5.5]{BC18}.
	\item $\textcolor{PineGreen}{M_5}$ looks a lot like $\textcolor{RedOrange}{M_2}$, which gives us a technique to
	compute $\Ext_{\cA(2)}(\textcolor{PineGreen}{M_5})$. Specifically, if $\tau_{\le k}M$ denotes the quotient of
	an $\cA(2)$-module $M$ by the submodule of elements in degrees greater than $k$, then there is a short exact
	sequence of $\cA(2)$-modules
	\begin{equation}
	\label{A2_SES}
		\shortexact{\textcolor{RubineRed}{\Sigma^{13}\Z/2}}{\tau_{\le 13}\textcolor{PineGreen}{M_5}}{ \tau_{\le
		13}\textcolor{Periwinkle}{\Sigma^8 M_2}}.
	\end{equation}
	We draw this sequence in \cref{M2M5_LES}, left. \eqref{A2_SES} induces a long exact sequence in Ext groups;
	passage between $M$ and $\tau_{\le 13}M$ does not change Ext groups in degrees $12$ and below, and since we
	only care about degrees $12$ and below, we can and do pass between $\tau_{\le 13}M$ and $M$ without comment.
	\begin{figure}[h!]
		\begin{subfigure}[c]{0.63\textwidth}
			\begin{tikzpicture}[scale=0.6]
				\begin{scope}[every node/.style = {font=\tiny}]
					\foreach \y in {9, ..., 13} {
						\node at (-7, \y-9) {$\y$};
					}
				\end{scope}
				\sqfourL(0, 0);
				\begin{scope}[RubineRed]
					\foreach \x in {-5, 0} {
						\tikzpt{\x}{4}{}{};
					}
				\draw[->, thick] (-4.5, 4) -- (-0.75, 4);
				\end{scope}
				\begin{scope}[Periwinkle]
					\foreach \y in {0, ..., 4} {
						\tikzpt{5}{\y}{}{};
					}
					\sqone(5, 0);
					\sqone(5, 2);
					\sqtwoL(5, 1);
					\sqtwoR(5, 2);

					\foreach \y in {0, ..., 3} {
						\tikzpt{0}{\y}{}{};
					}
					\tikzpt{1.5}{4}{}{};
					\sqone(0, 0);
					\sqtwoL(0, 1);
					\sqone(0, 2);
					\sqtwoCR(0, 2);

					\draw[->, thick] (0.5, 0) -- (4.5, 0);
				\end{scope}
				\node[below=2pt] at (-5, 0) {$\Sigma^{13}\Z/2$};
				\node[below=2pt] at (0, 0) {$\tau_{\le 13}M_5$};
				\node[below=2pt] at (5, 0) {$\tau_{\le 13}\Sigma^8M_2$};
			\end{tikzpicture}
		\end{subfigure}
		\begin{subfigure}[c]{0.3\textwidth}
%
%
		\includegraphics{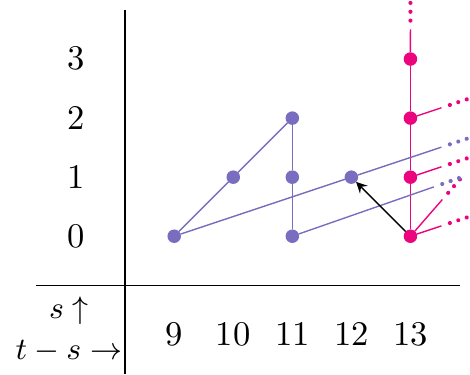}
		\end{subfigure}
	\caption{Left: the short exact sequence~\eqref{A2_SES} of $\cA(2)$-modules. Right: the associated long exact
	sequence in Ext. See the discussion after~\eqref{A2_bdry} for why the pictured boundary map (black arrow) is
	nonzero.}
	\label{M2M5_LES}
	\end{figure}

	We already know $\Ext_{\cA(2)}(\textcolor{RubineRed}{\Z/2})$ and $\Ext_{\cA(2)}(\textcolor{Periwinkle}{M_2})$,
	so we can run the long exact sequence associated to~\eqref{A2_SES} to compute
	$\Ext_{\cA(2)}(\textcolor{PineGreen}{M_5})$ in degrees $12$ and below; we draw this long exact sequence in
	\cref{M2M5_LES}, right. In the range we care about, there is exactly one boundary map that is not forced to be
	zero for degree reasons, namely
	\begin{equation}
	\label{A2_bdry}
		\partial\colon\Ext_{\cA(2)}^{0,13}(\textcolor{RubineRed}{\Sigma^{13}\Z/2})\longrightarrow
		\Ext_{\cA(2)}^{1,13} (\textcolor{Periwinkle}{\Sigma^8 M_2});
	\end{equation}
	it must be nonzero, because that is the only way to obtain $\Ext_{\cA(2)}^{0,13}(\textcolor{PineGreen}{M_5}) =
	\Hom_{\cA(2)}(\textcolor{PineGreen}{M_5}, \Sigma^{13}\Z/2) = 0$, and by inspection of
	\cref{heterotic_A2_module} this Hom group vanishes.
	\item If $C\eta\coloneqq \Sigma^{-2}\widetilde H^\ast(\CP^2; \Z/2)$, there is a $14$-connected quotient map
	$\textcolor{Fuchsia}{M_7}\to \Sigma^{12}C\eta$, so $\Ext_{\cA(2)}(\Sigma^{12} C\eta)$ and
	$\Ext_{\cA(2)}(\textcolor{Fuchsia}{M_7})$ do not differ in the range we care about. Bruner-Rognes~\cite[Figure
	0.15]{BR21} compute $\Ext_{\cA(2)}(C\eta)$.
\end{enumerate}
Using these computations, we obtain the following description of the $E_2$-page of the Adams spectral sequence for
the summand $\cQ$ of $\MTxihet'$.
\begin{prop}
The $E_2$-page of the Adams spectral sequence for $\cQ$ in topological degrees $12$ and below is as given in
\cref{heterotic_SS}. In particular, in this range, the $E_2$-page is generated as an $\Ext_{\cA(2)}(\Z/2)$-module
by eight elements: $p_1\in\Ext^{0,1}$, $p_3\in\Ext^{0,3}$, $p_7\in\Ext^{0,7}$, $a\in\Ext^{0,8}$, $b\in\Ext^{2,10}$,
$c\in\Ext^{0,9}$, $d\in\Ext^{0,11}$, and $e\in\Ext^{0,12}$.
\end{prop}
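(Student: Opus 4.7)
The plan is to assemble the $E_2$-page from the decomposition $H^*(\cQ;\Z/2) \cong \textcolor{RedOrange}{M_2}\oplus \textcolor{Green}{M_4}\oplus \textcolor{PineGreen}{M_5}\oplus\textcolor{Fuchsia}{M_7}$ given by \cref{e8_semi_coh} and the preceding identification of the $\MTSpin$-summand with $\textcolor{BrickRed}{M_1}\oplus\textcolor{Goldenrod!67!black}{M_3}\oplus\textcolor{MidnightBlue}{M_6}$. Because $\Ext_{\cA(2)}(-,\Z/2)$ is additive in its first argument, it suffices to compute $\Ext_{\cA(2)}(N,\Z/2)$ for each of $N = \textcolor{RedOrange}{M_2}, \textcolor{Green}{M_4}, \textcolor{PineGreen}{M_5}, \textcolor{Fuchsia}{M_7}$ in internal degrees $t\le 12$, and then to read off the generators and the $h_0,h_1,h_2$-module structure.

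First I would treat the two pieces whose Ext is already in the literature or follows immediately from a change-of-rings. For $\textcolor{RedOrange}{M_2}$, I would cite the Davis--Mahowald tabulation~\cite[Table 3.2]{DM78} of $\Ext_{\cA(2)}(\widetilde H^*(\RP^\infty;\Z/2),\Z/2)$ through the relevant range; this produces generators in the appropriate low internal degrees (giving $p_1, p_3, p_7$, and an $h_1^2$-torsion class of the sort that contributes $b$ in bidegree $(2,10)$), together with all $h_0,h_1,h_2$-action information. For $\textcolor{Green}{M_4}$, I would note that in degrees $\le 13$ the module is $\Sigma^8(\cA(2)\otimes_{\cA(0)}\Z/2)$, apply the change-of-rings isomorphism~\eqref{change-of-rings}, and invoke the Koszul-dual identification $\Ext_{\cA(0)}(\Z/2,\Z/2)\cong \Z/2[h_0]$ to obtain a single $h_0$-tower starting at bidegree $(0,8)$; this contributes the generator $a$.

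Next I would handle $\textcolor{PineGreen}{M_5}$ using the short exact sequence~\eqref{A2_SES} and its resulting long exact sequence in Ext, pictured in \cref{M2M5_LES}. The inputs are the known $\Ext_{\cA(2)}(\textcolor{Periwinkle}{M_2})$ and $\Ext_{\cA(2)}(\Z/2)$; the only boundary map not forced to vanish for bidegree reasons is~\eqref{A2_bdry}, and the observation $\Hom_{\cA(2)}(\textcolor{PineGreen}{M_5}, \Sigma^{13}\Z/2) = 0$ (visible from \cref{heterotic_A2_module}) pins it down as nonzero. Reading off the resulting six-term exact sequences contributes the generator $c\in\Ext^{0,9}$ and determines its $h_i$-actions. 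For $\textcolor{Fuchsia}{M_7}$, I would use the $14$-connected quotient map $\textcolor{Fuchsia}{M_7}\twoheadrightarrow \Sigma^{12}C\eta$ to identify $\Ext_{\cA(2)}(\textcolor{Fuchsia}{M_7})$ with $\Ext_{\cA(2)}(\Sigma^{12}C\eta)$ through internal degree $12$, and then cite the Bruner--Rognes chart~\cite[Figure 0.15]{BR21} of $\Ext_{\cA(2)}(C\eta,\Z/2)$; this contributes the generator $e\in\Ext^{0,12}$.

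Finally I would assemble the four contributions, draw them superimposed to produce \cref{heterotic_SS}, and verify by inspection that the complete module structure over $\Ext_{\cA(2)}(\Z/2)$ is generated by precisely the eight classes listed. The remaining generator $d\in\Ext^{0,11}$ arises from $\textcolor{RedOrange}{M_2}$ (the class dual to $Ux^{10}$ in the truncated $\RP^\infty$ piece). The main obstacle I anticipate is not any single computation but bookkeeping: making certain that each boundary map in the $M_5$ long exact sequence is correctly resolved, that the truncation at degree $13$ does not mask any generator living in the range $t\le 12$, and that the $\Ext_{\cA(2)}(\Z/2)$-module structure is fully consistent across the four summands when they are merged. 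If that bookkeeping is carried out carefully, the proposition follows.
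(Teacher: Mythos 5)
Your route is the same as the paper's: decompose via \cref{e8_semi_coh}, discard the summands absorbed into $\MTSpin$, and compute $\Ext_{\cA(2)}$ of $\textcolor{RedOrange}{M_2}$ (Davis--Mahowald), $\textcolor{Green}{M_4}$ (change-of-rings over $\cA(0)$), $\textcolor{PineGreen}{M_5}$ (the long exact sequence from~\eqref{A2_SES}, with the boundary map~\eqref{A2_bdry} forced to be nonzero by the vanishing of the relevant Hom group), and $\textcolor{Fuchsia}{M_7}$ (via $\Sigma^{12}C\eta$ and Bruner--Rognes). This is exactly the argument in the paper.

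One correction to your bookkeeping: the generator $d\in\Ext^{0,11}$ does \emph{not} come from $\textcolor{RedOrange}{M_2}$. A class in $\Ext^{0,11}$ is an $\cA(2)$-module generator in degree $11$, and $\widetilde H^*(\RP^\infty;\Z/2)$ has no such generator: its $\cA(2)$-module generators in degrees $\le 12$ are $x$, $x^3$, and $x^7$ only (e.g.\ $x^{11}=\Sq^4x^7$), so $\Ext^{0,11}_{\cA(2)}(\textcolor{RedOrange}{M_2})=0$; your ``class dual to $Ux^{10}$'' is also in the wrong internal degree. Instead $d$, like $c$, comes from $\textcolor{PineGreen}{M_5}$: through the quotient $\tau_{\le 13}M_5\to\tau_{\le 13}\Sigma^8M_2$ in~\eqref{A2_SES} it is the $8$-fold suspension of $p_3$ (just as $c$ is the suspension of $p_1$), which is what produces the relation $h_0^2d=h_1^2c$ used in the differential~\ref{purple_d2}. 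With that attribution fixed, the assembly of the four Ext charts and the list of eight generators match \cref{heterotic_SS}.
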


\begin{figure}[h!]
\includegraphics{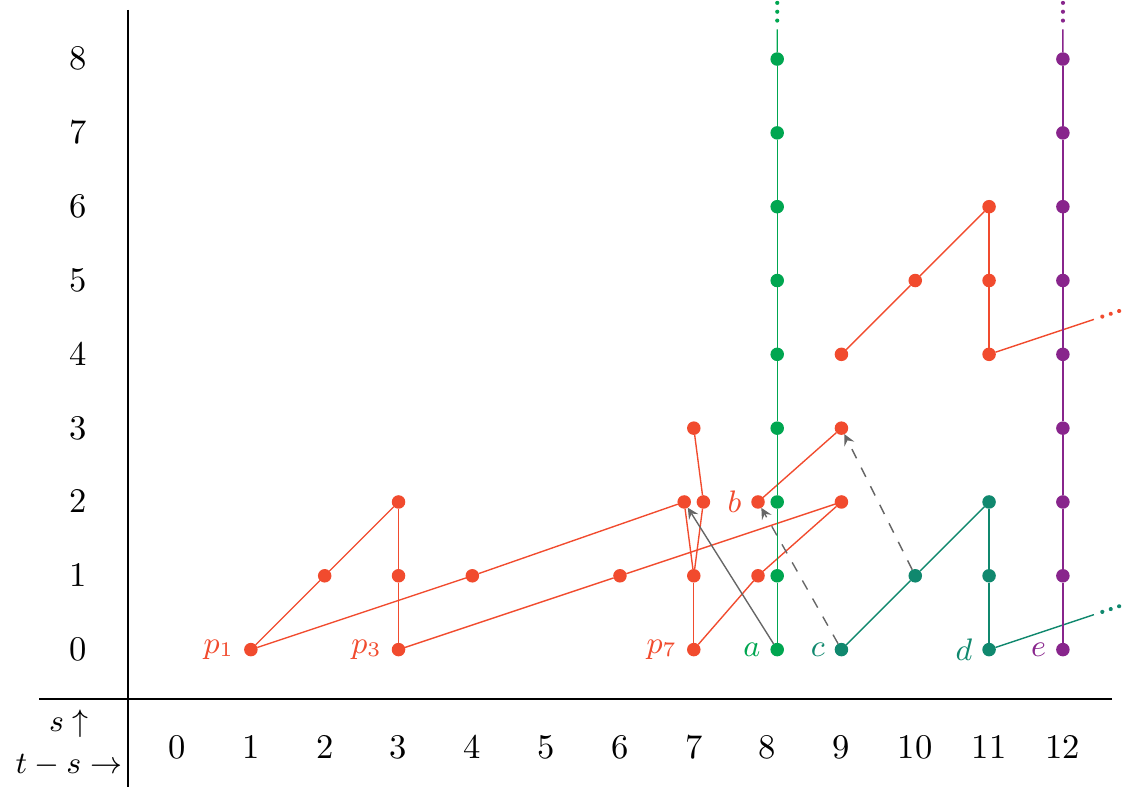}
\caption{In \cref{spin_splits_off}, we showed $\MTxihet'\simeq\MTSpin\vee\cQ$; this figure denotes the $E_2$-page
of the Adams spectral sequence computing $\pi_*(\cQ)$ in degrees $12$ and below. This corresponds to a subset of the
summands in \cref{heterotic_A2_module}. In \cref{g_to_o_nonzero}, we show that the solid gray differential
beginning at $a$ is nonzero; we leave open the other two differentials, which are dashed in this figure.}
\label{heterotic_SS}
\end{figure}

There are plenty of differentials in this Adams spectral sequence which could be nonzero, even when
we take into account the fact that Adams differentials commute with $h_0$, $h_1$, and $h_2$:
\begin{enumerate}[label=(D\arabic*)]
	\item\label{green_to_orange_d2_1} $d_2\colon E_2^{0,8}\to E_2^{2,9}$, whose value on $a$
	could be $h_2^2 p_1$, $h_0^2p_7$, or a linear combination of
	those two elements.
	\item\label{green_to_orange_d2_2} $d_2\colon E_2^{1,9}\to E_2^{3,10}$, which could send
	$h_0a$ or $h_1p_7$ to $h_0^3 p_7$.
	\item\label{h1_orange_d2_family} $d_2\colon E_2^{0,9}\to E_2^{2,8}$ and $d_2\colon E_2^{1,11}\to E_2^{3,12}$,
	intertwined by an $h_1$-action, which could send $c\mapsto b$ and $h_1c\mapsto h_1b$.
	\item\label{purple_d2} $d_2\colon E_2^{0,12}\to E_2^{2,13}$, which could send $e\mapsto h_1^2c = h_0^2d$.
	\item\label{green_to_orange_d3} If the differentials in~\ref{green_to_orange_d2_1}
	and~\ref{green_to_orange_d2_2} vanish, $d_3\colon E_3^{0,8}\to E_3^{3,10}$ could be nonzero on $a$.
	\item\label{deep_purple} If the differential in~\ref{purple_d2} vanishes, $d_5\colon E_5^{0,12}\to
	E_5^{5,16}$ (and its image under $h_0$) or $d_6\colon E_6^{0,12}\to E_6^{6,17}$ could be nonzero.
\end{enumerate}
\begin{lem}
\label{orange_diffs}
The differentials~\ref{green_to_orange_d2_2}, \ref{green_to_orange_d3}, and~\ref{deep_purple} vanish.
\end{lem}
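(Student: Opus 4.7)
The three differentials all target classes in well-understood summands of the $E_2$-page; the uniform strategy is to exhibit those targets as permanent cycles by naturality against a spectrum-level map from a simpler source.

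The key construction is the map of Thom spectra
\begin{equation*}
	\sigma\colon \MTString\wedge(B\Z/2)_+ \longrightarrow \MTxihet'
\end{equation*}
induced by the map of tangential structures sending a string manifold $M$ with a principal $\Z/2$-bundle to the $\xihet'$-manifold with trivial $\rE_8$-classes $c_1 = c_2 = 0$ and with trivialization of $\lambda - (c_1+c_2) = \lambda$ coming from the string structure on $M$. Applying \cref{fake_shearing_DY} to both source and target, $\sigma$ realizes on Adams $E_2$-pages the natural inclusion of $\Ext_{\cA(2)}(\widetilde H^*(\RP^\infty;\Z/2))$ onto the $\textcolor{RedOrange}{M_2}$ summand of $\Ext_{\cA(2)}(T(-(c_1+c_2)))$, and in particular is injective there.

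For~\ref{green_to_orange_d2_2} and~\ref{green_to_orange_d3}, the common target $h_0^3 p_7\in E_2^{3,10}$ lies in $M_2$, so it suffices to show that it is a permanent cycle in the source spectral sequence for $\MTString\wedge(B\Z/2)_+$. Inspection of the Davis--Mahowald computation of $\Ext_{\cA(2)}(\widetilde H^*(\RP^\infty;\Z/2))$ cited earlier from \cite[Table 3.2]{DM78} shows that there is no class in a bidegree supporting any $d_r$-differential ($r \ge 2$) hitting any element of the $h_0$-tower on $p_7$. The entire tower therefore consists of permanent cycles in the source spectral sequence, detecting a cyclic subgroup of order $16$ in $\pi_7$. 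By naturality, $\sigma_*(h_0^3 p_7)$ is also a permanent cycle in the $\MTxihet'$ spectral sequence, ruling out both~\ref{green_to_orange_d2_2} and~\ref{green_to_orange_d3}.

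For~\ref{deep_purple}, the potential targets of $d_5$ and $d_6$ on $e$ lie in bidegrees $(5,16)$ and $(6,17)$ (both in topological degree $11$). The plan is to identify the $M_i$-summand containing each potential target using \cref{heterotic_A2_module} and then apply the analogous naturality argument: targets in $M_2$ are ruled out by $\sigma$; targets in the spin summands $\textcolor{Goldenrod!67!black}{M_3}$ or $\textcolor{MidnightBlue}{M_6}$ are pulled back via the section $s_1\colon \MTSpin\to\MTxihet'$ of \cref{spin_to_xihet} and shown to survive using the Anderson--Brown--Peterson description of $\pi_*(\MTSpin)$ (noting in particular that $\Omega_{11}^{\Spin} = 0$ constrains severely what can happen in the spin summand in this degree). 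The main obstacle is therefore the bookkeeping of locating the potential targets precisely within the $E_2$-page; the naturality principle itself is uniform across the three differentials.
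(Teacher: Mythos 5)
There is a genuine gap, and it sits at the heart of the argument for~\ref{green_to_orange_d2_2} and~\ref{green_to_orange_d3}. Naturality of the Adams spectral sequence for the map $\sigma$ gives $d_r\circ\sigma_* = \sigma_*\circ d_r$, so the image of a permanent cycle is a permanent cycle in the sense that it \emph{supports} no differentials. It does not follow that the image is not the \emph{target} of a differential in the spectral sequence for $\MTxihet'$: a class can be a boundary there even if its preimage is not a boundary in the source, because the differential hitting it may originate outside the image of $\sigma_*$. That is exactly the situation here: the potential differentials~\ref{green_to_orange_d2_2} and~\ref{green_to_orange_d3} have sources $h_0a$ and $a$ in the summand $\textcolor{Green}{M_4}$, which is not in the image of $\sigma_*$, so the map \emph{into} $\MTxihet'$ gives no control over them. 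Indeed your method would equally ``prove'' that $h_2^2p_1$ survives --- it too lies in $\textcolor{RedOrange}{M_2}$, is in the image of $\sigma_*$, and is a permanent cycle in the source --- yet \cref{g_to_o_nonzero} shows $d_2(a) = h_2^2p_1\ne 0$. So the proposed inference is not merely unjustified; it is refuted within this very spectral sequence.

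The missing ingredient is a map \emph{out of} $\MTxihet$ that detects the relevant homotopy classes. The paper composes $\iota'$ with the projection $p\colon\MTxihet\to\MTSpin\wedge(B\Z/2)_+$ obtained by forgetting the $2$-group structure and the $\rE_8$ factors; the composite $p\circ\iota$ is the standard map $\MTString\wedge B\Z/2\to\MTSpin\wedge(B\Z/2)_+$. Computing this map on $\pi_7$ (it is $\Z/16\oplus\Z/2\to\Z/16$, $(1,0)\mapsto 1$) shows that the subgroup of $\Omega_7^{\xihet}$ detected by the $h_0$-tower on $p_7$ has order at least $16$, which forces the entire tower --- in particular $h_0^3p_7$ --- to survive, killing~\ref{green_to_orange_d2_2} and~\ref{green_to_orange_d3}. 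The same device on $\pi_{11}$ (the $\Z/8$ generated by $B\times\RP^3$ injects into $\widetilde\Omega_{11}^\Spin(B\Z/2)$) disposes of~\ref{deep_purple}; your sketch for that case inherits the same flaw and, as written, does not locate the targets or explain why $\Omega_{11}^\Spin = 0$ would help with targets lying in $\textcolor{RedOrange}{M_2}$ rather than in a spin summand.
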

\begin{proof}
Our strategy is to use the fact that $\Ghet\to\Z/2$ splits to zero out differentials. This splitting does not
extend to a splitting of $\MTxihet$, but it will be close enough.

The inclusion $\iota\colon \Z/2\inj \Ghet$ defines a map $\iota'\colon \MTString\wedge B\Z/2\to\MTxihet$ which on
Adams $E_2$-pages is precisely the inclusion of the summand $\Ext(\textcolor{RedOrange}{M_2})$. Quotienting $\Ghet$
by $\T[1]$, then by $\rE_8\times\rE_8$, produces a map
\begin{equation}
	p\colon \MTxihet\underset{\eqref{forget_string}}{\overset{\phi}{\longrightarrow}} \MTSpin\wedge
	(B((\rE_8\times\rE_8)\rtimes\Z/2))_+\longrightarrow \MTSpin\wedge (B\Z/2)_+,
\end{equation}
and $p\circ\iota\colon\MTString\wedge (B\Z/2)\to\MTSpin\wedge (B\Z/2)_+$ is the usual map $\MTString\to\MTSpin$
together with the addition of a basepoint. This means that any element of $\widetilde\Omega_*^\String(B\Z/2)$ whose
image in $\widetilde\Omega_*^\Spin(B\Z/2)$ is nonzero must also be nonzero in $\Omega_*^{\xihet}$, which kills many
differentials to or from $\Ext(\textcolor{RedOrange}{M_2})$. To produce such elements, study the map of Adams
spectral sequences induced by $p\circ\iota$, which on $E_2$-pages is the map
\begin{equation}
\label{ExtA2A1map}
	\Ext_{\cA(2)}(\widetilde H^*(B\Z/2;\Z/2))\longrightarrow \Ext_{\cA(1)}(H^*(B\Z/2;\Z/2)).
\end{equation}
Davis-Mahowald~\cite[Table 3.2]{DM78} compute $\Ext_{\cA(2)}(H^*(B\Z/2;\Z/2))$ in the degrees we need, and
Gitler-Mahowald-Milgram~\cite[\S 2]{GMM68} compute $\Ext_{\cA(1)}(H^*(B\Z/2;\Z/2))$. We draw the
map~\eqref{ExtA2A1map} in \cref{orange_diffs_pic}. All differentials in the spectral sequence over $\cA(1)$ vanish
using $h_0$- and $h_1$-equivariance, and by inspection there are no hidden extensions. Therefore we can identify
some classes which survive $p\circ\iota$ and use this to trivialize some differentials in \cref{heterotic_SS}.
\begin{itemize}
	\item By computing the image of $p\circ\iota$ on Ext groups, we learn that the map
	$\widetilde\Omega_7^\String(B\Z/2)\to\widetilde\Omega_7^\Spin(B\Z/2)$ can be identified with the map
	$\Z/16\oplus\Z/2\to\Z/16$ sending $(1, 0)\mapsto 1$ and $(0, 1)\mapsto 0$.\footnote{Alternatively, one could
	show that the $\Z/16\subset\widetilde\Omega_7^\String(B\Z/2)$ is mapped injectively into
	$\widetilde\Omega_7^\Spin(B\Z/2)$ by checking on a generator. One can show that $\RP^7$ admits a string
	structure; then the generator of that $\Z/16$ subgroup of $\widetilde\Omega_7^\String(B\Z/2)$ is $\RP^7$ with
	its nontrivial principal $\Z/2$-bundle. Its image in $\widetilde\Omega_7^\Spin(B\Z/2)$ has order at least $16$,
	because the $\eta$-invariant of a suitable twisted Dirac operator associated to the $\Z/2$-bundle defines a
	bordism invariant $\Omega_7^\Spin(B\Z/2)\to\R/\Z$, and on $(\RP^7, S^7\to\RP^7)$, this $\eta$-invariant is
	$\ell/16\bmod 1$ for some odd $\ell$, as follows from a formula of Donnelly~\cite[Proposition 4.1]{Don78}.}
	Therefore, any differential to or from the four summands in topological degree $7$ linked by $h_0$-actions must
	vanish, including~\ref{green_to_orange_d2_2} and~\ref{green_to_orange_d3}.
	\item Similarly, the map $\widetilde\Omega_{11}^\String(B\Z/2)\to\widetilde\Omega_{11}^\Spin(B\Z/2)$ can be
	identified with the inclusion $\Z/8\inj \Z/128 \oplus \Z/8\oplus \Z/2$ sending $1\mapsto (16, 0, 0)$, which
	follows either by computing $p\circ\iota$ on Ext groups or computing $\eta$-invariants on the generator of
	$\widetilde\Omega_{11}^\String(B\Z/2)$, which can be taken to be the product of $\RP^3$ with a Bott
	manifold.\footnote{All orientable $3$-manifolds have trivializable tangent bundles, hence string structures;
	for a construction of a Bott manifold with string structure, see~\cite[\S 5.3]{FH21b}.}
	Thus~\ref{deep_purple} vanishes.
	\qedhere
\end{itemize}
\end{proof}
\begin{figure}[h!]
\begin{subfigure}[c]{0.48\textwidth}
%
\includegraphics{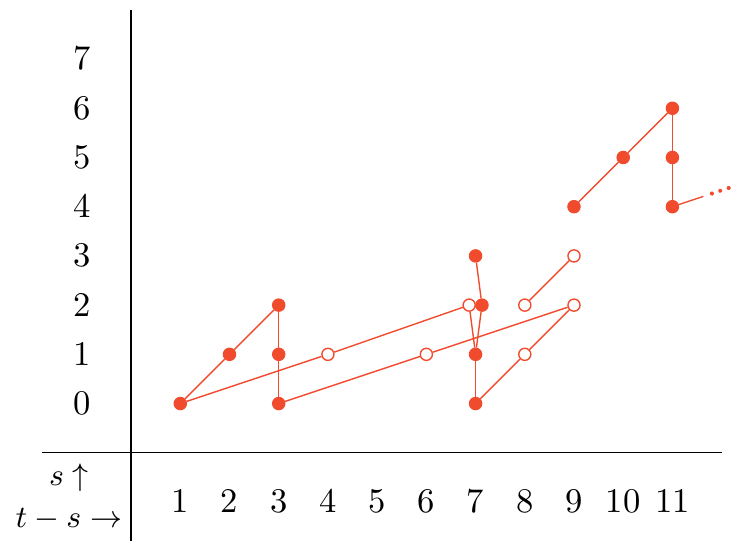}
\end{subfigure}
\begin{subfigure}[c]{0.48\textwidth}
%
\includegraphics{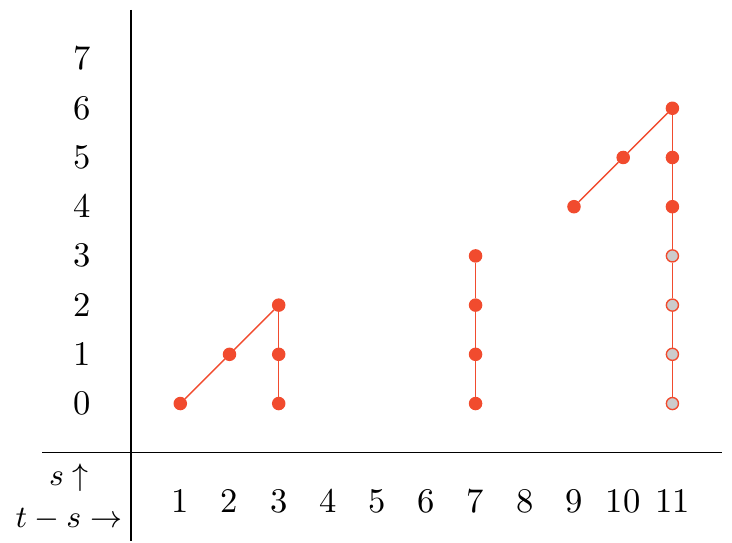}
\end{subfigure}
\caption{Left: $\Ext_{\cA(2)}(\widetilde H^*(B\Z/2; \Z/2), \Z/2)$, the $E_2$-page of the Adams spectral sequence
computing $\widetilde\Omega_*^\String(B\Z/2)_2^\wedge$. Filled dots have nonzero image in $\Ext_{\cA(1)}$; unfilled
dots are the kernel.
Right: $\Ext_{\cA(1)}(\widetilde H^*(B\Z/2; \Z/2), \Z/2)$, a summand of the $E_2$-page of the Adams spectral
sequence computing $\widetilde\Omega_*^\Spin(B\Z/2)_2^\wedge$. Filled dots are in the image of the map from
$\Ext_{\cA(2)}$; gray dots are the cokernel.
This map of spectral sequences is used in the proof of \cref{orange_diffs}.
}
\label{orange_diffs_pic}
\end{figure}

\begin{lem}
\label{g_to_o_nonzero}
The differential~\ref{green_to_orange_d2_1} is nonzero; specifically, $d_2(a) = h_2^2p_1$.
\end{lem}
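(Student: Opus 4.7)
The plan is to determine the differential by combining Davis--Mahowald's computation of $\widetilde\Omega_*^{\String}(B\Z/2)$ with the constraints coming from \cref{orange_diffs} together with a vanishing claim for a particular $\Z/2$ class in $\Omega_7^{\xihet}$.

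By Davis--Mahowald, $\widetilde\Omega_7^{\String}(B\Z/2)\cong\Z/16\oplus\Z/2$, with the $\Z/16$ detected by the four-element $h_0$-tower based at $p_7$ and the $\Z/2$ detected by $h_2^2p_1$. From \cref{orange_diffs}, the $\Z/16$ summand injects into $\Omega_7^{\xihet}$ under the map $\iota'_*$, so $p_7,h_0p_7,h_0^2p_7,h_0^3p_7$ are all permanent cycles in the Adams spectral sequence for $\cQ$. The key additional input I would establish is that the $\Z/2$ summand of $\widetilde\Omega_7^{\String}(B\Z/2)$ maps to zero in $\Omega_7^{\xihet}$ under $\iota'_*$. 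Granted this, $h_2^2p_1$, which sits in the $\textcolor{RedOrange}{M_2}$ summand of the $E_2$-page for $\cQ$, cannot represent a nonzero permanent cycle, so must be killed by a differential. Inspection of \cref{heterotic_SS} shows the only possible source is a $d_2$ out of $\Ext^{0,8}$, which is generated by $a$; the only potential targets are $h_2^2p_1$ and $h_0^2p_7$, but $h_0^2p_7$ is a permanent cycle by \cref{orange_diffs} and hence not a boundary. Therefore $d_2(a)=h_2^2p_1$, as desired.

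To prove the vanishing claim, I would exhibit a $\xihet$-null-bordism of a manifold representative. The most natural route, parallel to the strategy used in \cref{orange_diffs}, is to exploit the fact already observed there that the $\Z/2$ summand dies in $\widetilde\Omega_7^{\Spin}(B\Z/2)$. Starting from a spin null-bordism $W^8$ of a representative $N^7$, one then needs to upgrade the spin structure on $W$ to a $\xihet$-structure extending the one on $N$. With trivial $\rE_8$-bundle data chosen on $N$, the extension to trivial $\rE_8$-bundles on $W$ reduces the obstruction to finding a trivialization of $\lambda(W)\in H^4(W;\Z)$ restricting to the given one on $N$; the obstruction to this lives in a relative cohomology group controlled by the cokernel of $\widetilde\Omega_8^{\String}(B\Z/2)\to\widetilde\Omega_8^{\Spin}(B\Z/2)$, which can be analyzed directly via Davis--Mahowald. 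An alternative, more formal route is to work entirely at the level of spectra, using the factorization $\iota'\colon \MTString\wedge (B\Z/2)_+\to\MTString\wedge (B((\rE_8\times\rE_8)\rtimes\Z/2))_+\to\MTxihet$ and verifying that $h_2^2p_1$ maps to zero already in the middle spectrum.

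The main obstacle is the vanishing claim itself: the delicate point is that the $\Z/2$ class is detected by a genuinely string-level invariant (since it lives in the kernel of the map to $\cA(1)$-Ext), so it is not immediate that the trivialization of $\lambda$ built into a $\xihet$-structure is enough to kill it. The analysis above effectively reduces this to a tractable calculation in degree $8$, but carrying it out carefully — whether by an explicit bounding manifold or by a direct spectral-level argument — is where most of the real work lies.
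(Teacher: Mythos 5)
Your reduction of the lemma to a single vanishing statement is sound, and it is a genuinely different framing from the paper's: you argue that the $\Z/2\subset\widetilde\Omega_7^\String(B\Z/2)$ detected by $h_2^2p_1$ must die in $\Omega_7^{\xihet}$, so $h_2^2p_1$ must vanish on the $E_\infty$-page; since the only possible source of a differential into $E^{2,9}$ is $d_2$ on the one-dimensional group $E_2^{0,8}=\Z/2\cdot a$, the class $h_2^2p_1$ can lie in the image of the differentials only if $d_2(a)=h_2^2p_1$ exactly (this already rules out $d_2(a)=h_2^2p_1+h_0^2p_7$, so you do not really need the separate appeal to \cref{orange_diffs} there). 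The problem is that the vanishing claim itself is exactly the hard content of the lemma, and neither of your two proposed routes establishes it. The ``more formal route'' cannot work: the projection $\rE_8^2\rtimes\Z/2\to\Z/2$ splits the inclusion $\Z/2\inj\rE_8^2\rtimes\Z/2$, so $\MTString\wedge(B\Z/2)_+$ is a retract of $\MTString\wedge(B(\rE_8^2\rtimes\Z/2))_+$ and $h_2^2p_1$ survives with nonzero image there; worse, the second map in your claimed factorization does not exist, because a string structure together with an arbitrary $(\rE_8^2\rtimes\Z/2)$-bundle does not induce a $\xihet$-structure (one would also need a trivialization of $c_1+c_2$). The class only dies after the B-field condition is imposed, so any proof must engage with that condition.

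The null-bordism route is where the real argument would have to live, and as written it has a gap: if $W^8$ is a spin nullbordism of the representative $N^7$ with its $\Z/2$-bundle and you insist on trivial $\rE_8$-bundles over $W$, the obstruction to extending the $\xihet$-structure is the relative class $\lambda(W,N)\in H^4(W,N;\Z)$, which has no reason to vanish and is not ``controlled by the cokernel of $\widetilde\Omega_8^\String(B\Z/2)\to\widetilde\Omega_8^\Spin(B\Z/2)$.'' The actual flexibility is in choosing nontrivial $\rE_8$-bundles on $W$ whose class $c_1+c_2$ realizes $\lambda(W,N)$ rel boundary, and proving that this is always possible (for some choice of $W$) is precisely the kind of statement the paper proves by other means. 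The paper's proof instead establishes \cref{4d_goings_on} --- surjectivity of $\Omega_4^{\xi^{r,\mathrm{het}}}\to\Omega_4^{\xihet}$, via a cofiber-sequence argument identifying the fiber of $\mathit{MT\xi}^{r,\mathrm{het}}\to\MTxihet$ as a Thom spectrum whose $\pi_3$ vanishes --- and then uses $h_2$-linearity together with Stong's computation $\Omega_7^\Spin(K(\Z,4))=0$ to kill $\nu^2\cdot[\RP^1]$. Your degree-$7$ vanishing claim is an immediate consequence of that (since $\nu\cdot[\RP^1]$ lifts to $\Omega_4^{\xi^{r,\mathrm{het}}}$ and $\Omega_7^{\xi^{r,\mathrm{het}}}=0$), so the cleanest repair of your argument is to import exactly that input; without it, the proposal does not yet constitute a proof.
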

We will deduce this from the following fact.
\begin{prop}
\label{4d_goings_on}
The map $\Omega_4^{\xi^{r,\mathrm{het}}}\to\Omega_4^{\xihet}$ is surjective, at least after $2$-completion.
\end{prop}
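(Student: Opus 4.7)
The plan is to exhibit explicit $\xi^{r,\mathrm{het}}$-representatives for generators of the $2$-completion of $\Omega_4^{\xihet}$, which by \cref{main_1} is $\Z\oplus\Z/2$. Both the source and target Thom spectra split off a common $\MTSpin$ summand: for the target, $\MTxihet'\simeq\MTSpin\vee\cQ$ by \cref{spin_splits_off}, and for the source, \cref{restricted_het_is_easy} identifies $\mathit{MT\xi}^{r,\mathrm{het}}$ in the relevant range with $\MTSpin\wedge K(\Z,4)_+\simeq\MTSpin\vee(\MTSpin\wedge K(\Z,4))$. The map in question respects this splitting, so the $\MTSpin$ summand is automatically realized: K3 equipped with the trivial $\Z/2$-bundle, the trivial first $\rE_8$-bundle $P$, and a second $\rE_8$-bundle $Q$ with $c(Q)=\lambda(\mathrm{K3})$ (which exists by the $15$-connectivity of $B\rE_8\to K(\Z,4)$, cf.\ \cref{xihet_char_class}) gives a $\xi^{r,\mathrm{het}}$-lift of the $\Z$ generator, detected by the signature map $\Omega_4^{\xihet}\to\Omega_4^{\Spin}$.

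For the $\Z/2$ summand, which lies in $\pi_4(\cQ)$, the task reduces to showing that the induced map $\widetilde\Omega_4^{\Spin}(K(\Z,4))\cong\Z\to\pi_4(\cQ)\cong\Z/2$ is nonzero. The source is generated by $S^4$ together with a map inducing a generator of $H^4(S^4;\Z)$; in $\xi^{r,\mathrm{het}}$-terms, this is $S^4$ with $\rE_8$-bundles $P,Q$ satisfying $c(P)=1$ and $c(Q)=-1$, together with the canonical trivialization of $\lambda(S^4)-c(P)-c(Q)=0$. By \cref{heterotic_SS} the target $\Z/2$ is detected at Adams filtration $1$ by a class such as $h_2 p_1\in\Ext_{\cA(2)}^{1,5}$ coming from the summand $M_2$. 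The plan is to trace this source filtration-$0$ class through the map of Adams $E_2$-pages using naturality and Steenrod identities such as $\Sq^4(D_1+D_2)=D_1^2+D_2^2$, which relate the pulled-back Thom class (supported in the image of $M_3$) to the filtration-$1$ target generator, and then verify nonvanishing by computing a suitable secondary cohomology operation or $\eta$-invariant on this explicit generator.

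A complementary and more geometric route proceeds by surgery: any $\xihet$-4-manifold $M$ with nontrivial double cover $\pi$ can be converted by $1$-surgery along a loop $\gamma\subset M$ Poincar\'{e} dual to $[\pi]\in H^1(M;\Z/2)$ into a bordant $\xihet$-manifold with trivial double cover, i.e.\ a $\xi^{r,\mathrm{het}}$-manifold. The trace is a $5$-dimensional $\xihet$-bordism whose $2$-handle must carry the $\rE_8$-bundles and the trivialization of $\lambda-(c_1+c_2)$; the obstructions live in $\pi_1(B\rE_8)=0$ and $\pi_3(K(\Z,4))=0$, and so vanish, yielding surjectivity in one stroke without identifying generators. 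The hardest step in either approach will be the extension data: for the Adams route, pinning down the hidden extension carrying the source filtration-$0$ class to the target filtration-$1$ generator, which is invisible on the $E_2$-page; for the surgery route, confirming coherence of the trivialization of $\lambda-(c_1+c_2)$ across the $2$-handle while arranging the resulting characteristic data so that the image class is not already captured by the $\MTSpin$ summand.
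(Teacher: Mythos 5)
Your first route reduces correctly to showing that $(S^4,1)\in\widetilde\Omega_4^\Spin(K(\Z,4))$ has nonzero image in the $\Z/2$ summand of $\Omega_4^{\xihet}$, but the method you propose for that step cannot work as stated, and the fallback is not carried out. The source class sits in Adams filtration $0$ while the target generator $h_2p_1$ sits in filtration $1$, so the induced map of Adams $E_2$-pages (and $E_\infty$-pages) sends the class representing $(S^4,1)$ to zero; no amount of naturality or Steenrod-square bookkeeping on the $E_2$-page can detect the phenomenon, which is genuinely a hidden extension for the map of spectral sequences (this is exactly the point of \cref{S4_generator}). You flag this as the hard step, but flagging it is not the same as resolving it: the secondary-operation or $\eta$-invariant computation is where the entire content of the proposition lives, and it is absent. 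The paper proves surjectivity without identifying generators at all: it forms the fiber $F$ of $\mathit{MT\xi}^{r,\mathrm{het}}\to\MTxihet$, identifies $F$ via a Smith-type long exact sequence as a desuspended Thom spectrum over $B\Ghet$ twisted by the line bundle $\sigma$ classified by $B\Ghet\to B\Z/2$, and shows $\pi_3(F)_2^\wedge=0$ by a short Adams computation with the module $\Sigma^{-9}\tau_{\le 13}M_5$; the statement about $(S^4,1)$ is then deduced as a corollary, not used as input.

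The surgery route is not a complementary fix; it is wrong. A loop in a $4$-manifold is Poincar\'e dual to a class in $H^3$, not $H^1$; what you presumably mean is surgery on a loop $\gamma$ on which $[\pi]$ evaluates nontrivially. But such a surgery cannot be performed within $\xihet$-bordism: the double cover restricted to $\gamma$ is the connected double cover of $S^1$, which does not extend over the core disc of the $2$-handle, so the $\xihet$-structure does not extend over the trace. The obstruction you overlooked lives in $\pi_1(B\Z/2)=\Z/2$, not in $\pi_1(B\rE_8)$ or $\pi_3(K(\Z,4))$. Indeed, if this argument worked it would prove $\Omega_k^{\xi^{r,\mathrm{het}}}\to\Omega_k^{\xihet}$ surjective in every degree, which already fails for $k=1$: the class $[\RP^1]$ generates a $\Z/2$ summand of $\Omega_1^{\xihet}$ that is not hit, precisely because the nontrivial double cover of the circle does not bound.
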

Recall that $\xi^{r,\mathrm{het}}$ is the analogue of $\xihet$ but with $(\rE_8\times\rE_8)\rtimes\Z/2$ replaced
with $\rE_8\times\rE_8$.
\begin{proof}[Proof of \cref{g_to_o_nonzero} assuming \cref{4d_goings_on}]
In this proof, implicitly $2$-complete all abelian groups.
If $d_2(a) = 0$, then $h_2^2 p_1\in E_2^{2,9}$ survives to the $E_\infty$-page, so the $h_2$-action $E_\infty^{1,5}\to
E_\infty^{2,9}$ is nonzero. This lifts to imply that taking the product with $S^3$ with string structure induced
from its Lie group framing, which defines a map $\Omega_4^{\xihet}\to\Omega_7^{\xihet}$, is also nonzero. Direct
products with framed manifolds correspond to action by elements of $\pi_*(\mathbb S)$ on homotopy groups, so this
product with $S^3$ is natural with respect to maps of spectra.

Since $\Omega_4^{\xi^{r,\mathrm{het}}}\to\Omega_4^{\xihet}$ is surjective, we may compute the product with $S^3$ as
a map
\begin{equation}
	\bl\times S^3\colon \Omega_4^{\xi^{r,\mathrm{het}}}\longrightarrow \Omega_7^{\xi^{r,\mathrm{het}}}
\end{equation}
and then map back to $\Omega_7^{\xihet}$. However, as we noted in \cref{restricted_het_is_easy},
$\Omega_7^{\xi^{r,\mathrm{het}}}\cong\Omega_7^\Spin(K(\Z, 4))$, and Stong~\cite{Sto86} showed
$\Omega_7^{\Spin}(K(\Z, 4)) = 0$. Thus taking the product with $S^3$ is the zero map
$\Omega_4^{\xihet}\to\Omega_7^{\xihet}$, which is incompatible with $d_2(a)$ vanishing.
\end{proof}
\begin{proof}[Proof of \cref{4d_goings_on}]
Let $F$ be the fiber of the map $\phi\colon \mathit{MT\xi}^{r,\mathrm{het}}\to\MTxihet$, so that there is a long
exact sequence
\begin{equation}
\label{smith_unsaid}
	\dotsb \longrightarrow \Omega_4^{\xi^{r,\mathrm{het}}} \overset\phi\longrightarrow \Omega_4^{\xihet}
	\overset\partial\longrightarrow \pi_3(F) \longrightarrow \Omega_3^{\xi^{r,\mathrm{het}}}
	\longrightarrow\dotsb
\end{equation}
We will show $\pi_3(F)_2^\wedge = 0$, which implies the proposition statement by exactness. To do so, we must understand
$F$.

Let $V$ be the rank-zero stable vector bundle on $B\Ghet$ classified by the map $\xihet\colon B\Ghet\to B\O$ and
let $\sigma\to B\Ghet$ be the line bundle classified by the map quotienting by $\T[1]$, then by $\Spin$, then by
$\rE_8^2$:
\begin{equation}
\label{BGhet_to_Z2}
	B\Ghet\longrightarrow B(\rE_8^2\rtimes\Z/2)\longrightarrow B\Z/2.
\end{equation}
Then, inclusion of the zero section of $\sigma$ defines a map of spaces over $\Z\times B\O$: $\phi\colon (B\Ghet,
V)\to (B\Ghet, V\oplus\sigma)$. Here we use the notation $(B, \xi)$ to denote a space $B$ and a map $\xi\colon
B\to \Z\times B\O$, and we use $\Z\times B\O$ instead of $B\O$ because $\sigma$ is not rank $0$. Let $M^-$ denote
the Thom spectrum of $V\oplus\sigma\colon B\Ghet\to \Z\times B\O$, and let $\widetilde\phi\colon\MTxi\to M^-$
denote the map of Thom spectra induced by $\phi$; we claim $F\simeq \Sigma^{-1}M$. To see this, we
will use a theorem in~\cite{DDKLPTT} which identifies the fiber of $\widetilde\phi$ as the map
$\mathit{MT\xi}^{r,\mathrm{het}}\to\MTxihet$. Specifically, \cite{DDKLPTT} shows that the fiber of $\widetilde\phi$
is the Thom spectrum of the pullback of $V$ to the sphere bundle $S(\sigma)$ of $\sigma$. This sphere bundle is the
pullback of the universal sphere bundle over $B\Z/2$ by the classifying map of $\sigma$:
\begin{equation}
\label{ghet_sphere}
\begin{tikzcd}
	{S(\sigma)} & {S(L)\simeq E\Z/2} \\
	B\Ghet & {B\Z/2}
	\arrow[from=2-1, to=2-2]
	\arrow[from=1-1, to=2-1]
	\arrow[from=1-2, to=2-2]
	\arrow[from=1-1, to=1-2]
	\arrow["\lrcorner"{anchor=center, pos=0.125}, draw=none, from=1-1, to=2-2]
\end{tikzcd}
\end{equation}
The sphere bundle of the tautological line bundle $L\to B\Z/2$ is $E\Z/2\to B\Z/2$, which is contractible, so
the pullback diagram~\eqref{ghet_sphere} simplifies to a fiber diagram, and the sphere bundle is the fiber
of~\eqref{BGhet_to_Z2}. Since~\eqref{BGhet_to_Z2} was induced from a group homomorphism by taking classifying
spaces, one can compute its fiber by taking the classifying space of the kernel of the homomorphism, which is
$\mathcal S(\Spin\times \rE_8^2, c_1 + c_2 - \lambda)$. In \cref{no_Z2} we saw that applying the Thom spectrum
functor to $B\mathcal S(\Spin\times \rE_8^2, c_1 + c_2 - \lambda)\to B\Ghet$, i.e.\ to the map $S(\sigma)\to
B\Ghet$, produces the map $\mathit{MT\xi}^{r,\mathrm{het}}\to\MTxihet$, and therefore the fiber of this map is
$\Sigma^{-1}M^-$.

To finish the proof, attack $F$ with the Adams spectral sequence, using its description as the Thom spectrum
$\Sigma^{-1}M$ to get a description in terms of Ext of an $\cA(2)$-module by using~\cite{DY} again. Recall from
\cref{M2M5_LES}, left, the $\cA(2)$-module $\tau_{\le 13} M_5$; the result of the computation here is that the
$\cA(2)$-module relevant for computing $\pi_*(F)_2^\wedge$ agrees with $\Sigma^{-9}(\tau_{\le 13}M_5)$ in degrees
$4$ and below. Then, \cref{M2M5_LES}, right, computes $\Ext_{\cA(2)}(\Sigma^{-9}(\tau_{\le 13}M_5))$, which is the
$E_2$-page of the Adams spectral sequence computing $\pi_*(F)_2^\wedge$, in degrees $3$ and below (shift the
topological degree of everything in \cref{M2M5_LES}, right, down by $9$). The $E_2$-page vanishes in topological
degree $3$, which implies $\pi_3(F)^\wedge_2 = 0$.
\end{proof}

\begin{lem}
\label{purple_lem}
The differential~\ref{purple_d2} vanishes.
\end{lem}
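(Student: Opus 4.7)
The plan is to show the differential vanishes using the same type of naturality argument used in \cref{orange_diffs,g_to_o_nonzero}. The target of the hypothetical differential is $h_1^2 c = h_0^2 d$, lying in topological degree $11$ in a summand that we need to pin down — since $d \in \Ext^{0,11}$ sits on an $h_0$-tower in $\textcolor{PineGreen}{M_5}$ (the summand containing $UD_1D_2x$), the class $h_0^2 d$ lives in that summand. So the strategy is to exhibit an element of $\pi_{11}(\cQ)_2^\wedge$, in the correct Adams filtration, whose class agrees with $h_0^2 d$ on the $E_\infty$-page, coming from a source where it is forced to survive.

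Concretely, I would first try the map $\iota'\colon\MTString\wedge (B\Z/2)_+\to\MTxihet$ used in \cref{orange_diffs}. That map only detects the $\textcolor{RedOrange}{M_2}$ summand on cohomology, so it does not by itself hit $\textcolor{PineGreen}{M_5}$; but combined with a product with a framed manifold representing $D_1D_2$ in the appropriate sense (or equivalently, using the $\pi_*(\mathbb S)$-module structure on Adams spectral sequences of $\MTString$-modules), one can potentially promote a class in $M_2$ to one in $M_5$. Specifically, the product $\RP^7\times\RP^3$ (with the string and double-cover structures used in the proof of \cref{orange_diffs}) gives an element of $\Omega_{11}^\String(B\Z/2)$ of order $16$ in the appropriate filtration; pushing forward to $\xihet$-bordism and tracking Adams filtration should produce a permanent cycle in the filtration of $h_0^2 d$ if one sets things up carefully.

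If that direct approach is too delicate, an alternative is to mimic \cref{g_to_o_nonzero}: use the fiber sequence $\mathit{MT\xi}^{r,\mathrm{het}}\to\MTxihet\to\Sigma M^-$ and the resulting long exact sequence
\begin{equation}
\cdots\longrightarrow\Omega_{12}^{\xi^{r,\mathrm{het}}}\overset{\phi}\longrightarrow\Omega_{12}^{\xihet}\overset{\partial}\longrightarrow\pi_{11}(M^-)\longrightarrow\Omega_{11}^{\xi^{r,\mathrm{het}}}\longrightarrow\cdots
\end{equation}
Since $\Omega_*^{\xi^{r,\mathrm{het}}}\cong\Omega_*^\Spin(K(\Z,4))$ in the relevant range and Stong~\cite{Sto86} has computed this, the two outer groups are known, and $\pi_*(M^-)$ can be attacked by the same $\MTString$-module Adams spectral sequence machinery (using \cref{fake_shearing}) applied to the appropriate shift of $\tau_{\le 13}M_5$ already described in the proof of \cref{4d_goings_on}. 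Lower-bounding $\Omega_{11}^{\xihet}$ from below through this exact sequence, and comparing with the upper bound provided by the $E_\infty$-page under the hypothesis $d_2(e)\ne 0$, should force $d_2(e)=0$.

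The main obstacle I anticipate is bookkeeping the Adams filtrations carefully enough to match bordism classes with the specific $E_\infty$-generators; in particular, ruling out that a cohomology-theoretic detection argument lands in strictly higher filtration than $h_0^2 d$, which would leave the differential unconstrained. This is the same subtlety present in \cref{g_to_o_nonzero}, where multiplicativity with a framed $S^3$ was used to upgrade filtration information to a genuine differential, and I expect a similar multiplicative move to be the key step here.
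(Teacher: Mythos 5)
Your proposal misses the key observation that makes this lemma easy, and neither of the routes you sketch is actually carried out. The source $e$ of the differential~\ref{purple_d2} lies in Adams filtration \emph{zero}, so it corresponds to a mod $2$ characteristic number, here $\int(D_1D_2^2 + D_1^2D_2)$, and $e$ is a permanent cycle if and only if this invariant is nonzero on some closed $12$-dimensional $\xihet$-manifold. The paper's proof simply exhibits such a manifold: $\HP^2\times S^4$ with $\rE_8$-bundles chosen so that $c(P) = y$ and $c(Q) = x - y$, which satisfies the Bianchi identity since $\lambda(\HP^2\times S^4) = x$, and for which $\int(yx^2 + xy^2) = 1$. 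That single computation kills every differential out of $e$. You instead aim at the target $h_1^2c = h_0^2d$, which is a legitimate strategy in principle (if $h_0^2d$ detects an actual element of $\pi_{11}(\cQ)_2^\wedge$ it cannot be a $d_2$-boundary), but you never produce the required element.

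Your first concrete attempt does not work as stated. The classes $d$ and $h_0^2d$ live in $\Ext$ of the summand $\textcolor{PineGreen}{M_5}$ (the summand containing $UD_1D_2x$), whereas the map $\iota'\colon\MTString\wedge B\Z/2\to\MTxihet$ only hits $\Ext(\textcolor{RedOrange}{M_2})$; the $\pi_*(\Sph)$-module structure (products with framed manifolds, i.e.\ $h_1$- and $h_2$-actions) acts within each summand and cannot transport a class from $\Ext(M_2)$ to $\Ext(M_5)$ --- there is no ``framed manifold representing $D_1D_2$.'' Moreover $\widetilde\Omega_{11}^\String(B\Z/2)\cong\Z/8$ in the relevant filtrations (see the proof of \cref{orange_diffs}), so there is no order-$16$ class of the kind you invoke, and ``$\RP^7\times\RP^3$'' is not even well-defined as a class in $\widetilde\Omega_{11}^\String(B\Z/2)$ without specifying which double cover you take. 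Your fallback via the fiber sequence of \cref{4d_goings_on} would require computing $\pi_{11}(M^-)$ and $\Omega_{12}^{\Spin}(K(\Z,4))$ and then resolving the resulting extension and filtration ambiguities in a range where the paper itself could not settle other differentials; you give no reason to expect this to close. If you want to salvage a target-based argument, the honest version is to realize $d$ (also in filtration zero, corresponding to $\int D_1D_2x^3$) by an explicit $11$-manifold carrying an element of order $8$, but that is strictly harder than the paper's one-line evaluation on $\HP^2\times S^4$.
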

\begin{proof}
The source of this differential is $E_2^{0,12}\cong \textcolor{Purple}{\Z/2}\cdot e$ in Adams filtration zero.
Classes $\alpha$ in Adams filtration $0$ are canonically identified with classes $c_\alpha$ forming a subgroup of
mod $2$ cohomology, and $\alpha$ survives to the $E_\infty$-page if and only if the bordism invariant $\int
c_\alpha$ is nonzero. Here, $\alpha = e$ and $c_\alpha = D_1D_2^2 + D_1^2D_2$, so
our differential vanishes if and only if $e$ survives to the $E_\infty$-page
if and only if the following invariant is nonzero:
\begin{equation}
	\int \paren{D_1D_2^2 + D_1^2 D_2}\colon \Omega_{12}^{\xihet}\longrightarrow\Z/2.
\end{equation}
We will produce a manifold on which this invariant is nonzero.

The quaternionic projective plane $\HP^2$ has $H^*(\HP^2;\Z)\cong\Z[x]/(x^3)$ with $\abs x = 4$ and $\lambda(\HP^2)
= x$~\cite[\S 15.5, \S15.6]{BH58} (see also~\cite[\S 5.2]{FH21b}). The Künneth formula tells us $H^*(\HP^2\times
S^4;\Z)\cong\Z[x, y]/(x^3, y^2)$, with $\abs y = 4$; since $TS^4$ is stably trivial, $\lambda(S^4)$ vanishes and
the Whitney sum formula (\cref{lambda_whitney}) implies $\lambda(\HP^2\times S^4) = x$.

To define a $\xihet$-structure on $\HP^2\times S^4$, it suffices to produce two $\rE_8$-bundles $P,Q\to\HP^2\times
S^4$ and a trivialization of $\lambda(\HP^2\times S^4) - c(P) -c(Q)$. Since we can freely prescribe $c(P)$ and
$c(Q)$, choose $P$ and $Q$ such that $c(P) = y$ and $c(Q) = x-y$; then $\lambda(\HP^2\times S^4) - c(P) - c(Q) =
0$, so we can choose a trivialization. Since $D_1 = c(P)\bmod 2$ and $D_2 = c(Q)\bmod 2$,
\begin{equation}
	\int_{\HP^2\times S^4}\paren{D_1D_2^2 + D_1^2D_2} = \paren{\int_{\HP^2\times S^4} (yx^2 + xy^2)}\bmod 2 = 1.
	\qedhere
\end{equation}
\end{proof}

Now we have to tackle extension questions. In this part of the computation, it will be helpful to reference
\cref{heterotic_SS}, as we will use the description of the $E_\infty$-page of this spectral sequence several times
while addressing extension questions.
\begin{lem}
\label{easier_extensions}
In degrees $10$ and below, all extension questions in the Adams spectral sequence for $\pi_*(\cQ)_2^\wedge$ either
split or are detected by $h_0$ on the $E_\infty$-page, except possibly for the extensions involving the classes
$c\in E_\infty^{0,9}$, $h_1^2 p_7\in E_\infty^{2,11}$, and $h_1b\in E_\infty^{3,12}$.
\end{lem}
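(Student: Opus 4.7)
The plan is to finalize the $E_\infty$-page of the Adams spectral sequence for $\cQ$ using the differential computations of \cref{orange_diffs,g_to_o_nonzero,purple_lem} and then march through topological degrees $n\le 10$ one at a time, deploying three complementary tools to either verify a nonsplit $h_0$-extension or rule out a hidden one: (i) the fact that $h_0$ on $E_\infty$ detects multiplication by $2$; (ii) the $\pi_*(\mathbb S)_2^\wedge$-module structure on $\pi_*(\cQ)_2^\wedge$, under which $\eta\in\pi_1(\mathbb S)$ and $\nu\in\pi_3(\mathbb S)$ lift the $h_1$- and $h_2$-actions modulo higher Adams filtration; and (iii) naturality along the zigzag $\MTString\wedge B\Z/2\xrightarrow{\iota'}\MTxihet\xrightarrow{p}\MTSpin\wedge(B\Z/2)_+$ from the proof of \cref{orange_diffs}, together with the splitting $\MTxihet'\simeq\MTSpin\vee\cQ$ of \cref{spin_splits_off}.

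In most of the range the work is mechanical. Every pair of classes linked top-to-bottom by a nonzero $h_0$-action on $E_\infty$ gives a nonsplit multiplication-by-$2$ extension and fixes the order of the cyclic summand containing its bottom class; this disposes of the $h_0$-towers based at $p_1$, $p_3$, $p_7$, $a$, and the $h_0$-linked portion of the $\textcolor{RedOrange}{M_2}$ summand. For cross-summand pairs \emph{not} connected by $h_0$, I would rule out hidden $2$-extensions by pulling the question back through $p\circ\iota$ into the Adams spectral sequence for $\widetilde\Omega_*^\Spin(B\Z/2)_2^\wedge$, whose $E_\infty$-page (\cref{orange_diffs_pic}, right) has no hidden extensions by full $h_0$- and $h_1$-equivariance; any hidden $2$-extension involving a class in the $M_2$-summand that survived this comparison would already be visible there, so none can exist. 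Together with the $\MTSpin$-splitting, which controls any extension whose image factors through $\pi_*(\MTSpin)$, this handles all extensions in degrees $\le 8$ and all but three in degrees $9$ and $10$.

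The hard part — and the reason for the exceptions in the statement — is concentrated in degree $9$, where $c\in E_\infty^{0,9}$, $h_1^2p_7\in E_\infty^{2,11}$, and $h_1b\in E_\infty^{3,12}$ all live. For each of these three classes, the only candidate target of a hidden $h_0$-extension lies in strictly higher Adams filtration but is not linked to the source by any $h_i$-action on $E_\infty$, so neither the $h_0$-detection criterion nor the $\pi_*(\mathbb S)_2^\wedge$-module action yields any information. Moreover, $c$ lies in Adams filtration $0$ outside the $\textcolor{RedOrange}{M_2}$ summand and so is invisible to the $p\circ\iota$ comparison with $\widetilde\Omega_*^\Spin(B\Z/2)$, and analogous failures of naturality afflict $h_1^2p_7$ and $h_1b$. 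These three obstructions cannot be resolved by any tool available from the $E_\infty$-page alone, which is exactly why the lemma carves them out; they will be treated separately in the following lemma by exhibiting explicit manifold representatives and computing bordism invariants on them.
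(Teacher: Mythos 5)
Your toolkit and your list of exceptional classes are both right, but the step that actually carries the lemma is missing. Once $h_0$-linkage is accounted for, the only extension questions in degrees $\le 10$ that are neither formal nor excluded by the statement are the one between $h_1p_7\in E_\infty^{1,9}$ and $b\in E_\infty^{2,10}$ in degree $8$, and those in degree $10$ with source $h_1c$ (when the differentials in~\ref{h1_orange_d2_family} vanish). You route these through your tool (iii), and that argument does not close as stated: a hidden extension $2\bar x=\bar y$ is only contradicted downstairs when the \emph{target} $y$ has nonzero, non-$2$-divisible image there, and you never verify this for $b$ (which sits in filtration $2$ of the $\textcolor{RedOrange}{M_2}$ summand and may well die in $\Ext_{\cA(1)}$). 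Note also that $h_1p_7$ and $b$ both lie in $\textcolor{RedOrange}{M_2}$, so this is not a ``cross-summand pair'' by your own taxonomy; and in degree $10$ the class $h_1c$ lies in $\textcolor{PineGreen}{M_5}$, whose defining cohomology classes involve $D_1D_2$ and are therefore invisible to $p$, so the comparison with $\widetilde\Omega_*^\Spin(B\Z/2)$ cannot see that extension from the source side either.

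What actually does the work is your tool (ii), which you name but never deploy and then disclaim in your closing paragraph. Since $h_1p_7=h_1\cdot p_7$ with $p_7\ne 0$ on $E_\infty$, any lift $y$ of $p_7$ produces an element $\eta y$ detected exactly by $h_1p_7$, and $2\eta y=0$ because $2\eta=0$ in $\pi_*(\Sph)$; an order-$2$ element detected by the bottom class of a putative $\Z/4$ forces the extension to split. The same one-line argument applied to $h_1c=h_1\cdot c$ disposes of degree $10$. Your assertion that the $\pi_*(\Sph)$-module action ``yields no information'' about $h_1$-divisible classes such as $h_1^2p_7$ and $h_1b$ is therefore backwards: the trick applies to them directly, and the reason they are excepted from the lemma is only that running it requires order-$2$ lifts of $h_1p_7$, $b$, and $c$ as input — and the lift of $c$, which sits in filtration $0$ and is \emph{not} $h_1$-divisible, is the genuinely hard ingredient, supplied later by the $\RP^2$ pullback argument.
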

The classes $h_1b$ and $c$ may vanish on the $E_\infty$-page, depending on the fate of the differentials
in~\ref{h1_orange_d2_family}.
\begin{proof}
The $h_0$-action alone solves all extensions in this range except in degrees $8$, $9$, and $10$.

If the $d_2$s in~\ref{h1_orange_d2_family} vanish, there is an extension question in degree $8$. The
$h_0$-actions in the tower generated by $h_0a$ lift to produce a $\Z$ in $\Omega_8^\xihet$, so the only question is
whether there is an extension involving $h_1p_7$ and $b$. Suppose this
extension does not split, so $\pi_8(\cQ)_2^\wedge\cong\textcolor{Green}{\Z}\oplus \textcolor{RedOrange}{\Z/4}$. We
can choose a generator $x$ of this $\textcolor{RedOrange}{\Z/4}$ such that the image of $x$ in the Adams
$E_\infty$-page is $h_1p_7\in E_\infty^{1,9}$; since this is $h_1$ times another class on the $E_\infty$-page, $x$
is $\eta$ times a class $y\in \pi_7(\cQ)_2^\wedge$, where $\eta$ is the generator of $\pi_1(\mathbb S)\cong\Z/2$.
Since $2\eta = 0$, $2x = 2\eta y = 0$; since $x$ was supposed to generate a $\textcolor{RedOrange}{\Z/4}$, this is
a contradiction, and therefore this extension splits.

The same trick splits all extensions in degree $10$, and all extensions involving the class in $E_\infty^{4,13}$.
\end{proof}
\begin{prop}
\label{RP2_xtn}
All extension questions in $\pi_9(\cQ)_2^\wedge$ split, so $\pi_9(\cQ)_2^\wedge\cong (\Z/2)^{\oplus 4}$ if the
differentials in~\ref{h1_orange_d2_family} vanish, and $\pi_9(\cQ)_2^\wedge\cong (\Z/2)^{\oplus 2}$ if they do
vanish.
\end{prop}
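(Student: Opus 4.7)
The plan is to show that each class surviving to the $E_\infty$-page of the Adams spectral sequence for $\cQ$ in topological degree $9$ lifts to an element of $\pi_9(\cQ)_2^\wedge$ of order exactly $2$, and that no nontrivial extensions occur among these lifts. By \cref{easier_extensions}, the only extensions in this range requiring further argument are those involving the three classes $c \in E_\infty^{0,9}$, $h_1^2 p_7 \in E_\infty^{2,11}$, and $h_1 b \in E_\infty^{3,12}$, all sitting in topological degree $9$. The total class count in this degree is $4$ when the differentials in~\ref{h1_orange_d2_family} vanish and $2$ when they do not, according to the fates of $c$ and $h_1 b$.

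For the classes $h_1^2 p_7$ and $h_1 b$, the plan is to apply the $\eta$-trick already used in \cref{easier_extensions}: each is the image of multiplication by $h_1$ on a class in topological degree $8$, hence represents $\eta \beta$ for some $\beta \in \pi_8(\cQ)_2^\wedge$, where $\eta \in \pi_1(\mathbb S)$ is the Hopf map. Since $2\eta = 0$, such a lift is $2$-torsion. The relation $h_0 h_1 = 0$ in $\Ext_{\cA(2)}(\Z/2)$ ensures that no hidden $h_0$-extension can send one of these classes to a nonzero multiple of $2$ at higher filtration, and inspection of the $E_\infty$-page shows no further classes in degree $9$ in filtration $\ge 4$. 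Hence each contributes a split $\Z/2$ summand.

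For the class $c$ (present precisely when the differentials in~\ref{h1_orange_d2_family} vanish), the $\eta$-trick is unavailable because $c$ lies in Adams filtration zero. Instead, the plan is to verify $h_0 c = 0$ directly in $\Ext_{\cA(2)}(M_5)$. Using the $\cA(2)$-module structure of $\textcolor{PineGreen}{M_5}$ near $U D_1 D_2 x$ displayed in \cref{heterotic_A2_module}---in particular, $\Sq^1(U D_1 D_2 x) = U D_1 D_2 x^2 \neq 0$, so $U D_1 D_2 x$ generates a free $\cA(0)$-summand in these degrees---together with the long exact sequence associated to~\eqref{A2_SES} that was used to compute $\Ext_{\cA(2)}(M_5)$, one checks that $h_0 c$ already vanishes on the $E_2$-page. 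Hence $2[c] = 0$ in $\pi_9(\cQ)_2^\wedge$, so $c$ likewise contributes a split $\Z/2$.

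The main obstacle is ruling out hidden $h_0$-extensions \emph{across} summands of \cref{e8_semi_coh} in degree $9$: doubling a lift of one surviving class could in principle land in a higher-filtration class contributed by a different summand. To exclude this, the plan is to combine the filtration-by-filtration analysis above with an explicit manifold-theoretic argument, representing the relevant contribution from $\textcolor{RedOrange}{M_2}$ as a product involving the class $[\Snb] \in \Omega_1^{\xihet}$, which is $2$-torsion by \cref{main_1}, and exploiting naturality of the Adams spectral sequence under the forgetful map to spin bordism of $B\Z/2$ (as in the proof of \cref{orange_diffs}). Once these extensions are excluded, the contributions assemble into $(\Z/2)^{\oplus 4}$ or $(\Z/2)^{\oplus 2}$ as claimed.
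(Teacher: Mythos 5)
Your treatment of the classes $h_1^2p_7$ and $h_1b$ matches the paper's: both are $h_1$-multiples of classes in degree $8$ whose extensions were already split in \cref{easier_extensions}, so their lifts are $\eta$ times honest homotopy classes and hence killed by $2$ since $2\eta=0$ in $\pi_*(\Sph)$. That part is fine.

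The genuine gap is in your handling of the filtration-zero class $c$. You argue that $h_0c=0$ in $\Ext_{\cA(2)}(\textcolor{PineGreen}{M_5})$ and conclude ``hence $2[c]=0$ in $\pi_9(\cQ)_2^\wedge$.'' This inference is exactly what an extension problem is: vanishing of $h_0c$ on the $E_2$- (or $E_\infty$-) page only shows that $2\underline c$ has Adams filtration at least $2$; it does not preclude $2\underline c$ being detected by $h_1^2p_7\in E_\infty^{2,11}$ or $h_1b\in E_\infty^{3,12}$, which would make $\underline c$ an element of order $4$ or $8$. Since these potential targets live in \emph{different} summands of the decomposition in \cref{e8_semi_coh}, no amount of computation inside $\Ext_{\cA(2)}(M_5)$ alone can rule this out. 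Your final paragraph correctly identifies this cross-summand hidden-extension problem as the main obstacle, but the proposed fix --- ``an explicit manifold-theoretic argument'' involving products with $\Snb$ and the forgetful map to $\Omega_*^\Spin(B\Z/2)$ --- does not engage with it: knowing that the higher-filtration classes are $\eta$-multiples does not prevent $2\underline c$ from equalling one of them (if $2\underline c=\eta y\neq 0$ then $\underline c$ simply has order $4$, which is consistent). The paper's actual resolution is different and is the substantive content of the proof: one pulls back $B\Ghet$ over $\RP^2\inj B\Z/2$ to build an auxiliary Thom spectrum with summand $\cQ'$, uses Bruner--Rognes' computation of $\Ext_{\cA(2)}(C2)$ to see that in the $E_\infty$-page for $\cQ'$ every class in topological degree $9$ above filtration zero supports a nonzero $h_1$-action (so the $2\eta=0$ trick splits \emph{all} of $\pi_9(\cQ')_2^\wedge$), producing a $2$-torsion lift $\underline c'$ of $c'$, and then pushes $\underline c'$ forward along $\cQ'\to\cQ$ to obtain a $2$-torsion lift of $c$. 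Without this comparison (or some substitute), your argument does not establish that the extension on $c$ splits.
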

\begin{proof}
If the differentials in~\ref{h1_orange_d2_family} do not vanish, this is a consequence of \cref{easier_extensions},
so assume that those differentials vanish.

First suppose we can split all extensions involving $c$. Then the only extension remaining is between $h_1^2 p_7$
and $h_1b$. In \cref{easier_extensions}, we split the extension between $h_1p_7$ and $b$, so the classes $h_1p_7$
and $b$ lift to classes $\underline{h_1p_7}$, resp.\ $\underline b$, which generate a
$\textcolor{RedOrange}{\Z/2}\oplus \textcolor{RedOrange}{\Z/2}\subset\pi_8(\cQ)_2^\wedge$.  The action by $h_1$
lifts to imply that the images of $\eta\cdot \underline{h_1p_7}$ and $\eta\cdot\underline b$ in the $E_\infty$-page
are $h_1^2p_7$, resp.\ $h_1b$, and $\eta$ carries the $\textcolor{RedOrange}{\Z/2} \oplus
\textcolor{RedOrange}{\Z/2}$ generated by $\underline{h_1p_7}$ and $\underline b$ to a $\textcolor{RedOrange}{\Z/2}
\oplus \textcolor{RedOrange}{\Z/2} \subset\pi_9(\cQ)^\wedge_2$ generated by $\eta \underline{h_1p_7}$ and
$\eta\underline b$, thus splitting the extension between $h_1^2p_7$ and $h_1b$.

Now we need to prove that $c$ lifts to a class $\underline c$ such that $2\underline c = 0$. Let $X$ be the pullback
\begin{equation}\label{ghet_p2}
\begin{tikzcd}
	X & B\Ghet \\
	{\RP^2} & {B\Z/2}
	\arrow[from=2-1, to=2-2]
	\arrow[from=1-1, to=2-1]
	\arrow[from=1-2, to=2-2]
	\arrow[from=1-1, to=1-2]
	\arrow["\lrcorner"{anchor=center, pos=0.125}, draw=none, from=1-1, to=2-2]
\end{tikzcd}\end{equation}
and let $\xi\colon X\to B\O$ be the pullback of $\xihet$ to $X$. Both vertical arrows in~\eqref{ghet_p2} are
fibrations with fiber $B\rE_8^2$; using the induced map of Serre spectral sequences, we learn $H^*(X;\Z/2) \cong
H^*(B\Ghet;\Z/2)/(x^3)$, where $x\in H^1(B\Ghet;\Z/2)$ is the generator. One can replay the whole argument we ran
with $\xi$ in place of $\xihet$, defining $\xi'$ analogously to $\xihet'$, and deduce the following.
\begin{enumerate}
	\item The map $c\colon B\rE_8\to K(\Z, 4)$ induces an isomorphism $\Omega_*^\xi\to\Omega_*^{\xi'}$ in degrees
	$14$ and below,
	\item there is a spectrum $\cQ'$ and a splitting $\mathit{MT\xi}\simeq \MTSpin\vee \cQ'$, and
	\item the map $X\to B\Ghet$ induces a map $\mathit{MT\xi}'\to \MTxihet'$ which is the identity on the $\MTSpin$
factors and sends $\cQ\to\cQ'$.
\end{enumerate}
The analogue of \cref{e8_semi_coh} for $\xi'$ is exactly the same, except replacing
$\textcolor{RedOrange}{M_2}$ with $\textcolor{RedOrange}{\Sigma C2}$ and $\textcolor{PineGreen}{M_5}$ with
$\textcolor{PineGreen}{\Sigma^9 C2}$, where $C2$ is the $\cA(2)$-module $\Sigma^{-1}\widetilde H^*(\RP^2;\Z/2)$.
Bruner-Rognes~\cite[\S 6.1]{BR21} compute $\Ext_{\cA(2)}(C2)$, and using that we can draw the $E_2$-page of the
Adams spectral sequence computing $\pi_*(\cQ')_2^\wedge$ in \cref{RP2_fig}. For the classes $p_1$, $a$, and $c$ we
considered in the $E_2$-page of the Adams spectral sequence for $\cQ$, let $p_1'$, $a'$, and $c'$ be the
corresponding classes in the $E_2$-page for $\cQ'$: they live in the same bidegrees and the map $\cQ'\to\cQ$
carries $x'\to x$ for $x\in\set{p_1, a, c}$.

\begin{figure}[h!]
\includegraphics{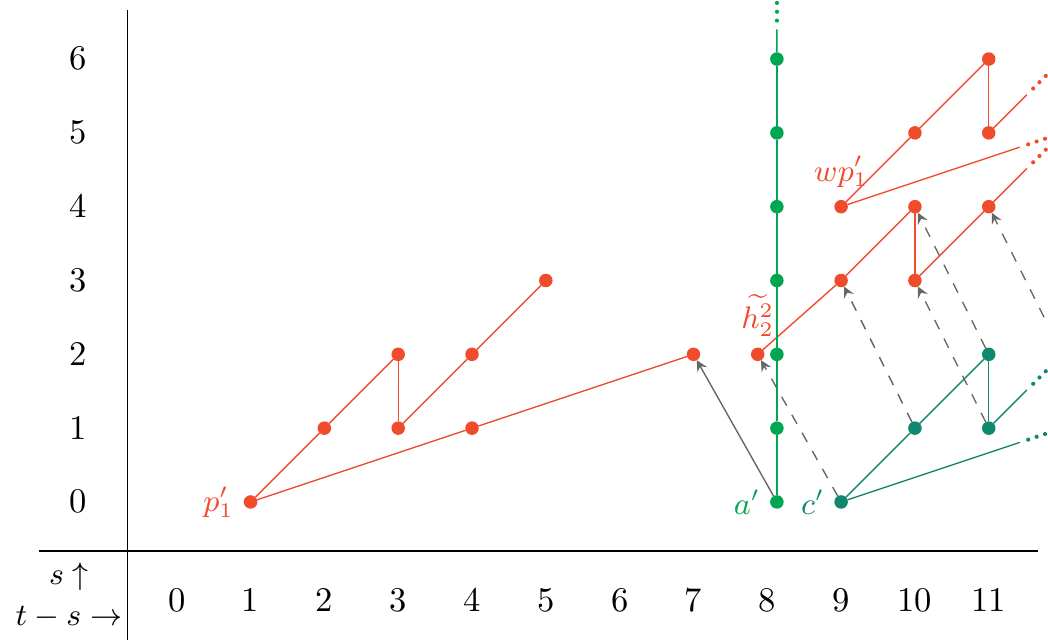}

\caption{The $E_2$-page of the Adams spectral sequence computing $\pi_*(\cQ')_2^\wedge$, where $\cQ'$ is the
spectrum defined in the proof of \cref{RP2_xtn}. By comparing with the Adams spectral sequence for $\cQ$, we learn
$d_2(a') = h_2^2 p_1'$ from \cref{g_to_o_nonzero}, and that the dashed differentials (e.g.\ $d_2(c')$,
$d_2(h_1c')$) vanish if and only if the differentials in~\ref{h1_orange_d2_family} vanish.}
\label{RP2_fig}
\end{figure}

The point of all of this is that if the differentials in~\ref{h1_orange_d2_family} vanish, then both $c$ and $h_1^2
p_7$ live to the $E_\infty$-page for $\cQ$, then both $c$ and $h_1^2p_7$ are in the image of the map $\Phi$ on
$E_\infty$-pages induced by $\cQ'\to\cQ$: $c = \Phi(c')$, and Bruner-Rognes~\cite[Corollary 4.3]{BR21} define a class
$\widetilde{h_2^2}\in\Ext_{\cA(2)}^{2,9}(C2) = \Ext_{\cA(2)}^{2,10}(\textcolor{RedOrange}{\Sigma C2})$ such that
$h_1^2 p_7 = \Phi(\widetilde{h_2^2})$. And looking at \cref{RP2_fig}, in the $E_\infty$-page for $\cQ'$,
$h_1(\widetilde{h_2^2})\ne 0$ and $h_1(wp_1')\ne 0$, so the $2\eta = 0$ trick from \cref{easier_extensions} splits
the extensions in $\pi_9(\cQ')_2^\wedge$. Thus there is a class $\underline c'\in\pi_9(\cQ')_2^\wedge$ such that
$2\underline c' = 0$ and the image of $\underline c'$ in the $E_\infty$-page is $c'$. Applying $\Phi(c') = c$, we
learn $c$ lifts to $\Phi(\underline c')$ in $\pi_9(\cQ)_2^\wedge$, and twice this class is $0$, as we wanted to
prove.
\end{proof}
We have therefore proven the following theorem.
\begin{thm}
\label{het_at_2}
Ignoring odd-primary torsion, there are isomorphisms
\begin{alignat*}{2}
	\Omega_0^{\xihet}  &\cong \Z & \qquad\qquad \Omega_6^\xihet &\cong \Z/2\\
	\Omega_1^{\xihet}  &\cong \Z/2\oplus\Z/2 &\Omega_7^\xihet &\cong \Z/16\\
	\Omega_2^{\xihet}  &\cong \Z/2\oplus\Z/2 &\Omega_8^\xihet &\cong \Z^3\oplus (\Z/2)^{\oplus i}\\
	\Omega_3^{\xihet}  &\cong \Z/8&\Omega_9^\xihet &\cong (\Z/2)^{\oplus j}\\
	\Omega_4^{\xihet}  &\cong \Z\oplus\Z/2& \Omega_{10}^\xihet &\cong (\Z/2)^{\oplus k}\\
	\Omega_5^{\xihet}  &\cong 0& \Omega_{11}^\xihet &\cong A,
\end{alignat*}
where:
\begin{itemize}
	\item $A$ is an abelian group of order $64$ isomorphic to one of $\Z/8\oplus\Z/8$, $\Z/16\oplus\Z/4$,
	$\Z/32\oplus\Z/2$, or $\Z/64$, and
	\item either $i = 1$, $j = 4$, and $j = 4$, or $i = 2$, $j = 6$, and $k = 5$.
\end{itemize}
\end{thm}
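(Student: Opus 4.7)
The plan is to assemble the computation from the pieces established throughout \S\ref{xihet_at_2}. First, since $c\colon B\rE_8\to K(\Z,4)$ is $15$-connected, the map $\widetilde c\colon \Omega_k^{\xihet}\to\Omega_k^{\xihet'}$ is an isomorphism for $k\le 14$, so it suffices to compute $\Omega_*^{\xihet'}$ in the stated range. Next, invoke the splitting $\MTxihet'\simeq\MTSpin\vee\cQ$ from \cref{spin_splits_off} to reduce to computing $\pi_*(\MTSpin)$ and $\pi_*(\cQ)$ separately and then summing. The $\MTSpin$ factor contributes the classical Anderson-Brown-Peterson groups $\Omega_k^\Spin$, which at $2$ are $\Z,\Z/2,\Z/2,0,\Z,0,0,0,\Z\oplus\Z,(\Z/2)^2,(\Z/2)^3,0$ in degrees $0$--$11$; the only subtlety is to keep track of the $\Z$ in degree $8$ coming from the $\textcolor{Goldenrod!67!black}{M_3}$ tower and the $\Z/2$ in degree $9$ coming from $\textcolor{MidnightBlue}{M_6}$ (though these are already accounted for in $\Omega_*^\Spin$).

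For the $\cQ$-factor, the strategy is to read off $\pi_*(\cQ)_2^\wedge$ from the $E_\infty$-page of the Adams spectral sequence displayed in \cref{heterotic_SS}, using the differentials and extensions proved in the rest of the subsection. The relevant inputs are: \cref{orange_diffs} kills the differentials~\ref{green_to_orange_d2_2}, \ref{green_to_orange_d3}, and~\ref{deep_purple}; \cref{g_to_o_nonzero} establishes $d_2(a)=h_2^2p_1$, which is responsible for collapsing the $\textcolor{Green}\Z$ tower in degree $8$ from rank four to rank three and determining the $\Z/16$ in degree $7$; \cref{purple_lem} shows the differential~\ref{purple_d2} vanishes, so $e$ survives. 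The remaining unresolved differentials are those in~\ref{h1_orange_d2_family}, which control whether the classes $c$, $h_1c$, $b$, and $h_1b$ survive; this is precisely the source of the ambiguity in $i$, $j$, $k$ and the group $A$ in the theorem statement.

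With the $E_\infty$-page pinned down up to~\ref{h1_orange_d2_family}, resolve extension problems using \cref{easier_extensions} and \cref{RP2_xtn}: in degrees $\le 10$ every extension splits (verified by the $2\eta=0$ argument and the auxiliary computation over $\RP^2$), while in degree $11$ there is a single genuinely unsolved extension problem between towers joined by $h_0$-actions, which leaves four possible isomorphism classes for $A$, all of order $64$. Adding the $\MTSpin$ contributions degree-by-degree then yields the stated groups, with $i\in\{1,2\}$, $j\in\{4,6\}$, $k\in\{4,5\}$ according to whether the differentials in~\ref{h1_orange_d2_family} fire (giving $i=1$) or not (giving $i=2$), and likewise for $j,k$.

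The main obstacle throughout is not the bookkeeping but the two delicate arguments already isolated in the text: the nonvanishing of $d_2(a)$ in \cref{g_to_o_nonzero}, which requires comparing with $\Omega_*^{\xi^{r,\mathrm{het}}}\cong\Omega_*^\Spin(K(\Z,4))$ via the fiber sequence~\eqref{smith_unsaid} and the multiplicative $\cdot S^3$ naturality, and the extension-splitting in \cref{RP2_xtn}, which requires building the auxiliary pullback $X\to B\Ghet$ over $\RP^2$ and transporting the split extension back along $\cQ'\to\cQ$. The residual ambiguity in $A$ and in $(i,j,k)$ is intrinsic to what this method can resolve; removing it would require either computing the differentials in~\ref{h1_orange_d2_family} by a geometric bordism invariant or running a secondary-operation argument, which we do not attempt here.
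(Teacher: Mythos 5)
Your proposal is correct and follows the paper's own route essentially step for step: reduce to $\xihet'$ via the $15$-connectedness of $c$, split off $\MTSpin$ as in \cref{spin_splits_off}, run the Adams spectral sequence for $\cQ$ using \cref{orange_diffs}, \cref{g_to_o_nonzero}, and \cref{purple_lem}, resolve extensions via \cref{easier_extensions} and \cref{RP2_xtn}, and isolate the residual ambiguity in the differentials~\ref{h1_orange_d2_family} and the degree-$11$ extension. One small correction: the differential $d_2(a)=h_2^2p_1$ does not change the rank of the green tower in topological degree $8$ (the classes $h_0^ka$ with $k\ge 1$ survive because $h_0h_2=0$, so the tower still contributes exactly one $\Z$, now with its bottom class in filtration $1$, which is why $\int c(P)c(Q)$ has minimal nonzero value $2$); its essential role is instead to kill $h_2^2p_1$ in degree $7$, giving $\Omega_7^{\xihet}\cong\Z/16$.
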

\subsubsection{Some manifold generators}
\label{xihet_gens}
We finish this section by giving manifold representatives for all the generators for the groups we found in
dimensions $10$ and below, except possibly for two classes in degrees $9$ and $10$ if the differentials
in~\ref{h1_orange_d2_family} vanish. We also give partial information in dimension $11$. In this list, we
implicitly localize at $2$, though we will soon see in \cref{no_odd_het} that this does not lose any information.

The map $\MTSpin\vee (\MTString\wedge B\Z/2)\to \MTxihet$ is surjective on homotopy groups in degrees $7$ and
below, quickly giving us many of the generators we need. The low-dimensional generators of spin bordism are
standard; for $\widetilde\Omega_*^\String(B\Z/2)$, we use the $h_2$-action on the $E_\infty$-page together with the
map $\widetilde\Omega_*^\String(B\Z/2)\to \widetilde\Omega_*^\Spin(B\Z/2)$, as in the proof of \cref{orange_diffs}
(see \cref{orange_diffs_pic}), to deduce generators.
\begin{enumerate}
	\setcounter{enumi}{-1}
	\item $\Omega_0^{\xihet}\cong \Z$, generated by the point.
	\item $\Omega_1^{\xihet}\cong \Z/2\oplus\textcolor{RedOrange}{\Z/2}$. The first summand comes from $\Omega_1^\Spin$,
	hence is generated by $\Snb$, the
	circle with $\xihet$-structure induced from its nonbounding framing. The other summand, corresponding to
	$p_1\in E_\infty^{0,1}$ of the Adams spectral sequence for $\cQ$, is in Adams
	filtration zero, hence corresponds to a mod $2$ cohomology class and is detected by that class. Looking at
	\cref{heterotic_A2_module}, this class is the generator of $H^1(B\Z/2;\Z/2)$ evaluated on the principal
	$\Z/2$-bundle associated to a $\xihet$-structure. Thus we can take as our generator $S^1$ with
	$\xihet$-structure induced by the nontrivial $\Z/2$-bundle and the inclusion $\Z/2\inj \rE_8^2\rtimes\Z/2$. We
	will call this generator $\RP^1$, so that we can represent its $\Z/2$-bundle by $S^1\to\RP^1$.
	\item An action by $h_1$ in the $E_\infty$-page of an Adams spectral sequence calculating bordism lifts to
	taking the product with $\Snb$ on manifold generators. Acting by $h_1$ defines an
	isomorphism from the $1$-line of the $E_\infty$-page to the $2$-line, so we can take $\Snb\times
	\Snb$ and $\RP^1\times \Snb$ to be our two generators of $\Omega_2^{\xihet}$.
	\item $\Omega_3^{\xihet}\cong\textcolor{RedOrange}{\Z/8}$; there is a generator whose image in the Adams
	$E_\infty$-page is $p_3$. The sequence of maps
	\begin{equation}
		\widetilde\Omega_3^\String(B\Z/2) \overset\iota\longrightarrow\Omega_3^\xihet \overset p\longrightarrow
		\Omega_3^\Spin(B\Z/2)
	\end{equation}
	consists of two isomorphisms $\Z/8\overset\cong\to\Z/8\overset\cong\to\Z/8$, so it suffices to find a generator
	of $\Omega_3^\Spin(B\Z/2)$ that admits a string structure. The standard generator is $\RP^3$ with principal
	$\Z/2$-bundle $S^3\to\RP^3$, and because $\RP^3$ is parallelizable, it admits a string structure.
	\item $\Omega_4^{\xihet}\cong \Z \oplus \textcolor{RedOrange}{\Z/2}$. The free summand comes from
	$\Omega_4^\Spin$, hence is generated by the K3 surface with trivial $\Z/2$-bundle, and $\rE_8$-bundles with
	characteristic classes $-\lambda(\mathrm{K3})$ and $0$.
	$\textcolor{RedOrange}{\Z/2}$ corresponds to $E_\infty^{1,5}\cong \textcolor{RedOrange}{\Z/2}\cdot h_2p_1$.
	Action by $h_2$ lifts to the product with $S^3$ with its Lie group framing, so we can generate this summand
	with $S^3\times\RP^1$.
	\begin{rem}
	\label{S4_generator}
	In \cref{4d_goings_on}, we showed $\Omega_4^\Spin(K(\Z, 4))\cong \Omega_4^{\xi^{r,\mathrm{het}}}\to
	\Omega_4^{\xihet}$ is surjective; using this, we can replace $S^3\times\RP^1$, which we will need later.
	Stong~\cite{Sto86} showed $\Omega_4^\Spin(K(\Z, 4))\cong\Z\oplus\Z$; one $\Z$ factor comes from
	$\Omega_4^\Spin$, hence is represented by the K3 surface with trivial map to $K(\Z, 4)$. The other is detected
	by the bordism invariant which, given a $4$-dimensional spin manifold $X$ and a map $f\colon X\to K(\Z, 4)$,
	sends $X\mapsto \int_X f^*c$, where $c\in H^4(K(\Z, 4);\Z)$ is the tautological class. For example, this
	invariant equals $1$ on $S^4$ with its standard orientation and unique spin structure inducing that
	orientation, with the map to $K(\Z, 4)$ given by the class $1\in H^4(S^4;\Z)\overset\cong\to\Z$.

	The images of the two classes $(\mathrm{K3}, 0)$ and $(S^4, 1)$ in $\Omega_4^{\xihet}$ must generate.
	Unsurprisingly, the K3 surface is sent to a generator of the $\Z$ summand we described above; this summand is
	detected by $\int p_1$. As this invariant vanishes on $(S^4, 1)$, surjectivity of the map on $\Omega_4$ implies
	that $(S^4, 1)$ maps to the class of $\RP^1\times S^3$.\footnote{We
	thank Justin Kaidi for informing of us of this fact.} Thus the
	$\textcolor{RedOrange}{\Z/2}$ summand in $\Omega_4^{\xihet}$ can be generated by $S^4$ with trivial
	$\Z/2$-bundle and two $\rE_8$-bundles with characteristic classes $c = \pm 1\in H^4(S^4;\Z)$.

	The map on Adams spectral sequences induced from the map of spectra
	$\mathit{MT\xi}^{r,\mathrm{het}}\to\MTxihet$ sends the class in the $E_\infty$-page representing $(S^4, 1)$ to
	$0$ (see Francis~\cite[\S 2]{Fra11} or Lee-Yonekura~\cite[\S 3.5]{LY22} for the Adams spectral sequence for
	$\Omega_*^{\xi^{r,\mathrm{het}}} = \Omega_*^\Spin(K(\Z, 4))$), so the fact that the image of $(S^4, 1)$ is
	nonzero in $\Omega_4^{\xihet}$ is analogous to a hidden extension.
	\end{rem}
%
	\item $\Omega_5^{\xihet} = 0$.
	\item $\Omega_6^\xihet\cong\textcolor{RedOrange}{\Z/2}$, and the image of a generator on the $E_\infty$-page is
	$h_2p_3$, which lifts to imply that we can take $S^3\times\RP^3$ as a generator.
	\item $\Omega_7^{\xihet}\cong\textcolor{RedOrange}{\Z/16}$. This
	$\textcolor{RedOrange}{\Z/16}$ is detected by $\Omega_7^\Spin(B\Z/2)$ much like $\RP^3$ was, and we
	learn that this summand is generated by $\RP^7$ with $\Ghet$-bundle induced from the $\Z/2$-bundle
	$S^7\to\RP^7$, and is detected in the $E_\infty$-page by $p_7$.
%
	\item $\Omega_8^\xihet\cong\Z^2\oplus\textcolor{Green}{\Z}\oplus \textcolor{RedOrange}{\Z/2}$ together with an
	additional $\textcolor{RedOrange}{\Z/2}$ summand if the differentials in~\ref{h1_orange_d2_family} do not
	vanish.
	\begin{itemize}
		\item The first two free summands come from $\Omega_*^\Spin$; their generators may be taken to be the
		quaternionic projective plane $\HP^2$ and a Bott manifold $B$. One can choose $B$ to have a string
		structure~\cite[\S 5.3]{FH21b} and we do so. In both cases, the $\Z/2$-bundle associated to the
		$\xihet$-structure is trivial; since $B$ is string, we give it the $\xihet$-structure in which both
		principal $\rE_8$-bundles are trivial. For $\HP^2$, $H^4(\HP^2; \Z)\cong\Z$ with generator $x$, as we
		discussed in the proof of \cref{purple_lem}; we choose a $\xihet$-structure on $\HP^2$ with principal
		$\rE_8$-bundles $P,Q\to\HP^2$ with $c(P) = -x$ and $Q$ trivial.
		\item The third free summand comes from the green $h_0$-tower in topological degree $8$ in the Adams
		spectral sequence for $\pi_*(\cQ)$. This summand is detected by the bordism invariant
		\begin{equation}
		\label{8d_green_detector}
			f\colon \int c(P)c(Q)\colon \Omega_8^\xihet\longrightarrow\Z,
		\end{equation}
		because this quantity can be nonzero (as we show below), it vanishes on the two generators we discovered
		for the other two free summands, and because it must vanish on the remaining, torsion summand. It is a
		consequence of \cref{g_to_o_nonzero} that the mod $2$ reduction of~\eqref{8d_green_detector}, which is
		$\int D_1D_2$, vanishes. This is because every class $x\in E_2^{0,t}$ has an associated degree-$t$ $\Z/2$
		cohomology class $c_x$, and $x$ lives to the $E_\infty$-page if and only if the bordism invariant $\int
		c_x$ is nonvanishing. Thus the minimum nonzero value of $\abs{f(M)}$, where $M$ is a closed, $8$-dimensional
		$\xihet$-manifold, is at least $2$.

		Recall from the proof of \cref{purple_lem} that $H^*(\HP^2;\Z)\cong\Z[x]/(x^3)$ with $\abs x = 4$ and
		$\lambda(\HP^2) = x$. Consider the two $\rE_8$-bundles $P,Q\to \HP^2$ prescribed by $c(P) = 2x$ and $c(Q) =
		-x$; then $\lambda(\HP^2) - c(P) - c(Q) = 0$, so this data lifts to a $\xihet$-structure, and
		\begin{equation}
			\int_{\HP^2} c(P) c(Q) = 2,
		\end{equation}
		achieving the minimum. Therefore $\HP^2$ with these two principal $\rE_8$-bundles generates the final free
		summand.
		\item The $\textcolor{RedOrange}{\Z/2}$ summand that we know is present independent of any unresolved
		differentials is generated by $h_1p_7$, so as usual lifts to $\Snb\times\RP^7$.
		\item If $d_2(c)\ne 0$, there is an additional $\textcolor{RedOrange}{\Z/2}$ summand represented in the
		$E_\infty$-page by $b$. We will discuss this summand, and its generator $X_8$, in \S\ref{s:X8}.
	\end{itemize}
	\item $\Omega_9^\xihet\cong (\Z/2)^{\oplus 2}\oplus \textcolor{RedOrange}{(\Z/2)^{\oplus 2}}$, and if the
	differentials in~\ref{h1_orange_d2_family} vanish, there is an additional $\textcolor{RedOrange}{\Z/2}\oplus
	\textcolor{PineGreen}{\Z/2}$ summand.
	\begin{itemize}
		\item Two of the $\Z/2$ summands come from $\Omega_9^\Spin\cong (\Z/2)^{\oplus 2}$, where they are
		represented by the generators $\HP^2\times \Snb$ and $B\times \Snb$, with
		$\xihet$-structure induced from the corresponding generators in $\Omega_8^\xihet$.
		\item The other two $\Z/2$ summands that are present no matter the value of the undetermined differentials
		are in the image of the map $\iota\colon\widetilde\Omega_9^\String(B\Z/2)\to\Omega_9^\xihet$. The generator
		of the summand in lower Adams filtration has image in the $E_\infty$-page equal to $h_1^2p_7$, so we obtain
		$\Snb\times\Snb\times\RP^7$.

		The summand in higher Adams filtration has nonzero image in
		$\widetilde\Omega_9^\Spin(B\Z/2)\cong\Z/2\oplus\Z/2$, by inspection of \cref{orange_diffs_pic}. The two
		generators of $\widetilde\Omega_9^\Spin(B\Z/2)$ can be taken to be $\HP^2\times\RP^1$ and $B\times\RP^1$;
		to determine which we get, compose further with the Atiyah-Bott-Shapiro~\cite{ABS} map
		$\widetilde\Omega_9^\Spin(B\Z/2)\to\widetilde\ko_9(B\Z/2)\cong\Z/2$, which sends $[\HP^2\times\RP^1]\mapsto
		0$ and $[B\times\RP^1]$ to the generator. The image of the map of Adams spectral sequence in
		\cref{orange_diffs_pic} is contained in the summand whose image under the Atiyah-Bott-Shapiro map is
		nonzero, the image of our generator in $\widetilde\Omega_9^\Spin(B\Z/2)$ is bordant to $B\times\RP^1$;
		finally, since $B$ and $\RP^1$ are both string, we can take $B\times\RP^1$ as our last generator in this
		dimension.
		\item If $d_2(h_1 c) = 0$, there is another $\textcolor{RedOrange}{\Z/2}$ summand whose image in the
		$E_\infty$-page is $h_1b$. Thus as usual it lifts to $\Snb\times X_8$, where $X_8$ is the manifold we
		describe in \S\ref{s:X8}.
		\item If $d_2(c) = 0$, there is another $\textcolor{PineGreen}{\Z/2}$ summand whose image in the
		$E_\infty$-page is $c$. We were unable to find a manifold $X_9$ representing this generator. Because $c$ is
		in Adams filtration $0$, corresponding to the mod $2$ cohomology class $D_1D_2x$, if $X_9$ exists then one
		can detect it by showing $\int_{X_9} D_1D_2x = 1$.
	\end{itemize}
	\item $\Omega_{10}^\xihet\cong (\Z/2)^{\oplus 3}\oplus \textcolor{RedOrange}{\Z/2}$, together with potentially
	another $\textcolor{PineGreen}{\Z/2}$ summand if the differentials in~\ref{h1_orange_d2_family} vanish.
	\begin{itemize}
		\item Three of the $\Z/2$ summands in $\Omega_{10}^\xihet$ come from $\Omega_{10}^\Spin\cong (\Z/2)^{\oplus
		3}$. Their generators are known to be $B\times \Snb\times\Snb$, $\HP^2\times\Snb\times\Snb$, and a
		\term{Milnor hypersurface} $X_{10}$, defined to be a smooth degree-$(1, 1)$ hypersurface in
		$\CP^2\times\CP^4$. Milnor~\cite[\S 3]{Mil65} showed that $X_{10}$ generates the last $\Z/2$ summand in
		$\Omega_{10}^\Spin$.
		\item The next $\textcolor{RedOrange}{\Z/2}$ summand is detected by the maps
		$\widetilde\Omega_{10}^\String(B\Z/2)\to \Omega_{10}^\xihet$ and
		$\Omega_{10}^\xihet\to\Omega_{10}^\Spin(B\Z/2)$, and by a similar argument to the one we gave for the
		higher-filtration orange $\textcolor{RedOrange}{\Z/2}$ summand in degree $9$, we may choose
		$B\times\RP^1\times \Snb$ as the generator.
		\item If $d_2(h_1c) = 0$, then there is an additional $\textcolor{PineGreen}{\Z/2}$ summand whose image in
		the $E_\infty$-page is $h_1c$. Thus we can take $\Snb\times X_9$ for a manifold representative, though as
		discussed above we do not know what $X_9$ is.
	\end{itemize}
	\item We have not determined generators for $\Omega_{11}^\xihet$, nor even its isomorphism type. This is a
	question whose answer would be useful for anomaly cancellation for the $\rE_8\times\rE_8$ heterotic string; see
	\cref{ques3} and \S\ref{heterotic_anomaly}. Nonetheless, the Adams argument we gave above implies
	$\Omega_{11}^\xihet$ contains a $\textcolor{RedOrange}{\Z/8}$ subgroup, the image of
	$\iota\colon\widetilde\Omega_{11}^\String(B\Z/2)\to\Omega_{11}^\xihet$. By comparing with the map
	$\widetilde\Omega_{11}^\String(B\Z/2)\to\widetilde\Omega_{11}^\Spin(B\Z/2)$ as in \cref{orange_diffs_pic}, one
	learns that the class of $B\times\RP^3$ generates this $\textcolor{RedOrange}{\Z/8}$.
\end{enumerate}
\subsubsection{$X_8$, a potentially nonzero class in $\Omega_8^\xihet$}
\label{s:X8}
Though we were unable to determine if the class $b\in E_2^{2,10}$ survives to the $E_\infty$-page, we are able to
write down a manifold representative $X_8$ of the class it determines in $\Omega_8^\xihet$; if $b$ does survive,
$X_8$ should be added to the list of generators above.
\begin{defn}
Let $\Z/2$ act on $S^3\times S^3\times S^2$ by the antipodal map on $S^2$ and the first copy of $S^3$, and a
reflection through a plane on the second $S^3$. This is a free action; let $X_8$ denote the quotient, which is a
smooth manifold.
\end{defn}
$X_8$ is a generalized Dold manifold of the sort studied by Nath-Sankaran~\cite{NS19}. Manifolds similar to $X_8$
frequently appear as generators of bordism groups: see~\cite[\S 5.5.1]{FH21b} and~\cite[\S 14.3.3]{DDHM22a} for
related examples.
\begin{lem}
\label{X8_is_string}
$X_8$ admits a string structure, and one can choose a string structure on $X_8$ so that the induced string
structure on $S^3\times S^3$ is the one induced by the Lie group framing on $S^3\times S^3 \cong\SU_2\times\SU_2$.
\end{lem}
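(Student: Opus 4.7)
The plan is to identify $X_8 \to \RP^2$ as a $\Z/2$-quotient fiber bundle, compute the relevant characteristic classes to produce a string structure, and then pin down the specific string structure by descent from the double cover $\widetilde X_8 = S^3 \times S^3 \times S^2$.

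First I would recognize $X_8$ as the total space of a fiber bundle $\pi\colon X_8 \to \RP^2$ with fiber $F = S^3 \times S^3$, associated to the antipodal cover $S^2 \to \RP^2$ via the $\Z/2$-action $(-\mathrm{id}, R)$ on $F$, and decompose the tangent bundle as $TX_8 \cong \underline{\R^5} \oplus L \oplus \pi^* T\RP^2$, where $L$ is the real line bundle classifying the $\Z/2$-cover (so $w_1(L) = \pi^* a$ for $a$ the generator of $H^1(\RP^2;\Z/2)$). For this: parallelizability of $\RP^3 = \SO_3$ makes the first-factor contribution to the vertical tangent bundle a trivial $\underline{\R^3}$; for the second factor, the splitting $TS^3 \oplus \nu \cong \underline{\R^4}$ (with $\nu$ the outward normal to $S^3\subset\R^4$) carries a $\Z/2$-action through $R \in \O(4)$ as $\mathrm{diag}(1,1,1,-1)$, yielding $\underline{\R^2} \oplus L$ after removing the fixed normal summand. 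This gives $w(TX_8) = (1+\pi^*a)\,\pi^*(1+a+a^2) = 1$, since $\pi^*a^3 = 0$, so $X_8$ is spin and $w_4(TX_8) = 0$. The Serre spectral sequence for $\pi$---with $\pi_1 \RP^2$ acting on $H^3(F;\Z) = \Z y_1 \oplus \Z y_2$ by $\mathrm{diag}(1,-1)$, since $R$ has degree $-1$---gives $H^4(X_8;\Z) \cong \Z/2$, and the Bockstein sequence makes mod-$2$ reduction injective on this summand; together with $w_4 = 0$ this forces $\lambda(TX_8) = 0$, so $X_8$ admits a string structure.

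For the second claim I would exhibit the string structure via descent. Equip $\widetilde X_8$ with the product string structure $\widetilde \sigma_{\mathrm{LG}}$ built from the Lie group framings on the two $S^3$ factors and the unique string structure on $S^2$. The goal is to show that this is preserved by the deck involution $(-\mathrm{id}, R, -\mathrm{id})$: on the first $S^3$ the action is $L_{-1}$, which preserves the left-invariant framing; the antipodal on $S^2$ preserves its unique string structure; and $R$ on the second $S^3$ preserves the Lie group framing string structure (the delicate step, below). The descended string structure on $X_8$ then restricts on $F \subset \widetilde X_8 \to X_8$ to the product Lie group framing by construction.

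The main obstacle is the sub-claim that $R$ preserves $\sigma_{\mathrm{LG}}$ on $S^3$: since $R$ is orientation-reversing, its action on the $\Z$-torsor $\mathrm{StrStr}(S^3)$ is an involution of the form $\sigma \mapsto -\sigma + t$, and one must check $t = 0$ at $\sigma_{\mathrm{LG}}$. My plan would be direct computation: in the left-invariant trivialization, $dR|_g = \mathrm{Ad}_{R(g)^{-1}} \circ \mathrm{diag}(1,1,-1)$ on $\mathfrak{su}_2$, and the resulting framing deformation should be shown to define the trivial class in $\pi_3(\SO)$ after stabilization to string structures. If the direct approach is awkward, an alternative is to work entirely on $X_8$: the restriction $\mathrm{StrStr}(X_8) \to \mathrm{StrStr}(F)$ is a torsor map over $H^3(X_8;\Z) \to H^3(F;\Z) = \Z^2$ whose image is $\Z \cdot y_1$, so the task reduces to showing that the $y_2$-component of the restriction of any string structure on $X_8$ vanishes, which follows from a monodromy symmetry argument since the structure group negates $y_2$ in $H^3(F;\Z)$.
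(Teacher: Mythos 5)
Your first half is correct and close in spirit to the paper's: both arguments come down to identifying the stable tangent bundle of $X_8$ in terms of line bundles pulled back from $\RP^2$. The paper gets $TX_8\oplus\underline\R^3\cong\sigma^{\oplus 8}\oplus\underline\R^3$ with $\sigma$ pulled back from $\RP^2$ and concludes immediately because $H^4(\RP^2;\Z)=0$; you instead compute $w(TX_8)=1$ and then kill the integral class $\lambda$ using the Serre spectral sequence computation $H^4(X_8;\Z)\cong\Z/2$ together with injectivity of mod $2$ reduction. Both are valid; yours uses a little more machinery but the extra step (the paper's observation that the relevant bundle literally comes from $\RP^2$) is the only thing you'd save.

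The second half has a genuine gap, and it sits exactly where you flagged it. Your descent strategy (and your fallback ``monodromy'' argument --- which, note, does not circumvent the issue: the fixed locus of $\tau^*$ equals the image of restriction, but to conclude you still must show the Lie-group structure \emph{is} a fixed point, since the difference of two fixed points lying in $\Z\cdot y_1$ requires both to be fixed points) reduces to showing that the hyperplane reflection $R$ preserves the string structure induced by the left-invariant framing on $S^3$. Your proposed criterion --- ``the framing deformation defines the trivial class in $\pi_3(\SO)$'' --- is not the right one, and taken literally it fails: writing $R(q)=-\bar q=-q^{-1}$, the differential of $R$ in the left-invariant trivialization is $g\mapsto -\mathrm{Ad}_g$ (your formula differs from this only by a constant), whose rotational part $\mathrm{Ad}\colon S^3\to\SO_3$ generates $\pi_3(\SO_3)$ and stabilizes to \emph{twice} a generator of $\pi_3(\SO)$, not to zero. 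So a naive execution of your plan would appear to disprove the (true) claim. The missing ingredient is the correct bookkeeping for an orientation-reversing comparison: because the deformation lands in the non-identity component of $\O_3$, the condition for $\sigma_{\mathrm{LG}}$ to be a fixed point of your affine involution $\sigma\mapsto-\sigma+t$ is not that the deformation class vanishes but that it equals the specific (even) value forced by the reflection, and the factor of $2$ carried by $\mathrm{Ad}$ is precisely what makes this work. This is the content the paper compresses into its appeal to the reflection acting through $\Pin_3^+\supset\SU_2$: the reflection is implemented group-theoretically, so it moves the invariant framing only by $\mathrm{Ad}$ of a group element composed with a constant, and that is the fixed-point condition. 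Until you set up and verify that identity, the key sub-claim --- and hence the second assertion of the lemma --- remains unproven in your write-up.
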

\begin{proof}
Adding the normal bundles for $S^{k-1}\inj\R^k$ defines an isomorphism
\begin{equation}
\label{stab_split}
	T(S^3\times S^3\times S^2)\oplus\underline\R^3 \overset\cong\longrightarrow \underline\R^4\oplus
	\underline\R^4\oplus\underline\R^3.
\end{equation}
To understand $TX_8$, we will study~\eqref{stab_split} when we introduce the $\Z/2$-action on $S^3\times S^3\times
S^2$ whose quotient is $X_8$. Since the outward unit normal vector field on $S^k$ is $\O_{k+1}$-invariant, $\Z/2$
acts trivially on the $\underline\R^3$ on the left side of~\eqref{stab_split}, since the outward unit normal vector
field provides the trivializations of the normal bundles giving that $\underline\R^3$ factor. On the right-hand
side, $\Z/2$ by the antipodal map on the first factor of $S^3$, so acts by $-1$ on each $\underline\R$ summand of
the first $\underline\R^4$. The reflection on the second $S^3$ factor means $\Z/2$ acts on the second
$\underline\R^4$ by $-1$, $1$, $1$, and $1$ on the four $\underline\R$ summands. Finally, the antipodal map on
$S^2$ implies $\Z/2$ acts by $-1$ on the remaining three $\underline\R$ summands.

Passing from equivariant vector bundles on $S^3\times S^3\times S^2$ to nonequivariant vector bundles on the
quotient, \eqref{stab_split} induces an isomorphism
\begin{equation}
	TX_8\oplus\underline\R^3 \overset\cong\longrightarrow \sigma^{\oplus 8}\oplus \underline\R^3,
\end{equation}
where $\sigma\to X_9$ is pulled back from the tautological line bundle $\sigma\to\RP^2$. The Whitney sum formula
implies $\sigma^{\oplus 8}\to\RP^2$ is spin, and since the string obstruction lives in $H^4(\RP^2;\Z) = 0$,
$\sigma^{\oplus 8}$ is string. Thus the pullback to $X_8$ is also string, so $TX_8$ is string.

For the Lie group framing string structure, use the fact that the involutions on each $S^3$ summand can be
described in terms of Lie groups: since the quotient of $S^3 \cong \SU_2$ by the antipodal map is $\RP^3\cong
\SO_3$, the Lie group framing on $S^3$ is equivariant for the antipodal map. Compatibility for the reflection comes
from the action of a reflection in $\Pin_3^+\supset\SU_2$.
\end{proof}
\begin{prop}
\label{X8_transfer}
With the string structure described in \cref{X8_is_string} and the $\Z/2$-bundle $\sigma\to X_8$, $[X_8]$ is
linearly independent from $[\Snb\times\RP^7]$ in $\widetilde\Omega_8^\String(B\Z/2)\cong\Z/2\oplus\Z/2$, so the
image of $[X_8]$ in the $E_\infty$-page for $\Omega_*^\String(B\Z/2)$ is the nonzero class in
$E_\infty^{2,10}\cong\Z/2$ (perhaps plus a term in lower filtration).
\end{prop}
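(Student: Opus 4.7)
The plan is to show (a) $[X_8] \neq 0$ and (b) $[X_8] \neq [\Snb\times\RP^7]$ in $\widetilde\Omega_8^\String(B\Z/2)\cong\Z/2\oplus\Z/2$. Together with the structure of the $E_\infty$-page in topological degree $8$ (nonzero at bidegrees $(1,9)$ and $(2,10)$, with $[\Snb\times\RP^7]$ generating the filtration-$1$ summand $E_\infty^{1,9}$), these two facts imply $[X_8]$ projects nontrivially onto $E_\infty^{2,10}$, with a possible additional contribution in lower filtration.

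The first step is to set up the geometry. The projection $S^3\times S^3\times S^2\to S^2$ is equivariant for the diagonal $\Z/2$-action and the antipodal action on $S^2$, so it descends to a map $\phi\colon X_8\to \RP^2$. A direct check shows that the pullback of the tautological double cover of $\RP^2$ along $\phi$ is exactly $\widetilde X_8\to X_8$, so the classifying map $X_8\to B\Z/2$ factors as $X_8\xrightarrow{\phi}\RP^2\hookrightarrow B\Z/2$. Consequently $x^3=0$ in $H^*(X_8;\Z/2)$. Combined with the stable isomorphism $TX_8\cong 8\sigma$ established in \cref{X8_is_string}, this gives $w(TX_8)=(1+x)^8=1+x^8=1$ mod $2$, and the integral Pontryagin classes of $X_8$ also vanish (they pull back from $H^{\geq 4}(\RP^2;\Z)=0$). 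Thus every primary mod-$2$ and integral characteristic number of $X_8$ is zero. The same holds for $\Snb\times\RP^7$, which is parallelizable, so primary invariants cannot distinguish the two classes.

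To handle (a) and (b) together, I would pass to $\tmf$- or $\ko$-theory via the string orientation $\MTString\to\tmf$ (or $\MTString\to\MTSpin\to\ko$), and compute the image of $[X_8]$ in $\widetilde{\tmf}_8(B\Z/2)$, respectively $\widetilde{ko}_8(B\Z/2)$. The image of $[\Snb\times\RP^7]=\eta\cdot[\RP^7]$ is identifiable by an $\eta$-invariant of a suitably twisted Dirac operator, following the method of \cref{orange_diffs} and using Donnelly's formula on $(\RP^7,S^7\to\RP^7)$. For $[X_8]$, the facts that $TX_8\cong 8\sigma$ stably and the classifying map factors through the low-dimensional space $\RP^2$ reduce the computation of its $\KO$- or $\tmf$-theoretic image to a universal characteristic class evaluation on $\RP^2$, which can be performed directly.

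The main obstacle is that, because every primary characteristic number of $X_8$ vanishes, both the nonvanishing of $[X_8]$ and its inequality with $[\Snb\times\RP^7]$ must be witnessed by a genuinely secondary invariant --- whether a twisted $\eta$-invariant, a $\tmf$-theoretic characteristic number, or the secondary cohomology operation associated to the generator of the relevant $\Ext^{2,10}$ class in $\Ext_{\cA(2)}(\widetilde H^*B\Z/2)$. Carrying out this secondary computation in a way that simultaneously certifies nonvanishing and nonequality with $[\Snb\times\RP^7]$ is the delicate step, and the expected output of the reduction to $\RP^2$ is precisely that the $\ko$-image of $[X_8]$ either lies in the kernel of the map from string bordism (forcing $[X_8]$ into filtration $\geq 2$) or takes a value different from that of $\eta\cdot[\RP^7]$.
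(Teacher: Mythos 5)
Your setup is the same as the paper's first step --- factoring the classifying map through $\RP^2$ and reducing to $\widetilde\Omega_8^\String(\RP^2)$ --- but the detection step, which you yourself flag as ``the delicate step,'' is where the proof actually lives, and the route you propose for it cannot work. The issue is that $\ko$ (equivalently spin bordism, twisted Dirac $\eta$-invariants, and all secondary invariants visible at the spin level) is blind to $[X_8]$: since $\ko_6 = \ko_7 = 0$, the cofiber sequence for $\Sigma^\infty\RP^2\simeq\Sigma\Sph/2$ gives $\widetilde\ko_8(\RP^2)=0$, so the image of $[X_8]$ in $\widetilde\ko_8(B\Z/2)$ vanishes identically. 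That would let you separate $[X_8]$ from $[\Snb\times\RP^7]$ (whose $\ko$-image is nonzero), but it can never certify $[X_8]\ne 0$, which is the half of linear independence that matters here; and passing to $\tmf$ instead is not a simplification, since $\MTString\to\tmf$ is an equivalence on homotopy in this range, so you are back to the original string-bordism computation. The class you are chasing lives in $E_\infty^{2,10}$ and is, via the geometry, a $\nu^2$-phenomenon invisible below the string level.

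The paper's mechanism for the missing step is the \emph{transfer} associated to the double cover $S^2\to\RP^2$: it induces $\widetilde\Omega_8^\String(\RP^2)\to\widetilde\Omega_8^\String(S^2)\cong\Omega_6^\String\cong\Z/2$, sending $(M,f)$ to its double cover with its induced map to $S(\sigma)=S^2$. The double cover of $X_8$ is exactly $S^3\times S^3\times S^2$ with the Lie-group-framing string structure on $S^3\times S^3$ (this is why \cref{X8_is_string} insists on that compatibility), which is the generator $h_2^2=\nu^2$ of $\widetilde\Omega_8^\String(S^2)$; hence $[X_8]\ne 0$ in $\widetilde\Omega_8^\String(\RP^2)$. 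One then needs the comparison of Adams $E_2$-pages for $\RP^2$ and $\RP^\infty$ (injectivity in topological degree $8$, with image containing $E_2^{2,10}$ but not $E_2^{1,9}$) to transport this to the statement about $\widetilde\Omega_8^\String(B\Z/2)$ and the filtration of $[X_8]$; your proposal gestures at the filtration structure but supplies neither this comparison nor any substitute for the transfer computation.
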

\begin{proof}
Let $f\colon\widetilde\Omega_8^\String(\RP^2)\to\widetilde\Omega_8^\String(B\Z/2)$ be the map induced by
$\RP^2\inj\RP^\infty\simeq B\Z/2$. The map this induces on Adams spectral sequences is not hard to analyze:
Bruner-Rognes~\cite[\S 4.4, Chapter 6, \S 12.1]{BR21} run the whole Adams spectral sequence for
$\widetilde\tmf_*(\RP^2)$, using the identification $\Sigma^\infty\RP^2\simeq \Sigma\Sph/2$, and as discussed above
$\tmf$- and $\MTString$-homology agree in degrees $14$ and below.\footnote{See also the closely related work of
Beaudry-Bobkova-Pham-Xu~\cite{BBPX22}, who compute $\tmf_*(\RP^2)$ using the elliptic spectral sequence.} Likewise,
Davis-Mahowald~\cite[Table 3.2]{DM78} compute the $E_2$-page of the Adams spectral sequence for
$\widetilde\Omega_*^\String(B\Z/2)$ in the range we need, and with their calculation, $h_i$-linearity of
differentials, and the $2\eta = 0$ trick from the proof of \cref{easier_extensions} one sees that
$\widetilde\Omega_8^\String(B\Z/2)\cong\Z/2\oplus\Z/2$. As discussed in \S\ref{xihet_gens}, one of the $\Z/2$
summands is detected by $\RP^7\times\Snb$, whose image in the $E_\infty$-page is in filtration $1$. Consider the
map
\begin{equation}
\label{RP2Inf}
	\Psi\colon \Ext_{\cA(2)}(\widetilde H^*(\RP^2;\Z/2)) \longrightarrow \Ext_{\cA(2)}(\widetilde H^*(B\Z/2;\Z/2))
\end{equation}
induced by $\RP^2\to \RP^\infty\simeq B\Z/2$; we draw this map in \cref{2_to_infty}. $\Psi$ is also the map between
the $E_2$-pages of these two Adams spectral sequences; looking at \cref{2_to_infty}, $\Psi$ is injective in
topological degree $8$, with image containing the nonzero element of $E_2^{2,10}$ but not the nonzero class in
$E_2^{1,9}$. As both of these elements survive to the $E_\infty$-page, this lifts to imply that $f\colon
\widetilde\Omega_8^\String(\RP^2)\to \widetilde\Omega_8^\String(B\Z/2)$ is injective and that if one wants to find
a class in $\widetilde\Omega_8^\String(B\Z/2)$ linearly independent from $\RP^7\times\Snb$, it suffices to find a
nonzero class in $\widetilde\Omega_8^\String(\RP^2)$.
\begin{figure}[h!]
\begin{subfigure}[c]{0.48\textwidth}
%
%
\includegraphics{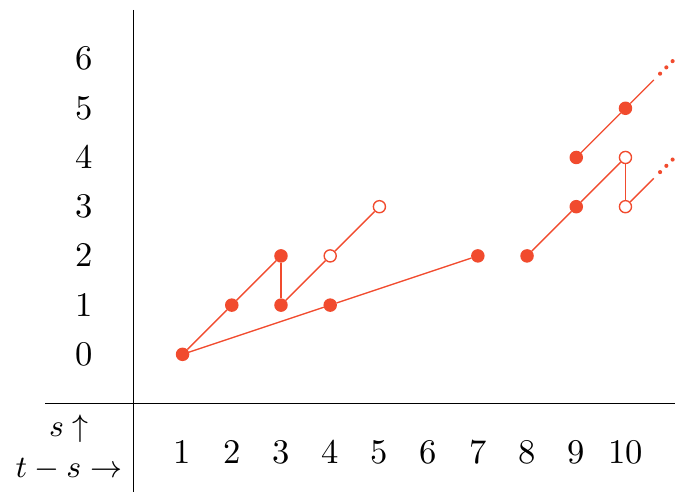}
\end{subfigure}
\begin{subfigure}[c]{0.48\textwidth}
%
\includegraphics{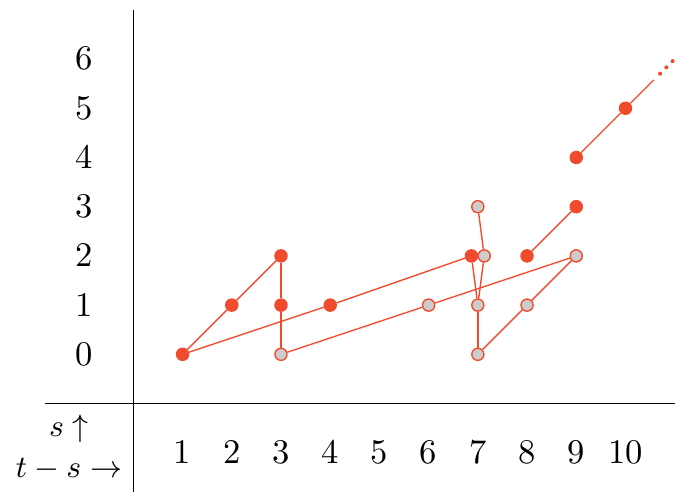}
\end{subfigure}
\caption{The map of Adams spectral sequences for reduced string bordism induced by the map
$\RP^2\inj\RP^\infty\simeq B\Z/2$, which we use in the proof of \cref{X8_transfer}. Left: $\Ext_{\cA(2)}(\widetilde
H^*(\RP^2; \Z/2), \Z/2)$, the $E_2$-page of the Adams spectral sequence computing
$\widetilde\Omega_*^\String(\RP^2)_2^\wedge$. Filled dots have nonzero image after mapping to $\RP^\infty$;
unfilled dots are the kernel. Bruner-Rognes~\cite[\S 4.4, Chapter 6, \S 12.1]{BR21} compute these Ext groups and
run this Adams spectral sequence; from their work we learn there are no differentials in this range (though there
are hidden $\nu$-extensions that do not enter into our argument; see (\textit{ibid.}, Theorem 12.5)). Right:
$\Ext_{\cA(2)}(\widetilde H^*(B\Z/2; \Z/2), \Z/2)$, a summand of the $E_2$-page of the Adams spectral sequence
computing $\widetilde\Omega_*^\String(B\Z/2)_2^\wedge$. Filled dots are in the image of the map from $\RP^2$; gray
dots are the cokernel. The $E_2$-page was computed by Davis-Mahowald~\cite[Table 3.2]{DM78}, and during the proof
of \cref{X8_transfer} we argue that there are no differentials or hidden extensions by $2$ in the range depicted.}
\label{2_to_infty}
\end{figure}

The map $\sigma\colon X_8\to B\Z/2$ factors through $\RP^2$ by definition, so we are done if we can show $X_8$,
with its map to $\RP^2$, is nonbounding. To do so, consider the transfer map $\Sigma^\infty\RP^2\to\Sigma^\infty
S^2$ associated to the double cover $S^2\to\RP^2$; this induces on string bordism a map
$\widetilde\Omega_*^\String(\RP^2)\to\widetilde\Omega_*^\String(S^2)$ sending $(M, f\colon M\to\RP^2)$ to the
double cover $M'\to M$ associated to the line bundle $f^*\sigma$, together to the map $M'\to S(\sigma) = S^2$.

The map $\Omega_k^\xi\to\widetilde\Omega_{k+\ell}^\xi(S^\ell)$ sending $M\mapsto (M\times S^\ell,
\mathrm{proj}_2\colon M\times S^\ell\to S^\ell)$ (where $S^\ell$ carries the bounding stable framing, which with
the $\xi$-structure on $M$ induces a $\xi$-structure on $M\times S^\ell$) is always an
isomorphism (e.g.\ check this with the Atiyah-Hirzebruch spectral sequence), and
$\Omega_6^\String\cong\Z/2\times\Z/2$~\cite[\S 3, \S 4]{Gia71}, generated by $S^3\times S^3$ with its Lie group
framing, because it is represented by $h_2^2$ in the Adams spectral sequence. Therefore
$\widetilde\Omega_8^\String(S^2)\cong\Z/2$ is generated by $S^3\times S^3\times S^2$, with the map to $S^2$
given by projection onto the third factor. The image of $X_8$ under the transfer is its double cover, which is
$S^3\times S^3\times S^2$, with the correct string structure and map to $S^2$, so $[X_8]\ne 0$ in
$\widetilde\Omega_8^\String(\RP^2)$, which suffices to prove the theorem.
\end{proof}
Finally, by looking at the map $\widetilde\Omega_*^\String(B\Z/2)\to\Omega_*^\xihet$, we conclude:
\begin{cor}
\label{X8_generate}
Suppose $d_2(c) = 0$ in the Adams spectral sequence for $\xihet$. Then $[X_8]\ne 0$ in $\Omega_8^{\xihet}$, and its
image in the $E_\infty$-page is the class $b\in E_\infty^{2,10}$ (perhaps plus some elements in lower filtration).
\end{cor}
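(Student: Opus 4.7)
The plan is to combine \cref{X8_transfer} with naturality of the Adams spectral sequence under the map $\iota'\colon \MTString\wedge (B\Z/2)_+\to \MTxihet$ induced by the inclusion $\Z/2\inj \Ghet$. First I would observe that the $\xihet$-structure placed on $X_8$ in \S\ref{s:X8}, consisting of the string structure from \cref{X8_is_string}, the principal $\Z/2$-bundle $\sigma$ (equivalently the double cover $S^3\times S^3\times S^2\to X_8$), and trivial $\rE_8$-bundles with the canonical trivialization of $\lambda(X_8) - (c_1+c_2) = 0$, is precisely the image under $\iota'$ of $X_8$ viewed as a string manifold with $\Z/2$-bundle. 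So it suffices to show that $\iota'[X_8]\ne 0$ in $\Omega_8^\xihet$ and to identify its $E_\infty$-page class.

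By \cref{X8_transfer}, the class $[X_8]\in\widetilde\Omega_8^\String(B\Z/2)$ has image on the $E_\infty$-page of the Adams spectral sequence for $\widetilde\Omega_*^\String(B\Z/2)_2^\wedge$ equal to the nonzero element of $E_\infty^{2,10}$, up to possible lower-filtration corrections. From the proof of \cref{orange_diffs}, the map of $E_2$-pages induced by $\iota'$ is the inclusion of the summand $\Ext_{\cA(2)}(\textcolor{RedOrange}{M_2})$ into the $E_2$-page for $\cQ$. The generator of $\Ext_{\cA(2)}^{2,10}(M_2)$ detecting $X_8$ is, by construction, carried to the generator $b\in E_2^{2,10}$ pictured in \cref{heterotic_SS}.

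It remains to verify that $b$ survives to the $E_\infty$-page under the hypothesis $d_2(c) = 0$. Inspecting the enumerated list \ref{green_to_orange_d2_1}--\ref{deep_purple} of a priori nontrivial differentials, the only one in which $b$ could appear as a target is $d_2(c) = b$ from family \ref{h1_orange_d2_family}, while differentials with source in bidegree $(2,10)$ do not appear on the list; the vanishing of the others was already established in \cref{orange_diffs,g_to_o_nonzero,purple_lem}. Hence, assuming $d_2(c) = 0$, the class $b$ persists to $E_\infty^{2,10}$, and the image of $\iota'[X_8]$ in $\Omega_8^\xihet$ is detected by $b$ (perhaps modified in lower Adams filtration), in particular nonzero. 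The main obstacle is really just careful bookkeeping of how the orange summand of \cref{e8_semi_coh} embeds into the full $E_2$-page, since \cref{X8_transfer} already does the nontrivial spectral sequence work via the transfer argument on $\RP^2$.
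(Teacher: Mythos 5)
Your proposal is correct and is essentially the paper's argument: the paper deduces the corollary in one line from \cref{X8_transfer} ``by looking at the map $\widetilde\Omega_*^\String(B\Z/2)\to\Omega_*^\xihet$,'' which is exactly the naturality-of-the-Adams-spectral-sequence bookkeeping you spell out. Your additional checks (that the $\xihet$-structure on $X_8$ is the image under $\iota'$, that the $E_2$-page map is the inclusion of the $\Ext_{\cA(2)}(\textcolor{RedOrange}{M_2})$ summand, and that $b$ is not the source or target of any other differential) are the correct details behind that one line.
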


\subsection{$\xihet$ bordism at odd primes}
	\label{xihet_odd}
\begin{thm}
\label{no_odd_het}
$\Omega_\ast^\xihet$ has no odd-primary torsion in degrees $11$ and below.
\end{thm}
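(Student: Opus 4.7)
The plan is to work at each odd prime $p$ separately and run the Adams spectral sequence, exploiting two simplifications unavailable at $p=2$. First, because $\xihet$ structures refine spin structures, $\MTxihet$ is orientable with respect to $\MTSO_{(p)}$ at each odd $p$, so the Thom isomorphism identifies $H^*(\MTxihet;\Z/p)$ with $H^*(B\Ghet;\Z/p)$ up to the Thom-class shift. Second, $\widetilde H^*(B\Z/2;\Z/p)$ vanishes for $p$ odd, and so the Serre spectral sequence for the fibration
\begin{equation*}
B\cS(\Spin\times \rE_8\times\rE_8,\, c_1+c_2-\lambda) \longrightarrow B\Ghet \longrightarrow B\Z/2
\end{equation*}
collapses to an isomorphism $H^*(B\Ghet;\Z/p) \cong H^*(B\cS(\Spin\times\rE_8\times\rE_8, c_1+c_2-\lambda);\Z/p)^{\Z/2}$, with $\Z/2$ swapping the two $\rE_8$ factors.

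To compute the mod-$p$ cohomology of the string cover I will use the Serre spectral sequence for the principal fibration $K(\Z,3) \to B\cS(\Spin\times\rE_8^2,c_1+c_2-\lambda) \to B\Spin\times B\rE_8\times B\rE_8$, whose transgression sends $\iota_3 \mapsto \lambda-c_1-c_2$. Since $B\rE_8\to K(\Z,4)$ is $15$-connected by Bott-Samelson, I may replace $B\rE_8$ by $K(\Z,4)$ in the range we care about. At any prime $p\ge 5$, the Steenrod operations $\mathcal P^i$ raise degree by at least $2(p-1)=8$; in topological degrees $\le 11$ this forces $H^*(B\cS;\Z/p)$ to be the polynomial ring $\Z/p[c_1,c_2]$ (with $\lambda=c_1+c_2$), whose $\Z/2$-invariants are concentrated in degrees $0$, $4$, $8$, $8$ with trivial Steenrod action. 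The Adams $E_2$-page in this range is therefore a disjoint union of $h_0$-towers, which converge to free $p$-adic summands of $\Omega_*^{\xihet}$ in those degrees, ruling out $p$-primary torsion for every $p\ge 5$.

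The main obstacle is the prime $p=3$, where $\mathcal P^1$ has degree $4$ and acts nontrivially on the fundamental class of $K(\Z,4)$. This pulls new classes into the range of interest — most notably classes of the form $\mathcal P^1 \lambda$, $\mathcal P^1 c_i$, and $\beta \mathcal P^1 c_i$ in degrees $8$ and $9$ — and imposes Steenrod-compatible versions of the relation $\lambda = c_1+c_2$ via the transgressions of $\mathcal P^1\iota_3$ and $\beta \mathcal P^1\iota_3$. Even so, the resulting $\cA$-at-$3$ module $H^*(B\Ghet;\Z/3)$ in degrees $\le 11$ is finite-dimensional and computable by hand. The heart of the argument is to write this module down explicitly, compute $\Ext$ over the Steenrod algebra at $3$ directly in the relevant bidegrees, and check that every surviving class sits at the base of an $h_0$-tower with no room for differentials that could generate $3$-torsion, and no hidden $3$-extensions joining distinct towers.
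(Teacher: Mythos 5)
Your strategy is essentially the one the paper follows: use that $\widetilde H^*(B\Z/2;\Z/p)=0$ for odd $p$ to reduce to the $\Z/2$-invariants of the cohomology of the string cover of $\Spin\times\rE_8^2$, compute that cohomology from the Serre spectral sequence of the $K(\Z,3)$-fibration (with the Kudo transgression theorem supplying the transgressions of $\cP^1\iota_3$ and $\beta\cP^1\iota_3$, and Bott--Samelson letting you replace $B\rE_8$ by $K(\Z,4)$), and then verify that the Adams $E_2$-page in topological degree $\le 11$ consists only of $h_0$-towers in even degrees, which forces collapse and rules out torsion.

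One step, as written, would go wrong if taken literally. The Thom isomorphism at odd $p$ identifies $H^*(\MTxihet;\Z/p)$ with $H^*(B\Ghet;\Z/p)$ only as graded vector spaces (equivalently, as modules over $H^*(B\Ghet;\Z/p)$); it is \emph{not} an isomorphism of $\cA$-modules, because the Steenrod operations on the Thom spectrum side are twisted by $\cP(U)$. At $p=3$ this twist is nontrivial in the range you care about: by the odd-primary Wu formula $\cP^1U$ is a unit multiple of $Up_1$, whereas $\cP^1$ kills the unit of $H^*(B\Ghet;\Z/3)$. So in your last step you must compute $\Ext_\cA$ of $H^*(\MTxihet;\Z/3)$, not of $H^*(B\Ghet;\Z/3)$. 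The efficient way to handle both the twist and the Ext computation at once --- and what the paper does --- is to invoke Milnor's theorem that $H^*(\MTSO;\Z/3)$ is a free $\cA/\beta$-module on even-degree generators, propagate this through the Cartan formula, and use Milnor's accompanying result that Ext of such a module is a sum of $h_0$-towers in even topological degrees. Two smaller points: at $p\ge 5$ your description of the degree-$\le 11$ cohomology omits $p_2\in H^8(B\Spin;\Z/p)$ (the relation coming from the transgression only eliminates $p_1$), though since $p_2$ sits in even degree this does not disturb the parity argument; and once the $E_2$-page is concentrated in even topological degrees with only $h_0$-towers, there is nothing further to check about differentials or extensions --- all differentials land in odd topological degree, hence vanish, and the towers assemble into $\Z_p$-summands.
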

\begin{proof}
This amounts to a direct computation with the Adams spectral sequence. We will go over the case $p = 3$ in detail;
for $p = 5,7$ the story is similar but easier, and for $p\ge 11$ it is trivial because the degrees of the Steenrod
powers are too high for the Adams spectral sequence to produce torsion.

First we compute $H^\ast(B\Ghet;\Z/3)$ as a module over the Steenrod algebra $\cA$ in low degrees in
\cref{ghet_coh}, then we do the same for $H^\ast(\MTxihet;\Z/3)$ in \cref{M3_module}. Once we have this, we can run
the Adams spectral sequence, and do so in \cref{Z3_adams}.

Throughout this subsection, $\cP^i$ refers to the $i^{\mathrm{th}}$ Steenrod power, a degree-$4i$ operation on mod
$3$ cohomology, and $\beta$ is the Bockstein homomorphism for the sequence $0\to\Z/3\to\Z/9\to\Z/3\to 0$.
\begin{lem}
Let $C\in H^3(K(\Z, 3); \Z/3)$ denote the mod $3$ reduction of the tautological class. Then
\begin{equation}
	H^*(K(\Z, 3);\Z/3)\cong\Z/3[C, \cP^1C, \beta\cP^1 C, \dotsc]/(C^2, \dotsc),
\end{equation}
where all missing generators and relations are in degrees $14$ and above.
\end{lem}
\begin{proof}
This is a standard application of the Serre spectral sequence for the fibration $K(\Z, 2)\to *\to K(\Z, 3)$,
so we will be succinct. $E_2^{0, *}\cong H^*(K(\Z, 2); \Z/3)\cong\Z/3[x]$,
with $\abs x = 2$; by the $E_\infty$-page, all powers of $x$ must be killed by differentials.

The only way to kill $x$ is with a transgressing $d_3\colon E_3^{0, 2}\to E_3^{3, 0}$. Let $C\coloneqq d_3(x)$.
$C^2 = 0$ follows by graded commutativity. The Leibniz rule for differentials means that when $3\nmid k$, $d_3(x^k)
= \pm x^{k-1}C$, and if $3\mid k$, $d_3(x^k) = 0$.

So $x^3$ survives to the $E_4$-page. The only remaining differential that can kill $x^3$ is the transgressing $d_7\colon
E_7^{0, 6}\to E_7^{7, 0}$, so $d_7(x^3)\ne 0$; by the Kudo transgression theorem~\cite{Kud56}, because $x^3 =
\cP^1(x)$, $d_7(x^3) = \cP^1 C$. The Leibniz rule then implies $d_7(x^6) = x^3\cP^1 C$, so by the $E_8$-page,
everything on the line $p = 0$ in total degree less than $18$ has been killed.

Because $d_3(x^3) = 0$, $x^2C$ survives to the $E_4$-page; the only remaining way for it to support a differential
is to have a new class $w\in H^8(K(\Z, 3);\Z/3)$ such that $d_5(x^2C) = w$. To see that $\beta(\cP^1 C) = \pm w$,
compare with the analogous spectral sequence for $\Z/9$-valued cohomology to see that $\cP^1C$ is not in the image
of the mod $3$ reduction map from $\Z/9$ cohomology to $\Z/3$ cohomology.\footnote{Alternatively, one could deduce
this Bockstein by setting up the Serre spectral sequence for $K(\Z, 3)\to *\to K(\Z, 4)$ and Hill's
calculation~\cite[Corollary 2.9, Figure 1(a)]{Hil09} of the low-degree mod $3$ cohomology of $K(\Z, 4)$ as an
$\cA$-module: $C$ transgresses to a degree-$4$ class $D$, and Hill shows $\beta(\cP^1 D)\ne 0$, so by the
Kudo  transgression theorem~\cite{Kud56}, $\beta(\cP^1 C)\ne 0$ in $H^*(K(\Z, 3);\Z/3)$.}
\end{proof}
\begin{prop}
\label{ghet_coh}
Let $D\in H^4(B\rE_8;\Z/3)$ be the mod $3$ reduction of the class $c$ from \cref{char_class_e8}, and let $D_1$ and
$D_2$ be the two copies of $D$ in $H^*(B\rE_8^2;\Z/3)$ coming from the two factors of $B\rE_8$.
In degrees $13$ and below, the pullback map on $\Z/3$ cohomology induced by $\phi\colon B\Ghet\to B\Spin\times
B(\rE_8^2\rtimes\Z/2)$ is the quotient ring homomorphism sending $\lambda - D_1 - D_2\mapsto 0$, $-p_2 - \cP^1(D_1
+ D_2)\mapsto 0$, and $\beta\cP^1(D_1 + D_2)\mapsto 0$.
\end{prop}
Here $\phi$ is the map we constructed in~\eqref{forget_string} which forgets the B-field.
\begin{proof}
Throw the Serre spectral sequence at the fibration
\begin{equation}
\label{Ghet_serre}
	K(\Z, 3)\longrightarrow B\Ghet \longrightarrow B\Spin\times B(\rE_8^2\rtimes\Z/2).
\end{equation}
The base space is not simply connected, so we might have to worry about local coefficients, but this turns out not
to be the case, because the $\Z/2$ symmetry swapping the two $\rE_8$ factors, which is the origin of the $\pi_1$ in
the base, acts trivially on the B-field, which gives us the fiber in~\eqref{Ghet_serre}.

In order to run the Serre spectral sequence for~\eqref{Ghet_serre}, we need to know the cohomology of $B\Spin$ and
$B(\rE_8^2\rtimes\Z/2)$. The former is the polynomial ring on the mod $3$ reductions of the Pontrjagin classes,
which is a theorem of Borel-Hirzebruch~\cite[\S 30.2]{BH59}; for the latter, run the Serre spectral sequence for
the fibration
\begin{equation}
	B\rE_8^2\longrightarrow B(\rE_8^2\rtimes\Z/2)\longrightarrow B\Z/2.
\end{equation}
Because $H^*(B\Z/2;\Z/3)$ vanishes in positive degrees, this Serre spectral sequence collapses to imply
\begin{equation}
	H^*(B(\rE_8^2\rtimes\Z/2);\Z/3)\overset\cong\longrightarrow H^*(B\rE_8^2;\Z/3)^{\Z/2}.
\end{equation}
The answer now follows from the Künneth formula, the fact that we can replace $B\rE_8$ with $K(\Z, 4)$ in the range
we need by the result of Bott-Samelson~\cite[Theorems IV, V(e)]{BS58} we mentioned in \S\ref{xihet_at_2}, and the
mod $3$ cohomology of $K(\Z, 4)$ in low degrees, worked out by Cartan~\cite{Car54} and Serre~\cite{Ser52}, and
stated explicitly by Hill~\cite[Corollary 2.9]{Hil09}.

Now back to~\eqref{Ghet_serre} and its Serre spectral sequence.
The fibration~\eqref{Ghet_serre} is classified by the degree-$4$ cohomology class $\lambda - D_1 - D_2$, i.e.\ it
is the pullback of the universal $K(\Z, 3)$-bundle
\begin{equation}
\label{universal_Z3}
	K(\Z, 3)\longrightarrow E K(\Z, 3)\longrightarrow B K(\Z, 3) \simeq K(\Z, 4)
\end{equation}
by the map $B\Spin\times B(\rE_8^2\rtimes\Z/2)\to K(\Z, 4)$ classified by $\lambda - D_1 - D_2$.\footnote{This map,
and hence also the fibration, is only determined up to homotopy, but any two choices of representative give
isomorphic answers.} In the Serre spectral sequence for~\eqref{universal_Z3}, the class $C\in E_2^{0, 3} =
H^3(K(\Z, 3); \Z/3)$ must transgress to the generator of $E_2^{4, 0} = H^4(K(\Z, 4);\Z/3)$, and this generator
pulls back to $\lambda - D_1 - D_2$, enforcing the relation $\lambda - D_1 - D_2 = 0$ in the $E_5$-page.

The other two pullbacks to zero in the theorem statement then follow from the Kudo transgression
theorem~\cite{Kud56}: $\cP^1 C\in E_2^{0, 7} = H^7(K(\Z, 3);\Z/3)$ must transgress to $\cP^1(\lambda - D_1 - D_2)$,
and analogously for $\beta\cP^1 C$. To compute these, we must determine how $\cP^1$ acts on the mod $3$ reductions
of Pontrjagin classes. Shay~\cite{Sha77} proves a formula for Steenrod powers of Chern classes, which yields the
formula for Pontrjagin classes by pullback. Hence, as worked out by Nordström~\cite{MO_Wu_3}, $\cP^1 p_1 = p_2$;
then an Adem relation tells us
\begin{equation}
	\cP^1 p_2 = \cP^1\cP^1 p_1 = -\cP^2 p_1 = p_1^3,
\end{equation}
the last equality because $\cP^i$ is the cup product cube on classes of degree $2i$. Thus we see that $\cP^1 C$
transgresses to $-p_2 - \cP^1(D_1 + D_2)$ and $\beta\cP^1 C$ transgresses to $\beta\cP^1(D_1 + D_2)$, killing those
classes by the $E_{10}$-page.

Now, the Leibniz rule cleans up the rest of the Serre spectral sequence in total degree at most $13$: by the
$E_{10}$-page, everything in this range is concentrated on the line $q = 0$. Therefore on the $E_\infty$-page,
the extension question is trivial in this range, and we conclude.
\end{proof}
\begin{prop}
\label{M3_module}
Let $\mathcal M_3$ denote the quotient of $H^*(\MTxihet;\Z/3)$ by all elements of degree $14$ or higher, $\mathcal
M^\SO_3$ denote the quotient of $H^*(\MTSO;\Z/3)$ by all elements of degree $14$ or higher, and $C\alpha$ denote the $\cA$-module which consists of two $\Z/3$ summands in degrees $0$ and $4$ linked by
$\cP^1$. 
Then, there is an isomorphism of $\cA$-modules
\begin{equation}
	\mathcal M_3\cong \mathcal M_3^\SO\oplus \Sigma^8 C\alpha \oplus \Sigma^{12}\Z/3.
\end{equation}
\end{prop}
\begin{proof}
In \cref{ghet_coh}, we discovered that the map $\phi\colon B\Ghet\to B\Spin\times B(\rE_8^2\rtimes\Z/2))$ induces a
surjection on mod $3$ cohomology in degrees $13$ and below. As $\phi$ commutes with the maps down to $B\O$ that are
part of the definition of these tangential structures, $\phi$ induces a map on Thom spectra
\begin{equation}
\label{forget_B_3}
	\MTxihet\to \MTSpin\wedge B(\rE_8^2\rtimes\Z/2)_+.
\end{equation}
Both of these tangential structures' maps to $B\O$ factor through $B\SO$, so the Thom isomorphism for mod $3$ cohomology
untwists. The Thom isomorphism is natural for maps of tangential structures, so we conclude that the pullback map on mod
$3$ cohomology induced by~\eqref{forget_B_3} is a surjection in degrees $13$ and below --- and therefore that we
can compute Steenrod powers in the cohomology of the latter Thom spectrum. And the map $\MTSpin\to\MTSO$ is an
equivalence away from $2$, so we may work with $\MTSO$ in place of $\MTSpin$. Milnor~\cite[Theorem 4]{Mil60}
computed the Steenrod module structure on $H^\ast(\MTSO;\Z/3)$, showing that it is a free $\cA/\beta$-module. Using
this, we can determine the Steenrod powers of $Up_i$, where $U$ is the Thom class; and this and the Cartan formula
finish the proof.
\end{proof}
\begin{prop}
\label{Z3_adams}
In topological degrees $12$ and below, the Adams $E_2$-page computing $(\Omega_*^\xihet)_3^\wedge$ consists of
$h_0$-towers concentrated in even topological degrees, and therefore this Adams spectral sequence collapses in
degrees $12$ and below.
\end{prop}
\begin{proof}
The direct-sum decomposition in \cref{M3_module} means that it suffices to prove the statement about $h_0$-towers
for $\mathcal M_3^\SO$, $\Sigma^8 C\alpha$, and $\Sigma^{12}\Z/3$ separately. As usual, with $M$ an $\cA$-module,
we write $\Ext(M)$ to denote $\Ext_\cA^{*, *}(M, \Z/3)$. The first ingredient we need is $\Ext(\Z/3)$ itself; the
computation of $\Ext_\cA(\Z/3)$ in degrees $t-s\le 11$ is due to Gershenson~\cite{Ger63}; May~\cite{May65, May66}
expanded this computation to $t-s\le 88$. In topological degrees $2$ and below, $\Ext(\Z/3)$ consists of a single
$h_0$-tower in topological degree $0$, implying the conclusion for $\Sigma^{12}\Z/3$.

Next, we compute $\Ext(C\alpha)$ using the fact that a short exact sequence of $\cA$-modules induces a long exact
sequence in Ext groups. Specifically, factor $C\alpha$ as an extension of $\cA$-modules
\begin{equation}
\label{cof_alf_ext}
	\shortexact{\textcolor{RubineRed}{\Sigma^4\Z/3}}{C\alpha}{\textcolor{Periwinkle}{\Z/3}},
\end{equation}
which we draw in \cref{cof_alf_fig}, left, and compute the corresponding long exact sequence in Ext in
\cref{cof_alf_fig}, right. There is one potentially nonzero boundary map in range: $\partial\colon\Ext_\cA^{0,
4}(\textcolor{RubineRed}{\Z/3})\to\Ext_\cA^{1,4}(\textcolor{Periwinkle}{\Z/3})$. This map must be nonzero because
$\Ext_\cA^{0, 4}(C\alpha) = \Hom_\cA(C\alpha, \Sigma^4\Z/3) = 0$. We see that in degrees $6$ and below,
$\Ext(C\alpha)$ consists solely of $h_0$-towers in even degrees, which implies the part of the corollary statement
coming from $\Sigma^8C\alpha$.

\begin{figure}[h!]
\begin{subfigure}[c]{0.4\textwidth}
\begin{tikzpicture}[scale=0.5]
	\PoneR(0, 0);
	\begin{scope}[RubineRed]
		\foreach \x in {-5, 0} {
			\tikzpt{\x}{4}{}{};
		}
		\draw[->, thick] (-4.5, 4) -- (-0.5, 4);
	\end{scope}
	\begin{scope}[Periwinkle]
		\foreach \x in {0, 5} {
			\tikzpt{\x}{0}{}{};
		}
		\draw[->, thick] (0.5, 0) -- (4.5, 0);
	\end{scope}
	\node[below=2pt] at (-5, 0) {$\Sigma^4\Z/3$};
	\node[below=2pt] at (0, 0) {$C\alpha$};
	\node[below=2pt] at (5, 0) {$\Z/3$};
\end{tikzpicture}
\end{subfigure}
\begin{subfigure}[c]{0.5\textwidth}
\includegraphics{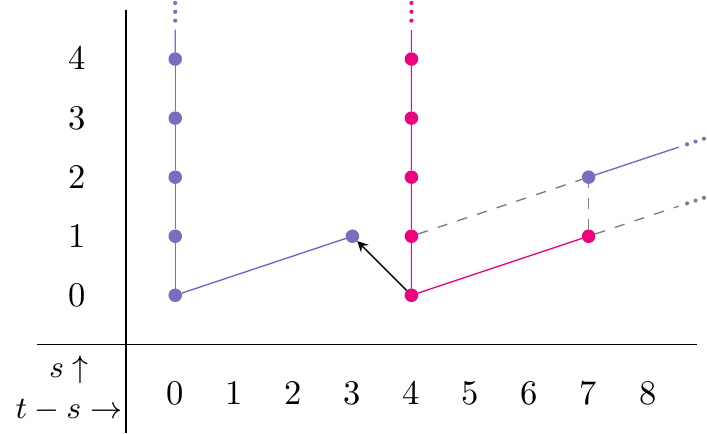}
\end{subfigure}
\caption{Left: the extension~\eqref{cof_alf_ext} of $\cA$-modules at $p = 3$. Right: the associated long exact
sequence in Ext. The dashed gray lines are actions by elements of $\Ext_\cA(\Z/3)$ that cannot be seen from this
long exact sequence and must be deduced another way; we do not need them in this paper, so do not go into the
details.}
\label{cof_alf_fig}
\end{figure}

Finally, $\mathcal M_3^\SO$. Milnor~\cite[Theorem 4]{Mil60} showed that this module coincides with a free
$\cA/\beta$-module in degrees $13$ and below, and proves (\textit{ibid.}, Lemma 5) that the Ext groups of such a
module consist solely of $h_0$-towers in even topological degree. Therefore in topological degrees $12$ and below,
$\Ext(\mathcal M_3^\SO)$ also consists solely of $h_0$-towers in even topological degrees.
\end{proof}
This suffices to prove \cref{no_odd_het} for $p = 3$: $h_0$-towers on the $E_\infty$-page lift to $\Z_3$ (i.e.\ the
$3$-adic integers) summands in $(\Omega_*^\xihet)_3^\wedge$, so there is no $3$-torsion in this range.
\end{proof}
\begin{rem}
The change-of-rings technique we used at $p = 2$ has an analogue at $p = 3$ for twists of $\tmf$ (hence also
$3$-local twisted string bordism in degrees $15$ and below, because the Ando-Hopkins-Rezk map~\cite{AHR10}
$\MTString_{(3)}\to\tmf_{(3)}$ is $15$-connected~\cite{HR95, Hil09}): using Baker-Lazarev's version of the Adams
spectral sequence~\cite{BL01}, we can take Ext over the algebra
\begin{equation}
	\cA^\tmf\coloneqq \pi_{-*}\mathrm{Map}_{\tmf}(H\Z/3, H\Z/3),
\end{equation}
where $H\Z/3$ is made into a $\tmf$-algebra spectrum by the ring spectrum maps $\tmf\overset{\tau_{\le 0}}\to
H\Z\to H\Z/3$, where the first map is the Postnikov $0$-connected quotient and the second map is induced from
$\Z\twoheadrightarrow\Z/3$. The algebra $\cA^\tmf$ was explicitly calculated by Henriques and Hill, using work of
Behrens~\cite{Beh06} and unpublished work of Hopkins-Mahowald; see Henriques~\cite[\S 13.3]{DFHH14},
Hill~\cite{Hil07}, and Bruner-Rognes~\cite[\S 13]{BR21} for computations with this Adams spectral sequence.

Just like at $p = 2$, there is a little more work to do apply this spectral sequence to twisted string bordism when
the twist does not arise from a vector bundle. We will take up this question in future work joint with Matthew
Yu~\cite{DY}, where we will see how to work over $\cA^{\tmf}$ for non-vector-bundle twists and that it simplifies
the $3$-primary computation of $\Omega_*^\xihet$ in degrees relevant to string theory.
\end{rem}

\subsection{$\xiCHL$ bordism}
	\label{s_CHL_bord}
In this section, we compute the $\xiCHL$ bordism groups. Just like for the $\xihet$ bordism groups, we use the
change-of-rings trick from \cref{fake_shearing} at $p = 2$ and work more directly with the Adams spectral sequence
at odd primes. This time, however, we can deduce a lot of information from abstract isomorphisms with the Adams
spectral sequences for the string bordism of $B\rE_8$, which has been studied by Hill~\cite{Hil09}.

\subsubsection{$2$-primary computation}
\begin{thm}
\label{2_loc_CHL}
In degrees $11$ and below, the $2$-completions of $\Omega_*^{\xiCHL}$ and $\Omega_*^\String(B\rE_8)$ are abstractly
isomorphic.
\end{thm}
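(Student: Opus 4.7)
The plan is to show that the $2$-completed Adams spectral sequences computing $\Omega_*^{\xiCHL}$ and $\Omega_*^\String(B\rE_8)$ have the same $E_2$-page in the range of interest, and then match their differentials and extensions using Hill's computation.

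First, I would apply \cref{fake_shearing} to $\xiCHL$, which is of the form $\xi^\mu$ with $X = B\rE_8$ and $\mu = -2c \in H^4(B\rE_8;\Z)$. This identifies the $E_2$-page of the $2$-completed Adams spectral sequence in degrees $t-s\le 15$ with $\Ext_{\cA(2)}^{s,t}(T(B\rE_8, -2c), \Z/2)$. The key simplification is that the $S$-action on $T(B\rE_8, -2c)$, defined by $S(x) = \mu x + \Sq^4(x)$, reduces to $S(x) = \Sq^4(x)$ because $\mu = -2c$ vanishes modulo $2$. Consequently $T(B\rE_8, -2c)$ coincides with $H^*(B\rE_8;\Z/2)$ equipped with the restriction of its standard $\cA$-module structure to $\cA(2)$.

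Next, I would identify the same module as the input for $\Omega_*^\String(B\rE_8)$. By Giambalvo's calculation invoked in the proof sketch of \cref{fake_shearing_DY}, $H^*(\MTString;\Z/2) \cong \cA\otimes_{\cA(2)}\Z/2$ in degrees $\le 15$; combining this with the Künneth formula yields $H^*(\MTString\wedge(B\rE_8)_+;\Z/2) \cong \cA\otimes_{\cA(2)}H^*(B\rE_8;\Z/2)$ in that range. The change-of-rings isomorphism \eqref{change-of-rings} then identifies the $E_2$-page of this Adams spectral sequence in degrees $\le 15$ with $\Ext_{\cA(2)}(H^*(B\rE_8;\Z/2), \Z/2)$, matching the $\xiCHL$ $E_2$-page.

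With the $E_2$-pages identified, the remaining work is to match Adams differentials and extensions. Hill~\cite{Hil09} ran this Adams spectral sequence for string bordism of $B\rE_8$ in degrees $\le 14$ and computed all relevant differentials. Since both $\MTxiCHL$ and $\MTString\wedge(B\rE_8)_+$ are $\MTString$-modules, the differentials in each spectral sequence are equivariant under the $\Ext_{\cA(2)}(\Z/2)$-action, and the usual tools ($h_i$-equivariance, the $2\eta=0$ trick as in \cref{easier_extensions,RP2_xtn}, and comparison with the Ando-Hopkins-Rezk map to $\tmf$) should be enough to constrain the differentials in the $\xiCHL$ spectral sequence. The hard part will be pinning down the specific pattern: the $E_2$-page identification is only abstract, with no obvious spectrum-level map realizing it, so each potential differential must be analyzed within the $\xiCHL$ spectral sequence directly and shown to match Hill's answer. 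Resolving extension problems may likewise require producing explicit $\xiCHL$-manifold representatives and computing bordism invariants, along the lines of \S\ref{xihet_gens}.
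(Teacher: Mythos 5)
Your identification of the $E_2$-pages is exactly the paper's: both use \cref{fake_shearing} plus the observation that $T(B\rE_8,-2c)\cong T(B\rE_8,0)=H^*(B\rE_8;\Z/2)$ as $\cA(2)$-modules because the $S$-action only sees $\mu\bmod 2$. The gap is in the differentials, which you correctly flag as the hard part but then leave unresolved. The two nonzero differentials in Hill's computation, $d_2\colon E_2^{0,10}\to E_2^{2,11}$ and $d_2\colon E_2^{1,12}\to E_2^{3,13}$, are \emph{not} detectable by $h_i$-equivariance or the $2\eta=0$ trick --- those tools can only force differentials to vanish, and here one must show two differentials are \emph{nonzero}; getting this wrong changes the answer in degrees $10$ and $11$. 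Hill establishes them by comparing with the Adams spectral sequence for $\MTSpin\wedge(B\rE_8)_+$, and the missing idea is that this comparison is available on the $\xiCHL$ side as well: quotienting $\GCHL$ by $\T[1]$ gives a homomorphism $\GCHL\to\Spin\times\rE_8$, hence a genuine spectrum-level map $\MTxiCHL\to\MTSpin\wedge(B\rE_8)_+$, and on Adams $E_2$-pages this map is identified with the one induced by $\MTString\wedge(B\rE_8)_+\to\MTSpin\wedge(B\rE_8)_+$. So Hill's argument for the two nonzero $d_2$'s applies verbatim to the $\xiCHL$ spectral sequence, and the remaining differentials are killed by $\Ext_{\cA(2)}(\Z/2)$-linearity exactly as in his computation. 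Your worry that ``there is no obvious spectrum-level map realizing'' the identification is thus only half right: there is no map realizing the abstract isomorphism of $E_2$-pages itself, but there is a common target that both spectra map to, and that is what pins down the differentials.

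Two smaller points. First, the comparison with the Ando--Hopkins--Rezk map to $\tmf$ that you propose is not the relevant tool here; the detection happens through spin bordism (essentially $\ko$), not $\tmf$. Second, no manifold representatives are needed for the extension problems: the paper observes that the summands coming from $\Omega_*^\String(\pt)$ split off (they are realized by string manifolds with trivial $\rE_8$-bundle, which vacuously satisfy $2c=\lambda$), and all remaining extensions by $2$ in this range are determined by the $h_0$-action on the $E_\infty$-page.
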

\begin{proof}
By \cref{fake_shearing}, the Adams $E_2$-page in this range coincides with the Ext of $T(-2c)$ over $\cA(2)$. The
$\cA(2)$-module structure on $T(\mu)$ only depends on the underlying group $BG$ and on $\mu\bmod 2$, and $2c\bmod 2
= 0$, so as $\cA(2)$-modules, $T(-2c)\cong T(0) = H^*(B\rE_8;\Z/2)$. So the Adams $E_2$-page coincides in the range
we care about with the $E_2$-page for $\mathit{MT\xi}^0 = \MTString\wedge (B\rE_8)_+$. Hill~\cite[Figure 3]{Hil09}
computes the $E_2$-page corresponding for the reduced string bordism of $B\rE_8$, which we use to draw the full
$E_2$-page for $\Omega_\ast^{\xiCHL}$ in \cref{CHL_fig_2}.
\begin{figure}[h!]
%
%
\includegraphics{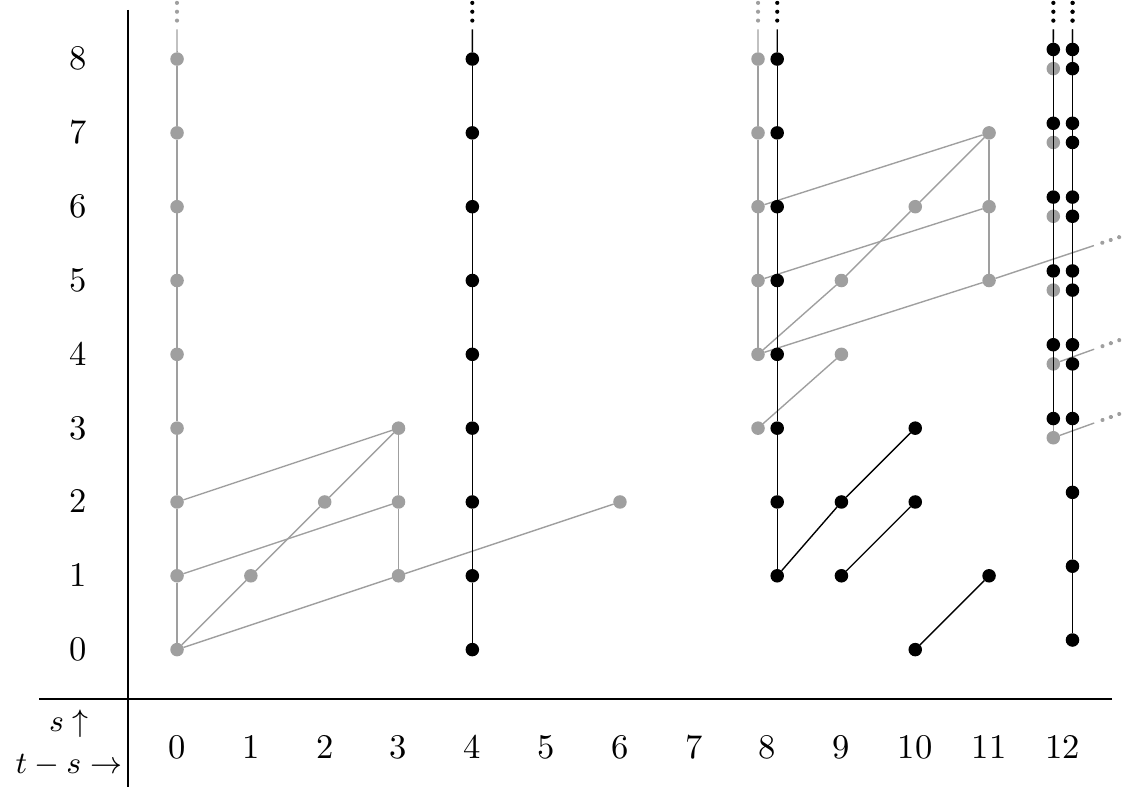}
\caption{The $E_2$-page for the Adams spectral sequence computing $2$-completed $\xiCHL$ bordism. The gray summands
correspond to classes with trivial $\rE_8$-bundle. See \cref{2_loc_CHL} for more information. This figure is
adapted from~\cite[Figure 3]{Hil09}.}
\label{CHL_fig_2}
\end{figure}

This is an abstract isomorphism and does not a priori tell us about differentials or extensions. However,
quotienting by $\T[1]$ defines a map $\GCHL\to \Spin\times\rE_8$, which induces a map on Adams spectral sequences
for Thom spectra of classifying spaces, and this map of Adams spectral sequences is identified with the map induced
by $\MTString\wedge (B\rE_8)_+\to\MTSpin\wedge (B\rE_8)_+$, so any differential for the string bordism of $B\rE_8$
deduced by pulling back from the Adams spectral sequence for $\MTSpin\wedge B\rE_8$ remains valid in our Adams
spectral sequence for $\xiCHL$ bordism.

Moreover, we can realize the part of $\Omega_*^{\xiCHL}$ corresponding to the gray summands in \cref{CHL_fig_2} by
string manifolds with trivial $\rE_8$-bundle, so the gray summands split off of the rest of the Adams spectral
sequence.

Looking at the black summands in \cref{CHL_fig_2}, linearity of differentials with respect to the
$\Ext_\cA(\Z/2)$-action on the $E_2$-page means the only possible nonzero differentials in the range we care about
are $d_2\colon E_2^{0, 10}\to E_2^{2, 11}$ and $d_2\colon E_2^{1, 12}\to E_2^{3, 13}$. Hill~\cite[\S 3.3]{Hil09}
uses the map to $\MTSpin\wedge (B\rE_8)_+$ to show that these two differentials are nontrivial, so as we noted
above, the same is true for $\xiCHL$ bordism.

As there are no more differentials, and all extensions by $2$ in range follow from $\Ext_\cA(\Z/2)$-action without
additional information, we have proven the theorem.
\end{proof}
\begin{rem}
As described in \cref{spinw4}, the map $c\colon B\rE_8\to K(\Z, 4)$ defines a map from $\xiCHL$ structures to
$\Spin\ang{w_4}$ structures, i.e.\ the data of a spin structure and a trivialization of $w_4$. This is the CHL
analogue of the passage from $\xihet$ structures to $\xihet'$ structures from \S\ref{xihet_at_2} --- and just as in
that case, because $c$ is $15$-connected, the induced map $\Omega_k^{\xiCHL}\to \Omega_k^{\Spin\ang{w_4}}$ is an
isomorphism for $k\le 14$, so the computations in this section also give $\Spin\ang{w_4}$ bordism groups.

An alternate point of view due to Sati-Schreiber-Stasheff~\cite[(2.17)]{SSS12} is that $\Spin\ang{w_4}$ structures
are twisted string structures in the sense of \cref{bordism_twists}: the trivialization of $w_4(M)$ is equivalent
data to a class $\mu\in H^4(M;\Z)$ and an identification of $2\mu$ and $\lambda(M)$, so a
$\Spin\ang{w_4}$-structure is a twisted string structure for the map $-2\colon K(\Z, 4)\to K(\Z, 4)$
(corresponding to the classifying space Sati-Schreiber-Stasheff denote $B\mathrm{String}^{2\mathrm{DD}_2}$). See
also~\cite[Remark C.18]{FH21b}.
\end{rem}
The proof of \cref{2_loc_CHL} took advantage of an abstract isomorphism, so it tells us nothing about the generators. The elements of
$\Omega_*^\String(B\rE_8)$ coming from $\Omega_*^\String(\pt)$ are represented by string manifolds with trivial
$\rE_8$-bundle; these vacuously satisfy the condition $2c = \lambda$, so define classes in $\Omega_\ast^\xiCHL$
representing the same elements under the abstract isomorphism with $\Omega_\ast^\String(B\rE_8)$.

That leaves a few elements left: copies of $\Z$ in degrees $4$ and $8$, and copies of $\Z/2$ in degrees $9$ and
$10$. We can represent the generator of $\Omega_4^{\xiCHL}\cong\Z$ by a K3 surface with an $\rE_8$-bundle chosen to
satisfy the Bianchi identity; it would be interesting to determine generators of $\Omega_k^{\xiCHL}$ for $k =
8,9,10$.

\subsubsection{Odd-primary computation}
\begin{thm}
\label{no_odd_CHL}
For $k\le 12$, $\Omega_k^{\xiCHL}$ has no odd-primary torsion.
\end{thm}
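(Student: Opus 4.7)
The plan is to mirror the proof of \cref{no_odd_het}: at each odd prime $p$, compute $H^*(B\GCHL;\Z/p)$ via a Serre spectral sequence, upgrade this to an $\cA$-module description of $H^*(\MTxiCHL;\Z/p)$ via a Thom isomorphism, and then show that the resulting Adams $E_2$-page consists only of $h_0$-towers in even topological degrees, whence no $p$-torsion can survive in the range. For primes $p \ge 11$ the operations $\cP^i$ have degree $\ge 20$, so nothing interesting happens below degree $12$ and only $h_0$-towers arising from the integral cohomology of $B\Spin\times B\rE_8$ appear; the argument then reduces to the cases $p = 3, 5, 7$.

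To compute $H^*(B\GCHL;\Z/p)$ I would apply the Serre spectral sequence to
\begin{equation}
	K(\Z,3)\longrightarrow B\GCHL\longrightarrow B\Spin\times B\rE_8,
\end{equation}
which is classified by $2c - \lambda$. The tautological class $C\in H^3(K(\Z,3);\Z/p)$ transgresses (up to unit) to $2c - \lambda$, and by the Kudo transgression theorem~\cite{Kud56}, $\cP^1 C$ and (at $p=3$) $\beta\cP^1 C$ transgress to $\cP^1(2c-\lambda)$ and $\beta\cP^1(2c-\lambda)$ respectively. Using the Shay/Nordström formulas $\cP^1 p_1 = p_2$ at $p=3$ and the analogous identities at $p = 5$, together with Hill's~\cite[Corollary 2.9]{Hil09} calculation of the low-degree mod $p$ cohomology of $K(\Z,4)\simeq B\rE_8$ (through the Bott-Samelson range), this identifies the image of the pullback map $H^*(B\Spin\times B\rE_8;\Z/p)\to H^*(B\GCHL;\Z/p)$ through degree $13$ as a quotient by $2c - \lambda$, $2\cP^1 c - \cP^1\lambda$, and (at $p=3$) $2\beta\cP^1 c - \beta\cP^1\lambda$; the Leibniz rule cleans up the rest of the spectral sequence in this range.

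Next, since $\xiCHL$ factors through $B\SO$, the mod $p$ Thom isomorphism untwists and naturality for the forgetful map $\phi\colon \MTxiCHL\to \MTSpin\wedge (B\rE_8)_+$ lets me transport the $\cA$-module structure from Milnor's description~\cite{Mil60} of $H^*(\MTSO;\Z/p)$ as a free $\cA/\beta$-module, tensored with $H^*(B\rE_8;\Z/p)$. The upshot is a splitting of $\cA$-modules analogous to \cref{M3_module}, where the summands away from the Milnor piece are concentrated in a few small dimensions and can each be handled by a short exact sequence of $\cA$-modules and its associated long exact sequence in Ext (as in \cref{cof_alf_fig}).

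The main obstacle will be at $p = 3$, where both $\cP^1$ (degree $4$) and $\beta\cP^1$ (degree $5$) act, so the non-Milnor summand needs to be tracked carefully through degree $12$; I expect it to split as a suspension of a small cofiber-type module like $C\alpha$ plus a couple of $\Sigma^n\Z/3$'s, and Gershenson-May's calculation~\cite{Ger63, May66} of $\Ext_{\cA}(\Z/3)$ in the relevant range together with a long exact sequence argument should show that each piece contributes only $h_0$-towers in even degree. The case $p = 5$ is strictly easier since $\cP^1$ lives in degree $8$, and $p=7$ trivial, so the $p=3$ analysis is the crux.
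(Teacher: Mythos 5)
Your strategy is sound and would work, but it is the ``brute force'' route --- essentially a rerun of the paper's proof of \cref{no_odd_het} for the CHL structure --- whereas the paper's actual proof of \cref{no_odd_CHL} at $p=3$ is a one-paragraph trick: since $2c\equiv -c\pmod 3$ and the mod $3$ $\cA$-module structure on the Thom spectrum depends only on the twisting class mod $3$, one may replace $2c-\lambda$ by $c-\lambda$ (absorbing the sign into the automorphism $-1$ of $K(\Z,4)$); this is the \emph{universal} twist, so by \cref{universal_Thom} the relevant Thom spectrum has the mod $3$ cohomology of $\MTSpin\simeq_{(3)}\MTSO$, and Milnor's theorem finishes immediately. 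Your Serre-spectral-sequence computation, if carried out, recovers exactly this: the transgression relations you write down become $c=p_1$ (up to units), $\cP^1 c=p_2$, and $\beta\cP^1 c=0$, so \emph{every} generator of $H^{\le 13}(B\rE_8;\Z/3)$ is identified with a Pontrjagin class or killed, and $H^*(B\GCHL;\Z/3)\cong H^*(B\Spin;\Z/3)$ in the range. In particular the step you flag as the crux --- decomposing a non-Milnor summand into suspensions of $C\alpha$ and $\Sigma^n\Z/3$ and running long exact sequences in Ext --- does not arise: unlike the heterotic case (where the $\Z/2$-invariant classes such as $D_1D_2$ genuinely survive and produce the extra summands of \cref{M3_module}), here there are no extra summands at all, and Milnor's freeness criterion applies to $\MTxiCHL$ directly. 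So your proposal is correct, but you should be aware that your anticipated intermediate answer is not what the computation yields; the collapse is total, which is what makes the paper's shortcut available. (Minor point: Hill's Corollary 2.9 is a mod $3$ statement; at $p=5,7$ cite Cartan--Serre for $H^*(K(\Z,4);\Z/p)$.)
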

\begin{proof}
First we show the result for $p = 3$. The mod $3$ cohomology, as an $\cA$-module, of the string cover $\mathcal
S(G, \lambda)$ only depends on $\lambda\bmod 3$.
Therefore in the CHL case,
where $\lambda = 2c$, we might as well work with $\lambda = -c$ --- or replacing our $K(\Z, 4)$ class with its
opposite, $\lambda = c$. This string cover corresponds to the universal twist of $\MTString$ over $K(\Z, 4)$ from
\cref{bordism_twists}, which means that by \cref{universal_Thom}, the Thom spectrum for this twist is $\MTSpin$
again! That is, the $E_2$-page of the $3$-primary Adams spectral sequence for CHL bordism coincides with the
$E_2$-page for spin bordism --- or for oriented bordism, because the forgetful map $\MTSpin\to\MTSO$ is a
$3$-primary equivalence.

Milnor~\cite[Theorem 4]{Mil60} shows that the mod $3$ cohomology of $\MTSO$ is free as an $\cA/\beta$-module on
even-degree generators, where $\beta$ is the mod $3$ Bockstein; then, he proves (\textit{ibid.}, Theorem 1) that
for any spectrum with that property and satisfying a finiteness condition, there is no odd-primary torsion in
homotopy. The CHL bordism spectrum satisfies these conditions, so we conclude.

For $p\ge 5$, the argument is essentially the same as in \cref{no_odd_het}.
\end{proof}

\section{Consequences in string theory}
	\label{s_phys}
There are a few different uses of bordism groups in theories of quantum gravity. In this section, we discuss
applications and questions raised by the computations in the previous section. Though we stay mostly mathematical,
some of what we state in this section is only known at a physical level of rigor.
\subsection{The cobordism conjecture}
\label{s_cobordism_conjecture}
As part of the Swampland program in quantum gravity, McNamara-Vafa~\cite{MV19} made the following conjecture, a
consequence of the generally believed fact that theories of quantum gravity should not have global symmetries:
\begin{conj}[McNamara-Vafa cobordism conjecture~\cite{MV19}]
\label{cobordism_conjecture}
Suppose we have a consistent $n$-dimensional theory of quantum gravity in which the spacetime backgrounds that are
summed over carry a $\xi$-structure. Then, for $3\le k\le n-1$, $\Omega_k^\xi = 0$.
\end{conj}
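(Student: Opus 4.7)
The statement is a physics conjecture rather than a mathematical theorem, so any ``proof'' is necessarily heuristic; accordingly, the plan is to outline the standard physical argument and pinpoint where it would need mathematical scaffolding to become rigorous. The starting principle, widely accepted among quantum gravity theorists, is that a consistent theory of quantum gravity admits no nontrivial global symmetries. My plan is to convert a nonzero bordism class $[M]\in\Omega_k^\xi$ into such a symmetry and then invoke the no-global-symmetries principle.

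The first step is the kinematical one. Given $[M]\in\Omega_k^\xi$, any spacetime configuration of the theory carries a well-defined $\xi$-bordism invariant, and this invariant is preserved by all physically permissible local processes (surgeries, local insertions, continuous deformations). Hence $\Omega_k^\xi$ itself plays the role of a discrete global symmetry group acting on the set of states and backgrounds. Applying the no-global-symmetries principle forces this group to be trivial in the dimensional range $3\le k\le n-1$ (the endpoints are excluded because $\Omega_0^\xi$ counts connected components of moduli and $\Omega_n^\xi$ plays a different role — it governs anomalies rather than defects).

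The second step is the dynamical one: one must argue that the theory actually possesses the physical ingredients required to kill every nonzero class. The standard recipe is that each nonzero $[M]\in\Omega_k^\xi$ is trivialized by the existence of a suitable extended object — a brane, defect, or allowed singular configuration — whose insertion enlarges the class of admissible tangential structures so that $M$ becomes null-bordant (typically by allowing a cone on $M$ decorated with the defect). The content of the conjecture is that in a truly consistent theory the defects corresponding to every class are present; in the examples studied here, this is the engine behind the predictions in \S\ref{s_cobordism_conjecture} and motivates \cref{ques1}.

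The main obstacle is definitional. There is no accepted mathematical characterization of ``consistent $n$-dimensional theory of quantum gravity,'' so neither step above is a theorem. A genuinely rigorous approach would require first axiomatizing the framework — perhaps extending the Freed--Hopkins bordism classification of invertible field theories to include non-perturbative gravitational sectors and their allowed defects — and then, for a given tangential structure $\xi$, either exhibiting explicit defects that trivialize each class in $\Omega_k^\xi$ or proving abstractly that such defects must be present. This second task is highly non-trivial even in concrete cases: for $\xihet$ and $\xiCHL$ the required defects are partially identified (e.g.\ the Kaidi--Ohmori--Tachikawa--Yonekura $7$-brane detecting one $\Z/2$ summand of $\Omega_1^{\xihet}$), but most of the classes computed in \cref{main_1,main_2} have no known defect realization, and supplying these is where the real work of the conjecture lies.
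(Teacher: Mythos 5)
This is a conjecture, not a theorem, and the paper offers no proof: it states the conjecture and motivates it exactly as you do, as a consequence of the no-global-symmetries principle in quantum gravity, with nonzero bordism classes signalling defects that must be added to trivialize them. Your outline matches the paper's treatment, including the honest acknowledgment that the real content lies in identifying the defects class by class.
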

The key here is the meaning of ``the spacetime backgrounds carry a $\xi$-structure'' --- we do not mean just that
one could sum over $\xi$-manifolds, but that $\xi$ is in some to-be-specified sense the maximally general structure
for which the theory makes sense. String theorists often work with singular manifolds and even Deligne-Mumford
stacks on $\cat{Man}$~\cite{PS05, PS06a, PS06b, DFM11a, DFM11b}, and the notion of $\xi$-bordism appearing in
\cref{cobordism_conjecture} is expected to take this into account, as some sort of bordism theory of generalized
manifolds.

The tangential structures $\xi$ currently known for various theories of quantum gravity do not satisfy the vanishing
criterion in \cref{cobordism_conjecture}, so there must be additional data or conditions on these theories'
backgrounds modifying $\xi$ so as to kill its bordism groups. These modifications often take the form of additional
extended objects in the theory. This leads to a common application of the cobordism conjecture: compute the bordism
groups for the tangential structure $\xi$ as we currently understand it, and use any nonvanishing groups as beacons
illuminating novel objects in the theory, which one then studies. This idea has been applied in~\cite{MV19, BKRU20,
GEMSV20, DM21, MV21, Sch21, ACC22, BC22, BCKM22, DHMT22, MR22, Wit22, DDHM22a, MVDBL22};\footnote{Despite all of
this work, there are still plenty of already-worked-out computations of bordism groups relevant to various string
and supergravity theories whose corresponding defects have not been determined. This includes
$\Omega_*^\Spin(B\rE_8)$~\cite{Sto86, Edw91}, applicable to the $\rE_8\times\rE_8$ heterotic string in the absence
of the $\Z/2$ swapping symmetry; $\Omega_*^{\mathrm{DPin}}$~\cite[Appendices E, F]{KPMTD20}, relevant for type I
string theory; and $\Omega_\ast^{\mathfrak m_c}$~\cite{FH21b}, useful for the low-energy limit of M-theory.}
in this subsection, we will use our computations from \S\ref{s_computations} and see what we can learn about the
$\rE_8\times\rE_8$ heterotic string and the CHL string.

Despite the $k\ge 3$ bound in \cref{cobordism_conjecture}, modifying $\xi$ to kill classes in $\Omega_1^\xi$ and
$\Omega_2^\xi$ is often physically meaningful, and can predict useful new objects in the theory. This is a common
technique in the study of the cobordism conjecture, and we will do this too.
\subsubsection{The $\rE_8\times\rE_8$ heterotic string}
McNamara-Vafa~\cite[\S 4.5]{MV19} discussed predictions of their conjecture to the $\rE_8\times\rE_8$ heterotic
string theory, but after making the simplifying assumption that the gauge $(\rE_8\times\rE_8)\rtimes\Z/2$-bundle is
trivial; the corresponding tangential structure is then $B\String$. For example, their conjecture must account for
$\Omega_3^\String\cong\Z/24$, generated by $S^3$ with its Lie group framing, and they explain how this is
trivialized by taking into account the NS5-brane.

With $\Omega_*^\xihet$ in hand, we can predict more objects. Recall the generators we found for $\xihet$-bordism
groups, and our notation for them, from \S\ref{xihet_gens}.
\begin{exm}
\label{RP1_brane}
$\Omega_1^\xihet\cong{\Z/2}\oplus\textcolor{RedOrange}{\Z/2}$, with generators
$\Snb$ and $\RP^1$. McNamara-Vafa already considered $\Snb$, but the latter is new. If one
allows manifolds with singularities, $\RP^1$ bounds $D^2/(\Z/2)$, i.e.\ the disc with a principal $\Z/2$-bundle
that is singular at the origin, inflated to a singular $\Ghet$-bundle via the inclusion $\Z/2\inj\Ghet$.

This class corresponds to a $7$-brane in the $\rE_8\times\rE_8$ heterotic string. The worldvolume of this brane is
eight-dimensional, so the link around it in ten-dimensional spacetime is a circle. The monodromy around this circle
exchanges the two $\rE_8$-bundles. This is exactly the non-supersymmetric $7$-brane recently introduced and
discussed by Kaidi-Ohmori-Tachikawa-Yonekura~\cite{KOTY23}.

Related $7$-branes in different theories are studied by Distler-Freed-Moore~\cite{DFM11a} and
Dierigl-Heckman-Montero-Torres~\cite{DHMT22}; the latter study a $7$-brane in type IIB string theory, called an
R7-brane, which in the cobordism conjecture corresponds to $[\RP^1]\in\Omega_1^{\Spin\text{-}\GL_2^+(\Z)}$.

As a way of better understanding Kaidi-Ohmori-Tachikawa-Yonekura's $7$-brane, we can try
to identify where it is sent under dualities between different string theories. For example,
Hořava-Witten~\cite{HW96a, HW96b, Wit96} identified (a certain limit) of $\rE_8\times\rE_8$ heterotic string theory
with a theory predicted to be the
low-energy limit of a compactification of M-theory on the unit interval. Under this identification, the
Kaidi-Ohmori-Tachikawa-Yonekura $7$-brane ought to correspond to a defect in M-theory associated to a $2$-dimensional bordism class
by the cobordism conjecture. Because the passage from M-theory to heterotic string theory requires compactifying on
the interval, which is
a manifold with boundary, one should use a theory of bordism of compact manifolds which are not necessarily
closed.\footnote{McNamara-Vafa~\cite[\S 5]{MV19} hint at this generalization, though from the perspective of
manifolds with singularities rather than manifolds with boundary.} The bordism class should be represented by an
interval bundle over $\RP^1$, so we conjecture that the bordism class of the Möbius strip corresponds to the avatar
of this brane in M-theory. As a check, M-theory compactified on a Möbius strip is expected to coincide with
$\rE_8\times\rE_8$ heterotic string theory compactified on $\RP^1$ --- they are both predicted to be the CHL
string, as we discussed in \S\ref{CHL_intro}, though as usual only a statement about low-energy supergravity limits
is known. We will not attempt to fully resolve this question in this paper:
among other things, this would require finding ``the right'' notion of bordism for manifolds with boundary for this
application.

Before we leave heterotic/M-duality behind, we point out a notion of bordism of manifolds with boundary, due to
Conner-Floyd~\cite[\S 16]{CF66}, for which the Möbius strip is nonbounding; we optimistically conjecture that this
is the correct kind of bordism of manifolds with boundary for applications to the cobordism conjecture.
\begin{defn}
Let $\xi_1\colon B_1\to B\O$ and $\xi_2\colon B_2\to B\O$ be tangential structures and $\eta\colon B_1\to B_2$ be a
\term{map of tangential structures}, i.e.\ $\eta$ commutes with the maps $\xi_i$. A \term{$\xi_2/\xi_1$-manifold}
is a compact manifold $M$ with $\xi_2$-structure together with
\begin{enumerate}
	\item a $\xi_1$-structure $\mathfrak x$ on $\partial M$, and
	\item an identification of the $\xi_2$-structure $\eta(\mathfrak x)$ on $\partial M$ with the $\xi_2$-structure
	induced by taking the boundary on $M$.
\end{enumerate}
\end{defn}
Conner-Floyd~\cite[\S 16]{CF66} introduce a notion of bordism for $\xi_2/\xi_1$-manifolds,\footnote{Conner-Floyd
only consider a few examples of $\xi_1$ and $\xi_2$. The works~\cite{Sto68, Ale75, Mit75, RST77, Lau00, Bun15}
consider some more tangential structures.} which we write $\Omega_*^{\xi_2/\xi_1}$, such that the Thom spectrum
corresponding to this notion of bordism is $\MTxi_2/\MTxi_1$, the cofiber of $\eta\colon \MTxi_1\to\MTxi_2$. This
implies the existence of a long exact sequence
\begin{equation}
\label{CF_bord_LES}
	\dotsb\longrightarrow \Omega_k^{\xi_1}\overset{\eta}{\longrightarrow}
	\Omega_k^{\xi_2}\overset{j}{\longrightarrow} \Omega_k^{\xi_1/\xi_2}\overset{\partial}{\longrightarrow}
	\Omega_{k-1}^{\xi_1} \longrightarrow\dots
\end{equation}
where $j$ regards a $\xi_2$-manifold as a $\xi_2/\xi_1$-manifold with empty boundary.
\begin{lem}
\label{mob_CF}
The class of the Möbius strip $M$ is nonzero in $\Omega_2^{\Pin^+/\Spin}$.\footnote{Strictly speaking,
$\Pin^+/\Spin$ is not the correct tangential structure: one should replace $\Pin^+$ with something like $\mathfrak
m_c$~\cite{Wit97, Wit16, FH21b}, and should replace $\Spin$ with something like $\xihet$, though $\mathfrak m_c$-
and \pinp structures on $2$-manifolds are equivalent data~\cite[\S 8.5.1]{FH21b}.}
\end{lem}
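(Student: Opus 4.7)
The plan is to use the long exact sequence~\eqref{CF_bord_LES} with $\xi_1 = \Spin$ and $\xi_2 = \Pin^+$, and reduce the statement to computing the boundary map on $[M]$. Explicitly, I want to show that $\partial[M]\in\Omega_1^\Spin\cong\Z/2$ is the class of $\Snb$, i.e., $S^1$ with its Lie group (nonbounding) spin structure. Since this class generates $\Omega_1^\Spin$, exactness then forces $[M]\ne 0$.

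First I would establish that $M$ admits a $\Pin^+$ structure: because $M\simeq S^1$, we have $H^2(M;\Z/2) = 0$, so the obstruction $w_2(TM) + w_1(TM)^2$ automatically vanishes. There are $|H^1(M;\Z/2)| = 2$ such structures, but the pullback $H^1(M;\Z/2)\to H^1(\partial M;\Z/2)$ induced by $\partial M\hookrightarrow M$ is the zero map (the inclusion is the connected double cover of the core circle, which kills $H^1$ with $\Z/2$ coefficients by the transfer), so both Pin$^+$ structures restrict to the same spin structure on $\partial M$. Hence the bordism class $\partial[M]$ is well-defined independently of choices.

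Next I would compute $\partial[M]$ by explicitly tracking a stable frame around $\partial M$. Parametrize $M = [0,1]\times[-1,1]/{\sim}$ with $(0,y)\sim(1,-y)$; then $\partial M$ is a single circle obtained by traversing $[0,1]\times\{1\}$ and then $[0,1]\times\{-1\}$. Use the frame $(e_1, e_2) = (\partial_t, \partial_x)$ at $(t,1)$ and $(\partial_t, -\partial_x)$ at $(t,-1)$, so that $e_1$ is tangent to $\partial M$ and $e_2$ is the outward normal. The identification rule $(1,y)\sim(0,-y)$ sends $(\partial_t, \partial_x)$ at $(1,1)$ to $(\partial_t,-\partial_x)$ at $(0,-1)$, matching the chosen frame exactly; a second application at $(1,-1)\sim(0,1)$ closes the loop without introducing any $2\pi$ rotation in the $(e_1,e_2)$-plane. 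By the standard comparison, a disk $D^2$ induces the bounding spin structure on $\partial D^2 = S^1$ via a frame that \emph{does} rotate by $2\pi$; so the Möbius strip instead induces the nonbounding spin structure, and $\partial[M] = [\Snb]$.

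The main obstacle is the sign bookkeeping in step three: the outward-normal convention and the gluing rule $(t,-x)\leftrightarrow(t,x)$ each contribute a sign, and they must cancel as claimed. A useful sanity check comes from $\RP^2 = M\cup_{S^1} D^2$: this gluing does produce a valid $\Pin^+$ manifold, as it must by the direct check that $\RP^2$ is $\Pin^+$; consistency is not a contradiction, because the gluing uses the \emph{inward} normal of $M$ (equivalently, the outward normal of $D^2$), and reversing the normal convention toggles between the two spin structures on $S^1$. Having verified $\partial[M] \ne 0$, exactness of~\eqref{CF_bord_LES} completes the proof.
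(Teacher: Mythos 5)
Your reduction via the long exact sequence~\eqref{CF_bord_LES} to showing $\partial[M]\ne 0$ in $\Omega_1^\Spin$ is exactly the paper's first step, but the way you then compute $\partial[M]$ has two genuine problems. The fatal one is in your ``sanity check'': $\RP^2$ does \emph{not} admit a \pinp structure. The obstruction to a \pinp structure is $w_2$ (not $w_2+w_1^2$, which is the \pinm obstruction you quoted earlier), and $w_2(T\RP^2)=a^2\ne 0$, so $\RP^2$ is \pinm but not \pinp. This is not a peripheral slip: the correct statement is precisely the engine of the paper's proof, which argues by contradiction that if $\partial M$ carried the bounding spin structure one could glue in $(D^2,\text{standard \pinp structure})$ along $\partial M$ and produce a \pinp structure on $\RP^2=M\cup_{S^1}D^2$. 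Your check ``passes'' only because you stack a second questionable claim (that switching the normal convention toggles the two spin structures on $S^1$) on top of the false one; had you used the correct fact about $\RP^2$, the gluing argument would have handed you a complete and sign-free proof.

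The second problem is that the frame computation itself does not establish what you want. Showing that the (tangent, outward normal) frame closes up around $\partial M$ only shows that $TM|_{\partial M}$ is a trivial bundle, which is automatic. The induced spin structure on $\partial M$ is determined by whether that loop in the frame bundle lifts to a closed loop in the \pinp double cover $P\to\mathrm{Fr}(TM\oplus\underline\R^k)$, and your argument never addresses this lifting question. For the disc you can meaningfully say the boundary frame ``rotates by $2\pi$'' because you compare it against the global constant frame of $D^2$; the Möbius strip is nonorientable and has no global frame, so ``closes up without rotation'' has no absolute meaning and cannot be compared against the disc case the way you do. If you want a hands-on argument rather than the gluing contradiction, you would need to construct the \pinp double cover explicitly (say over the cell structure of $M$) and check the lift of the boundary loop --- at which point the $\RP^2$ argument is strictly easier.
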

\begin{proof}
By~\eqref{CF_bord_LES}, it suffices to prove that $[\partial M]\ne 0$ in $\Omega_1^\Spin$. The boundary of
the Möbius strip is a circle, and for any \pinp structure on $M$, the boundary circle has the nonbounding spin
structure, i.e.\ is nonzero in $\Omega_1^\Spin$. This is because if $\partial M$ had the bounding
spin structure, one could glue the disc with its standard \pinp structure to $M$ along $\partial M$ and thereby
obtain a \pinp structure on $\RP^2$, but $\RP^2$ does not admit a \pinp structure.
\end{proof}
\Cref{mob_CF} suggests that Conner-Floyd's notion of bordism of manifolds with boundary could be the correct one
for our application in heterotic/M-theory duality. 
\end{exm}
\begin{exm}
\label{omega_2}
Moving onto higher-codimension objects predicted by higher-dimensional bordism groups, $\Omega_2^\xihet$ is
nonzero, but can be generated by products of $S_{\mathit{nb}}^1$ and $\RP^1$. This means that if we trivialize
$[\RP^1],[\Snb]\in\Omega_1^{\xihet}$ in the sense above, namely by allowing $\rE_8\times\rE_8$ heterotic string
theory to be defined on singular manifolds whose boundaries are $\RP^1$ and $\Snb$, then we can realize our chosen
generators of $\Omega_2^{\xihet}$ as boundaries of singular $3$-manifolds: for example, we used
$D^2/(\Z/2)$ to realize $\RP^1$ as a boundary, so we can use $\Snb\times D^2/(\Z/2)$ to realize $\Snb\times\RP^1$
as a boundary. Thus accounting for $\Omega_2^{\xihet}$ does not require adding any new kinds of defects or
singularities beyond what we used for $\Omega_1^\xihet$.
\end{exm}
\begin{exm}
\label{RP3_brane}
$\Omega_3^\xihet\cong \textcolor{RedOrange}{\Z/8}$, generated by $\RP^3$. As in \cref{RP1_brane}, we can bound
$\RP^3$ by $B^4/(\Z/2)$ by allowing a singularity at the origin. This bordism class should correspond to a
$5$-brane distinct from the NS5-brane.
\end{exm}
\begin{exm}
\label{K3_brane}
$\Omega_4^\xihet\cong{\Z}\oplus\textcolor{RedOrange}{\Z/2}$. The $\textcolor{RedOrange}{\Z/2}$
summand is generated by $S^3\times\RP^1$, where $S^3$ carries the Lie group framing, so its bordism class can be
trivialized using the objects we have already discussed, like in \cref{omega_2}. By \cref{S4_generator}, because
$S^3\times S^1$ is bordant as $\xihet$-manifolds to $S^4$ with trivial $\Z/2$-bundle and $\rE_8$-bundles with
characteristic classes $\pm 1\in H^4(S^4;\Z)\cong\Z$, this bordism class corresponds to the $4$-brane recently
found by Kaidi-Ohmori-Tachikawa-Yonekura~\cite{KOTY23}.

The ${\Z}$ summand in $\Omega_4^{\xihet}$ is new to us. It is generated by the K3
surface with trivial $\Z/2$-bundle; one $\rE_8$-bundle is trivial, and the other cancels $\lambda(\mathrm{K3})$.
McNamara-Vafa~\cite[\S 4.2.1]{MV19} address the K3 surface without data of $\rE_8$-bundles or a nontrivial B-field,
using it to exhibit a higher-form $\T$-symmetry. Our K3 surface corresponds to a different bordism class, but
McNamara-Vafa's argument still applies: as the K3 surface is believed to be a valid background for
$\rE_8\times\rE_8$ heterotic string theory, this higher-form $\T$-symmetry must be broken or gauged in some way.
We do not know what this would look like.
\end{exm}
$\Omega_5^\xihet$ vanishes and $\Omega_6^\xihet\cong\textcolor{RedOrange}{\Z/2}$ is generated by $\RP^3\times S^3$,
so as in \cref{omega_2} we can realize it as a boundary without introducing any new kinds of singularities.
\begin{exm}
\label{RP7_brane}
$\Omega_7^\xihet\cong\textcolor{RedOrange}{\Z/16}$, generated by $\RP^7$.
This bordism class is closely analogous to \cref{RP1_brane,RP3_brane}; this time, we have a $1$-brane, i.e.\ a
string.
\end{exm}
\begin{rem}[Relating bordism classes by compactification\protect\footnotemark]
\label{RP_cpt_rmk}
\footnotetext{We thank Markus Dierigl for pointing this out to us.}
For the cobordism conjecture for type IIB string theory considered on spin-$\GL_2^+(\Z)$ manifolds,
$[\RP^k]\in\Omega_k^{\Spin\text{-}\GL_2^+(\Z)}$ is nonzero for $k = 1$, $3$, and $7$~\cite[\S 14.3.2]{DDHM22a}, so
we would expect these classes to correspond to three different kinds of extended objects, akin to
\cref{RP1_brane,RP3_brane,RP7_brane}. However, in~\cite[\S 7]{DDHM22a}, it is shown that the two higher-codimension
objects can be expressed as compactifications of the R7-brane corresponding to $\RP^1$, so there is really only one
novel object. We suspect something similar happens here: that in $\rE_8\times\rE_8$ heterotic string theory, the
extended objects corresponding to $\RP^3$ and $\RP^7$ can be accounted for using previously known branes and
Kaidi-Ohmori-Tachikawa-Yonekura's
$7$-brane from \cref{RP1_brane} corresponding to $\RP^1$.

From a bordism point of view, we are saying that if we allow singular $\xihet$-manifolds which locally look like
$\R^k\times D^2/(\Z/2)$, it should be possible to not just bound $\RP^1$, but also to bound $\RP^3$ and $\RP^7$. We
leave this as a conjecture.
\end{rem}
\begin{exm}
$\Omega_8^\xihet$, which corresponds to codimension-$9$ objects, is isomorphic to either $\Z^3\oplus\Z/2$ or
$\Z^3\oplus (\Z/2)^{\oplus 2}$, depending on the fate of the differential~\ref{h1_orange_d2_family}. The generators
of these four or five summands that we found are:
\begin{itemize}
	\item $\HP^2$ with two different $\xihet$-structures, giving two $\Z$ summands;
	\item the Bott manifold, generating another free summand;
	\item $\RP^7\times\Snb$ generating the $\Z/2$ summand that is present even if~\ref{h1_orange_d2_family} does
	not vanish; and
	\item the manifold $X_8$ that we discussed in \S\ref{s:X8}, an $S^3\times S^3$-bundle over $\RP^2$. If the
	differential \ref{h1_orange_d2_family} is nonzero, then $X_8$ bounds as a $\xihet$-manifold.
\end{itemize}
$\RP^6\times\Snb$ is already accounted for in the sense of \cref{omega_2}, so we focus on the other generators.

Both $B$ and $\HP^2$ are nonbounding in the bordism group $\Omega_8^{\Spin\text{-}\mathrm{Mp}_2(\Z)}$, which
appears in the study of the cobordism conjecture for type IIB string theory; see~\cite[\S 6.9]{DDHM22a} for a
discussion of defects in type IIB corresponding to these bordism classes. Like in \cref{K3_brane}, the story in
$\rE_8\times\rE_8$ heterotic string theory is presumably not exactly the same, but it may be analogous.

Finally, $X_8$. Following the arguments in~\cite[\S 4.5]{MV19} and~\cite[\S 7.6, \S 7.8]{DDHM22a} the description of
$X_8$ as a fiber bundle over $\RP^2$ with fiber $S^3\times S^3$ suggests the following string-theoretic
construction: use the singular manifold corresponding to the NS5-brane to bound for the first $S^3$, compactify on
the second $S^3$, and then fiber over $D^3/(\Z/2)$ to make $X_8$ a boundary of a singular manifold. We do not know
whether this is a valid background for the $\rE_8\times\rE_8$ heterotic string; an argument for or against it could
provide an example of a use of \cref{cobordism_conjecture} to make a mathematical conjecture for the fate of $X_8$
based on string-theoretic predictions.
\end{exm}
\begin{exm}
$\Omega_9^\xihet$ corresponds to zero-dimensional objects, i.e.\ point defects, and is isomorphic to either
$(\Z/2)^{\oplus 4}$ or $(\Z/2)^{\oplus 6}$, depending on the fate of~\ref{h1_orange_d2_family}. Three of the
generators we found in \S\ref{xihet_gens} are of the form $\Snb$ times a $\xihet$-manifold, so have already been
accounted for in the sense of \cref{omega_2}. The fourth generator is $B\times\RP^1$, so it is also already
accounted for.

The remaining two manifolds that might or might not be necessary are $X_8\times\Snb$, which as usual is already
taken care of, and a manifold $X_9$ which we did not determine.
\end{exm}

\subsubsection{The CHL string}
In \cref{2_loc_CHL,no_odd_CHL}, we saw that $\Omega_*^\xiCHL$ is abstractly isomorphic to
$\Omega_*^\String(B\rE_8)$. Thus there is a summand corresponding to $\Omega_*^\String(\pt)$, and as we saw above,
these classes can be represented by string manifolds with trivial $\rE_8$-bundle. Some of these manifolds were
accounted for by McNamara-Vafa~\cite[\S 4.5]{MV19} in heterotic string theory, e.g.\ killing $S^3$ with its
nonbounding framing using the fivebrane, and presumably a similar defect is present in the CHL string.
McNamara-Vafa leave plenty of string bordism classes' interpretations in terms of defects open to address, and this
would be interesting to understand more in the setting of the CHL string.

We also found a few more classes in $\Omega_*^\xiCHL$. For example, $\Omega_4^{\xiCHL}\cong\Z$, generated by a K3
surface with $\rE_8$-bundle chosen to satisfy the Bianchi identity. Like in \cref{K3_brane}, this corresponds to
some codimension-$4$ object, though we do not know what it will look like.
\subsection{Is the $\Z/2$ symmetry on the $\rE_8\times\rE_8$ heterotic string anomalous?}
\label{anomalies_general}
Quantum field theories can come with the data of an \term{anomaly}, a mild inconsistency in which
key quantities in the field theory are not defined absolutely without fixing additional
data. For example, one wants the partition function of a QFT on a manifold $M$ to be a complex number, but an
anomaly signals that the partition function is only an element of a complex line which has not been trivialized.
The process of resolving this inconsistency, when necessary, is called \term{anomaly cancellation}.

Freed-Teleman~\cite{FT14} describe anomaly cancellation for a broad class of quantum field theories as follows: an
$n$-dimensional quantum field theory $Z$ lives at the boundary of an $(n+1)$-dimensional invertible field theory
$\alpha$, called the \term{anomaly field theory} of $Z$. The tangential structures of $Z$ and $\alpha$ match.
Anomaly cancellation is the procedure of trivializing $\alpha$, i.e.\ establishing an isomorphism from $\alpha$ to
the trivial theory.

We think of this from Atiyah-Segal's approach~\cite{AtiyahTFT, SegalCFT} that field theories are symmetric monoidal
functors from (potentially geometric) bordism categories into categories such as $\cat{Vect}_\C$. The perspective
of extended field theory means these are often $(\infty, n)$-categories. If $\cat C$ and $\cat D$ are two symmetric
monoidal $(\infty, n)$-categories, the $(\infty, n)$-category of symmetric monoidal functors $F\colon\cat C\to\cat
D$ acquires the symmetric monoidal structure of ``pointwise tensor product,'' specified by the formula
\begin{equation}
	(F_1\otimes F_2)(x) \coloneqq F_1(x) \otimes_{\cat D} F_2(x), 
\end{equation}
where $x$ is an object, morphism, higher morphism, etc.
\begin{defn}[{Freed-Moore~\cite[Definition 5.7]{FM06}}]
Let $\cat C$ be a symmetric monoidal $(\infty, n)$-category. An \term{invertible field theory} is a field theory
$\alpha\colon\Bord_n^\xi\to\cat C$ such that there is another field theory $\alpha^{-1}\colon\Bord_n^\xi\to\cat C$
such that $\alpha\otimes\alpha^{-1}\simeq \boldsymbol 1$, the trivial theory.
\end{defn}
The trivial theory $\boldsymbol 1\colon\Bord_n^\xi\to\cat C$ is defined to send all objects to the monoidal unit in
$\cat C$ and all morphisms and higher morphisms to identity morphisms, resp.\ identity higher morphisms.

Therefore the classification of anomalies follows from the classification of invertible field theories, and anomaly
cancellation is an isomorphism from an invertible field theory to $\boldsymbol 1$.
Freed-Hopkins-Teleman~\cite{FHT10} classify invertible \emph{topological} field theories using stable homotopy
theory, and Grady-Pavlov~\cite[\S 5]{GP21} generalize this in the nontopological setting.

In most cases, including the supergravity theories studied in this paper, the QFT under study is unitary, so their
anomaly theories have the Wick-rotated analogue of unitarity, \term{reflection positivity}.
Freed-Hopkins~\cite{FH21} classify reflection-positive invertible field theories.

Let $I_\Z$ denote the \term{Anderson dual of the sphere spectrum}~\cite{And69, Yos75}.
\begin{thm}[{Freed-Hopkins~\cite[Theorem 1.1]{FH21}}]
Let $\xi$ be a tangential structure. There is a natural isomorphism from the group of deformation classes of
$(n+1)$-dimensional reflection-positive invertible topological field theories on $\xi$-manifolds to the torsion
subgroup of $[\mathit{MT\xi}, \Sigma^{n+2}I_\Z]$.
\end{thm}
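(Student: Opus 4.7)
The plan is to realize both sides of the asserted isomorphism as connected components of a mapping space of spectra, and then match them using the Galatius-Madsen-Tillmann-Weiss theorem on one side and the defining properties of Anderson duality on the other. First, I would unpack ``invertible topological field theory'': a symmetric monoidal functor $\alpha \colon \mathrm{Bord}_{n+1}^\xi \to \cat C$ landing in invertible objects, morphisms, and higher morphisms of some symmetric monoidal $(\infty, n+1)$-category $\cat C$ factors through the Picard $\infty$-groupoid $\cat C^\times$, which is a grouplike $E_\infty$-space and hence (by $\Omega^\infty$) a connective spectrum $\mathrm{pic}(\cat C)$. Thus such a functor is equivalent data to a map of spectra from the spectrum associated to $|\mathrm{Bord}_{n+1}^\xi|$ into $\mathrm{pic}(\cat C)$.

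Next I would invoke GMTW, in its extended form, to identify the spectrum associated to $|\mathrm{Bord}_{n+1}^\xi|$ with $\Sigma^{n+1}\mathit{MT\xi}$, so that invertible field theories of the above form correspond to homotopy classes of spectrum maps $\mathit{MT\xi} \to \Sigma^{-(n+1)}\mathrm{pic}(\cat C)$. Deformation classes of invertible theories are exactly $\pi_0$ of this mapping spectrum. The problem then reduces entirely to computing $\Sigma^{-(n+1)}\mathrm{pic}(\cat C)$ for the ``universal'' target that accommodates reflection-positive topological theories. Here reflection positivity must be axiomatized as an antilinear involution on $\cat C$ compatible with orientation-reversal on bordisms; the invertible target objects then carry hermitian structures, and the Picard spectrum picks up a $\Z/2$-action whose homotopy fixed points are the relevant receptacle.

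The main step, and the main obstacle, is identifying this receptacle with $\Sigma^{n+2}I_\Z$. The strategy is first to show unitarity/reflection positivity forces the target to be (a shift of) $I_\C^\times$ in an appropriate sense --- a character-valued anomaly --- then to use the Anderson-dual cofiber sequence $\Sigma^{-1} I_{\C^\times} \to I_\Z \to I_\R$ together with the fact that topological (as opposed to smooth or geometric) theories must factor through the piece with discrete homotopy. Concretely, a reflection-positive invertible theory assigns to closed $(n+1)$-manifolds phases $\in U(1)$ in a way that depends only on bordism class; packaging this naturally over the universal bordism gives the map to $\Sigma^{n+2}I_\Z$, the sign of the shift coming from $I_\Z$ being defined by $[X, I_\Z] \cong \mathrm{Hom}(\pi_0 X, \Z) \oplus \mathrm{Ext}(\pi_{-1}X, \Z)$ and the partition function living in cohomological degree $n+2$.

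Finally, the torsion restriction. For any map $f \colon \mathit{MT\xi} \to \Sigma^{n+2}I_\Z$, rationalization $f_\Q$ detects the ``curvature'' of the corresponding invertible theory, namely a closed $(n+2)$-form-valued anomaly polynomial; a topological theory has no such nonvanishing continuous data, so $f_\Q = 0$, forcing $[f]$ to be torsion. Conversely, every torsion class arises from a genuinely topological theory because the non-torsion part is precisely what a differential refinement would carry. Combining the four steps produces the stated natural isomorphism. The single hardest ingredient is the identification of the reflection-positive target Picard spectrum with $\Sigma^{n+2}I_\Z$; the rest is bookkeeping with representability and GMTW.
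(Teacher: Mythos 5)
A point of record first: the paper does not prove this statement. It is quoted from Freed--Hopkins \cite{FH21} and used as a black box, so there is no in-paper argument to compare yours against. Judged on its own terms, your proposal correctly reproduces the \emph{architecture} of the Freed--Hopkins proof: factor an invertible theory through the Picard spectrum of the target, identify the classifying spectrum of the bordism category with a shift of $\mathit{MT\xi}$ in the style of Galatius--Madsen--Tillmann--Weiss, and use the fiber sequence $\Sigma^{-1}I_{\C^\times}\to I_\Z\to I_\R$ to see that the classes coming from ``topological'' theories land exactly in the torsion subgroup of $[\mathit{MT\xi},\Sigma^{n+2}I_\Z]$.

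As a proof, however, there is a genuine gap, and you have located it yourself: the identification of the universal target for reflection-positive invertible topological theories. This is not a hard ingredient to be filled in later --- it \emph{is} the theorem, and it occupies the bulk of \cite{FH21} (roughly \S 7--\S 8 there): one must define reflection structures and positivity for fully extended theories, show that positivity rigidifies the a priori $\Z/2$-equivariant data on the Picard spectrum, and then compute $\pi_0$ of the resulting mapping spectrum. Your sketch names this step but supplies none of it. Two further points are asserted rather than argued: the extended GMTW identification of the classifying spectrum of the $(\infty,n+1)$-categorical bordism category (itself a deep theorem, treated partly as a hypothesis even in \cite{FH21}), and the surjectivity onto the torsion subgroup (``every torsion class arises from a genuinely topological theory''), which in Freed--Hopkins follows from exactness of the sequence involving $\Sigma^{n+2}I_\R$ rather than from the heuristic about differential refinements. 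As written, the proposal is a correct roadmap of the Freed--Hopkins argument, not a proof of it.
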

Freed-Hopkins then conjecture (\textit{ibid.}, Conjecture 8.37) that the entire group classifies all
reflection-positive invertible field theories, topological or not.

$I_\Z$ satisfies a universal property which leads to the existence of a natural short exact sequence
\begin{equation}
\label{anomaly_SES}
	\shortexact{\mathrm{Tors}(\Hom(\Omega_{n+1}^\xi, \T))}{[\mathit{MT\xi},
	\Sigma^{n+2}I_\Z]}{\Hom(\Omega_{n+2}^\xi, \Z)},
\end{equation}
and this sequence carries physical meaning for the classification of possible anomalies for an $n$-dimensional QFT
$Z$. For example, $\Hom(\Omega_{n+2}^\xi, \Z)$ is a group of $\Z$-valued degree-$(n+2)$ characteristic classes of
$\xi$-manifolds, and the quotient map in~\eqref{anomaly_SES} sends the anomaly field theory of $Z$ to its
\term{anomaly polynomial}. This data can often be computed using perturbative techniques for $Z$, and is referred
to as the \term{local anomaly}. Consequently, one can use bordism computations to assess what the group of possible
anomalies of a QFT is, and whether a specific anomaly field theory is trivializable; see~\cite{FH21b, TY21, DDHM22,
LY22, DY22, Tac22, DOS23} for recent anomaly cancellation theorems in string and supergravity theories using this
technique.

\subsubsection{Anomalies for the $\rE_8\times\rE_8$ heterotic string}
\label{heterotic_anomaly}
For the $\rE_8\times\rE_8$ heterotic string, the anomaly field theory is an element of the group $[\MTxihet,
\Sigma^{12} I_\Z]$: the free part is noncanonically isomorphic to the free part of $\Omega_{12}^\xihet$, and the
torsion part is noncanonically isomorphic to the torsion subgroup of $\Omega_{11}^\xihet$. Though we have not
completely determined these groups, $\Omega_{11}^{\xihet}$ is nonzero, as we showed in \cref{het_at_2}, so there is
the possibility of a nontrivial anomaly to cancel. One generally expects that the anomaly field theory itself is
trivial, because physicists have undertaken many consistency checks on $\rE_8\times\rE_8$ heterotic string theory,
but sometimes there is a surprise: in joint work with Dierigl, Heckman, and Montero~\cite{DDHM22}, we found that
the anomaly theory for the duality symmetry in type IIB string theory is nonzero, and requires a modification of
the theory to be trivialized.

For the $\rE_8\times\rE_8$ heterotic string, there has been a fair amount of work already cancelling the anomaly in
special cases, but for the full tangential structure $\xihet$, the question of anomaly cancellation is open. The
original work of Green-Schwarz~\cite{GS84} shoes that the anomaly polynomial vanishes, so by~\eqref{anomaly_SES},
we only need to look at bordism invariants out of $\Omega_{11}^\xihet$. If one ignores the $\Z/2$ swapping
symmetry, the anomaly is known to be trivial: Witten~\cite[\S 4]{Wit86} showed that the global anomaly is
classified by a bordism invariant $\Omega_{11}^\Spin(B\rE_8)\to\C^\times$, and Stong~\cite{Sto86} showed that
$\Omega_{11}^\Spin(B\rE_8) = 0$ (see \cref{restricted_het_is_easy}). Sati~\cite{Sat11a} studies a closely related
question in terms of $\Omega_{11}^\String(B\rE_8)$.

Recent work of Tachikawa-Yamashita~\cite{TY21} (see also Tachikawa~\cite{Tac22} and Yonekura~\cite[\S 4]{Yon22})
cancels anomalies in a large class of compactifications of heterotic string theory using an ingenious
$\mathit{TMF}$-based argument. Their work does not take into account the $\Z/2$ swapping symmetry. It would be
interesting to address the full anomaly on $\xihet$-manifolds, either by directly computing it on generators of
$\Omega_{11}^\xihet$ or by adapting Tachikawa-Yonekura's argument. If this symmetry does have a nontrivial anomaly,
this would have consequences for the CHL string, either requiring a modification of the theory or showing that it
is inconsistent.
\subsubsection{Anomalies for the CHL string}
Anomaly cancellation for the CHL string has been studied less. In \cref{2_loc_CHL,no_odd_CHL}, we saw that
$\Omega_{11}^\xiCHL$ is torsion, so the anomaly polynomial vanishes; and we saw
$\Omega_{10}^\xiCHL\cong\Z/2\oplus\Z/2$, so there is a potential for the anomaly field theory to be nontrivial,
which would be interesting to check.


\newcommand{\etalchar}[1]{$^{#1}$}

\end{document}